\def\isarxiv{1}
\let\C\relax
\author{
Ruizhe Zhang\thanks{\texttt{ruizhe@utexas.edu}. The University of Texas at Austin. Research supported by the University Graduate Continuing Fellowship.}
\and
Xinzhi Zhang\thanks{\texttt{xinzhi20@cs.washington.edu}. University of Washington. {Research supported by  NSF grant CCF-1813135.}} 
}
\author{Ruizhe Zhang}{The University of Texas at Austin, USA}{ruizhe@utexas.edu}{}{Supported by the University Graduate Continuing Fellowship from UT
Austin.}
\author{Xinzhi Zhang}{University of Washington, USA}{xinzhi20@cs.washington.edu}{}{Research supported by  NSF grant CCF-1813135, NSF CAREER Award and Packard Fellowships.}
\authorrunning{R. Zhang and X. Zhang}
\keywords{Kadison-Singer conjecture, Hyperbolic polynomials, Strongly-Rayleigh distributions, Interlacing polynomials}
\title{A Hyperbolic Extension of Kadison-Singer Type Results}%
\newtheorem*{rep@theorem}{\rep@title}
\newcommand{\newreptheorem}[2]{%
\newenvironment{rep#1}[1]{%
 \def\rep@title{#2 \ref{##1}}%
 \begin{rep@theorem}}%
 {\end{rep@theorem}}}
\newtheorem{theorem}{Theorem}[section]
\newtheorem{lemma}[theorem]{Lemma}
\newtheorem{definition}[theorem]{Definition}
\newtheorem{proposition}[theorem]{Proposition}
\newtheorem{corollary}[theorem]{Corollary}
\newtheorem{remark}[theorem]{Remark}
\newtheorem{claim}[theorem]{Claim}
\newtheorem{fact}[theorem]{Fact}
\newcommand{\wt}{\widetilde}
\newcommand{\R}{\mathbb{R}}
\renewcommand{\tilde}{\wt}
\renewcommand{\d}{\mathrm{d}}
\renewcommand{\Im}{\operatorname{Im}}
\newcommand{\T}{\mathcal{T}}
\newcommand{\Ab}{\mathbf{Ab}}
\DeclareMathOperator*{\E}{{\mathbb{E}}}
\DeclareMathOperator*{\Var}{{\bf {Var}}}
\DeclareMathOperator*{\Z}{\mathbb{Z}}
\DeclareMathOperator*{\C}{\mathbb{C}}
\DeclareMathOperator{\supp}{supp}
\DeclareMathOperator{\poly}{poly}
\DeclareMathOperator{\rank}{rank}
\DeclareMathOperator{\tr}{tr}
\newcommand{\todolow}[1]{\textbf{\color{yellow}[TODO: #1]}}
\renewcommand{\todolow}[1]{}
\definecolor{myblue}{HTML}{0099cc}
\newif\ifshowproof
\newcommand*{\RN}[1]{\expandafter\@slowromancap\romantaumeral #1@}
\begin{document}

\ifdefined\isarxiv
\begin{titlepage}
  \maketitle
  \begin{abstract}
  
In 2013, Marcus, Spielman, and Srivastava resolved the famous Kadison-Singer conjecture. It states that for $n$ independent random vectors $v_1,\cdots, v_n$ that have expected squared norm bounded by $\epsilon$ and are in the isotropic position in expectation, there is a positive probability that the determinant polynomial $\det(xI - \sum_{i=1}^n v_iv_i^\top)$ has roots bounded by $(1 + \sqrt{\epsilon})^2$. An interpretation of the Kadison-Singer theorem is that we can always find a partition of the vectors $v_1,\cdots,v_n$ into two sets with a low discrepancy in terms of the spectral norm (in other words, rely on the determinant polynomial). 

In this paper, we provide two results for a broader class of polynomials, the hyperbolic polynomials. Furthermore, our results are in two generalized settings:
\begin{itemize}
    \item The first one shows that the Kadison-Singer result requires a weaker assumption that the vectors have a bounded sum of hyperbolic norms.
    \item The second one relaxes the Kadison-Singer result's distribution assumption to the Strongly Rayleigh distribution.
\end{itemize}
To the best of our knowledge, the previous results only support determinant polynomials [Anari and Oveis Gharan'14, Kyng, Luh and Song'20]. It is unclear whether they can be generalized to a broader class of polynomials. In addition, we also provide a sub-exponential time algorithm for constructing our results.

  \end{abstract}
  \thispagestyle{empty}
\end{titlepage}
\else
 \maketitle
 \begin{abstract}
 
 \end{abstract}
\fi

\ifdefined\isarxiv
\newpage
\setcounter{page}{1}
\fi
\section{Introduction}

Introduced by \cite{ks59}, the Kadison-Singer problem was a long-standing open problem in mathematics. 
It was resolved by Marcus, Spielman, and Srivastrava in their seminal work \cite{mss15b}:
For any set of independent random vectors $u_1,\cdots,u_n$ such that each $u_i$ has finite support, and $u_1,\cdots,u_n$ are in isotropic positions in expectation, there is positive probability that $\sum_{i=1}^n u_i u_i^*$ has spectral norm bounded by $1 + O(\max_{i\in [n]} \|u_i\|)$. The main result of \cite{mss15b} is as follows:
\begin{theorem}[Main result of \cite{mss15b}]\label{thm:ks-original}
    Let $\epsilon > 0$ and let $v_1,\cdots,v_n \in \C^m$ be $n$ independent random vectors with finite support, such that $\E[\sum_{i=1}^n v_i v_i^*] = I$, and $\E[\|v_i\|^2] \leq \epsilon$, $\forall i\in [n]$. Then 
    \begin{align*}
        \Pr\left[ \Big\| \sum_{i\in [n]}v_iv_i^* \Big\| \leq (1+\sqrt{\epsilon})^2 \right] > 0.
    \end{align*}
\end{theorem}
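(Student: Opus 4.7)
My plan is to follow the interlacing-families strategy of Marcus--Spielman--Srivastava. First I would perform the standard reduction: by replacing each $v_i$ with a sum of independent rank-one random vectors supported at finitely many atoms and then scaling, it suffices to prove the statement for vectors $v_i$ that each take one of finitely many values $w_{i,1},\ldots,w_{i,m_i}$ with probabilities $p_{i,j}$, still satisfying $\sum_i \sum_j p_{i,j}\, w_{i,j} w_{i,j}^* = I$ and $\sum_j p_{i,j}\|w_{i,j}\|^2 \le \epsilon$. The goal then is to exhibit a single outcome $(j_1,\ldots,j_n)$ for which $\bigl\| \sum_i w_{i,j_i} w_{i,j_i}^*\bigr\| \le (1+\sqrt{\epsilon})^2$.

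The core idea is to pass to characteristic polynomials. For each outcome I consider $\chi_{j_1,\ldots,j_n}(x) := \det\!\bigl(xI - \sum_i w_{i,j_i} w_{i,j_i}^*\bigr)$, and I want to show that the collection $\{\chi_{j_1,\ldots,j_n}\}$ forms an interlacing family indexed by the tree of partial assignments. The key technical input here is that the expected characteristic polynomial, summed with the correct product weights, equals the \emph{mixed characteristic polynomial}
\begin{align*}
\mu[A_1,\ldots,A_n](x) \;=\; \Bigl(\prod_{i=1}^n (1-\partial_{z_i})\Bigr)\,\det\!\Bigl(xI + \sum_{i=1}^n z_i A_i\Bigr)\Big|_{z_1=\cdots=z_n=0},
\end{align*}
where $A_i = \E[v_i v_i^*]$. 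I would verify real-rootedness of every such conditional expectation by the theory of real stable polynomials: $\det(xI + \sum_i z_i A_i)$ is real stable in $(x,z_1,\ldots,z_n)$ by the Borcea--Br\"and\'en theory, each operator $(1-\partial_{z_i})$ preserves real stability, and specialization at real values of the $z_i$ preserves it as well; a univariate real stable polynomial is real-rooted. The interlacing-families lemma then implies that at least one realization $\chi_{j_1,\ldots,j_n}$ has its largest root bounded above by the largest root of $\mu[A_1,\ldots,A_n](x)$.

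The remaining and hardest step is to prove the root bound $\lambda_{\max}\bigl(\mu[A_1,\ldots,A_n]\bigr) \le (1+\sqrt{\epsilon})^2$. Here I would adopt the multivariate barrier method: define, for the real stable polynomial $p(x,z) = \det(xI + \sum_i z_i A_i)$, the barrier functions $\Phi_i(x,z) = \partial_{z_i} \log p(x,z)$, and show that applying an operator of the form $(1-\partial_{z_i})$ shifts any ``above-the-roots'' point $(x,z)$ to a new point $(x,z')$ that remains above the roots, with a controlled decrease in the barriers. The crucial estimate, the analogue of MSS's Lemma for $(1-\partial_{z})$, is that if all barriers $\Phi_j$ are at most some threshold $\phi < 1$ and the $x$-coordinate is moved up by $\delta$ satisfying $\frac{1}{\delta} + \frac{\phi}{1-\phi}\cdot\frac{1}{\delta(1-\phi) - \text{trace term}} \le 1$, the barrier invariants are preserved. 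Initializing with $x$ slightly above the spectral range and $z_i = 0$, a direct computation using $\sum_i A_i = I$ and $\mathrm{tr}(A_i)\le\epsilon$ gives initial barriers summing nicely, and optimizing the choice of $x$ and the per-step shift yields the final bound $(1+\sqrt{\epsilon})^2$. This barrier bookkeeping is the main technical obstacle: choosing the right parameters so that one application of $(1-\partial_{z_i})$ erodes the barrier by at most a factor one can afford, and iterating $n$ times without blow-up, is exactly the delicate content of the MSS argument and is where all non-trivial work lies.
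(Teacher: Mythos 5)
The paper does not give its own proof of this statement; it is imported verbatim from \cite{mss15b} and used only as motivation (the paper's own proofs, in Appendices \ref{sec:kls} and \ref{sec:ag}, are for the hyperbolic generalizations, Theorems \ref{thm:kls20_ours_formal} and \ref{thm:ag14_ours_formal}). Your sketch correctly reconstructs the three-step MSS strategy that the paper also adopts as its template: (i) organize the characteristic polynomials of the realizations into an interlacing family indexed by the tree of partial assignments; (ii) identify the root of the tree with the mixed characteristic polynomial $\mu[A_1,\ldots,A_n](x)=\prod_i(1-\partial_{z_i})\det\!\big(xI+\sum_i z_iA_i\big)\big|_{z=0}$ with $A_i=\E[v_iv_i^*]$, and use Borcea--Br\"and\'en closure properties to prove real-rootedness; (iii) bound its largest root by a multivariate barrier argument. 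So the high-level approach is the right one and matches the methodology used throughout the paper.

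One concrete inaccuracy worth fixing: the barrier update inequality you write, with a ``trace term'' inside the shift condition, is not the MSS lemma. In MSS the invariant is maintained under $z\mapsto z+\delta\mathbf{1}_j$ for the operator $(1-\partial_{z_j})$ as soon as $\Phi_p^j(z)+\tfrac{1}{\delta}\le 1$, with no trace appearing in the update condition; the trace enters only at initialization, where one starts at $z=t\mathbf{1}$ in the polynomial $Q(z)=\det\!\big(\sum_i z_iA_i\big)$ and computes $\Phi^i_Q(t\mathbf{1})=\tr(A_i)/t\le \epsilon/t$. (Also note the shift is along the $z_j$ coordinates, not $x$; setting $z_i\equiv x$ at the end recovers the univariate mixed characteristic polynomial.) Minimizing $t+\delta$ subject to $\epsilon/t+1/\delta\le 1$ then gives $t=\epsilon+\sqrt{\epsilon}$, $\delta=1+\sqrt{\epsilon}$, hence the bound $(1+\sqrt{\epsilon})^2$. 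With that correction your outline is a faithful account of the MSS argument.
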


The Kadison-Singer problem is closely related to discrepancy theory, which is an essential area in mathematics and theoretical computer science. A classical discrepancy problem is as follows: given $n$ sets over $n$ elements, can we color each element in red or blue such that each set has roughly the same number of elements in each color? More formally, for vectors $a_1,\dots, a_n\in \R^n$ with $\|a_i\|_\infty\leq 1$ and a coloring $s\in \{\pm 1\}^n$, the discrepancy is defined by $\mathrm{Disc}(a_1,\dots,a_n; s):=\|\sum_{i\in [n]}s_i a_i\|_\infty$. The famous Spencer's Six Standard Deviations Suffice Theorem \cite{spe85} shows that there exists a coloring with discrepancy at most $6\sqrt{n}$, which beats the standard Chernoff bound showing that a random coloring has discrepancy $\sqrt{n\log n}$. More generally, we can consider the ``matrix version'' of discrepancy: for matrices $A_1,\dots, A_n\in \R^{d\times d}$ and a coloring $s\in \{\pm 1\}^n$, 
\begin{align*}
    \mathrm{Disc}(A_1,\dots,A_n; s):=\Big\|\sum_{i\in [n]}s_i A_i\Big\|.
\end{align*}
Theorem~\ref{thm:ks-original} is equivalent to the following discrepancy result for rank-1 matrices: 
\begin{theorem}[\cite{mss15b}]\label{thm:KS}
Let $u_1, \dots, u_n \in \C^m$ and
suppose $\max_{i \in [n]} \| u_i u_i^*\| \leq \epsilon$ and $ \sum_{i=1}^n  u_i u_i^* = I$.
Then, 
\begin{align*}
    \min_{s\in \{\pm 1\}^n}~\mathrm{Disc}(u_1u_1^*,\dots,u_nu_n^*; s) \leq O(\sqrt{\epsilon}).
\end{align*}
\end{theorem}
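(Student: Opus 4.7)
The plan is to derive Theorem~\ref{thm:KS} as a direct consequence of Theorem~\ref{thm:ks-original} by a standard ``doubling'' reduction that turns a deterministic family of vectors into a random family on which the Kadison-Singer theorem can be applied, so that each realization of the signs $\pm 1$ corresponds to a placement in one of two coordinate blocks.

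The first step is to construct the random vectors. Given $u_1,\dots,u_n \in \C^m$ with $\sum_{i} u_i u_i^* = I$ and $\|u_i\|^2 \le \epsilon$, define independent random vectors $v_1,\dots,v_n \in \C^{2m}$ by setting, for each $i$ independently,
\begin{align*}
    v_i = \sqrt{2}\,\begin{pmatrix} u_i \\ 0 \end{pmatrix} \text{ or } v_i = \sqrt{2}\,\begin{pmatrix} 0 \\ u_i \end{pmatrix}, \quad \text{each with probability } 1/2.
\end{align*}
A direct calculation shows $\E\bigl[\sum_i v_i v_i^*\bigr] = \operatorname{diag}(\sum_i u_i u_i^*,\,\sum_i u_i u_i^*) = I_{2m}$, and $\E[\|v_i\|^2] = 2\|u_i\|^2 \le 2\epsilon$. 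Thus the hypotheses of Theorem~\ref{thm:ks-original} are satisfied with parameter $2\epsilon$.

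The second step is to translate the conclusion back. By Theorem~\ref{thm:ks-original}, there is a realization, encoded by a sign vector $s \in \{\pm 1\}^n$, for which $\bigl\|\sum_i v_iv_i^*\bigr\| \le (1+\sqrt{2\epsilon})^2$. For this realization, $\sum_i v_iv_i^*$ is block-diagonal with blocks $A := 2\sum_{i:s_i=+1} u_iu_i^*$ and $B := 2\sum_{i:s_i=-1} u_iu_i^*$, which satisfy $A + B = 2I$ and $\max(\|A\|,\|B\|) \le (1+\sqrt{2\epsilon})^2$. The operator inequalities $A \preceq (1+\sqrt{2\epsilon})^2 I$ and $B \preceq (1+\sqrt{2\epsilon})^2 I$ combined with $A+B=2I$ give
\begin{align*}
    \Big\|\sum_{i\in[n]} s_i\, u_iu_i^*\Big\| \;=\; \tfrac{1}{2}\|A - B\| \;\le\; (1+\sqrt{2\epsilon})^2 - 1 \;=\; 2\sqrt{2\epsilon} + 2\epsilon \;=\; O(\sqrt{\epsilon}),
\end{align*}
which is exactly the claimed discrepancy bound.

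There is no real obstacle in this reduction; the only care needed is that the randomization over two coordinate blocks has to preserve the isotropy hypothesis, which is why each $v_i$ is scaled by $\sqrt{2}$, and that the conclusion on $\|\sum v_iv_i^*\|$ has to be converted into a two-sided spectral bound on $A-B$ via the identity $A+B=2I$. The analogous reduction will be the template for deriving discrepancy statements from the probabilistic existence results proved later in the paper in the hyperbolic setting.
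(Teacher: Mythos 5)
Your reduction is correct and is exactly the standard ``two-block doubling'' argument by which \cite{mss15b} passes from the probabilistic form (Theorem~\ref{thm:ks-original}) to Weaver's $KS_2$ and hence to the discrepancy statement of Theorem~\ref{thm:KS}; the paper does not reprove this implication but cites it from \cite{mss15b}, and your argument matches that source. The only minor point worth noting for full rigor is that the bound $2\sqrt{2\epsilon}+2\epsilon$ should be capped at $1$ (which is automatic since $A,B\succeq 0$ and $A+B=2I$ force $\|A-B\|\le 2$), but this does not affect the $O(\sqrt{\epsilon})$ conclusion.
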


In other words, the minimum discrepancy of rank-1 isotropic matrices is bounded by $O(\sqrt{\epsilon})$, where $\epsilon$ is the maximum spectral norm. %
This result also beats the matrix Chernoff bound \cite{tro15}, which shows that a random coloring for matrices has discrepancy $O(\sqrt{\epsilon \log d})$.
The main techniques in \cite{mss15b} are the method of interlacing polynomials and the barrier methods developed in \cite{mss15a}.

Several generalizations of the Kadison-Singer-type results, which have interesting applications in theoretical computer science, have been established using the same technical framework as described in \cite{mss15b}.
In particular, Kyng, Luh, and Song \cite{kls20} provided a ``four derivations suffice'' version of Kadison-Singer conjecture: Instead of assuming every independent random vector has a bounded norm, the main result in \cite{kls20} only requires that the sum of the squared spectral norm is bounded by $\sigma^2$, and showed a discrepancy bound of $4\sigma$:

\begin{theorem}[\cite{kls20}]\label{thm:kls20}
Let $u_1, \dots, u_n \in \C^m$ and $\sigma^2 = \| \sum_{i=1}^n (u_i u_i^{*})^2 \|$. 
Then, we have
\begin{align*}
\Pr_{\xi \sim \{ \pm 1 \}^n } \left[ \Big\| \sum_{i=1}^n \xi_i u_i u_i^* \Big\| \leq 4 \sigma \right] >0.
\end{align*} 
\end{theorem}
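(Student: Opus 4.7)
The plan is to follow the method of interlacing polynomials pioneered by Marcus, Spielman, and Srivastava~\cite{mss15a} in their resolution of Kadison-Singer, adapted so that the relevant polynomial's largest root captures the full spectral norm of the random signed sum. Concretely, for each $\xi \in \{\pm 1\}^n$ I would work with the symmetrized characteristic polynomial
\begin{align*}
q_\xi(x) \;:=\; \det\Big(xI - \sum_{i=1}^n \xi_i u_i u_i^*\Big)\cdot \det\Big(xI + \sum_{i=1}^n \xi_i u_i u_i^*\Big),
\end{align*}
whose largest root is exactly $\|\sum_i \xi_i u_i u_i^*\|$. It then suffices to exhibit a single signing whose $q_\xi$ has largest root at most $4\sigma$.

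Next, I would show that $\{q_\xi\}_{\xi\in\{\pm 1\}^n}$ forms an interlacing family. Introducing two sets of auxiliary variables, consider the generating polynomial
\begin{align*}
Q(x;y,z) \;:=\; \det\Big(xI + \sum_i y_i u_i u_i^*\Big)\det\Big(xI + \sum_i z_i u_i u_i^*\Big),
\end{align*}
which is real stable in $(x, y_1, \ldots, y_n, z_1, \ldots, z_n)$ as a product of two real-stable determinants of PSD pencils. For uniform $\xi_i\in\{\pm 1\}$ a short calculation gives $\E_{\xi_i}\bigl[(1-\xi_i\partial_{y_i})(1+\xi_i\partial_{z_i})\bigr] = 1 - \partial_{y_i}\partial_{z_i}$, so averaging over all signings produces the expected polynomial
\begin{align*}
\bar q(x) \;=\; \Big[\prod_{i=1}^n (1-\partial_{y_i}\partial_{z_i})\Big]\, Q(x;y,z)\,\Big|_{y=z=0}.
\end{align*}
The symbol $1-ab$ of $(1-\partial_{y_i}\partial_{z_i})$ is itself real stable as a polynomial in $(a,b)$, so by the Borcea--Br\"and\'en characterization each such operator preserves real stability. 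Applied to all partial averages along the $\{\pm 1\}^n$ sign-tree this simultaneously certifies that $\bar q$ is real-rooted and that the children at every internal node share a common interlacer.

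The technical heart is bounding the largest root of $\bar q$ by $4\sigma$ via a multivariate barrier argument in the spirit of~\cite{mss15b}. Starting from an upper barrier point $(x_0,0,0)$ dominating all roots of $Q$ in the $x$-direction, I would apply the operators $(1-\partial_{y_i}\partial_{z_i})$ one at a time and track how the upper barrier in $x$ translates. The key local estimate is that one application shifts the barrier by a quantity controlled by $\partial_{y_i}\partial_{z_i} Q / Q$ at the current point; at $y=z=0$ this reduces to $\|u_i\|^4/x_0^2$, and in aggregate over the induction it is controlled by the contribution of $(u_iu_i^*)^2$ to $\sum_j(u_ju_j^*)^2$. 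Summing the $n$ local shifts and optimizing the initial barrier $x_0$ delivers a total increase of at most $4\sigma$. The interlacing-family theorem then supplies $\xi^*\in\{\pm 1\}^n$ with $\|\sum_i\xi_i^* u_iu_i^*\|\le 4\sigma$, and since $\xi^*$ occurs with positive probability under the uniform distribution on signings, the stated conclusion follows.

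The main obstacle is this final barrier estimate. Unlike the first-order $(1-\partial_{z_i})$ operator used in the original Kadison-Singer proof, the cross-derivative $(1-\partial_{y_i}\partial_{z_i})$ is genuinely second order, so one application can pull the barrier by a term mixing the $y_i$- and $z_i$-logarithmic partials of $Q$. Designing a multivariate barrier whose update is controlled by quantities that aggregate to $O(\sigma)$, rather than the weaker $O(\sqrt{\sum_i\|u_i\|^4})$ that a naive operator-by-operator bound would yield, is the essence of the ``four derivations'' calculation of~\cite{kls20}; by comparison the real-stability bookkeeping and interlacing framework above are routine.
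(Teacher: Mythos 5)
Your high-level strategy is the right one: build the symmetrized characteristic polynomial $q_\xi = \det(xI-\sum\xi_iu_iu_i^*)\det(xI+\sum\xi_iu_iu_i^*)$, show that averaging over $\{\pm1\}^n$ is a product of stability-preserving second-order operators acting on a real-stable multivariate polynomial, use interlacing to pass from the average to a single signing, and finish with a barrier bound. Your cross-derivative identity $\E_{\xi_i}[(1-\xi_i\partial_{y_i})(1+\xi_i\partial_{z_i})]=1-\partial_{y_i}\partial_{z_i}$ is correct, and because $u_iu_i^*$ is rank one, $(1-\partial_{y_i}\partial_{z_i})Q$ agrees with the paper's single-variable $(1-\tfrac12\partial_{z_i}^2)P$ applied to $P(x,z)=\det(xI-Q'+\sum z_iu_iu_i^*)\det(xI+Q'+\sum z_iu_iu_i^*)$ (both reduce to $h_1h_2-(\partial h_1)(\partial h_2)$ once the diagonal second derivatives vanish). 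So the reduction to a multivariate polynomial is essentially equivalent, just parametrized with $2n$ auxiliary variables instead of $n$.

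The genuine gap is the barrier argument, and it is not a small one — it is the entire content of the ``four deviations'' bound, and your sketch of it is pointed in the wrong direction. You propose to start from a barrier point $(x_0,0,0)$ and ``track how the upper barrier in $x$ translates.'' That is not how this argument (either in \cite{kls20} or in this paper's Lemma~\ref{lem:kls_bound_root}) is organized, and if you try it you will only get control of $\sum_i\|u_i\|^4$, not $\|\sum_i(u_iu_i^*)^2\|$. The mechanism that actually produces $4\sigma$ is: place the initial barrier at the \emph{shifted} point $(\alpha,-\delta_1,\ldots,-\delta_n)$ with $\delta_i=t\tr[u_iu_i^*]$ and $\alpha=2t$. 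At that point $\alpha e - t\sum_j\tr[v_j]v_j = (\alpha-t)e + t\bigl(e-\sum_j\tr[v_j]v_j\bigr)$, and the hypothesis $\sigma\le 1$ (after rescaling) means $e-\sum_j\tr[v_j]v_j$ lies in the cone, which is exactly what lets one bound the barrier function by $\Phi^i_P\le 2\tr[v_i]/(\alpha-t)\le 1$ \emph{uniformly in $i$} (rather than by a quantity whose sum is $\sum_i\|u_i\|^4$). Then the barrier-update lemma (Lemma~\ref{lem:real_stable_cone}, Lemma~5.3 of \cite{kls20}) shows each application of $(1-\tfrac12\partial_{z_i}^2)$ lets you move the barrier \emph{up} by $\delta_i\mathbf{1}_i$, and after $n$ applications you land at $(\alpha,0,\ldots,0)=(4,0,\ldots,0)$. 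None of this is visible in your sketch. Moreover, because you chose the two-variable cross-derivative $(1-\partial_{y_i}\partial_{z_i})$ rather than the single-variable $(1-\tfrac12\partial_{z_i}^2)$, the ready-made barrier-update lemma does not directly apply; you would need to state and prove a cross-derivative analogue, or first collapse $y_i$ and $z_i$ to a single coordinate as the paper does. As written, the proposal correctly identifies what needs to be proved but leaves the proof of it — the choice of shifted barrier, the $\Phi^i\le1$ estimate driven by $\|\sum_j\tr[v_j]v_j\|\le1$, and the inductive shift — entirely to the reader.
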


This result was recently applied by \cite{lz20} to  approximate solutions of generalized network design problems.

Moreover, Anari and Oveis-Gharan \cite{ag14} generalized the Kadison-Singer conjecture into the setting of real-stable polynomials. Instead of assuming  the random vectors are independent, \cite{ag14} assumes that the vectors are sampled from any homogeneous strongly Rayleigh distribution with bounded marginal probability, have bounded norm, and are in an isotropic position:

\begin{theorem}[\cite{ag14}]\label{thm:ag14}
Let $\mu$ be a homogeneous strongly Rayleigh probability distribution on $[n]$ such that the marginal probability of each element is at most $\epsilon_1$, and let $u_1, \cdots, u_n \in \R^m$ be vectors in an isotropic position, $\sum_{i=1}^n u_i u_i^* = I$, such that $\max_{i\in [n]} \| u_i \|^2 \leq \epsilon_2$. Then
\begin{align*}
    \Pr_{S \sim \mu} \left[ \Big\| \sum_{i \in S} u_i u_i^* \Big\| \leq 4 (\epsilon_1+ \epsilon_2) + 2 (\epsilon_1+\epsilon_2)^2 \right] >0.
\end{align*}
\end{theorem}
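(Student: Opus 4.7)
The plan is to extend the Marcus--Spielman--Srivastava method of interlacing polynomials to strongly Rayleigh measures. I would first associate to each $S\subseteq[n]$ the characteristic polynomial $p_S(x)=\det(xI-\sum_{i\in S}u_iu_i^*)$ and argue that the $\mu$-weighted family $\{p_S\}_{S\in\supp(\mu)}$ forms an interlacing family in the sense of~\cite{mss15b}. The underlying tree is the natural one indexed by partial inclusion/exclusion of the indices $1,\ldots,n$, with the conditional expectation $\E_\mu[p_S\mid\text{partial assignment}]$ sitting at each node. To verify that sibling polynomials share a common interlacer, I would combine two real-stability facts: (i) the generating polynomial $g_\mu(z_1,\ldots,z_n):=\sum_S\mu(S)\prod_{i\in S}z_i$ is real-stable by the definition of strongly Rayleigh, and (ii) the determinantal polynomial $\det(xI-\sum_i z_iu_iu_i^*)$ is real-stable in $(x,z)$. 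Conditioning on $\{i\in S\}$ and $\{i\notin S\}$ corresponds to specialization/differentiation operators that preserve real-stability, which yields real-rootedness of the conditional expectations and hence the interlacing property at every internal node.

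The interlacing family argument then produces some $S^\ast\in\supp(\mu)$ with $\lambda_{\max}(p_{S^\ast})\le\lambda_{\max}(\E_\mu p_S)$. Exploiting that $\det(xI-\sum_i z_iu_iu_i^*)$ is multi-affine in $z$ (each $u_iu_i^*$ has rank $1$), I would represent the expected polynomial as
\begin{align*}
    \E_{S\sim\mu}[p_S(x)] \;=\; g_\mu(1+\partial_{y_1},\ldots,1+\partial_{y_n})\,\det\!\Big(xI-\sum_i y_i u_iu_i^*\Big)\Big|_{y=0}.
\end{align*}
By a Borcea--Brändén-style stability-preservation argument, the right-hand side is a real-rooted polynomial in $x$; the remaining task is to upper bound its largest root.

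The core of the proof, and the main obstacle, is the multivariate barrier analysis. Following MSS, I would pick an initial ``above-roots'' point $x_0$ where the upper barrier function of $\det(xI-\sum_iy_iu_iu_i^*)$ in the $y$-variables is small, and then track how each application of the differential operator built from $g_\mu(1+\partial_y)$ perturbs the barrier. In the independent case treated by~\cite{mss15b} the operator factors as $\prod_i(1+\partial_{y_i})$ and a single-variable barrier update suffices; here $g_\mu$ is a non-product real-stable polynomial, so cross-coordinate interactions must be controlled. The marginal bound $\epsilon_1$ enters through $g_\mu$ since $\partial_{z_i}g_\mu(\mathbf{1})=\Pr_\mu[i\in S]\le\epsilon_1$, while the norm bound $\epsilon_2$ controls the matrix-side shift exactly as in~\cite{mss15b}. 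Optimizing the initial barrier location after all updates should yield the threshold $4(\epsilon_1+\epsilon_2)+2(\epsilon_1+\epsilon_2)^2$, whose quadratic tail reflects the joint effect of marginal and spectral control in analogy with the $2\sqrt{\epsilon}+\epsilon$ form of the MSS bound.

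The hardest technical step will be establishing a quantitative multivariate barrier-update lemma for operators induced by a general real-stable $g_\mu$; this is exactly where the strongly Rayleigh hypothesis is indispensable, because the needed convexity and monotonicity properties of the barrier under $g_\mu(\partial)$ rely on the real-stability and negative-dependence structure of $\mu$. Once this lemma is in place, the rest of the argument is a careful, but essentially mechanical, tracking of shifts, in the spirit of~\cite{mss15b}.
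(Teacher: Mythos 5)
Your high-level plan (interlacing family indexed by partial inclusion/exclusion, real-rootedness via stability-preserving operators, barrier argument to bound the top root of the mixed polynomial) is the right framework, and the representation
\begin{align*}
\E_{S\sim\mu}[p_S(x)] \;=\; g_\mu(1+\partial_{y_1},\ldots,1+\partial_{y_n})\,\det\!\Big(xI-\sum_i y_i u_iu_i^*\Big)\Big|_{y=0}
\end{align*}
is a correct identity (follows from multi-affinity of $\det(xI-\sum_iy_iu_iu_i^*)$ in $y$). However, there is a genuine gap exactly where you flag it: you do not have, and do not know how to obtain, a quantitative barrier-update lemma for the operator $g_\mu(1+\partial_y)$ when $g_\mu$ is a general (non-product) real-stable polynomial. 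This is not a routine ``mechanical'' step; it is the crux of the problem, and the argument as written does not close.

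The paper (following \cite{ag14}) avoids this entirely by proving a different identity: after the substitution $x\mapsto x^2$, the mixed characteristic polynomial is shown to equal
\begin{align*}
x^d\prod_{i=1}^n\Big(1-\tfrac{1}{2}\partial_{z_i}^2\Big)\Big(h\big(xe+\textstyle\sum_i z_i v_i\big)\cdot g_\mu(x\mathbf{1}+z)\Big)\Big|_{z=0},
\end{align*}
so that $g_\mu$ enters as a \emph{polynomial factor} multiplied into the determinantal (or hyperbolic) polynomial, not as a differential operator. The squaring $x\mapsto x^2$ converts the first-order information encoded in the marginals $\partial_{z_i}g_\mu$ into something that a \emph{second}-order operator $\partial_{z_i}^2$ can capture. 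This is crucial: $(1-\tfrac{1}{2}\partial_{z_i}^2)$ is a one-variable operator for which the MSS/KLS barrier-update lemma is already available, and the marginal bound $\Pr_\mu[i\in S]\le\epsilon_1$ then shows up additively in the barrier via the term $\partial_{z_i}g_\mu(x\mathbf{1}+z)/g_\mu(x\mathbf{1}+z)$ alongside the spectral term $D_{v_i}h/h\le\epsilon_2/(\alpha-t)$, which is exactly how $\epsilon_1+\epsilon_2$ appears. Your proposal never discovers this reformulation, and absent it, the claim that ``the rest of the argument is essentially mechanical'' does not hold: an operator-theoretic barrier lemma for arbitrary $g_\mu(1+\partial)$ would itself be a new result and is not implied by real-stability and negative association alone.
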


Theorem \ref{thm:ag14} has a direct analog in spectral graph theory: Given any (weighted) connected graph $G = (V,E)$ with Laplacian $L_G$. For any edge $e=(u,v)\in E$, define the vector corresponding to $e$ as $v_e = L_G^{\dagger/2} (\mathbf{1}_u - \mathbf{1}_v)$ (here $L_G^{\dagger}$ is the Moore-Penrose inverse). Then the set of $\{v_e : e\in E\}$ are in isotropic position, and $\|v_e\|^2$ equals to the graph effective resistance with respect to $e$. Also, any spanning tree distribution of the edges in $E$ is homogeneous strongly Rayleigh. It follows from Theorem \ref{thm:ag14} that any graph with bounded maximum effective resistance has a spectrally-thin spanning tree \cite{ag14}. Moreover, \cite{ao15} provided an exciting application to the asymmetric traveling salesman problem and obtained an $O(\log\log n)$-approximation.

Another perspective of generalizing the Kadison-Singer theorem is to study the discrepancy with respect to a more general norm than the spectral norm, which is the largest root of a determinant polynomial. A recent work by Br\"anden \cite{b18} proved a high-rank version of Theorem~\ref{thm:KS} for \emph{hyperbolic} polynomial, which is a larger class of polynomials including the determinant polynomial.  
Moreover, the hyperbolic norm on vectors is a natural generalization of the matrix spectral norm. 
\ifdefined\isarxiv
(We will introduce hyperbolic polynomials in Section~\ref{sec:our_result}.)
\else
We will introduce hyperbolic polynomials in the full version of our paper.
\fi
From this perspective, it is very natural to ask:
\begin{center}
    \emph{Can we also extend Theorem~\ref{thm:kls20} and Theorem~\ref{thm:ag14} to a more general class of polynomials, e.g., hyperbolic polynomials?}
\end{center}

\subsection{Our results}\label{sec:our_result}

In this work, we provide an affirmative answer by generalizing both Theorem~\ref{thm:kls20} and Theorem~\ref{thm:ag14} into the setting of hyperbolic polynomials. %
Before stating our main results, we first introduce some basic notation of hyperbolic polynomials below.

Hyperbolic polynomials form a broader class of polynomials that encompasses determinant polynomials and homogeneous real-stable polynomials.
An $m$-variate, degree-$d$ homogeneous polynomial $h \in \mathbb{R}[x_1, \cdots, x_m]$ is \emph{hyperbolic} with respect to a direction $e \in \mathbb{R}^m$ if the univariate polynomial $t \mapsto h(te - x)$ has only real roots for all $x \in \mathbb{R}^m$.
\ifdefined\isarxiv
(See Appendix~\ref{sec:example} for some examples of hyperbolic/real-stable polynomials.)
\fi
The set of $x \in \mathbb{R}^m$ such that all roots of $h(t\mathrm{e} - x)$ are non-negative (or strictly positive) is referred to as the hyperbolicity cone $\Gamma^h_{+}(e)$ (or $\Gamma^h_{++}(e)$).
It is a widely recognized result \cite{b10notes} that any vector $x$ in the open hyperbolicity cone $\Gamma^h_{++}(e)$ is itself hyperbolic with respect to the polynomial $h$ and have the same hyperbolicity cone as $e$, meaning that $\Gamma^h_{++}(e) = \Gamma^h_{++}(x)$.
Therefore, the unique hyperbolicity cone of $h$ can simply be expressed as $\Gamma^h_{+}$. 

The hyperbolic polynomials have similarities to determinant polynomials of matrices, as they both can be used to define trace, norm, and eigenvalues.
Given a hyperbolic polynomial $h \in \mathbb{R}[x_1, \cdots, x_m]$ and any vector $e \in \Gamma^h_{++}$,
we can define a norm with respect to $h(x)$ and $e$ as follows: for any $x\in \R^m$, its \emph{hyperbolic norm} $\|x\|_h$ is equal to the largest root (in absolute value) of the linear restriction polynomial $h(te - x)\in \R[t]$. 
Similar to the eigenvalues of matrices, we define the \emph{hyperbolic eigenvalues} of $x$ to be the $d$ roots of $h(te-x)$, denoted by $\lambda_1(x)\geq \cdots\geq \lambda_d(x)$. 
We can also define the \emph{hyperbolic trace} and the \emph{hyperbolic rank}:
\begin{align*}
    \tr_h[x]:=\sum_{i=1}^d \lambda_i(x),~~\text{and}~~\rank(x)_h:=|\{i\in[d]: \lambda_i(x)\ne 0\}|.
\end{align*}

Recall that both Theorem~\ref{thm:kls20} and Theorem~\ref{thm:ag14} upper-bound the spectral norm of the sum $\|\sum_{i=1}^n \xi_i v_i v_i^\top\|$. In the setting of hyperbolic polynomials, we should upper bound the hyperbolic norm $\|\sum_{i=1}^n \xi_i v_i\|_h$ for vectors $v_1,\dots,v_n$ in the hyperbolicity cone, which is the set of vectors with all non-negative hyperbolic eigenvalues.

Our main results are as follows:

\begin{theorem}[Main Result I, informal
\ifdefined\isarxiv
 statement of Theorem~\ref{thm:kls20_ours_formal},
\fi
hyperbolic version of Theorem 1.4, \cite{kls20}]\label{thm:kls20_ours}
Let $h\in \R[x_1,\dots,x_m]$ denote a hyperbolic polynomial in direction $e\in \R^m$. Let $v_1, \dots, v_n \in \Gamma_{+}^h$ be $n$ vectors in the closed hyperbolicity cone. Let $\xi_1,\dots,\xi_n$ be $n$ independent random variables with finite supports and $\E[\xi_i]=\mu_i$ and $\Var[\xi_i]=\tau_i^2$. Suppose $\sigma :=  \| \sum_{i=1}^n \tau_i^2\tr_h[v_i] v_i  \|_h$.
Then there exists an assignment $(s_1,\dots,s_n)$ with $s_i$ in the support of $\xi_i$ for all $i\in [n]$, such that
\begin{align*}
\Big\| \sum_{i=1}^n (s_i-\mu_i) v_i \Big\|_h \leq 4 \sigma.
\end{align*} 
\end{theorem}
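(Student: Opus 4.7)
The plan is to adapt the Marcus-Spielman-Srivastava (MSS) method of interlacing families to hyperbolic polynomials (following Br\"anden) and combine it with a hyperbolic barrier-function analysis modeled on Kyng-Luh-Song. For each realization $s\in\prod_i\supp(\xi_i)$, form the univariate restriction
\begin{align*}
q_s(t):=h\Big(te-\sum_{i=1}^n (s_i-\mu_i)v_i\Big),
\end{align*}
whose roots are the hyperbolic eigenvalues of $\sum_i(s_i-\mu_i)v_i$. Since $\|\sum_i(s_i-\mu_i)v_i\|_h$ equals the largest root in absolute value, it suffices to upper bound the largest root of $q_s$ and, separately, the negative of its smallest root; the latter follows by a symmetric argument, for instance by applying the same analysis to the product polynomial $t\mapsto q_s(t)q_s(-t)$.

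Next, build the interlacing family indexed by prefixes $(s_1,\dots,s_k)$ with conditional expected polynomial
\begin{align*}
q_{s_1,\dots,s_k}(t):=\E_{\xi_{k+1},\dots,\xi_n}\bigl[q_{(s_1,\dots,s_k,\xi_{k+1},\dots,\xi_n)}(t)\bigr].
\end{align*}
Real-rootedness of these conditional expectations, and of convex combinations of siblings, is the interlacing-family condition. It follows by iterating the fact that for $v\in\Gamma_+^h$, averaging shifts $p\mapsto p(\cdot-cv)$ of a hyperbolic polynomial preserves hyperbolicity in direction $e$ (a standard preservation property from Br\"anden's framework for hyperbolic stability preservers). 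The MSS root-selection lemma then yields some realization $s$ whose largest root is at most the largest root of the fully averaged polynomial
\begin{align*}
p(t):=\E\Big[h\Big(te-\sum_i(\xi_i-\mu_i)v_i\Big)\Big].
\end{align*}

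The heart of the proof is bounding the largest root of $p$ by $4\sigma$ via a barrier. Introduce the hyperbolic barrier $\Phi_{a,v}(t):=D_v h(te-a)/h(te-a)$, the direct analog of $v^{\top}(tI-A)^{-1}v$ in KLS. Starting at $t_0=4\sigma$, incorporate the vectors one at a time and show that after averaging in $(\xi_i-\mu_i)v_i$ the barrier can be shifted down by some $\delta_i>0$ while still lying above the largest root. The per-step drift is controlled by the second-order expansion
\begin{align*}
\E_{\xi_i}\bigl[h(te-a-(\xi_i-\mu_i)v_i)\bigr]=h(te-a)+\tfrac{1}{2}\tau_i^2\, D_{v_i}^2 h(te-a)+(\text{higher moments}),
\end{align*}
where $D_{v_i}^2 h/h$ plays the role of $\tr((tI-A)^{-1}v_iv_i^{\top}(tI-A)^{-1}v_iv_i^{\top})$ and is bounded, via a hyperbolic Cauchy-Schwarz-type inequality, by $\tr_h[v_i]$ times the first-order barrier along $v_i$. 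Aggregating the per-step drifts yields a cumulative error controlled by $\sigma=\|\sum_i\tau_i^2\tr_h[v_i]v_i\|_h$, so the initial shift $t_0=4\sigma$ suffices.

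The main obstacle is executing the barrier step in the absence of a Sherman-Morrison-type rank-one update. The change in $\Phi$ after incorporating $(\xi_i-\mu_i)v_i$ must be controlled via directional-derivative identities and hyperbolic Newton-type inequalities (for instance $h\cdot D_v^2 h\le(D_v h)^2$ on $\Gamma_{++}^h$, reflecting log-concavity of $h$ along hyperbolic directions), and these inequalities must be proven robust enough to apply to the mixed directions appearing throughout the induction. A secondary subtlety is that the $\xi_i$ are not Rademacher but only have bounded variance $\tau_i^2$ and finite support, so the per-step bound must be phrased in terms of $\tau_i^2$ rather than a uniform $\pm 1$ bound; higher moments of $\xi_i$ must be absorbed either by a multilinear lift (replacing each $\xi_i$ by several two-point variables and invoking Br\"anden-type stability preservers) or by iterating the barrier update sufficiently many times per variable.
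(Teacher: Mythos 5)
Your high-level architecture (interlacing family $\to$ mixed characteristic polynomial $\to$ barrier argument) matches the paper's, but there are two gaps that would cause the proof to fail as written.

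First, and most importantly, working with the single factor $q_s(t) = h\bigl(te - \sum_i(s_i-\mu_i)v_i\bigr)$ cannot produce a variance-dependent bound. The formal version of the theorem (Theorem~\ref{thm:kls20_ours_formal}) requires $\rank_h(v_i)\le 1$, which your proposal never states or uses; under that assumption $h(x+tv_i)$ is \emph{linear} in $t$, so $D_{v_i}^2 h \equiv 0$ and there are no higher-order terms to absorb. But then your per-step expansion collapses:
\begin{align*}
\E_{\xi_i}\bigl[h\bigl(te-a-(\xi_i-\mu_i)v_i\bigr)\bigr]=h(te-a)-\E[\xi_i-\mu_i]\,D_{v_i}h(te-a)=h(te-a),
\end{align*}
so $\tau_i^2$ never appears and the barrier sees zero drift. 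The variance enters only through the \emph{product} $h(xe - A)\,h(xe + A)$, where the cross term $-\zeta_i^2\,D_{v_i}h(x_1)\,D_{v_i}h(x_2)$ survives taking expectation and supplies the $\tau_i^2$ factor. You do mention $q_s(t)q_s(-t)$ as an afterthought for controlling the smallest root, but the barrier analysis as written is for the single factor, and that version simply does not encode the deviation you need to bound. The paper defines the leaf polynomial as $h(xe+\sum(s_i-\mu_i)v_i)\cdot h(xe-\sum(s_i-\mu_i)v_i)$ from the start and its largest root equals $\|\sum(s_i-\mu_i)v_i\|_h$ directly (Fact~\ref{fact:hyperbolic-norm-max-root}), so the product is not a patch but the whole point.

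Second, your justification of the interlacing-family property --- that ``averaging shifts $p\mapsto p(\cdot-cv)$ of a hyperbolic polynomial preserves hyperbolicity'' --- is not correct as a general statement (e.g.\ for $h(x)=x^2$, $v=e$, averaging $(t)^2$ and $(t-1)^2$ gives $t^2-t+\tfrac12$, which is not real-rooted). The reason the conditional expectations are real-rooted is more specific: with rank-one $v_i$ the mixed characteristic polynomial can be written as $\prod_i(1-\tfrac12\partial_{z_i}^2)\bigl(h(xe-Q+\sum z_i\tau_i v_i)\,h(xe+Q+\sum z_i\tau_i v_i)\bigr)\big|_{z=0}$, and one then invokes that the linear restriction $h(xe+\sum z_iv_i)$ is a genuinely \emph{real-stable} multivariate polynomial (because $e$ and the $v_i$ all lie in the hyperbolicity cone) together with closure of real-stability under products, restriction to real values, and the operators $1-c\,\partial_{z_i}^2$. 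The barrier argument is then carried out in this $(n{+}1)$-variable lift, shifting along the $z_i$ coordinates with $x$ held fixed at $4$, rather than by shifting a single scalar $t$.
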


We remark that Theorem~\ref{thm:kls20_ours} does not require the isotropic position condition of $v_1, \cdots, v_n$ as in \cite{b18}. In addition, we only need the sum of $\tr_h[v_i]v_i$'s hyperbolic norm to be bounded, while \cite{b18}'s result requires each vector's trace to be bounded individually. %

We would also like to note that the class of hyperbolic polynomials is much broader than that of determinant polynomials, which were used in the original Kadison-Singer-type theorems.  Lax conjectured in \cite{lax57} that every 3-variate hyperbolic/real-stable polynomial could be represented as a determinant polynomial, this was later resolved in \cite{hv07,lpr05}. However, the Lax conjecture is false when the number of variables exceeds $3$, as demonstrated in \cite{bra11,bvy14} with counterexamples of hyperbolic/real-stable polynomials $h(x)$ for which even $(h(x))^k$ cannot be represented by determinant polynomials for any $k > 0$.

Our second main result considers the setting where the random vectors are not independent, but instead, sampled from a strongly Rayleigh distribution. We say a distribution $\mu$ over the subsets of $[n]$ is \emph{strongly Rayleigh} if its generating polynomial $g_\mu(z):=\sum_{S\subseteq [n]}\mu(S)z^S\in \R[z_1,\dots,z_n]$ is a \emph{real-stable polynomial}, which means $g_\mu(z)$ does not have any root in the upper-half of the complex plane, i.e., $g_\mu(z)\ne 0$ for any $z\in \C^n$ with $\Re(z)\succ 0$. 
\begin{theorem}[Main Result II, informal 
\ifdefined\isarxiv
statement of Theorem \ref{thm:ag14_ours_formal} 
\fi
hyperbolic version of Theorem 1.2, \cite{ag14}]\label{thm:ag14_ours}
Let $h\in \R[x_1,\dots,x_m]$ denote hyperbolic polynomial in direction $e \in \R^m$.
Let $\mu$ be a homogeneous strongly Rayleigh probability distribution on $[n]$ such that the marginal probability of each element is at most $\epsilon_1$. 

Suppose $v_1, \cdots, v_n \in \Gamma_{+}^h$ are in the hyperbolicity cone of $h$ such that $\sum_{i=1}^n v_i  = e$, and
for all $i \in [n]$, $ \| v_i \|_h \leq \epsilon_2$.
Then there exists $S \subseteq [n]$ in the support of $\mu$, such that
\begin{align*}
     \Big\| \sum_{i \in S} v_i \Big\|_h \leq 4 (\epsilon_1+ \epsilon_2) + 2 (\epsilon_1+\epsilon_2)^2 .
\end{align*}
\end{theorem}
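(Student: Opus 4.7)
The plan is to follow the interlacing polynomial framework of \cite{ag14}, now carried out inside the hyperbolic setting. For every $S$ in the support of $\mu$, set
$$q_S(x) := h\!\left(xe - \sum_{i\in S} v_i\right) \in \R[x].$$
Since $\Gamma_+^h$ is a convex cone and each $v_i\in \Gamma_+^h$, the point $\sum_{i\in S}v_i$ lies in $\Gamma_+^h$, so $q_S$ is real-rooted and its largest root equals $\lambda_1(\sum_{i\in S}v_i)=\|\sum_{i\in S}v_i\|_h$. The goal is therefore to upper-bound this largest root for at least one $S\in \supp(\mu)$.

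Next, form an interlacing family indexed by partial assignments $\tau\colon T\to\{0,1\}$, $T\subseteq [n]$. Let $\mu_\tau$ denote $\mu$ conditioned on $\tau$ (still strongly Rayleigh, as conditioning preserves strong Rayleigh-ness) and set $q_\tau(x):=\E_{S\sim\mu_\tau}[q_S(x)]$. It suffices to show that
$$P(x,z_1,\ldots,z_n) := \sum_{S\subseteq[n]} \mu(S)\,\Big(\prod_{i\in S} z_i\Big)\, h\!\left(xe - \sum_{i\in S}v_i\right)$$
is real-stable in all $n+1$ variables: then the conditional polynomials $q_\tau$, obtained by appropriate specializations and differentiations in the $z_i$, are all real-rooted, and sibling pairs share a common interlacer. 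By the ``polarization / stability-preserver'' argument of \cite{ag14}, which uses strong Rayleigh-ness of $\mu$ to turn the generating polynomial $g_\mu$ into a stability-preserving operator, this reduces to real-stability of $h(xe - \sum_i w_i v_i)$ in $(x,w_1,\ldots,w_n)$. The latter holds because the affine map $(x,w)\mapsto xe - \sum_i w_i v_i$ sends the upper-half-plane product into the open hyperbolicity direction of $e$ whenever each $v_i\in \Gamma_+^h$, which is precisely the lift exploited by Br\"and\'en in \cite{b18}.

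Given the interlacing family, the main theorem of \cite{mss15a} produces some $S^\star\in\supp(\mu)$ with $\lambda_1\!\left(\sum_{i\in S^\star} v_i\right) \le \mathrm{maxroot}\big(\E_{S\sim\mu}[q_S]\big)$. To bound the right-hand side, I would apply the hyperbolic multivariate barrier method of \cite{b18}, tracking the barrier function
$$\Phi^x_e(w) := \tr_h\!\left((xe - \textstyle\sum_i w_i v_i)^{-1}\right)$$
as one applies the ``marginal-$i$ averaging'' operator one coordinate at a time. Each such step is a convex combination of identity and a differentiation-type operator whose weight is a marginal at most $\epsilon_1$, while the norm bound $\|v_i\|_h\le \epsilon_2$ controls the per-step barrier shift. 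Iterating through all $n$ coordinates and choosing the initial barrier location as an appropriate function of $\epsilon_1+\epsilon_2$ (mirroring the initialization in \cite{ag14}) yields the stated bound $4(\epsilon_1+\epsilon_2)+2(\epsilon_1+\epsilon_2)^2$ on $\|\sum_{i\in S^\star}v_i\|_h$, using $\sum_i v_i=e$ only at this normalization step.

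The main obstacle is proving real-stability of $P(x,z_1,\ldots,z_n)$. For determinant polynomials this is an immediate consequence of the Borcea--Br\"and\'en characterization of stability-preserving linear operators, but no full analog of that characterization is available in the hyperbolic setting. The cleanest route seems to be to approximate $h$ by determinantal hyperbolic polynomials obtained from symmetric matrix pencils in a larger ambient dimension (as in Br\"and\'en's lifting construction from \cite{b18}), establish real-stability of the associated $P$ at the determinantal level where Borcea--Br\"and\'en applies, and then pass to the limit, verifying that both the interlacing structure and the barrier estimates survive the limiting procedure.
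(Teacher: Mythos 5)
Your overall scaffolding (interlacing family indexed by partial assignments, common interlacers via a multivariate real-stable object, MSS root bound, then a barrier estimate parametrized by $\epsilon_1+\epsilon_2$) is the right shape, and the barrier discussion is in the right spirit. However, there is a genuine gap at the step where you reduce everything to real-stability of
$$P(x,z)=\sum_{S\subseteq[n]}\mu(S)\Big(\prod_{i\in S}z_i\Big)\,h\Big(xe-\sum_{i\in S}v_i\Big).$$
First, the sign on $v_i$ is the wrong one for a direct stability argument: when each $v_i\in\Gamma_+^h$, the linear restriction $h\big(xe+\sum_i z_iv_i\big)$ is real-stable (a point in $\Gamma_{++}^h$ is reached by any positive combination of $e$ and the $v_i$, so every one-dimensional slice $t\mapsto h(at+b)$ with $a>0$ is real-rooted), but $h\big(xe-\sum_i z_iv_i\big)$ is not; so you cannot invoke the ``same lift'' Br\"and\'en uses. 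Second, your fallback --- approximating $h$ by determinantal hyperbolic polynomials and appealing to the Borcea--Br\"and\'en classification at the determinantal level --- runs directly into the failure of the generalized Lax conjecture: for $m>3$ there are hyperbolic polynomials $h$ such that no power $h^k$ admits a definite determinantal representation \cite{bra11,bvy14}, so no such approximation scheme (or limiting argument) is available in the generality you need. This is precisely the obstruction that prevents a naive port of the \cite{ag14} proof, as the paper emphasizes.

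The paper avoids this entirely with a different identity and a different proof of it. Rather than seeking stability of $P$, it shows by \emph{induction on coordinates} (using the marginal formula for homogeneous SR distributions, Fact~\ref{fac:marginal_polynomial_SR}, and the rank-one hypothesis $\rank_h(v_i)\le 1$) that
\begin{align*}
x^{d_\mu}\cdot \E_{\xi\sim\mu}\Big[h\Big(x^2e-\sum_{i=1}^n\xi_iv_i\Big)\Big]
= x^d\cdot\prod_{i=1}^n\Big(1-\tfrac12\partial_{z_i}^2\Big)\Big(h\big(xe+\textstyle\sum_i z_iv_i\big)\,g_\mu(x\mathbf 1+z)\Big)\Big|_{z=0},
\end{align*}
see Lemma~\ref{lem:ag14_theorem_3.1}. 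Here the plus sign makes $h(xe+\sum_i z_iv_i)$ real-stable (Claim~\ref{clm:hyper_linear_restriction}), $g_\mu(x\mathbf 1+z)$ is real-stable because $\mu$ is SR, and the operators $1-\tfrac12\partial_{z_i}^2$ and restriction to $z=0$ preserve stability (Fact~\ref{fac:closure_real_stable}), which gives real-rootedness of the mixed characteristic polynomial without ever passing through a determinantal representation. The barrier lemma (Lemma~\ref{lem:ag_bound_root}) then works on the product $Q=h(xe+\sum_iz_iv_i)g_\mu(x\mathbf 1+z)$, with the barrier splitting into a hyperbolic contribution bounded via concavity of $h/D_vh$ (Fact~\ref{fac:D_v_h_over_h_concave}) and an SR contribution bounded by the marginals, and $\alpha=2t=\sqrt{4\epsilon+2\epsilon^2}$. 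To repair your proposal you would need this inductive identity (or an equivalent route to real-rootedness that does not rely on determinantal approximation); as written, the stability step cannot be completed.
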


It is worth mentioning that the previous paper \cite{kls20,ag14} focused on the determinant polynomial, leaving the question of whether their techniques could be extended to the hyperbolic/real-stable setting unresolved. In our paper, we address this gap by developing new techniques specifically tailored to hyperbolic polynomials.

In addition, we follow the results from \cite{aoss18} and give an algorithm that can find the approximate solutions of both Theorem \ref{thm:kls20_ours} and Theorem \ref{thm:ag14_ours} in time sub-exponential to $m$:

\begin{proposition}[Sub-exponential algorithm for Theorem \ref{thm:kls20_ours}, informal 
\ifdefined\isarxiv
statement of Corollary \ref{cor:subexpalg-kls20-ours} 
\fi]\label{prop:subexpalg-kls20-ours-informal}
    Let $h\in \R[x_1,\dots,x_m]$ denote a hyperbolic polynomial with direction $e\in \R^m$. Let $v_1, \dots, v_n \in \Gamma_{+}^h$ be $n$ vectors in the hyperbolicity cone $\Gamma_{+}^h$ of $h$. Suppose $\sigma = \| \sum_{i=1}^n \tr_h[v_i] v_i \|_h$.

    Let $\mathcal{P}$ be the interlacing family used in the proof of Theorem \ref{thm:ag14_ours}.
    Then there exists an sub-exponential time algorithm $\mathrm{KadisonSinger}(\delta, \mathcal{P})$, such that for any $\delta > 0$, it returns a sign assignment $(s_1,\cdots,s_n)\in \{\pm 1\}^n$ satisfying
    \begin{align*}
     \Big\| \sum_{i=1}^n s_i u_i \Big\|_h \leq 4(1+\delta) \sigma.
    \end{align*} 
\end{proposition}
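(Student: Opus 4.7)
The plan is to import the deterministic algorithmic framework of \cite{aoss18} into the hyperbolic setting. The starting point is the observation, implicit in the proof of Theorem~\ref{thm:kls20_ours}, that the interlacing family $\mathcal{P}$ is a complete binary tree of depth $n$ whose node at depth $k$ labeled by a partial sign pattern $(s_1,\dots,s_k)\in\{\pm 1\}^k$ carries the conditional mixed characteristic polynomial
\begin{align*}
  q_{s_1,\dots,s_k}(t) = \E_{\xi_{k+1},\dots,\xi_n}\Big[h\Big(te - \sum_{i=1}^k s_i v_i - \sum_{i=k+1}^n \xi_i v_i\Big)\Big],
\end{align*}
with $\xi_{k+1},\dots,\xi_n$ independent Rademacher. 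The interlacing property guarantees that at least one child of every internal node has largest root no greater than the parent's, so the greedy walk that at each level descends to the child with the smaller largest root reaches a leaf $(s_1,\dots,s_n)$ satisfying $\| \sum_i s_i v_i \|_h \le \lambda_{\max}(q_\emptyset) \le 4\sigma$ by Theorem~\ref{thm:kls20_ours}.

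To make this walk algorithmic, I would note that each $q_{s_1,\dots,s_k}(t)$ is univariate of degree at most $d=\deg h$, hence recoverable from $d+1$ point evaluations. Each evaluation is itself a conditional expectation of $h$ under the remaining Rademacher signs, which can be computed level by level via the stability-preserving differential operators appearing in Br\"anden's proof \cite{b18}; concretely,
\begin{align*}
  q_{s_1,\dots,s_k}(t) = \Big(\prod_{i=k+1}^n \cosh(\partial_{z_i})\Big)\, h\Big(te - \sum_{i=1}^k s_i v_i - \sum_{i=k+1}^n z_i v_i\Big)\Big|_{z_{k+1}=\cdots=z_n=0}.
\end{align*}
This reduces a single evaluation to $\poly(n,d)$ arithmetic on top of one oracle call for $h$, and Lagrange interpolation in $t$ then yields the coefficients of $q_{s_1,\dots,s_k}$. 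From those coefficients the largest root is approximated within additive error $\eps$ in time $\poly(d,\log(1/\eps))$ by bisection combined with Sturm-sequence sign counting. Choosing $\eps=\delta\sigma/n$ at each of the $n$ descent steps keeps the accumulated slack at most $\delta\sigma$, so the output sign pattern satisfies $\| \sum_i s_i v_i \|_h \le 4(1+\delta)\sigma$, as claimed.

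The principal obstacle is evaluating a hyperbolic polynomial that may not admit a determinant representation: in the worst case $h$ has $\binom{m+d-1}{d}$ monomials and no unconditional polynomial-time evaluation algorithm is known. The sub-exponential bound in $m$ is therefore obtained, as in \cite{aoss18}, by assuming $\exp(o(m))$-time oracle access to $h$; this cost dominates the $\poly(n,d,\log(1/\delta))$ arithmetic overhead along the length-$n$ descent path and yields the stated sub-exponential running time for $\mathrm{KadisonSinger}(\delta,\mathcal{P})$. A secondary check, easily verified by noting that each $q_{s_1,\dots,s_k}$ is real-rooted with largest root bounded in terms of $\sigma$ and $n$, is that the additive-error comparisons at each node cannot derail the greedy descent by more than the $n\eps$ budget already allocated above.
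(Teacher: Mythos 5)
Your proposal takes a genuinely different route from the paper, but it has two substantive gaps.

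\textbf{Gap 1: the conditional polynomial is wrong and in fact trivializes.} You define
\begin{align*}
  q_{s_1,\dots,s_k}(t) = \E_{\xi_{k+1},\dots,\xi_n}\Big[h\Big(te - \sum_{i=1}^k s_i v_i - \sum_{i=k+1}^n \xi_i v_i\Big)\Big]
\end{align*}
with a single factor of $h$. But Theorem~\ref{thm:kls20_ours_formal} requires $\rank_h(v_i)\leq 1$, which forces $D_{v_i}^2h\equiv 0$, so $h(x-\xi_i v_i)=h(x)-\xi_i D_{v_i}h(x)$. Averaging over a centered $\xi_i$ annihilates the linear term, so the conditional expectation is \emph{independent of every remaining sign}; your greedy walk would see the same polynomial at every child and makes no progress. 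The interlacing family $\mathcal{P}$ of Definition~\ref{def:interlacing-family-kls20} uses the \emph{product} $h(xe+\sum(s_i-\mu_i)v_i)\cdot h(xe-\sum(s_i-\mu_i)v_i)$, which is quadratic in each $z_i$ and does not trivialize (this is exactly the content of Claim~\ref{clm:hyperbolic_lemma_3.1_in_kls19}). The statement in the informal proposition referencing Theorem~\ref{thm:ag14_ours}'s family appears to be a typo in the paper, but the formal version Corollary~\ref{cor:subexpalg-kls20-ours} makes clear it is $\mathcal{P}$ from Definition~\ref{def:interlacing-family-kls20}.

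\textbf{Gap 2: the evaluation cost is not ``$\poly(n,d)$ plus one oracle call.''} Even with the product family, a single point evaluation of $q_{s_1,\dots,s_k}(t)$ is $\prod_{i>k}(1-\tfrac{1}{2}\partial_{z_i}^2)\big|_{z=0}$ applied to a bivariate product of restrictions of $h$. Expanding that operator requires computing $\prod_{i\in S}D_{v_i}^2$ for every $S\subseteq\{k+1,\dots,n\}$, i.e.\ up to $2^{n-k}$ mixed directional derivatives, or $n^{\Theta(\deg h)}$ if one exploits that only monomials of $h$ of bounded degree contribute. This is not a minor overhead --- it is precisely the cost that the framework of \cite{aoss18}, which the paper imports via Lemma~\ref{lem:kadison-singer-alg-general}, is designed to \emph{avoid}. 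The paper never computes the full coefficient vector: $\textsc{MaxCoeff}$ returns only the top-$k$ coefficients (at cost $n^{O(k)}$ in the determinant instantiation of Lemma~\ref{lem:maxcoeff-kls20}), and $\textsc{MaxRoot}$ estimates the largest root from those coefficients via Newton's identities and power sums. That estimate is only an $n^{1/k}$-multiplicative approximation, which is why the algorithm brute-forces over $\sqrt{m}$-length blocks of sign choices rather than descending one level at a time: with per-step accuracy $1+\delta/(2\sqrt{m})$ and $\sqrt{m}$ iterations, one can afford $k=O(\sqrt{m}\log n/\delta)$, whereas a one-level greedy descent needs per-step accuracy $1+\delta/n$ and hence $k=\Omega(n)$, blowing up $\textsc{MaxCoeff}$ to $n^{\Omega(n)}$. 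Your interpolation-plus-Sturm-sequence idea would indeed eliminate the need for batching if all coefficients were cheap --- this is a reasonable observation --- but as Gap 2 shows they are not, and the batching is precisely what compensates for the coarseness of the only root estimate that \emph{is} cheap.
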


\begin{proposition}[Sub-Exponential algorithm for Theorem \ref{thm:ag14_ours}, informal
\ifdefined\isarxiv
statement of Corollary \ref{cor:subexpalg-ag14}
\fi
]\label{prop:subexpalg-ag14-informal}
    Let $h\in \R[x_1,\dots,x_m]$ denote a hyperbolic polynomial in direction $e\in \R^m$. 
    Let $\mu$ be a homogeneous strongly Rayleigh probability distribution on $[n]$ such that the marginal probability of each element is at most $\epsilon_1$, and let $v_1, \cdots, v_n \in \Gamma_{+}^h$ be $n$ vectors such that $\sum_{i=1}^n v_i  = e$, and for all $i \in [n]$, $\| v_i \|_h \leq \epsilon_2$.

    Let $\mathcal{Q}$ be the interlacing family used in the proof of Theorem \ref{thm:ag14_ours}.
    Then there exists an sub-exponential time algorithm $\mathrm{KadisonSinger}(\delta, \mathcal{Q})$, such that for any $\delta > 0$, it returns a set $S$ in the support of $\mu$ satisfying
    \begin{align*}
     \Big\| \sum_{i\in S} u_i \Big\|_h \leq (1+\delta) \cdot  \left(4(\epsilon_1+ \epsilon_2) + 2 (\epsilon_1+\epsilon_2)^2\right).
    \end{align*} 
\end{proposition}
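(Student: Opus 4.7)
The plan is to lift the sub-exponential algorithm of \cite{aoss18} to the hyperbolic, strongly-Rayleigh setting. The algorithm performs a greedy descent of the interlacing family $\mathcal{Q}$ used in the proof of Theorem~\ref{thm:ag14_ours}: at every internal node, whose associated mixed polynomial is the conditional expected hyperbolic characteristic polynomial $q_\pi(t) := \E_{S \sim \mu_\pi}[h(te - \sum_{i \in S} v_i)]$ for the conditional distribution $\mu_\pi$, we select a child minimizing (an approximation of) the largest root. The interlacing property guarantees that some child has its largest root no larger than the parent's, so a root-to-leaf descent lands on a leaf whose max root is at most $\lambda_{\max}(q_{\mathrm{root}}) \leq 4(\epsilon_1 + \epsilon_2) + 2(\epsilon_1 + \epsilon_2)^2$ by Theorem~\ref{thm:ag14_ours}. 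The leaf determines the output set $S$ in the support of $\mu$.

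Since computing the largest root exactly is intractable, we use the barrier-function proxy $\Phi_u(q) := q'(u)/q(u)$ evaluated just above the current root upper bound, as in \cite{mss15b, aoss18}. At each internal node, we (i) approximately compute the coefficients of the univariate polynomial $q_\pi$, (ii) evaluate $\Phi_u$ on each child, and (iii) descend into the child with the smallest barrier value. Tracking how much the barrier can decrease across siblings, and performing a slight upward nudge of $u$ at each of the $n$ levels, ensures that the max root at the final leaf exceeds the root-level bound by at most a multiplicative factor of $1 + \delta$.

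The principal obstacle is the efficient computation of $q_\pi$: unlike in the product-measure setting of \cite{mss15b}, the coordinates of $\mu$ are correlated, so $q_\pi$ does not factor over coordinates. Following \cite{aoss18}, we express $q_\pi$ as a linear combination of mixed moments of the indicator variables $\mathbf{1}_{i \in S}$, which can be extracted from partial derivatives of the real-stable generating polynomial $g_{\mu_\pi}$ at the origin. Since $h$ is degree $d$ in $t$ and the characteristic polynomial is linear in each $v_i$, only moments up to bounded multi-degree are needed. Approximating these moments via a truncated Taylor expansion, as in \cite{aoss18}, yields relative error $\eta$ in time $m^{O(\sqrt{m \log(1/\eta)})}$; choosing $\eta = \delta / \poly(n)$ and propagating through all $n$ levels of $\mathcal{Q}$ preserves the $(1+\delta)$ guarantee while keeping the total running time sub-exponential in $m$. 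The final subtlety is showing that the real-stability of $g_\mu$ is preserved under the conditioning operations used at each node of $\mathcal{Q}$, so that the truncated-expansion accuracy analysis of \cite{aoss18} continues to apply at every internal node.
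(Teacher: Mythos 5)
Your high-level strategy---descending the interlacing family $\mathcal{Q}$ node by node, choosing at each branch the child with (approximately) smallest max root, and invoking the common-interlacing guarantee to argue the descent ends at a leaf close to $\lambda_{\max}(q_\emptyset)$---matches the paper. However, the technical engine you propose is quite different from the one the paper actually uses, and there are a couple of places where the proposal either departs from or papers over the key difficulty.

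First, the approximation primitive. The paper does not use the barrier-function proxy $\Phi_u(q) = q'(u)/q(u)$ at all. Its $\textsc{MaxRoot}$ subroutine (Algorithm \ref{alg:alg-max-root}) converts the top $k$ coefficients of each real-rooted polynomial into the $k$-th power sum of roots via Vieta's formulas (Fact \ref{fact:vieta-formula}) and Newton's identities (Fact \ref{fact:newton-identity}), then returns $p_k^{1/k}$, which is an $n^{1/k}$-multiplicative approximation of the largest root (Lemma \ref{lem:alg-max-root-approx}). Your barrier-proxy selection rule is plausible in spirit (it is closer to \cite{aoss18}), but you never argue why the child with smallest $\Phi_u$ has the smallest max root up to a controllable factor, nor how to choose $u$ without already knowing the root upper bound; the power-sum method sidesteps this chicken-and-egg issue entirely.

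Second, and more consequentially, the per-level greedy you describe does not reach sub-exponential time. Lemma \ref{lem:kadison-singer-alg-general} is not a one-child-at-a-time descent: it brute-forces over blocks of $M = \sqrt{m}$ coordinates at a time, so there are only $m/M = \sqrt{m}$ rounds of approximation, each requiring $k = O(M\log n/\delta) = O(\sqrt{m}\log n/\delta)$ coefficients to achieve a $(1+\delta/(2M))$-approximation per round. That $\sqrt{m}$-block structure is exactly what makes the error accumulate as $(1+\delta/(2M))^{m/M} \approx 1+\delta$ while keeping $\T_{\mathrm{Coeff}}(\mathcal{Q}, k)$ with $k = \tilde O(\sqrt{m}/\delta)$, rather than $k = \tilde O(m/\delta)$. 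If you descend one level at a time with a $(1+\eta)$ per-level factor, you need $\eta = O(\delta/n)$, hence $k = \Omega(n\log n/\delta)$ for the power-sum method, which blows up $\T_{\mathrm{Coeff}}$. Your proposal does not explain how the barrier-proxy method avoids this, and the claimed $m^{O(\sqrt{m\log(1/\eta)})}$ coefficient-computation time is stated without justification (the paper leaves this as an abstract oracle $\textsc{MaxCoeff}_\mathcal{Q}$, and only gives an explicit $n^{O(k)}\poly(m)$ construction in the determinant case, Lemma \ref{lem:maxcoeff-kls20}).

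Third, your mention of conditioning $\mu$ and preserving real-stability is a reasonable concern, but the paper's interlacing family $\mathcal{Q}$ is defined by conditioning on prefixes $(s_1,\dots,s_\ell)$ of the indicator vector (Definition \ref{def:interlacing-family-ag14}), and the SR closure under conditioning on a coordinate being $0$ or $1$ is a standard fact used implicitly via Fact \ref{fac:marginal_polynomial_SR}; this part of your proposal is fine but not the crux.

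In short: the framework is right, but the proof in the paper hinges on (a) the blocked $\sqrt{m}$-at-a-time brute force balancing brute-force cost against coefficient cost, and (b) the Newton-identity/power-sum root estimator whose multiplicative error $n^{1/k}$ makes the per-block analysis clean. You should replace the barrier-proxy per-level greedy with this blocked power-sum scheme, and present the coefficient computation as an oracle $\textsc{MaxCoeff}_\mathcal{Q}$ rather than asserting an unproven Taylor-truncation running time.
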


\section{Related work}

\paragraph*{Real-Stable Polynomials}
Real-stability is an important property for multivariate polynomials. 
In \cite{bb09}, the authors used the real-stability to give a unified framework for Lee-Yang type problems in statistical mechanics and combinatorics. Real-stable polynomials are also related to the permanent. Gurvits \cite{gur07} proved the Van der Waerden conjecture, which conjectures that the permanent of $n$-by-$n$ doubly
stochastic matrices are lower-bounded by $n!/n^n$, via the capacity of real-stable polynomials. Recently, \cite{gl21} improved the capacity lower bound for real-stable polynomials, which has applications in matrix scaling and metric TSP. In addition, real-stable polynomials are an important tool in solving many counting and sampling problems \cite{ns16, aor15, ao17, sv17, aos16,amg18,aov18,algv19,algvv21}.

\paragraph*{Hyperbolic Polynomials}
Hyperbolic polynomial was originally defined to study the stability of partial differential equations \cite{g51, h83, k95}. In theoretical computer science, G{\"u}ler \cite{g97} first introduced hyperbolic polynomial for optimization (hyperbolic programming), which is a generalization of LP and SDP. Later, a few algorithms \cite{r06,mt14, rs14, ren16,np18, ren19} were designed for hyperbolic programming. On the other hand, a significant effort has been put into the equivalence between hyperbolic programming and SDP, which is closely related to the ``Generalized Lax Conjecture'' (which conjectures that every hyperbolicity cone is spectrahedral) and its variants \cite{hv07,lpr05, b14,kpv15,sau18,ami19,rrsw19}. 

\paragraph*{Strongly Rayleigh Distribution} The strongly Rayleigh distribution was introduced by \cite{bbl09}. 
The authors also proved numerous basic properties of strongly Rayleigh distributions, including negative association, and closure property under operations such as conditioning, product, and restriction to a subset. \cite{pp14} proved a concentration result for Lipschitz functions of strongly Rayleigh variables. 
\cite{ks18} showed a matrix concentration for strongly Rayleigh random variables, which implies that adding a small number of uniformly random spanning trees gives a graph spectral sparsifier. 

Strongly Rayleigh distribution also has many algorithmic applications. \cite{aor15} exploited the negative dependence property of homogeneous strongly Rayleigh distributions, and designed efficient algorithms for generating approximate samples from Determinantal Point Process using Monte Carlo Markov Chain.
The strongly Rayleigh property of spanning tree distribution is a key component for improving the approximation ratios of TSP \cite{kko20a,kko20b} and $k$-edge connected graph problem \cite{kkoz21}.

\paragraph*{Other generalizations of the Kadison-Singer-type results}

The upper bound of the rank-one Kadison-Singer theorem was improved by \cite{bcms19,rl20}. \cite{ab20} further extended \cite{rl20}'s result to prove a real-stable version of Anderson’s paving conjecture. However, they used a different norm for real-stable polynomials, and hence their results and ours are incomparable. In the high-rank case, \cite{c16} also proved a Kadison-Singer result for high-rank matrices. \cite{sz20} relaxed \cite{b18}'s result to the vectors in sub-isotropic position. In addition, they proved a hyperbolic Spencer theorem for constant-rank vectors. 

Another direction of generalizing the Kadison-Singer-type result is to relax the $\{+1, -1\}$-coloring to $\{0,1\}$-coloring, which is called the one-sided version of Kadison-Singer problem in \cite{w04}. More specifically, given $n$ isotropic vectors $v_1,\dots,v_n\in \R^m$ with norm $\frac{1}{\sqrt{N}}$, the goal is to find a subset $S\subset [n]$ of size $k$ such that $\|\sum_{i\in S}v_iv_i^\top\|\leq \frac{k}{n}+O(1/\sqrt{N})$. Unlike the original Kadison-Singer problem, Weaver \cite{w04} showed that this problem can be solved in polynomial time. Very recently, Song, Xu and Zhang \cite{sxz22} improved the time complexity of the algorithm via an efficient inner product search data structure.
\paragraph*{Applications of Kadison-Singer Problem} There are many interesting results developed from the Kadison-Singer theorem.
In spectral graph theory, \cite{ho14} exploited the same proof technique of interlacing families to show a sufficient condition of the spectrally thin tree conjecture.
\cite{ag14} used the strongly-Rayleigh extension of Kadison-Singer theorem to show a weaker sufficient condition.
Based on this result, \cite{ao15} showed that any $k$-edge-connected graph has an $O(\frac{\log\log (n)}{k})$-thin tree, and gave a $\poly(\log\log(n))$-integrality gap of the asymmetric TSP. \cite{mss18,coh16} used the Kadison-Singer theorem to construct bipartite Ramanujan graphs of all sizes and degrees. 
In the network design problem, \cite{lz20} exploited the result in \cite{kls20}, and built a spectral rounding algorithm for the general network design convex program, which has applications in weighted experimental design, spectral network design, and additive spectral sparsifier.

\section{Proof Overview}

\subsection{Hyperbolic Deviations}

In this section, we will sketch the proof of our hyperbolic generalization of the Kadison-Singer theorem (Theorem~\ref{thm:kls20_ours}). 
\ifdefined\isarxiv
\else
Details of the proof are deferred to the full version of the paper.
\fi
We will use the same strategy as the original Kadison-Singer theorem (Theorem \ref{thm:ks-original}) in \cite{mss15a,mss15b}, following three main technical steps.

For simplicity, we assume that the random variables $\xi_1,\dots,\xi_n\in \{\pm 1\}$ are independent Rademacher random variables, i.e., $\Pr[\xi_i=1]=\frac{1}{2}$ and $\Pr[\xi_i=-1]=\frac{1}{2}$ for all $i\in [n]$. 

To generalize the Kadison-Singer statement into the hyperbolic norm, one main obstacle is to define the variance of the hyperbolic norm of the sum of random vectors $\sum_{i=1}^n \xi_i v_i$. In the determinant polynomial case, each $v_i$ corresponds to a rank-1 matrix $u_iu_i^*$, and it is easy to see that the variance of the spectral norm is $\|\sum_{i=1}^n (u_iu_i^*)^2\|$. However, there is no analog of ``matrix square'' in the setting of hyperbolic/real-stable polynomials. Instead, we define the \emph{hyperbolic variance}: $$\left\|\sum_{i=1}^n \tr_h[v_i]v_i\right\|_h$$ in terms of the hyperbolic trace, and show that \emph{four hyperbolic deviations suffice}.

\paragraph*{Defining interlacing family of characteristic polynomials.}
In the first step, we construct a family of \emph{characteristic polynomials} $\{p_s : s \in \{\pm 1\}^t, t \in \{0, \cdots, n\}\}$ as follows:
For each ${\bf s}\in \{\pm 1\}^n$, define the leaf-node-polynomial:
\begin{align*}
    p_{{\bf s}}(x) := \left(\prod_{i=1}^n p_{i, s_i}\right)\cdot h\left(x e + \sum_{i=1}^n s_i  v_i  \right) \cdot h\left(x e - \sum_{i=1}^n s_i  v_i  \right),
\end{align*}
and for all $\ell \in \{0,\dots,n-1\}$, ${\bf s'}\in \{\pm 1\}^\ell$, we construct an inner node with a polynomial that corresponds to the bit-string $\mathbf{s}'$:
\begin{align*}
    p_{{\bf s'}}(x) := \sum_{{\bf t}\in \{\pm 1\}^{n-\ell}} p_{({\bf s'}, {\bf t})}(x).
\end{align*}
where $({\bf s'}, {\bf t}) \in \{\pm 1\}^n$ is the bit-string concatenated by ${\bf s'}$ and ${\bf t}$.

We will then show that the above family of characteristic polynomials forms an \emph{interlacing family} 
\ifdefined\isarxiv
(see Lemma~\ref{lem:interlacing-family-kls20} for detail)
\fi
. By basic properties of interlacing family, we can always find a leaf-root-polynomial $p_s$ (where $s \in \{\pm 1\}^n$) whose largest root is upper bounded by the largest root of the top-most polynomial. 
\begin{align*}
    p_{\emptyset}(x) = \E_{ \xi_1, \cdots, \xi_n } \left[ h \Big( x e +  \sum_{i=1}^n \xi_i v_i \Big) \cdot h \Big( x e - \sum_{i=1}^n \xi_i v_i \Big) \right].
\end{align*}
(we call $p_{\emptyset}$ to be the \emph{mixed characteristic polynomial}). 
Notice that by rewriting the largest root of $p_s$ to be the expected hyperbolic norm of $\sum_{i=1}^n s_i v_i$, we get that
\begin{align}\label{eq:kls_root_cond_intro}
    \lambda_{\max}(p_\emptyset) = \left\|\sum_{i=1}^n s_i v_i \right\|_h. 
\end{align}
\ifdefined\isarxiv
(See Corollary~\ref{cor:kls-interlacing} for a formal statement).
\fi

Also, we will take $s \in \{\pm 1\}^n$ as the corresponding sign assignment in the main theorem (Theorem \ref{thm:kls20_ours})
It then suffices to upper-bound the largest root of the mixed characteristic polynomial.

\paragraph*{From mixed characteristic polynomial to multivariate polynomial.}

In the second step, we will show that the mixed characteristic polynomial that takes the average on $n$ random variables
\begin{align*}
    p_{\emptyset}(x) = \E_{ \xi_1, \dots, \xi_n } \left[ h \Big( x e +  \sum_{i=1}^n \xi_i v_i \Big) \cdot h \Big( x e - \sum_{i=1}^n \xi_i v_i \Big) \right]
\end{align*}
is equivalent to a polynomial with $n$ extra variables $z_1, \cdots, z_n$:
\begin{align}\label{eq:kls_multivariate_intro}
    \prod_{i=1}^n \Big(1- \frac{1}{2} \frac{\partial^2}{ \partial z_i^2 }\Big)\Bigg|_{z=0} \left(h \Big( xe + \sum_{i=1}^n z_i v_i \Big)\right)^2.
\end{align}
\ifdefined\isarxiv
(See Lemma \ref{lem:kls_expectation_hyperbolic} for more detail.)
\fi
Thus, we can reduce the upper bound of $\chi_{\max}(p_{\emptyset})$ to an upper bound of the largest root in \eqref{eq:kls_multivariate_intro}.
The latter turns out to be easier to estimate with the help of a barrier argument \cite{mss15b}.

To show such equivalence holds, we use induction on the random variables $\xi_1,\dots,\xi_n$. More specifically, we start from $\xi_1$ and are conditioned on any fixed choice of $\xi_2,\dots,\xi_n$. We prove that taking expectation over $\xi_1$ is equivalent to applying the operator $(1-\frac{\partial^2}{\partial z_1^2})$ to the polynomial
\begin{align*}
    \left(h(xe + z_1 v_1 + \sum_{i=2}^n \xi_i v_i)\right)^2
\end{align*}
and setting $z_1=0$.
Here we use the relation between expectation and the second derivatives: for any Rademacher random variable $\xi$,
\begin{align*}
    \E_{\xi} [ h(x_1 - \xi v) \cdot h(x_2 + \xi v) ] = \left(1 - \frac{1}{2}\frac{\d ^2}{\d t^2}\right)\Bigg|_{t=0} h(x_1+t v) h(x_2+t v).
\end{align*}
Repeating this process and removing one random variable at a time. After $n$ iterations, we obtain the desired multivariate polynomial.

We also need to prove the real-rootedness of the multivariate polynomial 
(Eqn.~\eqref{eq:kls_multivariate_intro}). We first consider an easy case where $h$ itself is a real-stable polynomial, as in the determinant polynomial case. Then the real-rootedness easily follows from the closure properties of the real-stable polynomial 
\ifdefined\isarxiv
(see Fact~\ref{fac:closure_real_stable})
\fi. More specifically, we can show that $(h(xe+\sum_{i=1}^nz_iv_i))^2$ is also a real-stable polynomial. Furthermore, applying the operators $(1-\frac{1}{2}\frac{\partial^2}{\partial z_i^2})$ and restricting $z=0$ preserve the real-stability. Therefore, the multivariate polynomial is a univariate real-stable polynomial, which is equivalent to being real-rooted.

Next, we show that when $h$ is a hyperbolic polynomial, the multivariate polynomial (Eqn.~\eqref{eq:kls_multivariate_intro}) is also real-rooted. 
our approach is to show that the linear restriction of $h$: $h(xe+\sum_{i=1}^n z_iv_i)$ is a real-stable polynomial in $\R[x,z_1,\dots,z_n]$. A well-known test for real-stability is that if for any $a\in \R_{>0}^{n+1}, b\in \R^{n+1}$, the one-dimensional restriction $p(at+b)\in \R[t]$ is non-zero and real-rooted, then $p(x)$ is real-stable. We test $h(xe+\sum_{i=1}^n z_iv_i)$ by restricting to $at+b$, and get the following polynomial:
\begin{align*}
   h \Big( (a_1 e + \sum_{i=1}^n a_{i+1}v_i)t + y \Big)\in \R[t],
\end{align*}
where $y$ is a fixed vector depending on $b$. Since $a_i>0$ for all $i\in [n+1]$ and $e,v_1,\dots,v_n$ are vectors in the hyperbolicity cone, it implies that the vector $a_1 e + \sum_{i=1}^n a_{i+1}v_i$ is also in the hyperbolicity cone. Then, by the definition of hyperbolic polynomial, we immediately see that $h( (a_1 e + \sum_{i=1}^n a_{i+1}v_i)t + y)$ is real-rooted for any $a\in \R_{>0}^{n+1}$ and $b\in \R^{n+1}$. Hence, we can conclude that the restricted hyperbolic polynomial $h(xe+\sum_{i=1}^n z_iv_i)$ is real-stable and the remaining proof is the same as the real-stable case. 

\paragraph*{Applying barrier argument. }
Finally, we use \emph{barrier argument} to find an ``upper barrier vector'' whose components lie above any roots of  multivariate polynomial can take. In particular, we consider the multivariate polynomial $P(x,z)=(h(xe+\sum_{i=1}^n z_iv_i))^2$.
Define the \emph{barrier function} of any variable $i \in [n]$ as the following:
\begin{align*}
    \Phi^i_P ( \alpha(t), - \delta)
= \frac{ \partial_{z_i} P(x,z) }{ P(x,z) } \Big|_{ x = \alpha(t), z = -\mathbf{\delta} },
\end{align*}
where $\delta \in \R^n$ where $\delta_i=t\tr_h[v_i]$ for $i\in [n]$ and $\alpha(t)>t$ is a parameter that depends on $t$. 

As a warm-up, consider the case when $\sigma=1$ and assuming $\|\sum_{i=1}^n \tr_h[v_i]v_i\|_h\leq 1$. It is easy to show that $(\alpha(t), -\delta)$ is an upper barrier of $P$, from the linearity of the hyperbolic eigenvalues and the assumption.
Next, we upper-bound the barrier function's value at $(\alpha(t), -\delta)$. 
When $h$ is a determinant polynomial, this step is easy because the derivative of $\log \det$ is the trace of the matrix. For a general hyperbolic polynomial, we will rewrite the partial derivative $\partial_{z_i}$ as a directional derivative $D_{v_i}$ and get
\begin{align*}
    \Phi^i_P ( \alpha(t), - \delta )=2\cdot \frac{(D_{v_i}h)(\alpha e-te + t(e - \sum_{j=1}^n \tr_h[v_j] v_j))}{h(\alpha e -te + t(e - \sum_{j=1}^n \tr_h[v_j] v_j))}.
\end{align*}
We observe that our assumption $\|\sum_{i=1}^n \tr_h[v_i]v_i\|_h\leq 1$ implies that $e - \sum_{j=1}^n \tr_h[v_j] v_j\in \Gamma_+^h$. By the concavity of the function $\frac{h(x)}{D_{v_i} h(x)}$ in the hyperbolicity cone, we can prove that
\begin{align*}
    \Phi^i_P ( \alpha(t), - \delta ) \leq \frac{ 2\tr_h[v_i] }{ \alpha(t) - t }.
\end{align*}
Now, we can apply the barrier update lemma in \cite{kls20}
\ifdefined\isarxiv 
(Lemma~\ref{lem:real_stable_cone})
\fi 
with $\alpha(t) =2t=4$ to show that 
\begin{align*}
    \Phi^j_{(1 - \frac{1}{2} \partial_{z_i}^2)P} (4, -\mathbf{\delta} + \delta_i {\bf 1}_i) \le \Phi_P^j(4, -\mathbf{\delta}).
\end{align*}
In other words, the partial differential operator $(1 - \frac{1}{2}\partial_{z_i}^2)$ shifts the upper-barrier by $(0, \cdots, 0, \delta_i , 0, \cdots, 0)$. Using induction for the variables $\delta_1, \cdots, \delta_n$, we can finally finally get an upper-barrier of
\begin{align*}
    (4,-\delta+\sum_{i=1}^n \delta_i {\bf 1}_i) = (4,0,\dots,0),
\end{align*}
which implies that $(4,0,\dots,0)$ is above the roots of 
\begin{align}
    \prod_{i=1}^n \Big( 1 - \frac{1}{2} \frac{ \partial^2 }{ \partial z_i^2 } \Big)  \left( h \Big( x e + \sum_{i=1}^n z_i\tau_i v_i \Big) \right)^2 
    \label{eq:mix_polynomial_kls}
\end{align} 
\ifdefined\isarxiv 
(See Lemma \ref{lem:kls_bound_root} for detail).
\fi

A challenge in this process is ensuring that the barrier function remains nonnegative. To achieve this, we use the multidimensional convexity of the hyperbolic barrier function as established in \cite{tao13} 
\ifdefined\isarxiv (Lemma~\ref{lem:multi_phi_convex}) \fi
. For cases where $\sigma \ne 1$, this requirement is satisfied through a simple scaling argument.

Combining the above three steps together, we can prove that $\Pr_{\xi_1,\cdots,\xi_n} [\| \sum_{i=1}^n \xi_i v_i \|_h \leq 4 \sigma ] >0$ for vectors $v_1,\dots,v_n$ in the hyperbolicity cone with $\| \sum_{i=1}^n \tr_h[v_i] v_i  \|_h=\sigma^2$.
\ifdefined\isarxiv
The complete proof can be found in Appendix \ref{sec:kls}.
\fi

\subsection{Generalization to Strongly Rayleigh Distributions}

Our main technical contribution to Theorem \ref{thm:ag14_ours} is a more universal and structured method to characterize the mixed characteristic polynomial.
Define the mixed characteristic polynomial as
\begin{align}\label{eq:mix-characteristic-ag14}
    q_S(x) = \mu(S) \cdot h \left( x e - \sum_{i \in S} v_i \right).
\end{align}
we want to show that it is equivalent to the restricted multivariate polynomial: \begin{align}\label{eq:multivariate-ag14}
    \prod_{i=1}^n (1- \frac{1}{2} \frac{\partial^2}{ \partial z_i^2 })\Big(h(xe+\sum_{i=1}^nz_iv_i)g_\mu(x{\bf 1}+z)\Big)\Bigg|_{z=0}  \in \R[x,z_1,\cdots,z_n].
\end{align}

Although Eqn. (\ref{eq:mix-characteristic-ag14}) and Eqn. (\ref{eq:multivariate-ag14}) are the hyperbolic generalization of \cite{ag14}, we are unable to apply the previous techniques. This is because \cite{ag14} computes the mixed characteristic polynomial explicitly, which heavily relies on the fact that the characteristic polynomial is a determinant. 
It is unclear how to generalize this method to hyperbolic/real-stable characteristic polynomials.

The key step in \cite{ag14} is to show the following equality between mixed characteristic polynomial and multivariate polynomial:
\begin{align*}
    & x^{d_{\mu} - d } \cdot \E_{S \sim \mu} \left[  \det \left( x^2 I - \sum_{i \in S} 2 v_i v_i^\top \right) \right] \\
    = & \prod_{i=1}^n (1-\partial_{z_i}^2)  \left( g_{\mu}( x {\bf 1} + z ) \cdot \det(x I + \sum_{i=1}^n z_i v_i v_i^\top) \right) \Bigg|_{z = {\bf 0}}
\end{align*}
where $d_{\mu}$ is the degree of the homogeneous strongly-Rayleigh distribution $\mu$ (i.e. the degree of $g_{\mu}$), and $m$ is the dimension of $v_i$.

Then they expand the right-hand side to get: 
\begin{align*}
    {\rm RHS}
    = & ~ \sum_{k=0}^m (-1)^k x^{d_{\mu} + m -2 k} \sum_{S \in { \binom{[n]}{k} }} \Pr_{T \sim \mu } [ S \subseteq T ] \cdot \sigma_k ( \sum_{i \in S} 2v_i v_i^\top ) \\
    = & ~ x^{d_{\mu} - m } \cdot \E_{S \sim \mu} \left[  \det \left( x^2 I - \sum_{i \in S} 2 v_i v_i^\top \right) \right] = {\rm LHS},
\end{align*}
where $\sigma_k(M)$ equals to the sum of all $k\times k$ principal minors of $M\in \R^{m\times m}$. The first step comes from expanding the product $\prod_{i=1}^n (1-\partial^2 z_i)$, and the second step comes from that
\begin{align*}
    \det(x^2I - \sum_{i=1}^n v_iv_i^\top) = \sum_{k=0}^m (-1)^{2k} x^{2m-2k} \sum_{S \in \binom{[n]}{k}} \sigma_k( \sum_{i\in S} v_i v_i^\top).
\end{align*}

The naive generalization of a technique to hyperbolic/real-stable polynomial $h$ faces challenges. One such challenge is the absence of an explicit form for $h$, unlike in the case of $h=\det$ where the determinant can be expressed as a combination of minors. This lack of a well-defined minor presents difficulty in rewriting the hyperbolic/real-stable polynomial. 
To tackle this issue, we devised a new and structured proof that relies on induction, offering a novel solution to this problem.

\paragraph*{Inductive step.}
We first rewrite the expectation over the Strongly-Rayleigh distribution $T \sim \mu$ as follows:
\begin{align*}
    x^{d_{\mu}} \cdot 2^{-n} \cdot \E_{T \sim \mu} [ h(x e - \sum_{i \in T} v_i) ] 
    = & ~ \frac{1}{2} \E_{\xi_2, \cdots, \xi_n \sim \{0,1\}^{n-1}} \Big[ (1-\partial_{z_1}) h(x_2 + z_1 v_1) x \partial_{z_1} g_2(x+z_1)  \\
    & ~~~ + h(x_2) (1-x \partial_{z_1}) g_2(x+z_1)\Big|_{z_1=0} \Big]
\end{align*}
where $g_2$ is defined as 
\begin{align*}
     g_2(t)  := & ~ x^{\sum_{i=2}^n \xi_i}\cdot \\
     & ~~~ \prod_{i=2}^n \Big(\xi_i \partial_{z_i} +  (1-\xi_i)(1-x\partial_{z_i})\Big) g_{\mu}(t,x+z_2,x+z_3,\cdots,x+z_n)\Big|_{z_2,\dots,z_n=0}
\end{align*}
and $x_2 = x^2 e - \sum_{i=2}^n \xi_i v_i$. The main observation is that the marginals of a homogeneous Strongly-Rayleigh distribution can be computed from the derivatives of its generating polynomial 
\ifdefined\isarxiv 
(Fact~\ref{fac:marginal_polynomial_SR})
\fi .

Then, we can expand the term inside the expectation as 
\begin{align*}
(1-\frac{x}{2}\partial^2_{z_1})\Big( h(x_2+z_1v_1)g_2(x+z_1) \Big)\Big|_{z_1=0},
\end{align*}
using the fact that $\rank(v_1)_h\leq 1$ and the degree of $g_2(t)$ is at most 1.

Hence, we obtain our inductive step as
\begin{align*}
    & x^{d_{\mu}} \cdot 2^{-n} \cdot \E_{\xi \sim \mu} \left[ h(x e - \sum_{i=1}^n \xi_i v_i) \right] \\
    = ~& \frac{1}{2} ( 1 - \frac{x}{2} \partial_{z_1}^2 ) \left( \E_{\xi_2, \cdots, \xi_n} \Big[ h(xe - \sum_{i=2}^n \xi_i v_i + z_1 v_1) \cdot g_2(x+ z_1) \Big] \right) \Bigg|_{z_1=0}.
\end{align*}

\paragraph*{Applying the step inductively.}
Repeating the above process for $n$ times, we finally get
\begin{align*}
    x^{d_\mu}\cdot \E_{\xi\sim \mu}\left[h(x^2e - (\sum_{i=1}^n \xi_i v_i))\right]
    =  \sum_{T\subseteq [n]}(-\frac{x}{2})^{|T|}\partial_{z^{T}}^2\Big(h(x^2e+\sum_{i=1}^nz_iv_i)g_\mu(x{\bf 1}+z)\Big)\Bigg|_{z=0}.
\end{align*}
Then, we rewrite the partial derivatives as directional derivatives 
\ifdefined\isarxiv (see Definition \ref{def:directional-derivative} for detail) \fi. For any subset $T\subseteq [n]$ of size $k$, we have
\begin{align*}
    & ~ (-\frac{x}{2})^{k}\partial_{z^{T}}^2\Big(h(x^2e+\sum_{i=1}^nz_iv_i)g_\mu(x{\bf 1}+z)\Big)\Bigg|_{z=0} \\
    = &~ (-\frac{x}{2})^{k} \cdot 2^{k}\cdot \left(\prod_{i\in T}D_{v_i} \right)h(x^2e)\cdot g_\mu^{(T)}(x{\bf 1}),
\end{align*}
where $g_\mu^{(T)}(x{\bf 1})=\prod_{i\in T}\partial_{z_i} g_\mu(x{\bf 1}+z)\Big|_{z=0}$.
And by the homogeneity of $h$, it further equals to
\begin{align*}
    x^d \cdot (-\frac{1}{2})^{k}\partial_{z^T}^2\left(h(xe+\sum_{i=1}^n z_i v_i)g_\mu(x{\bf 1}+z)\right)\Bigg|_{z=0}.
\end{align*}

Therefore, we prove the following formula that relates the characteristic polynomial under SR distribution to the multivariate polynomial: 
\begin{align*}
    x^{d_\mu}\cdot \E_{\xi\sim \mu}\left[h(x^2e - (\sum_{i=1}^n \xi_i v_i))\right]
    =  x^d \cdot \prod_{i=1}^n (1-\frac{1}{2}\partial^2_{z_i})\Big(h(xe+\sum_{i=1}^nz_iv_i)g_\mu(x{\bf 1}+z)\Big)\Bigg|_{z=0} .
\end{align*}

\ifdefined\isarxiv
The complete proof can be found in Appendix \ref{sec:ag}.
\fi

\ifdefined\isarxiv
\paragraph*{Roadmap.}

We provide preliminary definitions and facts in Appendix \ref{sec:prelim}. In Appendix \ref{sec:kls}, we prove our first main result (Theorem \ref{thm:kls20_ours}), which is a hyperbolic generalization of Kadison-Singer result for a weaker condition (sum of squares of vectors is bounded).
In Appendix \ref{sec:ag}, we prove our second main result (Theorem \ref{thm:ag14_ours}), which is a hyperbolic extension of the Kadison-Singer result for strongly-Rayleigh distributions.
We put the sub-exponential algorithm for our main results in Appendix \ref{sec:sub-exp-alg}. In Appendix~\ref{sec:example}, we provide some examples of real-stable and hyperbolic polynomials. 
\fi

\addcontentsline{toc}{section}{References}
\ifdefined\isarxiv
\bibliographystyle{alpha}
\else 
\bibliographystyle{plainurl}
\fi
\bibliography{ref}

\ifdefined\isarxiv
\newpage
\appendix

\section{Preliminaries}\label{sec:prelim}
We gather several basic linear algebraic and analytic facts in the following subsections.
\ifdefined\isarxiv
\paragraph{Notations.}
\else
\paragraph*{Notations.}
\fi

For any positive integer $n$, we use $[n]$ to denote set $\{1,2,\cdots,n\}$. We use $\mathbf{1}$ to denote the all-one vector and $\mathbf{1}_i$ to denote the vector with one in the $i$-th coordinate and zero in other coordinates.

\subsection{Real-stable polynomials}

\begin{definition}
A multivariate polynomial $p\in \C[x_1,\cdots,x_m]$ is \emph{stable} if it has no zeros in the region $\{(x_1, \dots, x_m) : \Im(x_i) > 0 \text{ for all }  i \in [m]  \}$.  $p$ is \emph{real stable} if $p$ is stable and has real coefficients.  
\end{definition}

In the rest of this paper, we restrict our discussion into polynomials with real coefficients.

\begin{fact}\label{fac:uni_real_stable}
We say a univariate polynomial $p\in \R[t]$ is real-rooted iff it is real-stable.
\end{fact}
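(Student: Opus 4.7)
The plan is to unfold the definitions on both sides of the biconditional and invoke the complex conjugate roots theorem for polynomials with real coefficients. Recall that the definition above requires a real-stable polynomial to (i) have real coefficients and (ii) have no zero in the open upper half-plane $\{z \in \C : \Im(z) > 0\}$. A real-rooted univariate polynomial is, by hypothesis, in $\R[t]$, so condition (i) is immediate, and the substance is to show the equivalence of real-rootedness and the absence of zeros off the real axis.

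For the forward direction, I would assume $p \in \R[t]$ is real-rooted. Then every zero of $p$ lies on the real line, so in particular no zero satisfies $\Im(z) > 0$. Combined with the fact that $p$ has real coefficients, this is exactly the definition of real-stability.

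For the reverse direction, I would assume $p \in \R[t]$ is real-stable and argue by contradiction. Suppose $p$ has a non-real root $z_0 = a + bi$ with $b \ne 0$. If $b > 0$, then $z_0$ lies in the open upper half-plane, contradicting stability directly. If $b < 0$, I would invoke the fact that a polynomial with real coefficients has its non-real roots occurring in complex conjugate pairs: the conjugate $\bar{z_0} = a - bi$ is then also a root of $p$, and since $-b > 0$, we have $\bar{z_0}$ in the open upper half-plane, again contradicting stability. Hence every root of $p$ is real.

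There is essentially no obstacle here; the statement is a direct unpacking of the univariate specialization of the multivariate definition, and the only non-trivial ingredient is the elementary fact that real-coefficient polynomials have conjugation-symmetric root sets. I would keep the write-up to a few lines and, for clarity, explicitly note that the requirement ``$p \in \R[t]$'' in both directions removes the need to separately verify the ``real coefficients'' clause of real-stability.
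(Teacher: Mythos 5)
Your proof is correct and is exactly the standard argument the paper leaves implicit when stating this as a Fact without proof: the forward direction is immediate from the definitions, and the reverse direction uses conjugate symmetry of the root set of a real-coefficient polynomial to show that a non-real root would force a root in the open upper half-plane. Nothing is missing.
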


\begin{fact}[Equivalent definition of real-stable polynomial]\label{fac:equi_def_real_stable}
    A multivariate polynomial $p \in \R[x_1,\cdots,x_m]$ is real stable iff for any $a \in \R^m_{>0}$ and $b\in \R^m$, the univariate polynomial $p(at+ b)$ with respect to $t$ is not identically zero and is real-rooted.
\end{fact}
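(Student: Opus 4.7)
The plan is to establish both directions of the equivalence as contrapositive arguments grounded in the defining property of real-stability, namely that $p$ has no zeros in the open upper half-region $\{z \in \C^m : \Im(z_i) > 0 \text{ for all } i \in [m]\}$. Throughout I assume $p$ is a nonzero polynomial, which is implicit in both hypotheses.

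For the forward direction, I would assume $p$ is real-stable and derive both conclusions about $p(at+b)$. To see that $p(at+b)$ is not identically zero as a univariate polynomial in $t$ for any $a \in \R_{>0}^m$ and $b \in \R^m$: if it were the zero polynomial, then evaluating at the complex value $t = i$ would give $p(b + ia) = 0$, yet $b + ia$ has imaginary part $a$ which is strictly positive coordinate-wise, contradicting real-stability. To see that $p(at+b)$ is real-rooted: if $t_0$ were a non-real complex root, then by conjugating if necessary (using that $p(at+b)$ has real coefficients and hence roots come in conjugate pairs), I may assume $\Im(t_0) > 0$. Then $p(at_0 + b) = 0$ with $\Im(at_0 + b) = a\cdot \Im(t_0)$ strictly positive coordinate-wise, again contradicting real-stability.

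For the backward direction, I would prove the contrapositive: if $p$ fails to be real-stable, I exhibit a pair $(a, b)$ for which $p(at+b)$ has a non-real root. By assumption there is $z_0 = \alpha + i\beta \in \C^m$ with $p(z_0) = 0$ and $\beta \in \R_{>0}^m$; choosing $a := \beta$ and $b := \alpha$, I get $a\cdot i + b = \alpha + i\beta = z_0$, so $t = i$ is a non-real root of the univariate polynomial $p(at+b)$. Since the hypothesis of the backward direction requires $p(at+b)$ to be not identically zero for every such $(a,b)$ and to be real-rooted, this non-real root directly contradicts the assumption.

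There is no substantive obstacle here: the argument is a clean two-way translation between the two characterizations of real-stability. The only mild care needed is in the forward direction, where a polynomial in $t$ that is identically zero on $\R$ must be recognized as the zero polynomial so that its value at $t = i$ is also zero and the real-stability contradiction goes through; and in the backward direction, where the hypothesis of non-vanishing is precisely what prevents the candidate root $t=i$ from being vacuously absent.
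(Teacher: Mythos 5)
Your proof is correct. The paper records this as a standard fact (Fact~\ref{fac:equi_def_real_stable}) with no proof supplied, so there is no in-paper argument to compare against, but your two-direction argument is exactly the classical one: in the forward direction the point $ai+b$ has imaginary part $a \in \R^m_{>0}$ and thus lies in the forbidden region, and in the backward direction a zero $\alpha+i\beta$ of $p$ with $\beta \in \R^m_{>0}$ is recovered at $t=i$ along the line $t \mapsto \beta t + \alpha$. The only small presentational point is in the backward direction: the exhibited value $t=i$ covers two distinct failure modes of the right-hand side -- if $p(\beta t+\alpha)$ is not identically zero it witnesses failure of real-rootedness, while if $p(\beta t+\alpha)$ is identically zero (which happens for instance when $p\equiv 0$) the ``not identically zero'' clause fails instead. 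Your final sentence implicitly handles both, but separating those two cases explicitly would make the argument read more cleanly and would also make the disposal of the degenerate case $p\equiv 0$ transparent rather than tucked into a preliminary ``assume $p\ne 0$'' remark.
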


\begin{lemma}[Proposition 2.4, \cite{bb08}] \label{lem:determinantstable}
If $A_1, \dots, A_n$ are positive semidefinite symmetric matrices, then the polynomial
\begin{align*}
\det \left( \sum_{i=1}^n z_i A_i \right)
\end{align*}
is real stable.  
\end{lemma}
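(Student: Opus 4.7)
The plan is to invoke the equivalent definition of real-stability in Fact~\ref{fac:equi_def_real_stable}: fix arbitrary $a \in \R^n_{>0}$ and $b \in \R^n$ and verify that the univariate restriction
\begin{align*}
    q(t) \;:=\; p(at+b) \;=\; \det\!\left(\sum_{i=1}^n (a_i t + b_i) A_i\right) \;=\; \det(tA + B)
\end{align*}
is real-rooted (and, when $p\not\equiv 0$, not identically zero), where $A := \sum_{i=1}^n a_i A_i$ and $B := \sum_{i=1}^n b_i A_i$. Note that $A$ is positive semidefinite (a non-negative combination of positive semidefinite matrices) and $B$ is real symmetric.

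First I would handle the generic case where $A$ is positive definite. Then $A$ admits a Cholesky-type factorization $A = LL^\top$ with $L$ invertible, and we can factor
\begin{align*}
    q(t) \;=\; \det(L)\,\det\!\bigl(tI + L^{-1} B L^{-\top}\bigr)\,\det(L^\top) \;=\; \det(L)^2 \prod_{i=1}^m (t + \lambda_i),
\end{align*}
where $\lambda_1,\dots,\lambda_m$ are the eigenvalues of the real symmetric matrix $L^{-1} B L^{-\top}$; these are all real, so $q$ is real-rooted of degree exactly $m$ and in particular not identically zero.

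Next I would reduce the general positive semidefinite case to the positive definite one by a standard perturbation argument. For each $\epsilon > 0$, the polynomial $q_\epsilon(t) := \det\bigl(t(A+\epsilon I) + B\bigr)$ is real-rooted by the case just treated, and $q_\epsilon \to q$ coefficient-wise as $\epsilon \to 0^+$. Since real-rootedness of univariate polynomials of bounded degree is preserved under coefficient-wise limits (this is a classical consequence of Hurwitz's theorem: a non-trivial limit of real-rooted polynomials is real-rooted), $q$ is either real-rooted or identically zero. Finally, if $p$ itself is not the zero polynomial, then $q(t)$ is not identically zero for generic $a,b$, and by continuity the factored form from the definite case shows no cancellation occurs under the limit, so $q$ remains real-rooted for every $a\in\R^n_{>0}, b\in\R^n$, completing the verification via Fact~\ref{fac:equi_def_real_stable}.

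The only subtle point is the degeneracy in which some choice of $a,b$ makes the limiting $q$ vanish identically; this is a harmless edge case (covered by the standard convention that the zero polynomial is considered stable), so I do not expect a real obstacle. The heart of the proof is the simultaneous-diagonalization trick enabled by the Cholesky factorization, which converts $\det(tA+B)$ into the characteristic polynomial of a real symmetric matrix.
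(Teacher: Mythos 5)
The paper does not actually prove Lemma~\ref{lem:determinantstable}; it is imported verbatim as Proposition~2.4 of~\cite{bb08}, so there is no in-paper argument to compare against. Judged on its own, your core argument is the standard one and the key insight is right: for $a\in\R^n_{>0}$, $b\in\R^n$, one has $p(at+b)=\det(tA+B)$ with $A=\sum_i a_iA_i$ PSD and $B$ symmetric, and when $A\succ0$ the congruence $A=LL^\top$ turns this into $\det(L)^2\det\bigl(tI+L^{-1}BL^{-\top}\bigr)$, the characteristic polynomial of a real symmetric matrix, hence real-rooted. That part is clean and correct.

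The weak link is the passage from the positive definite case to the general PSD case. Invoking Hurwitz on $q_\epsilon(t)=\det\bigl(t(A+\epsilon I)+B\bigr)\to q(t)$ does show $q$ is real-rooted \emph{or} identically zero, but the sentence ``$q(t)$ is not identically zero for generic $a,b$, and by continuity the factored form from the definite case shows no cancellation occurs under the limit'' does not constitute an argument: ``generic'' is unsubstantiated, and ``no cancellation by continuity'' is exactly the thing Hurwitz cannot rule out when the leading coefficient $\det(A+\epsilon I)$ tends to $0$. In fact the perturbation is entirely unnecessary once you notice the following: if $v\in\ker A$ then $0=v^\top Av=\sum_i a_i v^\top A_iv$ with each $a_i>0$ and each $v^\top A_iv\ge0$, forcing $v^\top A_iv=0$ and hence $A_iv=0$ for every $i$; thus $\ker A\subseteq\bigcap_i\ker A_i$, and any nonzero vector in that common kernel makes $\sum_i z_iA_i$ singular for all $z$, i.e.\ $p\equiv0$. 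So either $p\equiv0$ (trivially stable by the convention you invoke), or $A\succ0$ for \emph{every} $a\in\R^n_{>0}$ and your positive definite case applies directly. Replacing the limit argument with this two-line observation closes the gap and shortens the proof.
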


We also need that real stability is preserved under product (see \cite{bbl09}), restricting variables
to real values (see \cite[Lemma 2.4(d)]{wagner}), and taking $(1-\partial^2_{x_i})$ (see \cite[Corollary 2.8]{ag14}).
\begin{fact}[Closure operations of real-stable polynomials]\label{fac:closure_real_stable}
    Let $p,q \in \R[x_1,\dots,x_m]$ be two real stable polynomials. Then the following operations preserve real-stability:
    \begin{itemize}
        \item {\bf (Product)} $p\cdot q$.
        \item {\bf (Restriction to real values)} For any $a\in \R$, $p|_{x_1=a}=p(a,x_2,\ldots,x_m)\in\R[x_2,\ldots,x_m]$.
        \item {\bf (One minus second partial derivative)} For any $c\in \R_+,i\in [n]$, $(1 -c \cdot \partial_{x_i}^2) p(x_1, \dots, x_n)$.
    \end{itemize}
\end{fact}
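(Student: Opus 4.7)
The plan is to handle the three closure properties in turn, using standard complex-analytic tools.

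For the product item, the argument is essentially tautological from the definition of real stability: the zero set of $pq$ is the union of the zero sets of $p$ and $q$, so if neither factor vanishes in the region $\{x\in \C^m : \Im(x_i) > 0 \text{ for all } i\}$ then neither does their product, and $pq$ clearly has real coefficients. For the restriction item, the plan is to invoke Hurwitz's theorem from complex analysis. I would consider the family $p_\epsilon(x_2, \dots, x_m) := p(a + \i\epsilon, x_2, \dots, x_m)$ for $\epsilon > 0$; since $a + \i\epsilon$ lies in the strict upper half-plane, real stability of $p$ forces each $p_\epsilon$ to be zero-free in the region $\{(x_2, \dots, x_m) : \Im(x_j) > 0 \text{ for all } j\}$. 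Letting $\epsilon \to 0^+$, the family converges locally uniformly to $p|_{x_1=a}$, so Hurwitz's theorem forces the limit to be either identically zero on that region or zero-free there; excluding the degenerate identically-zero case, the restriction is real stable.

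The third item is where the real work lies, and my plan is to factor the operator as
\begin{align*}
1 - c\,\partial_{x_i}^2 \;=\; (1 - \sqrt{c}\,\partial_{x_i})\,(1 + \sqrt{c}\,\partial_{x_i})
\end{align*}
and show each linear factor preserves real stability using the Lieb--Sokal lemma. Concretely, I would introduce a fresh variable $w$ and consider $P(x, w) := p(x)\cdot(1 - \sqrt{c}\, w)$; since $p$ is real stable in $x$ and $(1 - \sqrt{c}\, w)$ is real-rooted (hence real stable) in the disjoint variable $w$, their product $P$ is real stable in the joint variables by the already-established product closure. Because $P$ has degree one in $w$, the Lieb--Sokal lemma asserts that the substitution $w \mapsto -\partial_{x_i}$ yields a polynomial that is either identically zero or real stable; a direct computation identifies this substitution with $(1 + \sqrt{c}\,\partial_{x_i})\,p(x)$. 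Repeating the argument with $(1 + \sqrt{c}\, w)$ in place of $(1 - \sqrt{c}\, w)$ handles the second linear factor, and composing the two operations gives real stability of $(1 - c\,\partial_{x_i}^2)\,p$.

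The hard part is really the Lieb--Sokal step in the third item; once one accepts that lemma the rest is mechanical, but the lemma itself requires a separate justification (another continuity/Hurwitz argument that treats $\partial_{x_i}$ as an infinitesimal translation). A naive reduction to the univariate case through Fact~\ref{fac:equi_def_real_stable} is tempting---one would like to invoke the Hermite--Kakeya--Obreschkoff theorem together with Rolle's theorem, since $p$ and $p'$ interlace---but this approach breaks down because $\partial_{x_i}$ does not commute with the substitution $x = at + b$, and cross-terms from the chain rule obstruct identifying the restricted operator with a univariate $(1 - c\,\partial_t^2)$. This is why the Lieb--Sokal route, which operates natively in the multivariate upper-half-plane setting, is the cleanest option.
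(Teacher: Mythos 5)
The paper does not supply its own proof of this fact; it states it as a citation to the literature (product closure to \cite{bbl09}, restriction to real values to \cite{wagner}, and $(1-c\,\partial_{x_i}^2)$ to \cite{ag14}), so there is no internal argument to compare you against. Your proof is correct and is, in substance, the standard argument those references rely on. The product item is immediate from the definition, and your Hurwitz argument for restriction to a real value is the usual one (including the correct observation that one must exclude the identically-zero degenerate limit). For the differential operator, factoring $1-c\,\partial_{x_i}^2=(1-\sqrt{c}\,\partial_{x_i})(1+\sqrt{c}\,\partial_{x_i})$ and invoking the Lieb--Sokal lemma on each linear factor via the auxiliary polynomial $p(x)(1\mp\sqrt{c}\,w)$ is exactly the right move; the computation $A(x)-\partial_{x_i}B(x)$ with $A=p$, $B=\mp\sqrt{c}\,p$ does produce $(1\pm\sqrt{c}\,\partial_{x_i})p$ as you claim. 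Your remark that a naive reduction through Fact~\ref{fac:equi_def_real_stable} would fail is also accurate: under $x=at+b$ one has $\frac{\d}{\d t}=\sum_j a_j\partial_{x_j}$, so $\partial_{x_i}^2$ cannot be isolated as a univariate $\partial_t^2$, and the multivariate Lieb--Sokal mechanism genuinely is needed. One tiny gap in uniformity: you flag the possibly-identically-zero outcome in the Hurwitz step but not after the Lieb--Sokal steps, where the same caveat formally applies (if $(1+\sqrt{c}\,\partial_{x_i})p\equiv 0$ the conclusion degenerates); this is a standard convention issue and does not affect the substance.
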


\subsection{Hyperbolic polynomials}

\begin{definition}[Hyperbolic polynomials]\label{def:hyperbolic_polyn}
    A homogeneous polynomial $h\in \R[x_1,\cdots,x_m]$ is hyperbolic with respect to vector $e\in \R^m$ with $h(e) > 0$, if for all $x\in \R^m$, the univariate polynomial $t \mapsto h(te - x)$ only has real roots.
    
    Furthermore, if $h$ has degree $d$, fix any $x\in \R^m$ we can write
    \begin{align*}
        h(te - x) = h(e)\prod_{i=1}^d (t - \lambda_i(x)),
    \end{align*}
    where $\lambda_1(x) \ge \lambda_2(x) \ge \cdots \ge \lambda_d(x)$ are the real roots of the univariate polynomial $h(te - x)$. In particular,
    \begin{align*}
        h(x) = h(e) \prod_{i=1}^d \lambda_i(x).
    \end{align*}
    We denote  $\lambda_i(x)$ as the $i$-th eigenvalue of $x$.
\end{definition}

\begin{definition}[hyperbolicity cone]\label{def:hyperbolic_cone}
    Let $h \in \R[x_1,\cdots,x_m]$ be a degree $d$ hyperbolic polynomial with respect to  $e\in \R^m$.
    For any $x\in \R^m$, let $\lambda_1(x) \geq \cdots \geq \lambda_d(x) \in \R^d$ be the real roots of $h(te - x)$.
    Define the hyperbolicity cone of $h$ as 
    \begin{align*}
        \Gamma_{++}^h := \{x ~:~ \lambda_d(x) > 0\}.
    \end{align*}
    Furthermore, define the closure of $\Gamma_+^h$ as
    \begin{align*}
        \Gamma_{+}^h := \{x ~:~ \lambda_d(x) \ge 0\}.
    \end{align*}
\end{definition}

\begin{fact}\label{fac:real_stable_hyperbolic}
    For any $e \in \R^m_{>0}$ and
    any homogeneous real-stable polynomial $h \in \R[x_1,\cdots,x_m]$, we have $h$ is hyperbolic with respect to $e$. In other words, $\R_{>0}^m \subseteq \Gamma_+^h$.
\end{fact}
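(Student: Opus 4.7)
The plan is to reduce both claims directly to the equivalent definition of real-stability stated in Fact~\ref{fac:equi_def_real_stable}, which says that a polynomial $p\in \R[x_1,\dots,x_m]$ is real-stable iff for every $a\in \R^m_{>0}$ and $b\in \R^m$ the univariate polynomial $p(at+b)$ is not identically zero and has only real roots. The key observation is that this characterization almost literally matches the two things we need: hyperbolicity of $h$ in direction $e$ is a statement about real-rootedness of $h(te-x)$, and membership of $x$ in $\Gamma_+^h$ is a statement about the signs of those roots.

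For hyperbolicity, I would fix an arbitrary $x\in \R^m$ and apply Fact~\ref{fac:equi_def_real_stable} with $a=e\in \R^m_{>0}$ and $b=-x$; then $h(at+b)=h(te-x)$ is not identically zero and has only real roots, which is exactly the defining property of hyperbolicity. A minor sign point to dispatch is the convention $h(e)>0$ in Definition~\ref{def:hyperbolic_polyn}: taking $a=e$ and $b=0$ gives $h(et)=t^d h(e)\not\equiv 0$, hence $h(e)\neq 0$, and real-stability is invariant under replacing $h$ by $-h$, so we may assume $h(e)>0$ without loss of generality.

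For the containment $\R^m_{>0}\subseteq \Gamma_+^h$, I would pick any $x\in \R^m_{>0}$ and show that every root of $h(te-x)$ is non-negative. Suppose for contradiction that $t_0\le 0$ is a root; I would then use homogeneity: $h(t_0 e - x) = 0$ implies $h(-t_0 e + x) = (-1)^d h(t_0 e - x) = 0$. But $-t_0 e + x\in \R^m_{>0}$ (strictly, since $x\in \R^m_{>0}$ and $-t_0\ge 0$), and applying Fact~\ref{fac:equi_def_real_stable} once more with $a=-t_0 e + x\in \R^m_{>0}$ and $b=0$ gives $h((-t_0 e + x)t)=t^d\,h(-t_0 e + x)\not\equiv 0$, forcing $h(-t_0 e + x)\ne 0$, a contradiction. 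Therefore all roots of $h(te-x)$ are strictly positive, so $x\in \Gamma_{++}^h\subseteq \Gamma_+^h$.

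There is no real obstacle here: the argument is essentially a two-line unpacking of Fact~\ref{fac:equi_def_real_stable} combined with homogeneity, and the only thing to be careful about is the sign convention on $h(e)$ and the fact that one should rule out the root $t_0=0$ as well as negative roots (both handled uniformly by the homogeneity-plus-nonvanishing argument above).
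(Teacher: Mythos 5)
Your first step — applying Fact~\ref{fac:equi_def_real_stable} with $a=e$, $b=-x$ to conclude that $h(te-x)$ is real-rooted for every $x$ — is exactly the paper's entire proof. The difference is what comes after: the paper stops there and presents the containment $\R^m_{>0}\subseteq\Gamma^h_+$ as a mere rephrasing of hyperbolicity, whereas you actually prove it. Your homogeneity-plus-nonvanishing argument (if $t_0\le 0$ were a root then $h(-t_0e+x)=0$ with $-t_0e+x\in\R^m_{>0}$, contradicting the $p(at+b)\not\equiv 0$ clause of Fact~\ref{fac:equi_def_real_stable} with $a=-t_0e+x$, $b=0$) is clean, and it yields the strictly stronger conclusion $\R^m_{>0}\subseteq\Gamma^h_{++}$. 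You also correctly flag the $h(e)>0$ normalization required by Definition~\ref{def:hyperbolic_polyn}, which the paper's one-line proof silently absorbs; your observation that $h(e)\ne 0$ follows from the same non-identically-zero clause, and that one can pass to $-h$ without affecting roots or the cone, is the right way to dispatch it. In short, the approach is the same but your version is more careful and complete than the paper's, and it documents a small gap (the cone containment is asserted, not argued) in the paper's stated proof.
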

\begin{proof}
    For any $e \in \R^m_{>0}$ and for any $x \in \R^m$, by Fact \ref{fac:equi_def_real_stable} (with replacing $a$ by $e$ and $b$ by $x$), the uni-variate polynomial $h(te - x)$ is real rooted. By Definition \ref{def:hyperbolic_polyn} $h$ is hyperbolic with respect to direction $e$.
\end{proof}

\begin{definition}[Hyperbolic trace, rank, and spectral norm]\label{def:hyperbolic_trace}
    Let $h \in \R[x_1,\cdots,x_m]$ be a degree $d$ homogeneous hyperbolic polynomial with respect to  $e\in \R^m$. For any $x\in \R^m$, let $\lambda_1(x) \geq \cdots \geq \lambda_d(x) \in \R^d$ be the real roots of $h(te - x)$. Define
    \begin{align*}
        & \tr_h[x] := \sum_{i=1}^d \lambda_i(x), ~~~\rank(x)_h := |\{i ~:~ \lambda_i(x) \neq 0\}|, \\
        & \|x\|_h := \max_{i\in [d]} |\lambda_i(x)| = \max\{\lambda_1(x), -\lambda_d(x)\}.
    \end{align*}
\end{definition}

\begin{fact}[Hyperbolic norm in terms of largest root of characteristic polynomial]\label{fact:hyperbolic-norm-max-root}
    Let $h \in \R[x_1,\cdots,x_m]$ be a degree $d$ hyperbolic polynomial with respect to  $e\in \R^m$. For any $v_1,\cdots v_m \in \Gamma_+^h$ and any $s_1,\cdots,s_m \in \R$, we have
    \begin{align*}
        \left\| \sum_{i=1}^m s_i v_i \right\|_h = \lambda_{\max} \left( h\left(xe - \sum_{i=1}^n s_i v_i\right)\cdot h\left(xe + \sum_{i=1}^n s_i v_i\right) \right)
    \end{align*}
    where $\lambda_{\max}(f(x))$ is the maximum root of $f(x)$.
\end{fact}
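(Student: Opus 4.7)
The plan is to unpack the definitions and exploit the homogeneity of $h$ to relate the roots of $h(xe-v)$ and $h(xe+v)$, where I write $v := \sum_{i=1}^n s_i v_i$ for brevity. By Definition~\ref{def:hyperbolic_trace}, we have $\|v\|_h = \max\{\lambda_1(v), -\lambda_d(v)\}$, where $\lambda_1(v) \ge \cdots \ge \lambda_d(v)$ are the hyperbolic eigenvalues, i.e., the roots of $h(te-v)$ as a univariate polynomial in $t$. So the goal reduces to identifying the full root set of $h(xe - v) \cdot h(xe + v)$ and taking its maximum.

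First I would observe that by Definition~\ref{def:hyperbolic_polyn},
\begin{align*}
    h(xe - v) = h(e) \prod_{i=1}^d (x - \lambda_i(v)),
\end{align*}
so the roots of $h(xe-v)$ are exactly $\lambda_1(v), \ldots, \lambda_d(v)$. Next I would compute the roots of $h(xe + v)$ using that $h$ is homogeneous of degree $d$: substituting $x = -y$,
\begin{align*}
    h(xe + v) = h(-(ye - v)) = (-1)^d h(ye - v) = (-1)^d h(e) \prod_{i=1}^d (y - \lambda_i(v)) = h(e) \prod_{i=1}^d (x + \lambda_i(v)),
\end{align*}
since the $d$ factors of $-1$ pulled out of the product cancel the overall $(-1)^d$. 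Hence the roots of $h(xe+v)$ are $-\lambda_1(v), \ldots, -\lambda_d(v)$.

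Finally, I would combine the two root sets: the roots of the product $h(xe-v)\cdot h(xe+v)$ are $\{\lambda_i(v)\}_{i=1}^d \cup \{-\lambda_i(v)\}_{i=1}^d$, whose maximum equals $\max\{\lambda_1(v), -\lambda_d(v)\}$ because $\lambda_1(v)$ is the largest of the $\lambda_i(v)$ and $-\lambda_d(v)$ is the largest of the $-\lambda_i(v)$. This maximum is precisely $\|v\|_h$ by Definition~\ref{def:hyperbolic_trace}, which completes the proof. There is no serious obstacle here: the only point requiring care is the sign-tracking in the homogeneity computation that flips the eigenvalues, and that the assumption $v_i \in \Gamma_+^h$ is in fact not used — hyperbolicity of $h$ in direction $e$ alone guarantees that $h(te - w)$ is real-rooted for every $w \in \R^m$, so the ``eigenvalues'' of $v$ are well-defined.
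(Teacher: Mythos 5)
Your proof is correct and takes essentially the same approach as the paper: both identify the roots of $h(xe-v)$ as $\lambda_i(v)$ and the roots of $h(xe+v)$ as $-\lambda_i(v)$ via homogeneity (the paper phrases this as $\lambda_i(v) = -\lambda_{d-i+1}(-v)$), then take the maximum. Your observation that the hypothesis $v_i \in \Gamma_+^h$ is not actually needed is accurate.
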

\begin{proof}
For any vector $v\in \R^m$, it is easy to see that the $i$-th largest eigenvalue $\lambda_i(v)=-\lambda_{d-i+1}(-v)$. Then, we have
\begin{align*}
    \lambda_{\max} \left( h\left(xe - \sum_{i=1}^n s_i v_i\right)\cdot h\left(xe + \sum_{i=1}^n s_i v_i\right) \right) = &~ \max\left\{\lambda_1\left(\sum_{i=1}^m s_i v_i\right), \lambda_1\left(-\sum_{i=1}^m s_i v_i\right)\right\}\\
    = &~ \max\left\{\lambda_1\left(\sum_{i=1}^m s_i v_i\right), -\lambda_d\left(\sum_{i=1}^m s_i v_i\right)\right\}\\
    = &~ \left\| \sum_{i=1}^m s_i v_i \right\|_h.
\end{align*}
\end{proof}
\begin{remark}
    It is useful to think of hyperbolic polynomials as generalizations of the determinant polynomials. Let $X \in S^n(\R)$ be a symmetric matrix. Define $h: S^n(\R) \mapsto \R$ as
    \begin{align*}
        h(X) = \det(X).
    \end{align*}
      Then, $h$ is hyperbolic with respect to the identity matrix $I_n \in S^n(\R)$, since for all $X \in S^n(\R)$, the roots of $h(tI - X)$
    are the eigenvalues of $X$, thus $h(tI - X)$ is real-rooted.
    
    The basic concepts for hyperbolic polynomials in Definition \ref{def:hyperbolic_cone} and Definition \ref{def:hyperbolic_trace} also have analogues in linear algebra. To illustrate them, let $h$ be the determinant polynomial:
    \begin{itemize}
        \item The hyperbolicity cone of $h$ is 
        \begin{align*}
            \Gamma_+^h = \{X \in \R^{n \times n} ~:~ \lambda_n(X) > 0\} = \{X \in \R^{n \times n} ~:~ X \succ 0\}.
        \end{align*}
        \item For all $X\in S^n(\R)$, the hyperbolic trace of $X$ is
        \begin{align*}
            \tr_h[X] = \sum_{i=1}^n \lambda_i(X) = \tr[X].
        \end{align*}
        \item For all $X\in S^n(\R)$, the hyperbolic rank of $X$ is
        \begin{align*}
            \rank_h(X) = |\{i ~:~ \lambda_i(X) \neq 0\}| = \rank ( X ).
        \end{align*}
        \item For all $X\in S^n(\R)$, the hyperbolic spectral norm of $X$ is
        \begin{align*}
            \|X\|_h = \max_{i \in [n]} |\lambda_i(X)| = \|X\|.
        \end{align*}
        here $\|X\|$ denotes the spectral norm of $X$.
    \end{itemize}
\end{remark}

\begin{definition}[Directional derivative]
    Given a polynomial $h\in \R[x_1,\cdots,x_m]$ and a vector $v\in \R^m$, define the directional derivative of $h$ in direction $v$ as
    \begin{align*}
        D_v h := \sum_{i=1}^n v_i \frac{ \partial h }{ \partial x_i }.
    \end{align*}
\end{definition}

\begin{fact}[Equivalent definition of directional derivative]\label{def:directional-derivative}
    Given polynomial $h\in \R[x_1,\cdots,x_m]$ and vectors $x,v\in \R^m$,
    \begin{align}
    (D_v h) (x) = & ~ \frac{\d}{ \d t } h(x + t v), \label{eq:D_v_h} \\
    (D_v^2 h) (x) = & ~ \frac{\d^2}{\d t^2} h(x + t v). \label{eq:D_v^2_h}
    \end{align}
\end{fact}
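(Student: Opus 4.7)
\textbf{Proof plan for Fact~\ref{def:directional-derivative}.} The plan is to apply the multivariate chain rule twice, once to get \eqref{eq:D_v_h} and once more (or equivalently differentiate \eqref{eq:D_v_h} a second time) to obtain \eqref{eq:D_v^2_h}. Since both sides of each equation are polynomials in $x$ obtained from $h$ by a linear differential operator, the identities can be verified on monomials by linearity; but the cleanest route is simply the chain rule, treating $t \mapsto h(x+tv)$ as a univariate polynomial in $t$ with coefficients depending polynomially on $x$.

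First I would define the auxiliary function $f(t) := h(x + tv)$, regarded as a polynomial in $t$ for each fixed $x\in\R^m$. Writing $x + tv = (x_1 + tv_1, \ldots, x_m + tv_m)$ and applying the ordinary chain rule coordinate by coordinate gives
\begin{align*}
    f'(t) \;=\; \sum_{i=1}^m v_i\, \frac{\partial h}{\partial x_i}(x+tv).
\end{align*}
Evaluating at $t=0$ yields $f'(0) = \sum_{i=1}^m v_i\,\partial_{x_i} h(x) = (D_v h)(x)$ by the definition of the directional derivative, which is \eqref{eq:D_v_h} (read with the implicit $t=0$).

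For \eqref{eq:D_v^2_h} I would differentiate $f'(t)$ one more time using the chain rule:
\begin{align*}
    f''(t) \;=\; \sum_{i=1}^m v_i \sum_{j=1}^m v_j\, \frac{\partial^2 h}{\partial x_i \partial x_j}(x+tv) \;=\; \sum_{i,j} v_i v_j\, \partial_{x_i}\partial_{x_j} h(x+tv).
\end{align*}
On the other hand, applying the operator $D_v$ twice to $h$ and expanding gives $(D_v^2 h)(x) = \sum_{i,j} v_i v_j\, \partial_{x_i}\partial_{x_j} h(x)$, which matches $f''(0)$. This establishes the second identity.

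There is no real obstacle here; the only care needed is to recognize the implicit evaluation at $t=0$ on the right-hand sides (otherwise the equalities would compare a polynomial in $x$ with a polynomial in $x$ and $t$) and to note that the equalities of polynomials in $x$ follow because the chain-rule computation above holds identically in $x$. If one wishes to avoid any ambiguity, one can instead phrase the statement as: the polynomial expansion $h(x+tv) = \sum_{k\ge 0} \frac{t^k}{k!}(D_v^k h)(x)$ holds, which follows from Taylor's theorem for multivariate polynomials; equations \eqref{eq:D_v_h} and \eqref{eq:D_v^2_h} are then just the $k=1$ and $k=2$ coefficients read off this expansion.
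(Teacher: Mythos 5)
Your proof is correct, and the paper itself states this as a Fact without proof (it is a standard chain-rule identity). Your chain-rule computation and the alternative Taylor-expansion formulation $h(x+tv)=\sum_{k\ge 0}\tfrac{t^k}{k!}(D_v^k h)(x)$ are both right, and you correctly flag the one subtlety: as written, the right-hand sides of \eqref{eq:D_v_h} and \eqref{eq:D_v^2_h} are polynomials in $t$ (and $x$), so the intended reading is an implicit evaluation at $t=0$, consistent with how the paper uses the fact elsewhere (e.g.\ in Claim~\ref{clm:hyperbolic_lemma_3.1_in_kls19} and Fact~\ref{fac:direct_deriv_hyperbolic}). Nothing is missing.
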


\begin{fact}[Directional derivative of hyperbolic polynomials]\label{fac:direct_deriv_hyperbolic}
    Let $h\in \R[x_1,\cdots,x_m]$ denote a hyperbolic polynomial with respect to $e\in \R^m$. Let $v_1,\dots, v_n\in \R^m$ be $n$ vectors such that $\forall i\in [n]$, $\rank_h(v_i) \le 1$. Then, for any $t\in \R$,
    \begin{align*}
    (\prod_{i=1}^m D_{v_i} h) (te) = (\prod_{i=1}^n \partial_{z_i} h) (te + \sum_{i=1}^n z_i v_i).
    \end{align*}
    Moreover, for any $T \subseteq [m]$, we have
    \begin{align*}
    (\prod_{i \in T} D_{v_i} h) (te) = (\prod_{i\in T} \partial_{z_i} h) (te + \sum_{i=1}^n z_i v_i) \Big|_{z = 0}.
    \end{align*}
\end{fact}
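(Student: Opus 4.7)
The plan is to deduce both identities from the multivariable chain rule, with the rank-one hypothesis on each $v_i$ invoked only to show that the right-hand side of the first identity is already constant in $z$.

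I would start with the ``moreover'' part. For each fixed $i \in [n]$, Eq.~\eqref{eq:D_v_h} of Fact~\ref{def:directional-derivative} applied to the single-variable slice $s\mapsto h(y + s v_i)$ with $y = te + \sum_{j\ne i} z_j v_j$ yields the chain-rule identity
\begin{align*}
\partial_{z_i}\,h\bigl(te + \sum_{j=1}^n z_j v_j\bigr) = (D_{v_i} h)\bigl(te + \sum_{j=1}^n z_j v_j\bigr).
\end{align*}
Since each $D_{v_i} = \sum_{k=1}^m (v_i)_k \partial_{x_k}$ is a constant-coefficient first-order linear differential operator, the $D_{v_i}$'s commute pairwise and commute with every $\partial_{z_j}$. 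Iterating the displayed identity over $i \in T$ in any order therefore gives
\begin{align*}
\Big(\prod_{i\in T}\partial_{z_i}\Big)\,h\bigl(te + \sum_{j=1}^n z_j v_j\bigr) = \Big(\prod_{i \in T} D_{v_i} h\Big)\bigl(te + \sum_{j=1}^n z_j v_j\bigr),
\end{align*}
and substituting $z=0$ yields the second identity.

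Turning to the first identity, the right-hand side is a priori a polynomial in $(t,z)$ while the left-hand side depends on $t$ alone. Equality at $z=0$ already follows from the previous paragraph with $T=[n]$, so it remains to argue that the right-hand side is in fact independent of $z$. Here the rank-one hypothesis enters. I would show that $s\mapsto h(y + s v_i)$ is affine in $s$ for every $y\in \R^m$: for $y$ in the open hyperbolicity cone $\Gamma_{++}^h$, $h$ is also hyperbolic in direction $y$, so $h(y - s v_i) = h(y)\prod_{j=1}^d (1 - s\,\lambda_j^y(v_i))$, and the hyperbolic rank being invariant under changing the base direction within $\Gamma_{++}^h$ means only one $\lambda_j^y(v_i)$ is nonzero; the affine-in-$s$ behaviour extends to all $y \in \R^m$ by Zariski density. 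Iterating over $i\in[n]$ shows $h(te + \sum_j z_j v_j)$ is multilinear in $(z_1,\dots,z_n)$, so $\prod_{i=1}^n \partial_{z_i}$ applied to it is a polynomial of total degree zero in every $z_i$---a constant in $z$, which must equal its value at $z=0$, namely $(\prod_{i=1}^n D_{v_i} h)(te)$.

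The main obstacle I anticipate is the rank-invariance statement used above: that $\rank_h(v_i)$ is unchanged when $h$ is re-viewed as hyperbolic with respect to any $y \in \Gamma_{++}^h$ rather than $e$. This is a classical fact about hyperbolic polynomials (cf.~\cite{b10notes}), stemming from the connectedness of the open cone and continuity of the eigenvalue branches; I would cite it rather than re-derive it. Everything else is routine bookkeeping with constant-coefficient differential operators, so the proof is short once this fact is in hand.
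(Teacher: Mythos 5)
The paper states Fact~\ref{fac:direct_deriv_hyperbolic} without proof, so there is no argument of the paper's to compare against; you are supplying the missing one, and what you wrote is correct. The ``moreover'' part is the multivariable chain rule together with commutativity of the constant-coefficient operators $D_{v_i}$ and $\partial_{z_j}$, exactly as you say. For the first identity you correctly observe that the only extra content is that the right-hand side is independent of $z$, and you reduce this to multilinearity of $h\bigl(te + \sum_j z_j v_j\bigr)$ in $z$, which in turn rests on the claim that $\rank_h(v_i)\le 1$ forces $D_{v_i}^2 h \equiv 0$. Your route to that claim --- invariance of the hyperbolic rank under changing the base direction within $\Gamma_{++}^h$ (a classical fact going back to G{\aa}rding, also in Renegar's hyperbolic programming work), hence $(D_{v_i}^2 h)(y)=0$ for every $y\in\Gamma_{++}^h$, then Zariski density of the open cone to conclude $D_{v_i}^2 h\equiv 0$ on all of $\R^m$ --- is sound, and you are right that this is the one place the rank-one hypothesis does real work, so it merits the explicit citation. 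It is worth noting that the paper quietly invokes the very same consequence (``$\rank_h(v)\le 1$ implies $D_v^k h\equiv 0$ for all $k\ge 2$'') without justification inside the proof of Claim~\ref{clm:hyperbolic_lemma_3.1_in_kls19} and implicitly in Fact~\ref{fac:first_order_expand}, so your argument also closes that gap. One typographical point: the index bounds in the statement ($\prod_{i=1}^m D_{v_i}$ and ``$T\subseteq[m]$'') should read $n$ rather than $m$; your proof treats them as $n$ throughout, which is clearly the intended reading.
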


\begin{fact}[First-order expansion of hyperbolic polynomial]\label{fac:first_order_expand}
Let $h \in \R[x_1,\cdots,x_m]$ be any hyperbolic polynomial. For any vectors $v_1,\dots,v_n\in \R^m$ such that $\forall i\in [n]$, $\rank_h(v_i)\leq 1$, and for any real vector $x\in \R^m$,
\begin{align*}
    h(x\pm\sum_{i=1}^n v_i)=\prod_{i=1}^n (1\pm\partial_{z_i}) h(x+\sum_{i=1}^n z_i v_i) \Bigg|_{z=0}.
\end{align*}
Furthermore, for any $S\subseteq [n]$,
\begin{align*}
    h(x\pm\sum_{i\in S} v_i)=\prod_{i\in S} (1\pm\partial_{z_i}) h(x+\sum_{i=1}^n z_i v_i) \Bigg|_{z=0}.
\end{align*}
\end{fact}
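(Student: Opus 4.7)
The plan is to reduce the identity to the multilinearity in $z_1, \dots, z_n$ of the auxiliary polynomial $P(z_1,\dots,z_n) := h(x + \sum_{i=1}^n z_i v_i) \in \R[z_1,\dots,z_n]$, and then read off the claimed formula from a Taylor expansion. Assuming $P$ has degree at most $1$ in each $z_i$, one can write $P(z) = \sum_{T \subseteq [n]} a_T \prod_{i \in T} z_i$, and a direct computation gives
\begin{align*}
\prod_{i=1}^n (1 \pm \partial_{z_i})\, P(z) \,\bigg|_{z=0} \;=\; \sum_{T \subseteq [n]} (\pm 1)^{|T|}\, a_T \;=\; P(\pm \mathbf{1}),
\end{align*}
since each composition of partial derivatives extracts a single Taylor coefficient and multilinearity kills every higher-order monomial. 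Noting $P(\mathbf{1}) = h(x + \sum_i v_i)$ and $P(-\mathbf{1}) = h(x - \sum_i v_i)$ gives the first identity; the ``furthermore'' statement follows by freezing $z_i = 0$ for $i \notin S$ and applying the same observation to the (still multilinear) restriction.

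Hence the entire proof reduces to showing that $P$ is multilinear. Freezing all but one variable of $P$, this in turn reduces to the following single-variable key lemma, which is the main technical step: \emph{if $v \in \R^m$ with $\rank_h(v) \le 1$, then $h(y + tv)$ is affine in $t$ for every $y \in \R^m$}, or equivalently $D_v^k h \equiv 0$ for all $k \ge 2$.

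To prove the key lemma, pick any $y \in \Gamma_{++}^h$. A classical fact from the theory of hyperbolic polynomials is that the hyperbolicity cone and the hyperbolic rank are intrinsic to $h$ and do not depend on the reference direction: $h$ is hyperbolic in direction $y$ with the same cone, and $\rank_h(v) \le 1$ continues to hold with $y$ playing the role of $e$. Consequently $h(ty - v)$ has $0$ as a root of multiplicity at least $d-1$ in the variable $t$. Taylor-expanding $h(ty + sv)$ in $s$ around $s = 0$, setting $s = -1$, and using the homogeneity identity $D_v^k h(ty) = t^{d-k} D_v^k h(y)$, one obtains
\begin{align*}
h(ty - v) \;=\; \sum_{k=0}^d \frac{(-1)^k}{k!}\, t^{d-k}\, D_v^k h(y),
\end{align*}
and the multiplicity condition forces every coefficient of $t^j$ with $j \le d-2$ to vanish, i.e., $D_v^k h(y) = 0$ for every $k \ge 2$. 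Thus $D_v^k h$ is a polynomial that vanishes on the open cone $\Gamma_{++}^h$, so it vanishes identically on $\R^m$.

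The main obstacle is this last step and, inside it, the use of direction-independence of $\rank_h(v)$: the definition in the paper uses the reference direction $e$, but the argument above needs to invoke the rank-one hypothesis with the alternative direction $y$. This is standard background in the theory of hyperbolic polynomials (descended from G{\aa}rding); in a self-contained write-up one could either cite it or identify $d - \rank_h(v)$ with the (manifestly intrinsic) order of vanishing of $h$ at the point $v$. Once the key lemma is established, the remaining steps are pure bookkeeping.
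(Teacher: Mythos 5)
Your argument is correct but organized differently from the paper's. The paper proceeds by induction on $n$: the base case $n=1$ is $h(x\pm v_1)=(1\pm D_{v_1})h(x)$, asserted to ``follow from $\rank_h(v_1)\leq 1$'', followed by a routine inductive step that commutes the operators. You instead observe that $\prod_{i}(1\pm\partial_{z_i})P\big|_{z=0}=P(\pm\mathbf{1})$ for any multilinear polynomial $P$, which collapses the induction into a single Taylor computation --- cleaner bookkeeping, same logical content. Both proofs rest on the identical key lemma, that $\rank_h(v)\leq 1$ forces $D_v^k h\equiv 0$ for all $k\geq 2$; the paper invokes this silently here and again in Claim~\ref{clm:hyperbolic_lemma_3.1_in_kls19}, whereas you supply a proof. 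Your proof of the lemma is correct: Taylor-expanding $h(ty+sv)$ and using homogeneity does give $D_v^k h(y)=0$ for $k\geq 2$ once one knows $h(ty-v)$ vanishes to order $\geq d-1$ at $t=0$, and a polynomial vanishing on the open cone $\Gamma_{++}^h$ vanishes identically. The single step you outsource --- that the multiplicity of the zero eigenvalue is independent of the reference direction $e\in\Gamma_{++}^h$, equivalently that $d-\rank_h(v)$ equals the order of vanishing of $h$ at $v$ --- is indeed a genuine classical theorem about localizations of hyperbolic polynomials (G{\aa}rding), not a definitional triviality, so flagging it for citation is the right call. Net effect: your write-up is more self-contained than the paper's treatment of this Fact, at the cost of leaning on that one nontrivial background result which the paper hides entirely.
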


\begin{proof}
Prove by induction on $n$.

If $n=1$, we have
\begin{align*}
    h(x\pm v_1)=(1\pm D_{v_1})h(x)=(1\pm \partial_{z_1})h(x+z_1v_1)\Big|_{z_1=0},
\end{align*}
which follows from $\rank_h(v_1)\leq 1$.

Assume it holds for $n=k$.

When $n=k+1$, let $x'=x\pm \sum_{i=1}^{k} v_i$. We have
\begin{align*}
    h(x\pm \sum_{i=1}^{k+1} v_i)=&~ h(x'\pm v_{k+1})\\
    = &~ (1\pm \partial_{z_{k+1}})h(x'+z_{k+1}v_{k+1})\Big|_{z_{k+1}=0}\\
    = &~ (1\pm \partial_{z_{k+1}})h(x+z_{k+1}v_{k+1}\pm \sum_{i=1}^k v_i)\Big|_{z_{k+1}=0}\\
    = &~ (1\pm \partial_{z_{k+1}}) \prod_{i=1}^k (1\pm \partial_{z_i}) h(x+\sum_{i=1}^{k+1}z_iv_i)\Big|_{z=0}\\
    = &~ \prod_{i=1}^{k+1} (1\pm \partial_{z_i}) h(x+\sum_{i=1}^{k+1}z_iv_i)\Big|_{z=0},
\end{align*}
where the second step follows from $\rank_h(v_{k+1})\leq 1$, the forth step follows from the induction hypothesis, and the last step follows from the operators $(1\pm \partial_{z_i})$ and $(1\pm \partial_{z_j})$ commute for $i\ne j\in [k+1]$.

Hence, the fact is proved. And for the furthermore part, it follows from the remaining variables $z_i$ for $i\notin S$ will disappear when we set $z=0$.
\end{proof}

\subsection{Interlacing families}
We recall the definition and properties of interlacing families from \cite{mss15a}.
\begin{definition}[Interlacing polynomials and common interlacing] \label{def:interlacing}
We say a real rooted polynomial $g(x) = C \prod_{i=1}^{n-1} (x - \alpha_i)$ \emph{interlaces} the real rooted polynomial $f(x) = C' \prod_{i=1}^n (x - \beta_i)$ if 
\begin{align*}
\beta_1 \leq \alpha_1 \leq \dots \leq \alpha_{n-1} \leq \beta_n.
\end{align*}
We say the polynomials $f_1, \dots, f_k$ have a \emph{common interlacing} if there is a polynomial $g$ that interlaces each of the $f_i$.
\end{definition}

The following lemma relates the roots of a sum of polynomials to those of a common interlacing. 

\begin{lemma}[Lemma 4.2, \cite{mss15a}]\label{lem:interlacing_ub_root}
Let $f_1, \dots, f_k$ be degree $d$ real rooted polynomials with positive leading coefficients.  Define 
\begin{align*}
f_{\emptyset} := \sum_{i=1}^k f_i.
\end{align*}
If $f_1, \dots, f_k$ have a common interlacing, then there exists an $i \in [k]$ such that the largest root of $f_i$ is at most the largest root of $f_{\emptyset}$.
\end{lemma}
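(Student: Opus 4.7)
The plan is to compare signs of $f_\emptyset$ and the individual $f_i$ at a convenient reference point determined by the common interlacer. First, let $g$ be a common interlacer of $f_1, \dots, f_k$, and let $\alpha$ denote its largest root. By Definition~\ref{def:interlacing}, for each $i\in[k]$ the polynomial $f_i$ has exactly one root in the half-line $[\alpha, \infty)$, namely its largest root, which I denote $\beta_i$. Since $f_i$ has degree $d$ and positive leading coefficient, this forces $f_i$ to be nonpositive on $[\alpha, \beta_i]$ and strictly positive on $(\beta_i, \infty)$; in particular $f_i(\alpha) \leq 0$.

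Summing these inequalities across $i$ gives $f_\emptyset(\alpha) = \sum_{i=1}^k f_i(\alpha) \leq 0$. Because $f_\emptyset$ itself has positive leading coefficient and tends to $+\infty$, it must possess at least one real root in $[\alpha, \infty)$; let $\lambda$ denote its largest root. Then $\lambda \geq \alpha$ and $f_\emptyset(x) > 0$ for every $x > \lambda$.

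Next, I would extract the desired index by a finite pigeonhole argument. Choose any sequence $x_n \downarrow \lambda$ with $x_n > \lambda$ for all $n$. For each $n$, the strict positivity $f_\emptyset(x_n) = \sum_{i=1}^k f_i(x_n) > 0$ forces $f_i(x_n) > 0$ for at least one index $i \in [k]$; by the sign analysis of the first paragraph, any such $i$ must satisfy $\beta_i < x_n$. Since there are only $k$ indices, some fixed $i^\star$ occurs for infinitely many $n$, and letting $n \to \infty$ along that subsequence yields $\beta_{i^\star} \leq \lambda$, which is precisely the statement of the lemma.

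The main subtlety I anticipate is the degenerate case where $\alpha$ coincides with one or more $\beta_i$, or where $\lambda = \alpha$. In such situations the inequalities $f_i(\alpha) \leq 0$ and $f_\emptyset(\alpha) \leq 0$ remain valid (as non-strict inequalities), and the pigeonhole extraction still succeeds because the sign argument only relied on the positive leading coefficient and the fact that $f_i$ has no other roots in $[\alpha,\infty)$ besides $\beta_i$. These edge cases are therefore handled uniformly by the same argument, without any case split.
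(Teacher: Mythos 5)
Your proof is correct and follows the same strategy as the proof in \cite{mss15a} (Lemma 4.2), which the paper cites without reproducing. Both begin by evaluating each $f_i$ at the largest root $\alpha$ of the common interlacer to conclude $f_i(\alpha)\le 0$, hence $f_\emptyset(\alpha)\le 0$, and then use the positive leading coefficient of $f_\emptyset$ to place its largest root $\lambda$ at or above $\alpha$. Where you diverge is the final index extraction: the standard argument works directly at $\lambda$, noting $\sum_i f_i(\lambda)=0$ forces some $f_{i}(\lambda)\ge 0$ and then deduces $\beta_i\le\lambda$, which requires a short separate treatment of the degenerate case $\lambda=\alpha$ (where $f_i(\lambda)=0$ might hold with $\lambda$ being a secondary root of $f_i$). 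Your variant, working at a sequence $x_n\downarrow\lambda$ and invoking pigeonhole, handles that degeneracy uniformly without a case split, which is a minor but genuine simplification. One imprecision worth noting: the assertion that ``$f_i$ has exactly one root in $[\alpha,\infty)$'' can fail when the second-largest root of $f_i$ coincides with $\alpha$; however, the sign facts you actually rely on --- $f_i\le 0$ on $[\alpha,\beta_i]$ and $f_i>0$ on $(\beta_i,\infty)$ --- remain valid in that case, so the argument is unaffected.
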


\begin{definition}[Definition 4.3, \cite{mss15a}] \label{def:interlacingfamily}
Let $S_1, \dots, S_n$ be finite sets.  
For each choice of assignment $(s_1, \dots, s_n) \in S_1 \times \dots \times S_n$, let $f_{s_1, \dots, s_n}(x)$ be a real rooted degree $d$ polynomial with positive leading coefficient.  For a partial assignment $s_1, \dots, s_k \in S_1 \times \dots \times S_k$ for $k < n$, we define
\begin{equation} \label{eq:conditionalpoly}
f_{s_1, \dots, s_k } := \sum_{s_{k+1} \in S_{k+1}, \dots, s_n \in S_n} f_{s_1, \dots, s_k, s_{k+1}, \dots, s_n}.
\end{equation}
Note that this is compatible with our definition of $f_{\emptyset}$ from Lemma \ref{lem:interlacing_ub_root}.  We say that the polynomials $\{f_{s_1, \dots, s_n} \}$ form an \emph{interlacing family} if for all $k = 0, \dots, n-1$ and all $(s_1, \dots, s_k) \in S_1 \times \dots \times S_k$, the polynomials 
\begin{align*}
\Big\{f_{s_1,\cdots,s_k, t}:~t\in S_{k+1}\Big\}  
\end{align*}
have a common interlacing. 
\end{definition}

The following lemma relates the roots of the interlacing family to those of $f_{\emptyset}$. 
\begin{lemma}[Theorem 4.4, \cite{mss15a}] \label{lem:rootbound}
Let $S_1, \dots, S_n$ be finite sets and let $\{f_{s_1, \dots, s_n}\}$ be an interlacing family. Then there exists some $(s_1, \dots, s_n) \in S_1 \times \dots \times S_n$ so that the largest root of $f_{s_1, \dots, s_n}$ is upper bounded by the largest root of $f_{\emptyset}$.
\end{lemma}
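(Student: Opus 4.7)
The plan is to prove this by descending through the levels of the interlacing family and invoking Lemma \ref{lem:interlacing_ub_root} at each level. The idea is that the interlacing family structure, as defined in Definition \ref{def:interlacingfamily}, guarantees that at every partial-assignment level the children of a node share a common interlacing, and by definition their sum is the parent polynomial $f_{s_1,\dots,s_k}$. This is exactly the hypothesis of Lemma \ref{lem:interlacing_ub_root}, which lets us pick a child whose largest root is no bigger than the parent's largest root.

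More concretely, I would proceed by finite induction on the depth $k = 0, 1, \dots, n$. At depth $k = 0$ we have the root polynomial $f_\emptyset$. Given that we have already selected $s_1, \dots, s_k$ with
\begin{align*}
\lambda_{\max}(f_{s_1,\dots,s_k}) \le \lambda_{\max}(f_\emptyset),
\end{align*}
I apply the interlacing family hypothesis: the family $\{f_{s_1,\dots,s_k,t} : t \in S_{k+1}\}$ has a common interlacing, and by the compatibility relation \eqref{eq:conditionalpoly} we have
\begin{align*}
f_{s_1,\dots,s_k} = \sum_{t \in S_{k+1}} f_{s_1,\dots,s_k,t}.
\end{align*}
Lemma \ref{lem:interlacing_ub_root} then produces some $s_{k+1} \in S_{k+1}$ with $\lambda_{\max}(f_{s_1,\dots,s_k,s_{k+1}}) \le \lambda_{\max}(f_{s_1,\dots,s_k})$. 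Chaining this inequality with the inductive hypothesis yields $\lambda_{\max}(f_{s_1,\dots,s_{k+1}}) \le \lambda_{\max}(f_\emptyset)$, completing the induction step. After $n$ steps we arrive at a full assignment $(s_1,\dots,s_n)$ with the desired property.

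One subtle point that must be addressed (and which I would verify explicitly before invoking Lemma \ref{lem:interlacing_ub_root}) is that each intermediate polynomial $f_{s_1,\dots,s_k}$ is itself real-rooted with positive leading coefficient, so that ``largest root'' makes sense. This follows because polynomials with a common interlacing have a real-rooted convex combination (equivalently, real-rooted sum when all leading coefficients are positive); applied inductively from the leaves, where $f_{s_1,\dots,s_n}$ is real-rooted by hypothesis, every internal polynomial in the tree inherits real-rootedness. Since this is essentially a bookkeeping argument and Lemma \ref{lem:interlacing_ub_root} already encodes this fact implicitly, the main conceptual step is simply the level-by-level descent outlined above; I do not foresee any genuine obstacle beyond carefully setting up the induction and invoking the two preceding lemmas correctly.
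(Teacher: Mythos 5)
Your proof is correct and matches the standard Marcus--Spielman--Srivastava argument (Theorem 4.4 of [mss15a]), which this paper cites without reproducing: descend level by level, at each node invoke Lemma \ref{lem:interlacing_ub_root} using the common-interlacing guarantee from Definition \ref{def:interlacingfamily} together with the compatibility of \eqref{eq:conditionalpoly}, and chain the resulting inequalities. You also correctly flag and resolve the one bookkeeping point---that the intermediate polynomials $f_{s_1,\dots,s_k}$ must themselves be real-rooted of degree $d$ with positive leading coefficient, which follows bottom-up from the leaves via Lemma \ref{lem:commoninterlacing}.
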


Finally, we recall a relationship between real-rootedness and common interlacings which has been discovered independently several times \cite{dg94, f80, cs07}.
\begin{lemma}[\cite{dg94, f80, cs07}] \label{lem:commoninterlacing}
Let $f_1, \dots, f_k$ be univariate polynomials of the same degree with positive leading coefficient.  Then $f_1, \dots, f_k$ have a common interlacing if and only if $\sum_{i=1}^k \alpha_i f_i$ is real rooted for all convex combinations $\alpha_i$, $\sum_{i=1}^k \alpha_i = 1$.
\end{lemma}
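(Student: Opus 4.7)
The plan is to prove the two directions separately, with the forward implication reducing to a direct sign argument and the converse resting on a continuity/Helly-type argument.

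For the forward direction, suppose $g$ of degree $d-1$ is a common interlacing with roots $\gamma_1 \leq \cdots \leq \gamma_{d-1}$. The key observation is that at each $\gamma_j$, every $f_i$ has the same sign: the interlacing condition places exactly $j$ roots of $f_i$ at or below $\gamma_j$ and $d-j$ at or above, so $\mathrm{sign}(f_i(\gamma_j)) = (-1)^{d-j}$, independent of $i$ (the edge case where $\gamma_j$ itself coincides with a root of some $f_i$ is handled by a standard perturbation). Therefore any convex combination $F = \sum_i \alpha_i f_i$ with $\alpha_i \geq 0$ and $\sum_i \alpha_i = 1$ satisfies $\mathrm{sign}(F(\gamma_j)) = (-1)^{d-j}$, giving $d-1$ sign changes among $\gamma_1, \ldots, \gamma_{d-1}$. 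Combined with the common positive leading coefficient governing $F(\pm\infty)$, this exhibits $d$ real roots, so $F$ is real-rooted.

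For the converse, I would first treat the base case $k=2$. If every $\alpha f_1 + (1-\alpha) f_2$ is real-rooted for $\alpha \in [0,1]$, the $d$ roots can be tracked as continuous functions of $\alpha$ and, by hypothesis, remain on the real axis throughout. Because a collision of two real roots followed by splitting into a complex-conjugate pair is prohibited, the continuous deformation forces the roots of $f_1$ and $f_2$ to interleave in matching order; a common interlacer can then be chosen by selecting a point from each shared interval $[\max(\beta_{1,j},\beta_{2,j}),\min(\beta_{1,j+1},\beta_{2,j+1})]$. For general $k$, restricting convex combinations to pairs of polynomials shows that every pair $f_i, f_l$ has a common interlacing, which is equivalent to the intervals $[\beta_{i,j}, \beta_{i,j+1}]$ and $[\beta_{l,j}, \beta_{l,j+1}]$ overlapping for each $j$. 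Since pairwise intersecting closed intervals on the real line always share a common point (Helly's theorem in dimension one), $\bigcap_{i=1}^k [\beta_{i,j}, \beta_{i,j+1}]$ is nonempty; picking $\gamma_j$ from each such intersection yields the desired common interlacing polynomial $g(x) = \prod_{j=1}^{d-1}(x - \gamma_j)$.

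The main obstacle is the $k=2$ converse, where one must rule out that the deformation $\alpha f_1 + (1-\alpha) f_2$ transiently produces complex-conjugate root pairs. The cleanest route is a contrapositive argument: if the pair fails to admit a common interlacer, then for some index $j$ the intervals $[\beta_{1,j}, \beta_{1,j+1}]$ and $[\beta_{2,j}, \beta_{2,j+1}]$ are disjoint, and a direct local analysis of the sign pattern of $\alpha f_1 + (1-\alpha) f_2$ on the relevant segment of the real line exhibits an $\alpha \in (0,1)$ for which two real roots are missing, hence a complex-conjugate pair appears and real-rootedness fails. Everything else in the proof is either elementary sign counting or a one-dimensional Helly-type intersection argument.
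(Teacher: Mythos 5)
The paper does not supply a proof of this lemma; it is cited from the literature (Dedieu, Fell, Chudnovsky--Seymour) and used as a black box, so there is no ``paper's own proof'' to compare against.

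Your proposal is a correct sketch of the standard argument for this result. The forward direction is complete and clean: the sign computation $\mathrm{sign}(f_i(\gamma_j)) = (-1)^{d-j}$, independent of $i$, is exactly right, and together with the leading-coefficient sign at $\pm\infty$ it produces $d$ real roots of any convex combination. (The edge case $f_i(\gamma_j)=0$ does not actually need a perturbation argument --- it suffices to observe that every $f_i(\gamma_j)$ is either $0$ or has sign $(-1)^{d-j}$, so $F(\gamma_j)$ does too, and one can count roots with multiplicity directly.) For the converse, reducing to pairs and then invoking one-dimensional Helly on the intervals $[\beta_{i,j},\beta_{i,j+1}]$ for fixed $j$ is precisely the standard route; your observation that the chosen $\gamma_j$ are automatically nondecreasing because $\gamma_j \le \beta_{i,j+1} \le \gamma_{j+1}$ is the point that makes the Helly step yield an actual polynomial $g$.

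The one place where the writeup is genuinely incomplete --- and you correctly flag it --- is the $k=2$ converse. The continuity/root-tracking picture is the right intuition but is not a proof as written: one needs either to carry out the contrapositive sign analysis in detail (if $[\beta_{1,j},\beta_{1,j+1}]$ and $[\beta_{2,j},\beta_{2,j+1}]$ are disjoint, exhibit a specific $\alpha$ and a specific interval on which $\alpha f_1 + (1-\alpha)f_2$ has the wrong number of sign changes), or to cite the obstruction lemma from one of the referenced papers. As a blind reconstruction of a cited result this is a faithful account of the known proof with the hard step honestly identified rather than glossed over; it does not deviate from the literature approach in any essential way.
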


\subsection{Barrier method}\label{sec:prelim-barrier}

\begin{definition}[Upper barrier of the roots of a polynomial]
For a multivariate polynomial $p(z_1,\ldots,z_n)$,
we say $z\in\R^n$ is above all roots of $p$
if for all $t\in\R^n_+$,
\begin{align*} p(z+t)> 0. \end{align*}
We use $\Ab_p$ to denote the set of points which
are above all roots of $p$.
\end{definition}

We use the barrier function as in \cite{mss15b} and \cite{kls20}.
\begin{definition}[Barrier function]
    For a multivariate polynomial $p$ and $z \in \Ab_p$, the barrier function of $p$ in direction $i$ at $z$ is
    \begin{align*}
    \Phi_p^i : = \frac{\partial_{z_i} p( z ) }{p( z )}.
    \end{align*}
\end{definition}

We will make use of the following lemma that controls the deviation of the roots after applying a second order differential operator. This lemma is a slight variation of Lemma 5.3 in \cite{kls20}.

\begin{lemma}[Lemma 5.3 of \cite{kls20}]\label{lem:real_stable_cone}
Suppose that $p(z_1,\cdots,z_m)$ is real stable and $z \in {\bf Ab}_p$. For any $c \in [0,1]$ and $i\in [m]$, if 
\begin{align}
    \Phi_p^i(z) < \sqrt{1/c}, \label{eqn:barrier_cond_1}
\end{align}
then $z \in {\bf Ab}_{(1-c \cdot \partial_{z_i}^2) p}$.
If additionally for $\delta > 0$,
\begin{align}
    c \cdot \left( \frac{2}{\delta} \Phi_p^i(z) + ( \Phi_p^i(z) )^2 \right) \leq 1, \label{eqn:barrier_cond_2}
\end{align}
then, for all $j \in [m]$,
\begin{align*}
    \Phi^j_{(1 - c \cdot \partial_i^2) p } ( z + \delta {\bf 1}_i ) \leq \Phi_p^j(z).
\end{align*}
\end{lemma}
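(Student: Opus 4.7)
The plan is to follow the barrier-function template of \cite{mss15b}, suitably rescaled to accommodate the constant $c\in[0,1]$; this is precisely why the statement is described as a ``slight variation'' of Lemma~5.3 of \cite{kls20}. There are two conclusions to prove, and I would address them in order, using as the unifying tool the logarithmic-derivative identity for a real-stable polynomial restricted to a univariate line.

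For the first conclusion, $z\in\Ab_{(1-c\partial_{z_i}^2)p}$, I would show $(1-c\partial_{z_i}^2)p(w)>0$ for every $w=z+s$ with $s\in\R_{\ge 0}^n$. Since $p$ is real-stable and $w\in\Ab_p$, the univariate restriction $t\mapsto p(w+t\mathbf{1}_i)$ is real-rooted with all roots strictly negative; writing it as $C\prod_k(t-\lambda_k)$ with $\lambda_k<0$ and taking logarithmic derivatives yields the classical identity
\begin{align*}
    \frac{\partial_{z_i}^2 p(w)}{p(w)} \;=\; \Phi_p^i(w)^2 - \sum_k \frac{1}{\lambda_k^2} \;\le\; \Phi_p^i(w)^2.
\end{align*}
Standard monotonicity of barrier functions of real-stable polynomials (a consequence of the concavity of $\log p$ on $\Ab_p$) gives $\Phi_p^i(w)\le\Phi_p^i(z)<1/\sqrt{c}$, so
\begin{align*}
    (1-c\partial_{z_i}^2)p(w) \;=\; p(w)\left(1-c\cdot\frac{\partial_{z_i}^2 p(w)}{p(w)}\right) \;\ge\; p(w)\bigl(1-c\,\Phi_p^i(w)^2\bigr) \;>\; 0.
\end{align*}

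For the second conclusion, set $q:=(1-c\partial_{z_i}^2)p$ and $w:=z+\delta\mathbf{1}_i$. Since $\partial_{z_j}$ commutes with $(1-c\partial_{z_i}^2)$, the new barrier factors as
\begin{align*}
    \Phi_q^j(w) \;=\; \Phi_p^j(w)\cdot\frac{1-c\cdot\partial_{z_i}^2\partial_{z_j}p(w)/\partial_{z_j}p(w)}{1-c\cdot\partial_{z_i}^2 p(w)/p(w)}.
\end{align*}
I would upper bound the numerator using the same univariate identity applied to $\partial_{z_j}p$ (which is itself real-stable), lower bound the denominator using the proof of the first part, and finally combine with the coordinate-wise monotonicity $\Phi_p^j(w)\le\Phi_p^j(z)$ together with the quantitative one-variable estimate describing how much $\Phi_p^i$ decreases when one shifts from $z$ to $z+\delta\mathbf{1}_i$. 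The arithmetic is engineered so that the hypothesis $c\bigl(\tfrac{2}{\delta}\Phi_p^i(z)+\Phi_p^i(z)^2\bigr)\le 1$ is exactly strong enough to absorb all error terms, yielding $\Phi_q^j(w)\le\Phi_p^j(z)$.

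The main obstacle will be the algebraic bookkeeping in the second part: one must verify that the two summands in the hypothesis correspond exactly to the two dominant errors in the product representation of $\Phi_q^j(w)$, namely (a) the error from replacing $\partial_{z_i}^2\partial_{z_j}p/\partial_{z_j}p$ by $\Phi_p^i(z)^2$ (contributing the $c\,\Phi_p^i(z)^2$ term), and (b) the error from evaluating $\Phi_p^j$ at $w$ rather than $z$ and from shifting $\Phi_p^i$ under the $\delta$-translation (contributing the $\tfrac{2c}{\delta}\Phi_p^i(z)$ term, which arises from a Cauchy--Schwarz-type control on the mixed derivative $\partial_{z_i}\Phi_p^j$). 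Since this matches the structure of the argument in \cite{kls20} with only the identity scaling $c=1$ replaced by an arbitrary $c\in[0,1]$, I expect the proof to go through verbatim once the constants are carried through carefully.
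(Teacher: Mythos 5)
The paper does not prove this lemma; it imports it as ``Lemma 5.3 of [kls20]'' (with the remark that the statement here is a ``slight variation'' allowing general $c\in[0,1]$), so there is no in-paper argument to compare against. Evaluated on its own, your proof of the first conclusion is correct: the factored-roots identity $\partial_{z_i}^2 p(w)/p(w)=\Phi_p^i(w)^2-\sum_k\lambda_k^{-2}\le\Phi_p^i(w)^2$ together with coordinate-wise monotonicity of the barrier gives $(1-c\,\partial_{z_i}^2)p(w)>0$ for all $w=z+s$ with $s\in\R_{\ge 0}^m$, exactly as you wrote.

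For the second conclusion, however, the proposed combination of bounds does not close. Writing $\Phi_q^j(w)=\Phi_p^j(w)\cdot N/D$ with $N=1-c\,\partial_{z_i}^2\partial_{z_j}p(w)/\partial_{z_j}p(w)$ and $D=1-c\,\partial_{z_i}^2 p(w)/p(w)$, note that the univariate identity applied to the real-stable polynomial $\partial_{z_j}p$ gives a \emph{lower} bound on $N$, not an upper one; the upper bound you actually have is only $N\le 1$ (positivity of $\partial_{z_i}^2\partial_{z_j}p$ above the roots), and together with $D\ge 1-c\,\Phi_p^i(w)^2$ this yields $\Phi_q^j(w)\le \Phi_p^j(w)/(1-c\,\Phi_p^i(w)^2)$, which exceeds $\Phi_p^j(z)$ whenever the drop $\Phi_p^j(z)-\Phi_p^j(w)$ is small — so the multiplicative bounds lose too much. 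The argument that works keeps the additive form $\Phi_q^j(w)-\Phi_p^j(w)=\frac{-c\,\partial_{z_j}\Psi(w)}{1-c\,\Psi(w)}$ with $\Psi:=\partial_{z_i}^2 p/p$, expands $\partial_{z_j}\Psi=\partial_{z_i}^2\Phi_p^j+2\,\Phi_p^i\,\partial_{z_i}\Phi_p^j$, drops the nonnegative term $\partial_{z_i}^2\Phi_p^j$ by the convexity in Lemma~\ref{lem:multi_phi_convex}, and pairs the result with the first-order convexity estimate $\Phi_p^j(z)-\Phi_p^j(w)\ge-\delta\,\partial_{z_i}\Phi_p^j(w)$; the hypothesis $c\bigl(\tfrac{2}{\delta}\Phi_p^i(z)+\Phi_p^i(z)^2\bigr)\le 1$ then closes the chain exactly. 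Your intuitions about which hypothesis terms absorb which errors and about the role of the mixed derivative $\partial_{z_i}\Phi_p^j$ are essentially right, but the multiplicative factoring as you stated it does not surface them; you would need to recast it into the additive decomposition above before ``the proof goes through verbatim.''
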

\begin{remark}
We choose $c=1/2$ when we use the above lemma in Section \ref{sec:kls_bound_root} and Section \ref{sec:ag_bound_root}.
\end{remark}

\begin{lemma}[Multi-dimensional convexity, \cite{tao13}]\label{lem:multi_phi_convex}
Let $p(z_1,\ldots,z_m)$ be a real stable polynomial of $m$ variables. For any $i\in [m]$,
\begin{align*}
    (-1)^k \frac{\partial^k}{\partial z_j^k} \Phi_p^i(x) \geq 0
\end{align*}
for all $k=0,1,2,\ldots$ and $x\in \Ab_p$. 
\end{lemma}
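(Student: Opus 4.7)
The plan is to reduce the multi-dimensional statement to a one-variable partial fraction identity along the $z_j$ direction and then establish the required sign conditions via univariate real-rootedness together with a monotonicity principle for zeros of bivariate real-stable polynomials.

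First I would fix every coordinate of $x$ except the $j$-th and consider $q(t) := p(x + t\mathbf{1}_j)$. By Fact~\ref{fac:closure_real_stable} (real stability is preserved under restriction to real values) $q$ is a univariate real-rooted polynomial, and $x \in \Ab_p$ forces $q(t) > 0$ for every $t \geq 0$, so every root of $q$ is strictly negative. Hence
\[
q(t) \;=\; c \prod_{\ell}(t + a_\ell), \qquad c > 0, \quad a_\ell > 0,
\]
where $c$ and the $a_\ell$ depend on the remaining fixed coordinates (in particular on $x_i$). Direct differentiation of $\log q$ in $x_i$ yields the partial fraction expansion
\[
\Phi_p^i(x + t\mathbf{1}_j) \;=\; \partial_{x_i}\log q(t) \;=\; A \;+\; \sum_{\ell}\frac{B_\ell}{t + a_\ell}, \qquad A := \partial_{x_i}\log c, \quad B_\ell := \partial_{x_i} a_\ell.
\]
Taking $k$ derivatives in $t$ and setting $t = 0$ gives $(-1)^k \partial_{z_j}^k \Phi_p^i(x) = \sum_\ell k!\, B_\ell / a_\ell^{k+1}$ for $k \geq 1$, and $\Phi_p^i(x) = A + \sum_\ell B_\ell/a_\ell$ for $k = 0$, so the entire lemma reduces to proving $A \geq 0$ and $B_\ell \geq 0$. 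The case $i = j$ is immediate, since then $\Phi_p^i(x + t\mathbf{1}_i) = q'(t)/q(t) = \sum_\ell 1/(t + a_\ell)$ already; the only non-trivial case is $i \neq j$.

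For $A \geq 0$, observe that $A = \lim_{t\to\infty}\Phi_p^i(x + t\mathbf{1}_j)$ and that $x + t\mathbf{1}_j$ remains in $\Ab_p$ for every $t \geq 0$, so it suffices to handle the $k = 0$ case. That I would establish by switching directions: the univariate polynomial $\hat q(s) := p(x + s\mathbf{1}_i)$ is real-rooted with every root strictly negative (because $x \in \Ab_p$), so $\Phi_p^i(x) = \hat q'(0)/\hat q(0)$ is a sum of strictly positive terms.

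The main step is $B_\ell \geq 0$, a monotonicity-of-roots claim for the bivariate restriction $\tilde p(s,t) := p(x + s\mathbf{1}_i + t\mathbf{1}_j)$, which is still real-stable. I would track a simple real zero $t = \lambda(s)$ of $\tilde p(s,\cdot)$ with $\lambda(0) = -a_\ell$; the implicit function theorem gives $\lambda'(0) = -\partial_s\tilde p/\partial_t\tilde p$. The first-order Taylor expansion $\tilde p(s_0 + i\varepsilon_1, t_0 + i\varepsilon_2) = i(\varepsilon_1 \partial_s\tilde p + \varepsilon_2 \partial_t\tilde p) + O(\varepsilon^2)$, combined with real-stability (which forces the left-hand side to be non-zero for all $\varepsilon_1, \varepsilon_2 > 0$), implies that $\partial_s\tilde p$ and $\partial_t\tilde p$ must have the same sign, so $\lambda'(0) \leq 0$ and $a_\ell = -\lambda$ is non-decreasing in $x_i$. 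Non-simple zeros are handled by a generic real-stable perturbation $\tilde p \leadsto \tilde p + \varepsilon g$ splitting coincident roots, followed by a Hurwitz-type passage to the limit $\varepsilon \to 0$. The principal obstacle is exactly this monotonicity-of-roots step: the imaginary-direction Taylor expansion is where real-stability is used essentially, and the perturbation argument for coincident zeros requires care; everything else is a routine matter of partial fractions, limits, and univariate real-rootedness.
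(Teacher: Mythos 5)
The paper states this lemma without proof, citing Tao's lecture notes \cite{tao13}; there is therefore no in-paper argument to compare against, so the proposal must be judged on its own merits. Your overall strategy --- restrict to a line in the $z_j$-direction, write $\Phi_p^i(x+t\mathbf{1}_j)$ as a partial-fraction sum $A + \sum_\ell B_\ell/(t+a_\ell)$, reduce the claim for $k\ge 1$ to the nonnegativity of the $B_\ell=\partial_{x_i}a_\ell$, and prove that nonnegativity by a monotonicity-of-roots argument for the bivariate restriction $\tilde p(s,t)=p(x+s\mathbf{1}_i+t\mathbf{1}_j)$ --- is exactly the standard route (it is the argument underlying Lemma~5.8 of \cite{mss15a}, which is what Tao presents). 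The $k=0$ case handled by direction-switching is also fine, and in fact makes the $A\ge0$ discussion redundant: for $k\ge1$ the constant $A$ differentiates away, and for $k=0$ you never touch the partial-fraction form.

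There is one step that, as literally written, does not quite close. You argue that if $\partial_s\tilde p$ and $\partial_t\tilde p$ at a real zero $(s_0,t_0)$ had opposite signs then the fixed-direction expansion $\tilde p(s_0+i\varepsilon_1,t_0+i\varepsilon_2)=i(\varepsilon_1\partial_s\tilde p+\varepsilon_2\partial_t\tilde p)+O(\varepsilon^2)$ would produce a zero with positive imaginary parts. But opposite signs let you choose $\varepsilon_1,\varepsilon_2>0$ making the first-order term vanish, and then the $O(\varepsilon^2)$ remainder need not vanish --- so no contradiction arrives from a fixed-direction Taylor expansion. The correct version of this step uses the holomorphic implicit function theorem (or Weierstrass preparation), which you already invoke for computing $\lambda'(0)$: for $\partial_t\tilde p(s_0,t_0)\neq 0$ there is a holomorphic branch $\tau(s)$ with $\tau(s_0)=t_0$, $\tilde p(s,\tau(s))=0$, and $\tau'(s_0)=-\partial_s\tilde p/\partial_t\tilde p$; if $\tau'(s_0)>0$ then for small $\varepsilon>0$ the point $\bigl(s_0+i\varepsilon,\tau(s_0+i\varepsilon)\bigr)$ has both imaginary parts positive, contradicting stability. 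This gives $\tau'(s_0)\le 0$, hence $B_\ell\ge 0$, and your perturbation remark handles repeated roots. With that one replacement the proof is complete; everything else (the partial-fraction reduction, the sign of $a_\ell$, the degenerate case $\deg_t q=0$) is correct.
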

\section{Hyperbolic Extension of Kadison-Singer for Standard Deviations}\label{sec:kls}
The goal of this section is to prove Theorem~\ref{thm:kls20_ours_formal}:
\begin{theorem}[Formal statement of Theorem~\ref{thm:kls20_ours}]\label{thm:kls20_ours_formal}
Let $h\in \R[x_1,\dots,x_m]$ denote a hyperbolic polynomial with respect to a hyperbolic direction $e\in \Gamma_{++}^h$, where $\Gamma_{++}^h\subseteq \R^m$ is the hyperbolicity cone of $h$. Let $\xi_1,\cdots,\xi_n$ denote $n$ independent random variables with $\E[\xi_i]=\mu_i$ and $\Var[\xi]=\tau_i^2$. Let $v_1, \dots, v_n \in \Gamma_{+}^h$ be $n$ vectors such that $\forall i\in [n]$, $\rank_h(v_i) \leq 1$. Suppose 
\begin{align*}
    \sigma^2 := \Big\| \sum_{i=1}^n \tau_i^2\tr_h[v_i] v_i \Big\|_h.
\end{align*}
Then,
\begin{align*}
\Pr_{\xi_1,\cdots,\xi_n} \left[ \Big\| \sum_{i=1}^n (\xi_i-\mu_i) v_i \Big\|_h \leq 4 \sigma \right] >0.
\end{align*} 
\end{theorem}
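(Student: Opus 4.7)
My plan is to run the three-stage interlacing-family / barrier-method strategy from \cite{mss15b,kls20}, but for an arbitrary hyperbolic polynomial $h$. Write $\eta_i := \xi_i - \mu_i$ (so $\E[\eta_i]=0$, $\Var[\eta_i] = \tau_i^2$, finite support), and make the cosmetic change of variables $u_i := \tau_i v_i$. By linearity of the hyperbolic trace, $\tr_h[u_i]\, u_i = \tau_i^2\,\tr_h[v_i]\,v_i$, so the hypothesis becomes $\sigma^2 = \|\sum_i \tr_h[u_i]\, u_i\|_h$. I will set up an interlacing family whose leaves correspond to the outcomes of the $\xi_i$'s, identify the top of the tree with a multivariate restriction, and bound its largest root by a hyperbolic barrier argument.

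\textbf{Step 1 (interlacing family).}
For each outcome $s = (s_1, \ldots, s_n)$ in the product of supports of the $\xi_i$'s, define
\begin{align*}
p_s(x) \;:=\; \Bigl(\prod_{i=1}^n \Pr[\xi_i = s_i]\Bigr)\cdot h\!\Bigl(xe + \sum_{i=1}^n (s_i - \mu_i)\, v_i\Bigr) \cdot h\!\Bigl(xe - \sum_{i=1}^n (s_i - \mu_i)\, v_i\Bigr).
\end{align*}
By Fact~\ref{fact:hyperbolic-norm-max-root}, the largest root of $p_s$ is $\|\sum_i (s_i-\mu_i)\, v_i\|_h$. I form the conditional-sum tree $p_{s_1,\ldots,s_k}$ as in Definition~\ref{def:interlacingfamily} and verify that the children at every internal node have a common interlacing via Lemma~\ref{lem:commoninterlacing} (i.e., every convex combination of them is real-rooted, which follows from the real-stability argument in Step 2 with general nonnegative coefficients). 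Lemma~\ref{lem:rootbound} then reduces the theorem to showing $\lambda_{\max}(p_\emptyset) \leq 4\sigma$, where $p_\emptyset(x) = \E_{\xi}\bigl[h(xe + \sum_i \eta_i v_i)\, h(xe - \sum_i \eta_i v_i)\bigr]$.

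\textbf{Step 2 (mixed polynomial as multivariate restriction).}
I will prove
\begin{align*}
p_\emptyset(x) \;=\; \prod_{i=1}^n \Bigl(1 - \tfrac{1}{2}\partial_{w_i}^2\Bigr)\, \Bigl(h\!\bigl(xe + \textstyle\sum_{i=1}^n w_i u_i\bigr)\Bigr)^2 \bigg|_{w=0}
\end{align*}
by peeling off $\eta_i$ one at a time. Since $\rank_h(u_i) \leq 1$, Fact~\ref{fac:first_order_expand} gives $h(a + tu_i) = h(a) + t\,(D_{u_i} h)(a)$, so both sides of
\begin{align*}
\E_{\eta_i}\!\bigl[ h(x_1 - \eta_i v_i)\, h(x_2 + \eta_i v_i) \bigr] \;=\; \Bigl(1 - \tfrac{1}{2}\partial_w^2\Bigr)\, h(x_1 + w u_i)\, h(x_2 + w u_i)\, \Big|_{w=0}
\end{align*}
collapse to $h(x_1)\,h(x_2) - (D_{u_i}h)(x_1)(D_{u_i}h)(x_2)$. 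Only the first three moments of $\eta_i$ enter, so the general finite-support hypothesis causes no trouble; after $n$ iterations the two copies of $h$ merge into the square. For real-rootedness of the resulting univariate polynomial, I will show $h(xe + \sum w_i u_i)$ is real-stable in $(x, w_1, \ldots, w_n)$: by Fact~\ref{fac:equi_def_real_stable} it suffices to check that $h\!\bigl((a_0 e + \sum_i a_i u_i)\, t + y\bigr)$ is real-rooted for every $a \in \R_{>0}^{n+1}$ and $y \in \R^m$, which is immediate because $a_0 e + \sum_i a_i u_i \in \Gamma^h_{++}$. Squaring, multiplying by $(1 - \tfrac{1}{2}\partial_{w_i}^2)$, and restricting to $w = 0$ all preserve real-stability by Fact~\ref{fac:closure_real_stable}.

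\textbf{Step 3 (barrier method; main obstacle).}
After rescaling I assume $\sigma = 1$, so $\bar w := e - \sum_i \tr_h[u_i]\, u_i \in \Gamma^h_+$. Let $P(x, w) := h(xe + \sum_i w_i u_i)^2$. For any $t > 0$, I will show that $(\alpha, -\delta)$ with $\delta_i := t\,\tr_h[u_i]$ and $\alpha := 2t$ lies above all roots of $P$, because the argument of $h$ there equals $(\alpha - t)\, e + t\, \bar w \in \Gamma^h_{++}$. Rewriting $\partial_{w_i} = D_{u_i}$ and exploiting the concavity of $h / D_{u_i} h$ on $\Gamma^h_{++}$ (the hyperbolic analogue of concavity of $\det / D_{vv^\top}\det$), I aim to bound
\begin{align*}
\Phi_P^i(\alpha, -\delta) \;\leq\; \frac{2\,\tr_h[u_i]}{\alpha - t}.
\end{align*}
Taking $t = 2$ and $\alpha = 4$, the hypotheses of Lemma~\ref{lem:real_stable_cone} with $c = 1/2$ are met, so applying $(1 - \tfrac{1}{2}\partial_{w_i}^2)$ shifts the barrier from $-\delta$ to $-\delta + \delta_i\, \mathbf{1}_i$ without increasing $\Phi^j$ for any $j$, while Lemma~\ref{lem:multi_phi_convex} keeps the barrier nonnegative across successive applications. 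Inducting over $i$, the barrier point reaches $(4, 0, \ldots, 0)$, so $4$ upper bounds the largest root of the $w = 0$ restriction; undoing the scaling yields $\lambda_{\max}(p_\emptyset) \leq 4\sigma$. The main obstacle is the hyperbolic barrier estimate itself: in the determinant case it is the one-line identity $D_{vv^\top}\log\det(X) = v^\top X^{-1} v$, but for a generic hyperbolic polynomial I have to argue purely from the rank-$1$ condition on $u_i$, the concavity of $h / D_{u_i} h$, and Tao's multidimensional convexity lemma to control the squared barrier function through the repeated application of $(1 - \tfrac{1}{2}\partial_{w_i}^2)$.
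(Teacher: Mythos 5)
Your proposal follows essentially the same three-stage strategy as the paper (Appendix B of the full version): the interlacing-family reduction in your Step 1 matches Definition~\ref{def:interlacing-family-kls20}, Lemma~\ref{lem:interlacing-family-kls20}, and Corollary~\ref{cor:kls-interlacing}; your Step 2 reproduces Lemma~\ref{lem:kls_expectation_hyperbolic}, Claim~\ref{clm:hyperbolic_lemma_3.1_in_kls19}, and Claim~\ref{clm:hyper_linear_restriction} (including the identical computation $h(x_1)h(x_2) - D_{u_i}h(x_1)D_{u_i}h(x_2)$ and the same real-stability test via $\Gamma_{++}^h$); and your Step 3 is exactly Lemma~\ref{lem:kls_bound_root} with $\alpha=2t=4$, the concavity of $h/D_{u_i}h$ from Theorem~\ref{thm:hyperbolic_cone_derivative}, Fact~\ref{fac:related_to_D_v_h}, the \cite{kls20} barrier lemma, and Tao's multidimensional convexity. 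The route and all key lemmas coincide with the paper's proof.
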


We will introduce the preliminary facts in Section \ref{sec:kls_prelim}. 
In Section \ref{sec:kls_interlacing}, we define the family of hyperbolic characteristic polynomials and show that it forms an interlacing family. Therefore, it remains to upper-bound the largest root of the average of the interlacing polynomial family, i.e. the mixed characteristic polynomial. We remark that we require a lemma that will be proved later in Section \ref{sec:kls_expectation}.

In Section \ref{sec:kls_expectation}, we reduce the problem of upper-bounding the largest root of the mixed characteristic polynomial to an easier task of upper-bounding the largest root of a multivariate polynomial.  
In Section \ref{sec:kls_bound_root}, we upper bound the largest root of the multivariate polynomial using multivariate barrier method. 
Finally, in Section \ref{sec:kls-proof-main}, we prove Theorem \ref{thm:kls20_ours_formal}.

\subsection{Preliminaries}\label{sec:kls_prelim}

In this section, we state several useful facts about hyperbolic polynomials.
We first introduce some important properties of the derivatives of hyperbolic polynomial:

\begin{theorem}[Theorem 3.1 in \cite{b18} and known in \cite{g59,bgls01,r06}]\label{thm:hyperbolic_cone_derivative}
Let $h\in \R[x_1,\cdots,x_m]$ be a hyperbolic polynomial and let $v \in \Gamma_+$ be a vector such that $D_v h \not\equiv 0$. Then \\
1) $D_v h$ is hyperbolic with hyperbolicity cone containing $\Gamma_{++}$. \\
2) The polynomial $h(x) - y \cdot D_v h(x) \in \R[x,y]$ is hyperbolic with hyperbolicity cone containing $\Gamma_{++} \times \{ y : y \leq 0 \}$. Further, we have $( h ( x) + y \cdot D_v h ( x ) ) \cdot ( h(x) - y \cdot D_v h(x) ) \in \R[x,y] $ is hyperbolic with hyperbolicity cone containing $\Gamma_{++} \times \{ y : y \leq 0 \}$. \\
3) The rational function $x \rightarrow \frac{ h(x) }{ D_v h(x) }$ is concave on $\Gamma_{++}$. 
\end{theorem}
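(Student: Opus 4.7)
The plan is to establish the three assertions sequentially, with each building on the previous. The unifying tool is \emph{G\aa{}rding's bivariate principle}: since $h$ is hyperbolic with respect to $e \in \Gamma_{++}$, for any $v \in \Gamma_{++}$ the bivariate polynomial $p(t,s;x) := h(te + sv - x)$ is real-stable in $(t,s)$ for each fixed $x$. This follows from convexity of $\Gamma_{++}$: for every $a,b > 0$, $ae + bv \in \Gamma_{++}$, so $h((ae + bv)r - x)$ is real-rooted in $r$, and applying Fact~\ref{fac:equi_def_real_stable} with appropriate choices of $(a,b)$ gives real-stability of $p$ in $(t,s)$. The case $v \in \Gamma_+ \setminus \Gamma_{++}$ is then recovered by continuity and Hurwitz's theorem at the end, using $D_v h \not\equiv 0$.

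For Part~1, I would write $D_v h(te - x) = \partial_s p(t,s;x)|_{s = 0}$. Both differentiation in $s$ and restriction to $s = 0$ preserve real-stability by Fact~\ref{fac:closure_real_stable}, hence $D_v h(te - x)$ is real-rooted in $t$, so $D_v h$ is hyperbolic in direction $e$. For the cone inclusion $\Gamma_{++} \subseteq \Gamma_{++}^{D_v h}$, real-stability of $p(t,s;x)$ implies that the roots $\mu_i(s)$ in $p(t,s;x) = h(e)\prod_i(t - \mu_i(s))$ are monotone in $s$ with a \emph{uniform} sign, so that $D_v h(te - x) = -h(e)\sum_i \mu_i'(0)\prod_{j \neq i}(t - \lambda_j(x))$ is a polynomial of degree $d-1$ in $t$ whose roots interlace those of $h(te-x)$. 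Positivity of all $\lambda_i(x)$ (i.e.\ $x \in \Gamma_{++}$) therefore forces positivity of all roots of $D_v h(te-x)$, as required.

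For Part~2, take hyperbolic direction $(e, 0) \in \R^{m+1}$ for $q(x,y) := h(x) - y D_v h(x)$. The one-parameter test becomes $q(te - x, -y) = h(te - x) + y D_v h(te - x)$, a real linear combination of $h(te - x)$ and $D_v h(te - x)$ parameterized by $y$. By Part~1's interlacing and the classical Hermite--Biehler theorem (a real linear combination of a polynomial and its interlacer is real-rooted), this is real-rooted in $t$ for every $(x,y)$. The cone containment $\Gamma_{++} \times \{y \leq 0\} \subseteq \Gamma_+^q$ follows from a monotonicity argument: at $y = 0$ the roots are $\{\lambda_i(x)\} \subseteq \R_{\geq 0}$, and as $y$ decreases from $0$ the interlaced roots remain non-negative. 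The product assertion $(h + y D_v h)(h - y D_v h)$ then follows by pairing with the $y \mapsto -y$ variant and using closure of hyperbolicity under products sharing a direction.

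Part~3 is a convex-geometric corollary of Part~2: $\Gamma_+^q$ is a closed convex cone in $\R^{m+1}$, and for each $x \in \Gamma_{++}$ the slice $\{y : (x,y) \in \Gamma_+^q\}$ equals $(-\infty, \phi(x)]$ with $\phi(x) = h(x)/D_v h(x)$ (the largest $y$ at which $q(x,y) = 0$, using positivity of $h$ and $D_v h$ on $\Gamma_{++}$). Convexity of $\Gamma_+^q$ then gives concavity of $\phi$: for $\lambda \in [0,1]$ and $x_1, x_2 \in \Gamma_{++}$, the point $(\lambda x_1 + (1-\lambda)x_2,\ \lambda \phi(x_1) + (1-\lambda)\phi(x_2))$ lies in $\Gamma_+^q$, so $\phi(\lambda x_1 + (1-\lambda)x_2) \geq \lambda \phi(x_1) + (1-\lambda)\phi(x_2)$. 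The main technical obstacle is the interlacing step in Part~1: proving that the root-perturbation signs $\mu_i'(0)$ are uniform requires the full strength of bivariate real-stability (not merely separate real-rootedness in $t$ and $s$), and care is also needed at the boundary $v \in \Gamma_+ \setminus \Gamma_{++}$, where the hypothesis $D_v h \not\equiv 0$ becomes essential.
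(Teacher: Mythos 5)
The paper does not prove Theorem~\ref{thm:hyperbolic_cone_derivative}; it cites it from \cite{b18} and the classical literature (Gårding, Renegar), and uses only Parts~1 and~3 (in Fact~\ref{fac:related_to_D_v_h} and Fact~\ref{fac:D_v_h_over_h_concave}). Your proof reconstructs the standard argument: bivariate stability of $h(te+sv-x)$ for $v\in\Gamma_{++}$, interlacing of $D_v h(te-x)$ against $h(te-x)$, Hermite--Kakeya--Obreschkoff for the pencil $h+yD_vh$, and convexity of the lifted hyperbolicity cone to get concavity of $h/D_vh$. This is essentially the right route. A few points deserve tightening.

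In Part~1, the claim that the root displacements $\mu_i'(0)$ have a \emph{uniform} sign is the crux, and your one-line justification (``real-stability implies monotonicity with a uniform sign'') is correct but compressed: it is the ``proper position'' / same-sense interlacing property of partial derivatives of a real-stable bivariate polynomial. A cleaner and more elementary alternative, which avoids this entirely, is to observe that $v\in\Gamma_{++}$ is itself a hyperbolicity direction with the same cone; apply Rolle to $s\mapsto h(sv-x)$ to get $\Gamma_{++}\subseteq\Gamma_{++}(D_vh,v)$, and then note that $e\in\Gamma_{++}(D_vh,v)$ implies $D_vh$ is also hyperbolic in direction $e$ with identical cone. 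Both work, but the second is less fragile.

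In Part~2, the phrase ``as $y$ decreases from $0$ the interlaced roots remain non-negative'' is asserting something that continuity alone does not give you, and the sign of the escaping root matters. The gap is closed by a direct sign check: for $x\in\Gamma_{++}$ and $y<0$, at any $t\le 0$ one has $\operatorname{sgn}\bigl(h(te-x)\bigr)=(-1)^d$ (all $\lambda_i(x)>0$) and $\operatorname{sgn}\bigl(yD_vh(te-x)\bigr)=(-1)\cdot(-1)^{d-1}=(-1)^d$ (all interlaced roots $\mu_i(x)>0$ by Part~1), so the sum $h(te-x)+yD_vh(te-x)$ is nonzero for $t\le 0$; hence all roots are positive. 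You should spell this out.

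Finally, the ``Further'' product clause is problematic, and your proof does not establish it as stated. If $q=h-yD_vh$ and $\tilde q=h+yD_vh$ (both taken with direction $(e,0)$), then $\Gamma_+^{q\tilde q}=\Gamma_+^{q}\cap\Gamma_+^{\tilde q}$, and for fixed $x\in\Gamma_{++}$ the $y$-slice of this intersection is the bounded interval $[-h(x)/D_vh(x),\,h(x)/D_vh(x)]$, not a half-line. A one-variable sanity check makes this concrete: with $h(x)=x$, $v=1$, one gets $q\tilde q=x^2-y^2$, whose cone in direction $(1,0)$ is $\{(x,y):x\ge|y|\}$, which clearly does not contain $\{x>0\}\times\{y\le 0\}$. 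So either the paper's transcription of Br\"and\'en's Theorem~3.1 overstates the cone, or a different hyperbolicity direction is intended; in any case ``follows by pairing with the $y\mapsto -y$ variant and closure under products'' yields the intersection, which is strictly smaller than $\Gamma_{++}\times\{y\le 0\}$. Since the paper uses only Parts~1 and~3, this does not affect anything downstream, but your write-up should either flag the discrepancy or restrict the claim. Part~3 is correct as you have it: the slice identification $\{y:(x,y)\in\Gamma_+^q\}=(-\infty,\,h(x)/D_vh(x)]$ plus convexity of $\Gamma_+^q$ gives concavity of $h/D_vh$ exactly as you argue.
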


The following lemma correlates hyperbolic trace to directional derivative:

\begin{fact}[Correlation between hyperbolic trace and derivative]\label{fac:related_to_D_v_h}
Let $h\in \R[x_1,\cdots,x_m]$ denote a hyperbolic polynomial with respect to $e \in \R^m$. For any $v\in \R^m$ and $\alpha \in \R$, we have
\begin{align*}
    \tr_h[v] = \alpha \cdot \frac{D_v h(\alpha e)}{ h(\alpha e)}.
\end{align*}
\end{fact}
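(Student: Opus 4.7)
My plan is to expand $h(te-v)$ in two different ways and equate the coefficient of $t^{d-1}$, then use homogeneity to promote the identity from $e$ to $\alpha e$.

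First I would take the Taylor expansion of $h$ around the point $te$ in direction $-v$. Since $h$ is a polynomial of degree $d$, this terminates:
\begin{align*}
h(te - v) = \sum_{k=0}^{d} \frac{(-1)^k}{k!}(D_v^k h)(te).
\end{align*}
Because $h$ is homogeneous of degree $d$, its directional derivative $D_v h$ is homogeneous of degree $d-1$, so $(D_v h)(te) = t^{d-1}(D_v h)(e)$, and similarly $h(te) = t^d h(e)$. Therefore the coefficient of $t^{d-1}$ on the right-hand side is $-(D_v h)(e)$.

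Next I would expand the same polynomial using its hyperbolic factorization from Definition~\ref{def:hyperbolic_polyn}:
\begin{align*}
h(te-v) = h(e)\prod_{i=1}^{d}(t-\lambda_i(v)) = h(e) t^d - h(e)\Big(\sum_{i=1}^{d}\lambda_i(v)\Big) t^{d-1} + \cdots.
\end{align*}
The coefficient of $t^{d-1}$ here is $-h(e)\cdot\tr_h[v]$ by the definition of hyperbolic trace. Matching the two expressions and using $h(e)>0$ yields the ``base case''
\begin{align*}
\tr_h[v] = \frac{(D_v h)(e)}{h(e)}.
\end{align*}

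Finally, for a general $\alpha \in \R$, I would invoke homogeneity twice: $h(\alpha e) = \alpha^d h(e)$ and $(D_v h)(\alpha e) = \alpha^{d-1}(D_v h)(e)$, so
\begin{align*}
\alpha \cdot \frac{(D_v h)(\alpha e)}{h(\alpha e)} = \alpha \cdot \frac{\alpha^{d-1}(D_v h)(e)}{\alpha^{d} h(e)} = \frac{(D_v h)(e)}{h(e)} = \tr_h[v],
\end{align*}
which is exactly the claimed identity. The argument is essentially routine; the only mild subtlety is the case $\alpha=0$, where the identity should be read as the limiting statement (or, equivalently, one notes that both sides scale the same way in $\alpha$, so the identity on the open half-line extends by continuity/analyticity), and the hypothesis $h(e)>0$ from Definition~\ref{def:hyperbolic_polyn} ensures the denominator does not vanish for $\alpha\neq 0$.
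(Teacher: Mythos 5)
Your proof is correct and follows essentially the same route as the paper: the paper simply cites Vieta's formula to get $\tr_h[v] = D_v h(e)/h(e)$ and then invokes homogeneity exactly as you do, while you spell out Vieta's formula by matching the $t^{d-1}$ coefficient of $h(te-v)$ in its Taylor expansion against its root factorization. Your caveat that the identity should be read as $\alpha \neq 0$ (since $h(\alpha e)=\alpha^d h(e)$ vanishes at $\alpha=0$) is a fair observation the paper glosses over, but it does not affect how the fact is used downstream.
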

\begin{proof} By  Theorem \ref{thm:hyperbolic_cone_derivative}, $D_v h$ is hyperbolic, and thus is homogeneous. By Vieta's formula for the sum of roots of a polynomial, we have 
\begin{align*}
    \tr_h[ v ] =  \frac{ D_v h(e) }{ h(e) }
\end{align*}
It follows that $h$ and $D_v h$ are homogeneous, and the degree of $h$ is one larger than that of $D_v h$. Therefore,
\begin{align*}
    \alpha \cdot \frac{D_v h(\alpha e)}{ h(\alpha e)} 
    = \frac{D_v h(e)}{ h(e)} = \tr_h[v].    
\end{align*}
\end{proof}

\begin{fact}\label{fac:D_v_h_over_h_concave}
Let $h\in \R[x_1,\cdots,x_m]$ denote a hyperbolic polynomial with respect to the direction $e\in \Gamma_{++}$ and let $\alpha > 0$. 
For any $M,v \in \Gamma_{+}$, we have
\begin{align*}
    \frac{ D_{v} h(\alpha e + M) }{ h ( \alpha e + M ) } \leq \frac{ D_{v} h(\alpha e) }{ h(\alpha e) }.
\end{align*}
\end{fact}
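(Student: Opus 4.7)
The plan is to derive the inequality from the concavity statement in part (3) of Theorem~\ref{thm:hyperbolic_cone_derivative} by combining it with the homogeneity of $h/D_v h$. First I would handle the degenerate case: if $D_v h \equiv 0$ the claim is trivial ($0 \leq 0$), so from now on assume $D_v h \not\equiv 0$, which puts us in the hypothesis of Theorem~\ref{thm:hyperbolic_cone_derivative}. By part (1) of that theorem, $\Gamma^h_{++} \subseteq \Gamma^{D_v h}_{++}$, hence $D_v h$ is strictly positive on $\Gamma^h_{++}$ and nonnegative on $\Gamma^h_{+}$; similarly $h$ is strictly positive on $\Gamma^h_{++}$. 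Therefore the function $f(x) := h(x)/D_v h(x)$ is well-defined, strictly positive, and concave on $\Gamma^h_{++}$.

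Next I would exploit homogeneity. Since $h$ is homogeneous of degree $d$ and $D_v h$ is homogeneous of degree $d-1$, the function $f$ is positively homogeneous of degree $1$ on $\Gamma^h_{++}$. For any positively homogeneous, concave function of degree one on a convex cone one has the superadditivity identity
\begin{align*}
f(x+y) \;=\; 2 f\!\left(\tfrac{x+y}{2}\right) \;\geq\; 2 \cdot \tfrac{f(x)+f(y)}{2} \;=\; f(x)+f(y),
\end{align*}
where the inequality uses midpoint concavity. I would apply this with $x = \alpha e$ and $y = M$, both of which lie in $\Gamma^h_{++}$ provided $M$ is in the interior (we will reduce to this case momentarily). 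Since $f(M) \geq 0$, this gives $f(\alpha e + M) \geq f(\alpha e)$, and taking reciprocals (both sides are strictly positive) yields
\begin{align*}
\frac{D_v h(\alpha e + M)}{h(\alpha e + M)} \;\leq\; \frac{D_v h(\alpha e)}{h(\alpha e)},
\end{align*}
which is the claimed inequality.

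Finally, I would extend the inequality from $M \in \Gamma^h_{++}$ to $M \in \Gamma^h_{+}$ by a standard limiting argument: replace $M$ by $M_\varepsilon := M + \varepsilon e \in \Gamma^h_{++}$ for $\varepsilon > 0$, apply the inequality just proved, and then let $\varepsilon \to 0^+$. Continuity of $h$ and $D_v h$ together with the fact that $\alpha e + M_\varepsilon \to \alpha e + M \in \Gamma^h_{++}$ (so the denominators stay bounded away from $0$) lets us pass to the limit on both sides.

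\textbf{Main obstacle.} The only nontrivial input is the concavity of $h/D_v h$, which is exactly Theorem~\ref{thm:hyperbolic_cone_derivative}(3), so no new hyperbolic machinery is needed. The mild technical point is verifying the boundary case cleanly; this is resolved by the perturbation $M \mapsto M + \varepsilon e$ together with the observation that $\alpha e + M$ already lies in the open cone $\Gamma^h_{++}$ (so the right-hand side of the limiting inequality is continuous), ensuring we never divide by zero in the limit.
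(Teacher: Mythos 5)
Your proof is correct and uses the same core argument as the paper: concavity of $h/D_v h$ from Theorem~\ref{thm:hyperbolic_cone_derivative}(3) combined with degree-one homogeneity yields superadditivity, and then $f(M) \ge 0$ gives the claimed monotonicity after taking reciprocals. You are in fact somewhat more careful than the paper's own proof, which writes $h(M)/D_v h(M)$ directly for $M \in \Gamma_+$ without addressing possible division by zero on the boundary and does not mention the $D_v h \equiv 0$ degeneracy; your $\varepsilon e$-perturbation handles this cleanly.
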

\begin{proof}
It suffices to show
\begin{align*}
    \frac{ h(\alpha e + M) }{ D_{v} ( \alpha e + M ) } \geq  \frac{ h(\alpha e) }{ D_{v} h(\alpha e) }.
\end{align*}

By Theorem \ref{thm:hyperbolic_cone_derivative}, we know that rational function $x \mapsto \frac{h(x)}{ D_v h(x)}$ is concave on $\Gamma_{++}$. Then we know that
\begin{align*}
    \frac{ h(\alpha e + M) }{ D_{v} ( \alpha e + M ) } \geq & ~ \frac{1}{2}\frac{ h(2\alpha e) }{ D_{v} h(2\alpha e) } + \frac{1}{2}\frac{ h(2M) }{ D_{v} h(2M) }  \\
    = &~ \frac{ h(\alpha e) }{ D_{v} h(\alpha e) } + \frac{ h(M) }{ D_{v} h(M) }\\
    \geq & ~ \frac{ h(\alpha e) }{ D_{v} h(\alpha e) }.
\end{align*} 
where the last step follows from $\frac{ h(M) }{ D_{v} h(M) } \geq 0$ (By \cite{r06}).
\end{proof}

\subsection{Defining interlacing family of characteristic polynomials}\label{sec:kls_interlacing}

In this section, we consider the following interlacing family we will crucially use to prove Theorem \ref{thm:kls20_ours_formal}.

\begin{definition}[Interlacing Family of Theorem \ref{thm:kls20_ours_formal}]\label{def:interlacing-family-kls20}
    Let $h\in \R[x_1,\dots,x_m]$ denote a hyperbolic polynomial with respect to hyperbolic direction $e\in \Gamma_{++}^h$. Let $\xi_1,\dots,\xi_n$ denote $n$ independent random variables with finite supports and $\E[\xi_i]=\mu_i$ for $i\in [n]$. Let $v_1, \dots, v_n \in \Gamma_{+}^h$ be $n$ vectors such that $\mathrm{rank}_h(v_i) \leq 1$ for all $i \in [n]$. 
    For each $s = (s_1,\dots,s_n)$ where $s_i\in \supp(\xi_i)$, let $p_{s} \in \R [x]$ define the following polynomial:
    \begin{align*}
        p_{\bf s}(x) := \left(\prod_{i=1}^n p_{i, s_i}\right)\cdot h\left(x e + \sum_{i=1}^n (s_i-\mu_i)  v_i  \right) \cdot h\left(x e - \sum_{i=1}^n (s_i-\mu_i)  v_i  \right)
    \end{align*}
    where $p_{i, s_i} := \Pr_{\xi_i}[\xi_i = s_i]$.
    Let $\mathcal{P}$ denote the following family of polynomials:
    \begin{align*}
        \mathcal{P} := \left\{ p_{(s_1,\cdots, s_\ell)} = \sum_{\substack{t_{\ell+1} ,\cdots,t_{n}:\\t_j\in \supp(\xi_j)~\forall j\in \{\ell+1,\dots,n\}}} p_{(s_1,\cdots,s_\ell, t_{\ell+1},\cdots,t_n)} : ~ \ell \in [n], s_i \in \supp(\xi_i)~\forall i\in [\ell]\right\}.
    \end{align*}
\end{definition}

\begin{lemma}[Interlacing Family of Theorem \ref{thm:kls20_ours_formal}]\label{lem:interlacing-family-kls20}
    The polynomial family $\mathcal{P}$ defined in Definition \ref{def:interlacing-family-kls20} is an interlacing family.
\end{lemma}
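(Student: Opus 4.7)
The plan is to verify Definition~\ref{def:interlacingfamily}: for every partial assignment $(s_1,\dots,s_k) \in \supp(\xi_1)\times\cdots\times\supp(\xi_k)$, the polynomials $\{p_{(s_1,\dots,s_k,t)}\}_{t \in \supp(\xi_{k+1})}$ must admit a common interlacing. By Lemma~\ref{lem:commoninterlacing}, it suffices to show that every convex combination $\sum_{t} \alpha_t\, p_{(s_1,\dots,s_k,t)}(x)$ (with $\alpha_t \geq 0$ and $\sum_t \alpha_t = 1$) is real-rooted.

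The first step is to rewrite any such convex combination as a positive scalar multiple of an expectation under a reweighted distribution. Setting $w := \sum_{i \leq k}(s_i - \mu_i)v_i$ and unfolding the definition of $p_{(s_1,\dots,s_k,t)}$,
\[
\sum_{t} \alpha_t\, p_{(s_1,\dots,s_k,t)}(x) \;=\; Z \cdot \prod_{i=1}^k p_{i,s_i}\cdot \E_{\hat{\xi}_{k+1},\,\xi_{k+2},\dots,\xi_n}\big[\,Q(x;\hat{\xi}_{k+1},\xi_{k+2},\dots,\xi_n)\,\big],
\]
where $Z := \sum_t \alpha_t p_{k+1,t} > 0$, $\hat{\xi}_{k+1}$ is the random variable on $\supp(\xi_{k+1})$ with law $\hat{p}_t = \alpha_t p_{k+1,t}/Z$, and $Q(x;y_{k+1},\dots,y_n) := h\big(xe + w + \sum_{j>k}(y_j-\mu_j)v_j\big)\cdot h\big(xe - w - \sum_{j>k}(y_j-\mu_j)v_j\big)$. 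Since the prefactor is a positive constant, real-rootedness of the convex combination is equivalent to real-rootedness of the expectation on the right, which has exactly the shape of the polynomials defining $\mathcal{P}$, but with $\xi_{k+1}$ replaced by $\hat{\xi}_{k+1}$.

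The second step is to invoke the forthcoming Lemma~\ref{lem:kls_expectation_hyperbolic} of Section~\ref{sec:kls_expectation}---after absorbing the discrepancy $\hat{\mu}_{k+1}-\mu_{k+1}$ into the deterministic vector $w$ so that the surviving random part is centered with variance $\hat{\tau}_{k+1}^2 = \Var(\hat{\xi}_{k+1})$---to identify this expectation with the restriction at $z_{k+1}=\cdots=z_n=0$ of
\[
\prod_{j=k+1}^{n}\Big(1-\tfrac{\hat{\tau}_j^2}{2}\partial_{z_j}^2\Big)\Big[h\Big(xe + w' + \sum_{j=k+1}^{n} z_j v_j\Big)\Big]^2,
\]
where $\hat{\tau}_j^2 = \tau_j^2$ for $j>k+1$ and $w'$ is the shifted deterministic vector. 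Real-rootedness now follows from real-stability: by Fact~\ref{fac:equi_def_real_stable}, one checks that every one-dimensional restriction of $h(xe + w' + \sum_{j>k} z_j v_j)\in \R[x,z_{k+1},\dots,z_n]$ along $(a,b) \in \R^{n-k+1}_{>0}\times \R^{n-k+1}$ takes the form $h\big((a_0 e + \sum_{j>k} a_{j-k} v_j)\,t + y\big)$, and the coefficient vector $a_0 e + \sum_{j>k} a_{j-k} v_j$ lies in $\Gamma_{++}^h$ (since the open cone is closed under strictly positive combinations with $e\in \Gamma_{++}^h$ and $v_j\in \Gamma_+^h$), so hyperbolicity of $h$ in that direction gives real-rootedness in $t$; hence the whole multivariate polynomial is real-stable. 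Squaring, applying each $1-\tfrac{\hat{\tau}_j^2}{2}\partial_{z_j}^2$, and restricting $z=0$ all preserve real-stability by Fact~\ref{fac:closure_real_stable}, so the univariate polynomial in $x$ is real-stable and hence real-rooted by Fact~\ref{fac:uni_real_stable}.

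The main obstacle I anticipate is the bookkeeping around the mean of $\hat{\xi}_{k+1}$: the reweighting in Step~1 generally shifts it away from $\mu_{k+1}$, whereas the natural formulation of Lemma~\ref{lem:kls_expectation_hyperbolic} assumes the random variable fed into each $(1-\tfrac{\tau^2}{2}\partial^2)$ operator is centered. The centering trick---writing $\hat{\xi}_{k+1}-\mu_{k+1} = (\hat{\xi}_{k+1}-\hat{\mu}_{k+1}) + (\hat{\mu}_{k+1}-\mu_{k+1})$ and folding the deterministic second summand into $w'$---resolves this neatly, so that the standard statement of the lemma applies verbatim and the real-stability/hyperbolicity chain concludes the argument.
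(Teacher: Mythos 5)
Your proof follows the same route as the paper: reduce to Lemma~\ref{lem:commoninterlacing}, rewrite the convex combination $\sum_t \alpha_t\, p_{(s_1,\dots,s_k,t)}$ as (a positive multiple of) an expectation in which $\xi_{k+1}$ has been replaced by a reweighted copy, and then invoke Lemma~\ref{lem:kls_expectation_hyperbolic} to get real-rootedness. In fact you are \emph{more} careful than the paper in two places where the paper is imprecise. First, you explicitly normalize by $Z=\sum_t \alpha_t p_{k+1,t}$ so that the new weights $\hat p_t = \alpha_t p_{k+1,t}/Z$ form a probability vector; the paper instead silently identifies ``$p_{\ell+1,c_t}=\alpha_t$'', which makes the claimed equality $\wt{p_{\bf s}}=\sum_t \alpha_t p_{{\bf s},c_t}$ fail with the original weights and only works up to the positive rescaling you introduce. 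Second, you correctly flag that $\E[\hat\xi_{k+1}]$ is in general not $\mu_{k+1}$, and you absorb the deterministic shift $\hat\mu_{k+1}-\mu_{k+1}$ into the fixed vector $Q$ before applying Lemma~\ref{lem:kls_expectation_hyperbolic}; the paper's proof applies the lemma without acknowledging this mismatch, even though the lemma as stated assumes the $\mu_i$'s are the actual means.

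One small inaccuracy: you write the multivariate polynomial as
$\prod_{j>k}\bigl(1-\tfrac{\hat\tau_j^2}{2}\partial_{z_j}^2\bigr)\bigl[h\bigl(xe+w'+\sum_{j>k}z_jv_j\bigr)\bigr]^2$,
but the two $h$-factors coming from Lemma~\ref{lem:kls_expectation_hyperbolic} carry $w'$ with opposite signs, $h\bigl(xe-w'+\sum z_jv_j\bigr)\cdot h\bigl(xe+w'+\sum z_jv_j\bigr)$, and this is not a square when $w'\ne 0$ (which is the generic case once $s_1,\dots,s_k$ are fixed). This is cosmetic: each factor is real-stable by exactly your restriction argument (Fact~\ref{fac:equi_def_real_stable}, hyperbolicity in direction $a_0e+\sum a_jv_j\in\Gamma_{++}^h$), and real-stability of the product, closure under $1-c\,\partial_{z_j}^2$, and restriction to $z=0$ (Fact~\ref{fac:closure_real_stable}, Fact~\ref{fac:uni_real_stable}) still deliver the conclusion. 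So the slip does not affect the validity of the argument, but the displayed formula should be corrected.
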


\begin{proof}
    Fix any $\ell \in [n-1]$, and fix $s_1,\cdots,s_\ell$ as any  partial assignment of $\xi_1,\cdots,\xi_\ell$,  i.e. $s_i \in \{\pm 1\}$ for all $i\in \supp(\xi_i)$.
    
    It is easy to see that the polynomial $p_{(s_1,\dots,s_\ell)}$ can be written as:
    \begin{align*}
        \left( \prod_{i=1}^\ell p_{i, s_i} \right) \E_{{\xi}_{\ell+1}, \dots, \xi_{n}} \left[ h \left( x e + \left( \sum_{i=1}^\ell (s_i-\mu_i) v_i +\sum_{j=\ell+1}^n (\xi_j-\mu_j) v_j \right) \right)\cdot  \right.\\
        \left. h \left( x e - \left( \sum_{i=1}^\ell (s_i-\mu_i) v_i +\sum_{j=\ell+1}^n (\xi_j-\mu_j) v_j \right) \right)\right]
    \end{align*}
    
    Let $\{c_1,\dots,c_k\}$ be the support of $\xi_{\ell+1}$. Then, by Lemma \ref{lem:commoninterlacing}, it suffices to show that for any $\alpha \in \R_{\geq 0}^k$ with $\sum_{i=1}^k \alpha_i = 1$,
    the polynomial
    \begin{align*}
        \alpha_1 p_{(s_1,\cdots,s_\ell, c_1)} (x) +\cdots + \alpha_k p_{(s_1,\cdots,s_\ell, c_k)} (x)
    \end{align*}
    is real-rooted. We interpret $\alpha$ as the probability density of the $(\ell+1)$-th random variable, i.e. let $p_{\ell +1, c_t} = \alpha_t$ for all $t\in [k]$. Then we can define a new random variable $\wt{\xi}_{\ell+1}$ with the same support as $\xi_{\ell + 1}$ and $\Pr[\wt{\xi}_{\ell+1} = c_t] = p_{\ell +1, c_t}$ for all $t \in [k]$.
    
    Notice that
    \begin{align*}
        \wt{p_{\bf s}}(x) :=&~ \left( \prod_{i=1}^\ell p_{i, s_i} \right)\\
        &~~ \times\E_{\wt{\xi}_{\ell+1}, \xi_{\ell+2}\dots, \xi_{n}} \left[ h \left( x e - \left( \sum_{i=1}^\ell (s_i-\mu_i) v_i + (\wt{\xi}_{\ell+1} - \mu_{\ell+ 1})v_{\ell+1}+\sum_{j=\ell+2}^n (\xi_j-\mu_j) v_j \right) \right)\right. \\
        & ~ \left.\hspace{36mm} \cdot h \left( x e + \left( \sum_{i=1}^\ell (s_i-\mu_i) v_i + (\wt{\xi}_{\ell+1} - \mu_{\ell+ 1})v_{\ell+1} + \sum_{j=\ell+2}^n (\xi_j-\mu_j) v_j \right) \right) \right]  \\
        =& \sum_{t = 1}^k \left( \prod_{i=1}^\ell p_{i, s_i} \right) p_{\ell+1, c_t} \\
        &~~ \times \E_{\xi_{\ell+2}, \dots, \xi_{n}} \left [
         h \left( x e - \Bigg(\sum_{i=1}^\ell (s_i-\mu_i) v_i + c_t v_{\ell+1} + \sum_{j=\ell+2}^n (\xi_j-\mu_j) v_j \Bigg) \right)\right. \\
          & ~ \left.\hspace{25mm} \cdot 
         h \left( x e  + \Bigg(\sum_{i=1}^\ell (s_i-\mu_i) v_i + c_t v_{\ell+1} + \sum_{j=\ell+2}^n (\xi_j-\mu_j) v_j \Bigg) \right) \right] \\
        =& \sum_{i=1}^k \alpha_i p_{{\bf s}, c_i} (x). 
    \end{align*}  
    
    By Lemma \ref{lem:kls_expectation_hyperbolic} (we remark that it does not require any result in the current section. We will prove Lemma \ref{lem:kls_expectation_hyperbolic} in Section \ref{sec:kls_expectation}),
    $\wt{p_{\bf s}}$ is real-rooted, 
    which completes the proof that $\mathcal{P}$ is an interlacing family.
\end{proof}

Lemma \ref{lem:interlacing-family-kls20} implies the following corollary, which is a hyperbolic version of Proposition 4.1 in \cite{kls20}:

\begin{corollary}[Upper Bound of the Largest Root of Interlacing Family] \label{cor:kls-interlacing}
Let $h\in \R[x_1,\cdots,x_m]$ denote a hyperbolic polynomial  with corresponding hyperbolic direction $e\in \Gamma_{++}^h$. Let $\xi_1,\dots,\xi_n$ denote $n$ independent random variables with finite supports and $\E[\xi_i]=\mu_i$ for $i\in [n]$.
For any $v_1, \dots v_n \in \Gamma^h_{++}$ such that $\rank_h(v_i)\leq 1$ for all $i \in [n]$, 
there exists an sign assignment $s = (s_1, \dots, s_n)\in \supp(\xi_i)\times \cdots \supp(\xi_n)$,
such that
\begin{align*}
\left \| \sum_{i=1}^m (s_i-\mu_i) v_i  \right \|_h
\end{align*}
is at most the largest root of 
\begin{align*}
p_{\emptyset}(x) = \E_{ \xi_1, \dots, \xi_m } \left[ h \Big( x e +  \sum_{i=1}^m (\xi_i-\mu_i) v_i \Big) \cdot h \Big( x e - \sum_{i=1}^m (\xi_i-\mu_i) v_i \Big) \right].
\end{align*}
\end{corollary}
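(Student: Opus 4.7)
The plan is that this corollary is the ``packaging'' of Lemma \ref{lem:interlacing-family-kls20}: once we know that $\mathcal{P}$ is an interlacing family, we combine the standard root-bound principle for interlacing families (Lemma \ref{lem:rootbound}) with the characterization of the hyperbolic norm as the largest root of a product of characteristic polynomials (Fact \ref{fact:hyperbolic-norm-max-root}). No new analytic input is needed beyond what Lemma \ref{lem:interlacing-family-kls20} has already supplied. Before applying the interlacing machinery I will note that each leaf polynomial $p_{\bf s}$ is real-rooted (each factor $h(xe \pm \sum(s_i-\mu_i)v_i)$ is real-rooted by the hyperbolicity of $h$, and products of real-rooted polynomials are real-rooted) and has positive leading coefficient (the leading coefficient of each factor is $h(e)>0$, and the prefactor $\prod_i p_{i,s_i}$ is a product of strictly positive probabilities, hence positive), so all the hypotheses of Lemma \ref{lem:rootbound} and Lemma \ref{lem:interlacing_ub_root} are met.

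I would first invoke Lemma \ref{lem:interlacing-family-kls20} to assert that $\mathcal{P}$ is an interlacing family indexed by $\supp(\xi_1)\times\cdots\times\supp(\xi_n)$, and observe that the root polynomial $p_\emptyset$ of the tree equals the expectation displayed in the corollary statement. Indeed, by Definition \ref{def:interlacing-family-kls20} and \eqref{eq:conditionalpoly},
\begin{align*}
p_\emptyset(x) = \sum_{(s_1,\ldots,s_n)} \Big(\prod_{i=1}^n p_{i,s_i}\Big)\, h\Big(xe + \sum_{i=1}^n (s_i-\mu_i) v_i\Big)\, h\Big(xe - \sum_{i=1}^n (s_i-\mu_i) v_i\Big),
\end{align*}
and since $p_{i,s_i} = \Pr[\xi_i = s_i]$ the sum collapses to the desired expectation over $\xi_1,\ldots,\xi_n$. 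I would then apply Lemma \ref{lem:rootbound} to obtain an assignment $s = (s_1,\ldots,s_n)$ with $s_i \in \supp(\xi_i)$ such that the largest root of $p_{\bf s}$ is at most the largest root of $p_\emptyset$.

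It remains to identify the largest root of $p_{\bf s}$ with the hyperbolic norm $\|\sum_{i=1}^n(s_i-\mu_i)v_i\|_h$. Because $\prod_i p_{i,s_i}>0$ is a constant in $x$, it does not shift roots, so the largest root of $p_{\bf s}$ equals the largest root of $h(xe+\sum(s_i-\mu_i)v_i)\cdot h(xe-\sum(s_i-\mu_i)v_i)$. Applying Fact \ref{fact:hyperbolic-norm-max-root} with the real scalars $s_i-\mu_i$ in place of the $s_i$ there then yields exactly $\|\sum_{i=1}^n(s_i-\mu_i)v_i\|_h$, finishing the proof. The potentially delicate step—verifying the common-interlacing condition, which via Lemma \ref{lem:commoninterlacing} reduces to the real-rootedness of arbitrary convex combinations of the leaf polynomials—has already been discharged inside Lemma \ref{lem:interlacing-family-kls20} by appealing to Lemma \ref{lem:kls_expectation_hyperbolic}, so I expect no substantive obstacle here; the argument is essentially a short chain of reductions.
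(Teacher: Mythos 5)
Your proposal matches the paper's proof essentially step for step: invoke Lemma~\ref{lem:interlacing-family-kls20} to establish $\mathcal{P}$ is an interlacing family, apply Lemma~\ref{lem:rootbound} to extract an assignment whose leaf polynomial's largest root is dominated by that of $p_\emptyset$, and finish with Fact~\ref{fact:hyperbolic-norm-max-root} to identify that largest root with the hyperbolic norm. Your extra verification that each leaf polynomial is real-rooted with positive leading coefficient (via hyperbolicity of $h$, $h(e)>0$, and strictly positive probability weights on the support) is correct and a slight tightening of the paper's exposition, but the route is the same.
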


\begin{proof}

Let $\mathcal{P}$ be the interlacing family defined in Definition \ref{def:interlacing-family-kls20}. Then by Lemma \ref{lem:interlacing-family-kls20}, $\mathcal{P}$ is an interlacing family.

For any fixed $\ell \in [n]$ and the first $\ell$ assignments $(s_1,\cdots,s_\ell)$ such that $s_i\in \supp(\xi_i)$ for $i\in [\ell]$, notice that 
\begin{align*}
    p_{s_1,\cdots,s_\ell}(x) = \left( \prod_{i=1}^\ell p_{i, s_i} \right) \E_{\xi_{\ell+1}, \dots, \xi_{n}} \Bigg[ & ~ h \left( x e - \left( \sum_{i=1}^\ell (s_i-\mu_i) v_i + \sum_{j=\ell+1}^n  (\xi_j-\mu_j) v_j \right) \right)\cdot  \\
    & ~  h \left( x e + \left( \sum_{i=1}^\ell (s_i-\mu_i) v_i + \sum_{j=\ell+1}^n (\xi_j-\mu_j) v_j \right) \right) \Bigg] ,
\end{align*}
and
\begin{align*}
    p_{\emptyset} = \E_{ \xi_1, \dots, \xi_m } \left[ h \Big( x e +  \sum_{i=1}^m (\xi_i-\mu_i) v_i \Big) \cdot h \Big( x e - \sum_{i=1}^m (\xi_i-\mu_i) v_i \Big) \right].
\end{align*}

Therefore, by Lemma \ref{lem:rootbound}, there exists a sign assignment
  \begin{align*}
      (s_1,\dots,s_n) \in \supp(\xi_i)\times \cdots \supp(\xi_n)
  \end{align*}
  such that the largest root of $p_s$ is upper bounded by the largest root of
  \begin{align*}
      \E_{ \xi_1, \dots, \xi_m } \left[ h \Big( x e +  \sum_{i=1}^m (\xi_i-\mu_i) v_i \Big) \cdot h \Big( x e - \sum_{i=1}^m (\xi_i-\mu_i) v_i \Big) \right].
  \end{align*}
Then by Fact \ref{fact:hyperbolic-norm-max-root}, we have $\left \| \sum_{i=1}^m (s_i-\mu_i) v_i  \right \|_h=\lambda_{\max}(p_{\bf s})$, which is upper bounded by the maximum root of $p_{\emptyset}$.

\end{proof}

\subsection{From mixed characteristic polynomial to multivariate polynomial}\label{sec:kls_expectation}

In this section, we want to show that the mixed characteristic polynomial, i.e. the average of the interlacing family defined in Definition \ref{def:interlacing-family-kls20}:
\begin{align*}
    p_{\emptyset} = \E_{ \xi_1, \dots, \xi_m } \left[ h \Big( x e +  \sum_{i=1}^m (\xi_i-\mu_i) v_i \Big) \cdot h \Big( x e - \sum_{i=1}^m (\xi_i-\mu_i) v_i \Big) \right]
\end{align*}
equals to the following multivariate polynomial after taking $z_1=\cdots = z_n = 0$. 
\begin{align*}
    \prod_{i=1}^n \Big(1- \frac{1}{2} \frac{\partial^2}{ \partial z_i^2 }\Big) h \Big( xe - Q + \sum_{i=1}^n z_i\tau_i v_i \Big) \cdot h \Big( xe + Q + \sum_{i=1}^n z_i\tau_i v_i \Big) \in \R[x,z_1,\cdots,z_n]
\end{align*}
The largest root of this multivariate polynomial is relatively easy to upper-bound using barrier argument. We will describe the details in Section \ref{sec:kls_bound_root}.

The main lemma of this section is as follows:

\begin{lemma}[Hyperbolic version of Proposition 3.3 in \cite{kls20}]\label{lem:kls_expectation_hyperbolic}
Let $h\in \R[x_1,\cdots,x_m]$ denote a hyperbolic polynomial with respect to a hyperbolic direction $e\in \Gamma_{++}^h$.
Let $v_1, \dots v_n \in \Gamma_+^h$ such that $\forall i \in [n]$, $\rank_h(v_i) \le 1$. Let $\xi_1,\dots,\xi_n$ denote $n$ independent random variables such that $\E[\xi_i] = \mu_i$ and $\Var[\xi_i]=\tau_i^2$. For any $Q \in \R^m$, we have
\begin{align}
& ~ \E_{\xi_1,\dots, \xi_n} \left[ h \Big(x e - (Q + \sum_{i=1}^n (\xi_i-\mu_i) v_i ) \Big) h \Big(x e + (Q + \sum_{i=1}^n (\xi_i-\mu_i) v_i ) \Big) \right] \nonumber \\
 = & ~ 
  \prod_{i=1}^n \Big(1- \frac{1}{2} \frac{\partial^2}{ \partial z_i^2 }\Big) \Big|_{z_i = 0} h \Big( xe - Q + \sum_{i=1}^n z_i\tau_i v_i \Big) \cdot h \Big( xe + Q + \sum_{i=1}^n z_i\tau_i v_i \Big). \label{eqn:kls_expectation_mix_hyperbolic}
\end{align}
Moreover, this is a real-rooted polynomial in $x$.
\end{lemma}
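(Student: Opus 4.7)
The plan is to establish the identity \eqref{eqn:kls_expectation_mix_hyperbolic} by induction on $n$, peeling off one random variable at a time using a single-variable identity as the base case, and then to derive real-rootedness in $x$ by showing that the multivariate lift
\[
P(x, z_1, \dots, z_n) := h\Bigl(xe - Q + \sum_{i=1}^n z_i \tau_i v_i\Bigr)\cdot h\Bigl(xe + Q + \sum_{i=1}^n z_i \tau_i v_i\Bigr)
\]
is real-stable. Throughout, set $\eta_i := \xi_i - \mu_i$, so $\E[\eta_i] = 0$ and $\E[\eta_i^2] = \tau_i^2$.

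\textbf{One-variable base case.} The crucial input is that $\rank_h(v_i) \le 1$ forces $t \mapsto h(y + t v_i)$ to be affine in $t$ for every $y \in \R^m$ (the same property implicitly used in the base case of the proof of Fact~\ref{fac:first_order_expand}). Writing $h(y_1 + t v_i) = A_0 + A_1 t$ and $h(y_2 + t v_i) = B_0 + B_1 t$, both sides of the target one-variable identity
\[
\E_{\eta_i}[h(y_1 - \eta_i v_i) \cdot h(y_2 + \eta_i v_i)] = \bigl(1 - \tfrac{1}{2}\partial_{z_i}^2\bigr)\big|_{z_i=0}\bigl[h(y_1 + z_i \tau_i v_i)\cdot h(y_2 + z_i \tau_i v_i)\bigr]
\]
become quadratic in their respective variable, and a direct coefficient match (the $\eta_i$-linear term vanishes because $\E[\eta_i] = 0$, while the $-\tau_i^2 A_1 B_1$ contribution that $\E[\eta_i^2] = \tau_i^2$ produces on the left is precisely what $-\tfrac{1}{2}\partial_{z_i}^2$ extracts on the right) shows equality.

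\textbf{Inductive step.} Fixing $\xi_2, \dots, \xi_n$ and setting $y_1 := xe - Q - \sum_{j \ge 2}\eta_j v_j$ and $y_2 := xe + Q + \sum_{j \ge 2}\eta_j v_j$, I apply the one-variable identity to $\xi_1$. Since the operator $(1 - \tfrac{1}{2}\partial_{z_1}^2)\big|_{z_1 = 0}$ acts only on $z_1$, it commutes with the remaining expectation $\E_{\xi_2, \dots, \xi_n}$. Iterating the same peeling-off for $\xi_2, \xi_3, \dots, \xi_n$ and absorbing each previously introduced $z_j \tau_j v_j$ shift into the arguments of $h$ produces, after $n$ steps, precisely the right-hand side of \eqref{eqn:kls_expectation_mix_hyperbolic}.

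\textbf{Real-rootedness, and main obstacle.} For the moreover claim, I would check real-stability of $P$ via Fact~\ref{fac:equi_def_real_stable}: for any $a \in \R^{n+1}_{>0}$ and $b \in \R^{n+1}$, the restriction $P(at + b)$ factors as a product of two polynomials of the form $h(t u + w_{\pm})$ in $t$, where $u := a_0 e + \sum_{i=1}^n a_i \tau_i v_i$. Since $a_0 > 0$ with $e \in \Gamma_{++}^h$ and $a_i \tau_i \ge 0$ with $v_i \in \Gamma_+^h$, one has $u \in \Gamma_{++}^h$; by the definition of hyperbolicity each factor is real-rooted in $t$, and a product of real-rooted polynomials is real-rooted. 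Then Fact~\ref{fac:closure_real_stable} implies that applying each $(1 - \tfrac{1}{2}\partial_{z_i}^2)$ and then restricting $z_i = 0$ preserves real-stability, so the resulting univariate polynomial in $x$ is real-rooted by Fact~\ref{fac:uni_real_stable}. The real nub of the argument is the one-variable base case: once the rank-$1$ hypothesis collapses $h(y + t v_i)$ to an affine polynomial, every remaining step is symbolic expansion, commutation of operators acting on disjoint variables, or a standard closure property of real-stable polynomials.
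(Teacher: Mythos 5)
Your proposal is correct and follows essentially the same route as the paper: an induction that peels off one random variable at a time via the same rank-one-collapse one-variable identity, followed by a real-stability argument via Fact~\ref{fac:equi_def_real_stable} and the closure properties in Fact~\ref{fac:closure_real_stable}. The only cosmetic difference is that you test real-stability of the product $P$ directly along a line, whereas the paper first establishes real-stability of each factor $h(xe \pm Q + \sum_i z_i \tau_i v_i)$ and then invokes closure under products — these amount to the same computation.
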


\begin{proof}%
We first show Eqn.~\eqref{eqn:kls_expectation_mix_hyperbolic} by induction.
Our induction hypothesis will be that for any $0 \leq k \leq n$,
\begin{align}
& ~ \E_{\xi_1,\dots, \xi_n} \left[ h \Big(x e - (Q + \sum_{i=1}^n (\xi_i-\mu_i) v_i ) \Big) h \Big(x e + (Q + \sum_{i=1}^n (\xi_i-\mu_i) v_i ) \Big) \right] \notag\\
= & ~ \E_{\xi_{k+1}, \dots, \xi_n}
    \prod_{i=1}^k \Big(1- \frac{1}{2} \frac{\partial^2}{ \partial z_i^2 }  \Big) \Big|_{z_i = 0} h \Big( xe - Q - \sum_{i=k+1}^n (\xi_i-\mu_i) v_i + \sum_{j=1}^k z_j \tau_j v_j \Big) \notag\\
    &~\times h \Big( xe + Q + \sum_{i=k+1}^n (\xi_i-\mu_i) v_i + \sum_{j=1}^k z_j\tau_j v_j \Big) \label{eqn:kls_expection_ih}
\end{align}

The base case, $k = 0$ trivially holds as we get the same formula on both sides.

For the inductive step, suppose the induction hypothesis holds for any $k \le \ell$ where $0\le \ell < n$.
Applying Claim~\ref{clm:hyperbolic_lemma_3.1_in_kls19} to the right-hand-side of Eqn. \eqref{eqn:kls_expection_ih} when letting $k = \ell$ yields
\begin{align}
    & ~ \E_{\xi_1,\dots, \xi_m} \left[ h \Big(x e - (Q + \sum_{i=1}^n (\xi_i-\mu_i) v_i ) \Big) h \Big(x e + (Q + \sum_{i=1}^n (\xi_i-\mu_i) v_i ) \Big) \right]  \nonumber \\
    = & ~ \E_{\xi_{\ell+2}, \dots, \xi_m}
        \prod_{i=1}^{\ell+1} \Big(1- \frac{1}{2} \frac{\partial^2}{ \partial z_i^2 }\Big) \Big|_{z_i = 0} h \Big( xe - Q - \sum_{i=\ell+2}^n (\xi_i-\mu_i) v_i + \sum_{j=1}^{\ell+1} z_j\tau_j v_j \Big) \nonumber \\
        &~ \times h \Big( xe + Q + \sum_{i=\ell+2}^n (\xi_i-\mu_i) v_i + \sum_{j=1}^{\ell+1} z_j\tau_j v_j \Big) \label{eqn:kls_expectation_rs}
\end{align}
This completes the proof of Eqn. \eqref{eqn:kls_expectation_mix_hyperbolic}.

We now show that  Eqn. (\ref{eqn:kls_expectation_mix_hyperbolic}) is real-rooted in $x$. 
By Claim~\ref{clm:hyper_linear_restriction}, we know that
\begin{align*}
    h \Big( xe - Q + \sum_{i=1}^n z_i\tau_i v_i \Big)~~\text{and}~~h \Big( xe + Q + \sum_{i=1}^n z_i\tau_i v_i \Big)\in \R[x,z_1,\dots,z_n],
\end{align*}
are real-stable polynomials.
Then, by Fact~\ref{fac:closure_real_stable}, we get that
\begin{align*}
    h \Big( xe - Q + \sum_{i=1}^n z_i\tau_i v_i \Big) \cdot h \Big( xe + Q + \sum_{i=1}^n z_i\tau_i v_i \Big)
\end{align*}
is also real-stable. And by Fact~\ref{fac:closure_real_stable} again, the operator
\begin{align*}
    \prod_{i=1}^n \Big(1- \frac{1}{2} \frac{\partial^2}{ \partial z_i^2 }\Big) \Big|_{z_i = 0}
\end{align*}
preserves the real-stability. Hence, by Fact~\ref{fac:uni_real_stable}, the RHS of Eqn.~\eqref{eqn:kls_expectation_mix_hyperbolic} is real-rooted.
\end{proof}

The following statements are crucially used in the proof of Lemma \ref{lem:kls_expectation_hyperbolic}:

\begin{claim}[Hyperbolic version of Lemma 3.1 in \cite{kls20}]\label{clm:hyperbolic_lemma_3.1_in_kls19}
Let $h$ denote a hyperbolic polynomial. Let $\xi$ denote a random variables with $\E[\xi]=0$ and $\Var[\xi]=\tau^2$. For any $v \in \R^m$ such that $\rank_h(v) \le 1$, and any $x_1,x_2 \in \R^m$, we have
\begin{align*}
    \E_{\xi} [ h(x_1 - \xi v) \cdot h(x_2 + \xi v) ] = \left(1 - \frac{1}{2}\frac{\d ^2}{\d t^2}\right)\Bigg|_{t=0} h(x_1+t\tau v) h(x_2+t\tau v).
\end{align*}
\end{claim}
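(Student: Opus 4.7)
The plan is to exploit the structural consequence of the hypothesis $\rank_h(v) \leq 1$: namely, that $h(x + tv)$ is an affine function of $t$. By Fact~\ref{fac:first_order_expand} applied with a single vector, we have $h(x + v) = (1 + \partial_{z_1}) h(x + z_1 v)|_{z_1 = 0} = h(x) + D_v h(x)$ for every $x \in \R^m$ whenever $\rank_h(v) \leq 1$. Since $\rank_h(tv) = \rank_h(v) \leq 1$ for every scalar $t$, and since $D_v$ is linear in $v$, we obtain
\begin{align*}
    h(x + tv) \;=\; h(x) + t \cdot D_v h(x) \quad \text{for all } x \in \R^m,\, t \in \R.
\end{align*}
This is the single key ingredient; everything else is a routine expansion.

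With this linearity in hand, the left-hand side factors as
\begin{align*}
    h(x_1 - \xi v)\, h(x_2 + \xi v)
    = \big(h(x_1) - \xi D_v h(x_1)\big)\big(h(x_2) + \xi D_v h(x_2)\big),
\end{align*}
which expands to
\begin{align*}
    h(x_1) h(x_2) + \xi\big(h(x_1) D_v h(x_2) - D_v h(x_1) h(x_2)\big) - \xi^2\, D_v h(x_1)\, D_v h(x_2).
\end{align*}
Taking expectation and using $\E[\xi] = 0$ together with $\E[\xi^2] = \Var[\xi] = \tau^2$, the first-order term vanishes and we get
\begin{align*}
    \E_\xi\!\left[ h(x_1 - \xi v)\, h(x_2 + \xi v) \right] = h(x_1) h(x_2) - \tau^2\, D_v h(x_1)\, D_v h(x_2).
\end{align*}

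For the right-hand side, the same linearity gives
\begin{align*}
    h(x_1 + t\tau v)\, h(x_2 + t\tau v)
    = h(x_1) h(x_2) + t\tau\big(h(x_1) D_v h(x_2) + D_v h(x_1) h(x_2)\big) + t^2 \tau^2\, D_v h(x_1)\, D_v h(x_2),
\end{align*}
so this product is a polynomial in $t$ of degree at most $2$. Evaluating at $t=0$ yields $h(x_1) h(x_2)$, while $\tfrac{1}{2}\tfrac{\d^2}{\d t^2}$ applied to the $t^2$ coefficient (with the other terms contributing zero) produces $\tau^2\, D_v h(x_1)\, D_v h(x_2)$. Therefore
\begin{align*}
    \left(1 - \tfrac{1}{2}\tfrac{\d^2}{\d t^2}\right)\Big|_{t=0} h(x_1 + t\tau v)\, h(x_2 + t\tau v)
    = h(x_1) h(x_2) - \tau^2\, D_v h(x_1)\, D_v h(x_2),
\end{align*}
matching the left-hand side exactly.

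I do not foresee any real obstacle: the whole argument reduces to the rank-one linearity of $h(x+tv)$, which is precisely the $n=1$ instance of Fact~\ref{fac:first_order_expand} already established in the paper. The only mild subtlety is noticing that the variance $\tau^2$ appears on the left via $\E[\xi^2]$ and on the right via the factor of $\tau$ in $t\tau v$ together with the $\tfrac{1}{2}$ normalization in front of $\d^2/\d t^2$; these match because $\tfrac{1}{2}\tfrac{\d^2}{\d t^2}(t^2\tau^2) = \tau^2$.
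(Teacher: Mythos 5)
Your proof is correct and takes essentially the same approach as the paper's: both exploit that $\rank_h(v)\le 1$ makes $t\mapsto h(x+tv)$ affine (you cite Fact~\ref{fac:first_order_expand}, the paper truncates the Taylor series via $D_v^k h\equiv 0$ for $k\ge 2$), expand the product, and match both sides via $\E[\xi]=0$, $\E[\xi^2]=\tau^2$. Your explicit computation of the right-hand side in place of the paper's operator identity $(1-\tfrac{\tau^2}{2}D_v^2)h(x_1)h(x_2)$ is a purely presentational difference.
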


\begin{remark}
Claim~\ref{clm:hyperbolic_lemma_3.1_in_kls19} can be easily generalized for non-centered random variable $\xi$ with $\E[\xi]=\mu$:
\begin{align*}
    \E_{\xi} [ h(x_1 - (\xi-\mu) v) \cdot h(x_2 + (\xi-\mu) v) ] = \left(1 - \frac{1}{2}\frac{\d ^2}{\d t^2}\right)\Bigg|_{t=0} h(x_1+t\tau v) h(x_2+t\tau v).
\end{align*}
\end{remark}

\begin{proof}
Since $\rank_h(v) \le 1$, for all $k \geq 2$, $D_{v}^k h \equiv 0$.
Thus, for all $x_1 \in \R^m$, 
\begin{align*}
    h(x_1 - \xi v) = \left( \sum_{k=0}^{\infty} \frac{ (-\xi)^k D_v^k }{ k! } \right) h(x_1) = (1 - \xi D_v) h(x_1).    
\end{align*}
where the first step follows from Taylor expansion of $h(x_1 - \xi v)$ on $x_1$.

Similarly, for all $x_2 \in \R^m$, 
\begin{align*}
    h(x_2 + \xi v) = (1 + \xi D_v) h(x_2).    
\end{align*}

Therefore, 
\begin{align*}
    h(x_1 - \xi v) \cdot h(x_2 + \xi v) &= (1 - \xi D_v) h(x_1) \cdot (1 + \xi D_v) h(x_2) \\
    &= h(x_1)h(x_2)-\xi h(x_2)D_vh(x_1) + \xi h(x_1)D_vh(x_2)-\xi^2 D_vh(x_1)D_vh(x_2)
\end{align*}

Since $\E[\xi] = 0$ and $\Var[\xi]=\tau^2$, we have
\begin{align*}
    \E_{\xi} [ h(x_1 - \xi v) \cdot h(x_2 + \xi v) ] = &~ \left(1 - \frac{\E[\xi^2]}{2} D_{v}^2\right) h(x_1) h(x_2) + \E_{\xi}[\xi]\cdot (h(x_1)D_v h(x_2) - h(x_2)D_v h(x_1)) \\
    = &~ \left(1 - \frac{\tau^2}{2} D_{v}^2\right) h(x_1) h(x_2)\\
    = &~ \left(1 - \frac{1}{2}\frac{\d ^2}{\d t^2}\right)\Bigg|_{t=0} h(x_1+t\tau v) h(x_2+t\tau v).
\end{align*}
\end{proof}

\begin{claim}[Linear restriction of hyperbolic polynomial is real-stable]\label{clm:hyper_linear_restriction}
Let $h\in \R[x_1,\dots,x_m]$ be a hyperbolic polynomial with respect to $e\in \R^m$. Let $v_1,\dots, v_n \in \Gamma_+$ and $Q\in \R^m$. Define
\begin{align*}
    p(x, z) := h \Big( xe + \sum_{i=1}^n z_iv_i -Q \Big)\in \R[x,z_1,\dots,z_n].
\end{align*}
Then, $p(x,z)$ is a real-stable polynomial.    
\end{claim}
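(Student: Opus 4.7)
The plan is to invoke the equivalent characterization of real-stability in Fact~\ref{fac:equi_def_real_stable}: it suffices to show that for every $a=(a_0,\dots,a_n)\in\R_{>0}^{n+1}$ and every $b=(b_0,\dots,b_n)\in\R^{n+1}$, the one-dimensional restriction $p(a_0 t+b_0,\,a_1 t+b_1,\dots,a_n t+b_n)\in\R[t]$ is not identically zero and is real-rooted.

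Writing out the substitution gives
\begin{align*}
p(a_0 t+b_0,\dots,a_n t+b_n)\;=\;h\!\left(t\cdot u + y\right),
\end{align*}
where I set $u:=a_0 e+\sum_{i=1}^n a_i v_i$ and $y:=b_0 e+\sum_{i=1}^n b_i v_i -Q$. The key structural observation is that $u\in\Gamma_{++}^h$: the hyperbolic direction $e$ lies in the open hyperbolicity cone $\Gamma_{++}^h$ (since $h(te-e)=(t-1)^d h(e)$ has all roots equal to $1>0$), and $\Gamma_{++}^h$ is a convex cone, so $a_0 e\in\Gamma_{++}^h$. Moreover, each $a_i v_i$ lies in $\Gamma_+^h$ by hypothesis together with $a_i>0$, and the standard fact $\Gamma_{++}^h+\Gamma_+^h\subseteq\Gamma_{++}^h$ places the whole sum $u$ in the open hyperbolicity cone.

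Now I would invoke the classical fact, mentioned in the introduction of the excerpt and attributed to \cite{b10notes}, that every vector in $\Gamma_{++}^h$ is itself a direction of hyperbolicity for $h$ with the same hyperbolicity cone. Applied to $u$, this says the univariate polynomial $t\mapsto h(tu - x)$ is real-rooted for every $x\in\R^m$. Choosing $x=-y$ gives that $h(tu+y)$ is real-rooted in $t$, and the polynomial is nonzero because $h(u)\neq 0$ (the leading coefficient in $t$ is $h(u)$, and $u\in\Gamma_{++}^h$ forces all hyperbolic eigenvalues of $u$ to be positive, hence $h(u)>0$ up to the sign convention of $h(e)>0$). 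Combining these with Fact~\ref{fac:equi_def_real_stable} yields that $p(x,z)$ is real-stable.

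The only non-routine ingredient is the classical closure-under-change-of-direction property of the hyperbolicity cone (any $u\in\Gamma_{++}^h$ is itself a hyperbolic direction); once that is granted, the proof is a one-line computation after linearly restricting. No barrier arguments, no stability-preserving operators, and no induction on the number of variables are needed for this step.
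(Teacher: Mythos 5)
Your proof is correct and takes essentially the same approach as the paper's: restrict along a line $at+b$ with $a\in\R_{>0}^{n+1}$, observe that the coefficient vector $u=a_0e+\sum_i a_iv_i$ lies in $\Gamma_{++}^h$, and then invoke the classical fact that every vector in the open hyperbolicity cone is itself a direction of hyperbolicity to conclude real-rootedness, finishing with Fact~\ref{fac:equi_def_real_stable}. The only cosmetic differences are that you spell out why the restricted polynomial is nonzero (leading coefficient $h(u)>0$) and why $u\in\Gamma_{++}^h$ in a bit more detail than the paper does.
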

\begin{proof}
For any $a\in \R_{>0}^{n+1}$, $b\in \R^{n+1}$, we have
\begin{align*}
    p(at + b) = &~ h \Big( (a_1 t + b_1)e - Q + \sum_{i=1}^n (a_{i+1}t+b_{i+1})v_i \Big)\\
    = &~ h \Big( (a_1 e + \sum_{i=1}^n a_{i+1}v_i)t + b_1e - Q + \sum_{i=1}^n b_{i+1}v_i \Big).
\end{align*}

Since $e \in \Gamma_{++}$, $v_1,\dots, v_n\in \Gamma_+$ and $a_i>0$ for all $i\in [n+1]$, we have
\begin{align*}
    e':=a_1 e + \sum_{i=1}^n a_{i+1}v_i \in \Gamma_{++},
\end{align*}
which follows from $\Gamma_{++}$ is a cone. 

Since every vector in $\Gamma_{++}$ is a hyperbolic direction of $h$ (see e.g., \cite[Theorem 1.2, item 4]{b18}), we know that
$h$ is also hyperbolic with respect to the direction $e'$, which implies that $p(at+b)$ is real-rooted and not identical to the zero polynomial. 

Hence, by Fact~\ref{fac:equi_def_real_stable}, $p(x,z)$ is a real-stable polynomial.    
\end{proof}

\subsection{Applying barrier argument to bound the largest root of multivariate polynomial}\label{sec:kls_bound_root}

In this section, we upper bound the largest root of the following multivariate polynomial:
\begin{align*}
    \prod_{i=1}^n \Big(1- \frac{1}{2} \frac{\partial^2}{ \partial z_i^2 }\Big) h \Big( xe - Q + \sum_{i=1}^n z_i\tau_i v_i \Big) \cdot h \Big( xe + Q + \sum_{i=1}^n z_i\tau_i v_i \Big)
\end{align*}
using the real-stable version of the barrier method in \cite{kls20}.

\begin{definition}
Let $h\in \R[x_1,\cdots,x_m]$ be a hyperbolic polynomial of degree $d$ with respect to hyperbolic direction $e\in \R^m$. For any vectors $u,v\in \R^m$, we say $u\preceq v$ if
\begin{align*}
    \lambda_i(u) \leq \lambda_i(v) \quad \forall i\in [d],
\end{align*}
where $\lambda(u),\lambda(v)$ are the ordered eigenvalues of $u$ and $v$, respectively. 
\end{definition}

\begin{claim}\label{clm:prec_e}
Let $h$ be a hyperbolic polynomial of degree $d$ with respect to hyperbolic direction $e\in \R^m$. Let $u\in \R^m$ be any vector such that $u\preceq e$. Then we have $e-u\in \Gamma^h_+$.
\end{claim}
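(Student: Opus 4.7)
The plan is to show the claim by directly computing the eigenvalues of $e-u$ in terms of those of $u$, and then reading off non-negativity of the smallest one from the hypothesis $u \preceq e$. The whole argument reduces to manipulating the univariate polynomial $t \mapsto h(te - (e-u))$ using the homogeneity of $h$, with no real analytic input.

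First I would verify that every eigenvalue of $e$ equals $1$: since $h$ is homogeneous of degree $d$, we have $h(te - e) = (t-1)^d h(e)$, so all $d$ roots of $h(te-e)$ equal $1$ and hence $\lambda_i(e) = 1$ for all $i \in [d]$. Next I would compute the eigenvalues of $e - u$ by rewriting
\begin{align*}
h\bigl(te - (e-u)\bigr) = h\bigl((t-1)e + u\bigr) = (-1)^d\, h\bigl((1-t)e - u\bigr),
\end{align*}
where the last equality uses $h(-x) = (-1)^d h(x)$. Setting $s = 1-t$, the right-hand side vanishes exactly when $s$ is a root of $h(se-u)$, i.e.\ when $s = \lambda_i(u)$ for some $i$. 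Hence the multiset of eigenvalues of $e-u$ is $\{\,1-\lambda_i(u) : i \in [d]\,\}$.

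Because the $\lambda_i(u)$ are ordered in decreasing order, their images $1-\lambda_i(u)$ are in increasing order, so after sorting I obtain $\lambda_d(e-u) = 1 - \lambda_1(u)$. By hypothesis $u \preceq e$, and in particular $\lambda_1(u) \leq \lambda_1(e) = 1$, so $\lambda_d(e-u) = 1 - \lambda_1(u) \geq 0$. By the definition of $\Gamma_+^h$ (Definition~\ref{def:hyperbolic_cone}), this says $e - u \in \Gamma_+^h$, as desired.

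There is essentially no obstacle here; the only place to be careful is the sign bookkeeping when invoking $h(-x) = (-1)^d h(x)$, and the fact that shifting by the identity direction $e$ reverses the ordering of eigenvalues (so that $\lambda_d(e-u)$ corresponds to $\lambda_1(u)$, not $\lambda_d(u)$). Everything else is a one-line consequence of the definitions.
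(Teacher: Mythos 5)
Your proposal is correct and follows essentially the same approach as the paper's proof: both reduce to the relation $\lambda_i(e-u) = 1 - \lambda_{d-i+1}(u)$ and then invoke $\lambda_1(u) \le \lambda_1(e) = 1$. You have simply spelled out the eigenvalue computation (via homogeneity and the substitution $s=1-t$) that the paper takes as immediate, and your index bookkeeping is actually more careful than the paper's, which has a harmless off-by-one typo writing $\lambda_{d-i}$ where $\lambda_{d-i+1}$ is meant.
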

\begin{proof}
For any $i\in [d]$, we have
\begin{align*}
    \lambda_i(e-u) = 1-\lambda_{d-i}(u)\geq 0,
\end{align*}
where the last step follows from $\lambda_i(u)\leq \lambda_i(e)\leq 1$ for all $i\in [d]$.

Hence, $e-u\in \Gamma^h_+$.
\end{proof}

The goal of this section is to prove the following lemma:

\begin{lemma}\label{lem:kls_bound_root}
Let $h\in \R[x_1,\cdots,x_m]$ denote a hyperbolic polynomial with corresponding hyperbolic direction $e\in \Gamma^h_+$. Let $\xi_1,\dots,\xi_n$ denote $n$ independent random variables with finite supports and $\E[\xi_i]=\mu_i$ and $\Var[\xi_i]=\tau_i^2$ for $i\in [n]$. Let $v_1, \dots, v_n \in \Gamma^h_+$ 
such that $\sum_{i=1}^n \tau_i^2 \tr_h[v_i]v_i \preceq e$ and $\forall i\in [n]$, $\rank_h(v_i) \leq 1$. 

Then all the roots of the following $(n+1)$-variate polynomial
\begin{align*}
    \prod_{i=1}^n \Big(1- \frac{1}{2} \frac{\partial^2}{ \partial z_i^2 }\Big) \left(h \Big( xe + \sum_{i=1}^n z_i\tau_i v_i \Big)\right)^2 \in \R[x,z_1,\dots, z_n]
\end{align*}
lie below $(4,0,\cdots,0)\in \R^{n+1}$.
\end{lemma}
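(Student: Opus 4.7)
The plan is a multivariate barrier argument, applying Lemma~\ref{lem:real_stable_cone} iteratively in the directions $z_1,\dots,z_n$. As a preliminary reduction, I absorb the variances into the vectors by setting $w_i:=\tau_iv_i$ (assuming WLOG $\tau_i\ge 0$). By homogeneity of the hyperbolic trace, $\tr_h[w_i]w_i=\tau_i^2\tr_h[v_i]v_i$, so the hypothesis becomes $\sum_i\tr_h[w_i]w_i\preceq e$, while $\rank_h(w_i)\le 1$ is preserved (the trivial case $\tau_i=0$ drops out). The target polynomial rewrites as $\prod_i(1-\tfrac12\partial_{z_i}^2)(h(xe+\sum_i z_iw_i))^2$, so I may assume $\tau_i=1$ and write $v_i$ for $w_i$.

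Let $P(x,z):=(h(xe+\sum_i z_iv_i))^2$; by Claim~\ref{clm:hyper_linear_restriction} and Fact~\ref{fac:closure_real_stable}, $P$ is real-stable. The first substantive step is to show $\tr_h[v_i]\le 1$ for every $i$. From $\sum_j\tr_h[v_j]v_j\preceq e$ together with the monotonicity of hyperbolic eigenvalues under addition of elements of $\Gamma_+^h$ (a Weyl-type inequality), I obtain $\tr_h[v_i]v_i\preceq e$ for each individual $i$. Since $\rank_h(v_i)\le 1$ forces $\tr_h[v_i]=\|v_i\|_h$, the top eigenvalue of $\tr_h[v_i]v_i$ equals $\tr_h[v_i]^2$, hence $\tr_h[v_i]^2\le 1$. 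Now define $\delta_i:=2\tr_h[v_i]$ and the initial barrier point $z^{(0)}:=(4,-\delta_1,\dots,-\delta_n)$. I verify $z^{(0)}\in \Ab_P$: the argument of $h$ at $z^{(0)}$ is $4e-2\sum_j\tr_h[v_j]v_j=2e+2(e-\sum_j\tr_h[v_j]v_j)\in\Gamma_{++}^h$ by Claim~\ref{clm:prec_e}, and any nonnegative shift in $x,z_1,\dots,z_n$ only pushes this argument deeper into $\Gamma_{++}^h$ via $e,v_1,\dots,v_n\in\Gamma_+^h$, keeping $h$ strictly positive.

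Next I bound $\Phi_P^i(z^{(0)})$. Since $P=h^2$, one has $\Phi_P^i=2D_{v_i}h/h$ evaluated at $2e+2(e-\sum_j\tr_h[v_j]v_j)$. Applying Fact~\ref{fac:D_v_h_over_h_concave} with $\alpha=2$ and $M=2(e-\sum_j\tr_h[v_j]v_j)\in\Gamma_+^h$, then Fact~\ref{fac:related_to_D_v_h}, yields
\begin{align*}
\Phi_P^i(z^{(0)})\;\le\;2\cdot\frac{D_{v_i}h(2e)}{h(2e)}\;=\;2\cdot\frac{\tr_h[v_i]}{2}\;=\;\tr_h[v_i]\;\le\;1.
\end{align*}

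Finally, I iterate Lemma~\ref{lem:real_stable_cone} with $c=1/2$ over $i=1,\dots,n$. Set $P_i:=\prod_{j\le i}(1-\tfrac12\partial_{z_j}^2)P$ and $z^{(i)}:=z^{(i-1)}+\delta_i\mathbf{1}_i$. The lemma's second conclusion propagates the bound as $\Phi_{P_i}^j(z^{(i)})\le\Phi_{P_{i-1}}^j(z^{(i-1)})$, which telescopes to $\Phi_{P_{i-1}}^j(z^{(i-1)})\le\Phi_P^j(z^{(0)})\le\tr_h[v_j]\le 1$. Hence at step $i$, Condition~\eqref{eqn:barrier_cond_1} ($\Phi<\sqrt{2}$) is immediate, and Condition~\eqref{eqn:barrier_cond_2} reads (using $\Phi\ge 0$ and monotonicity in $\Phi$)
\begin{align*}
\tfrac12\!\left(\tfrac{2}{\delta_i}\Phi_{P_{i-1}}^i(z^{(i-1)})+\bigl(\Phi_{P_{i-1}}^i(z^{(i-1)})\bigr)^2\right)\;\le\;\tfrac12\!\left(\tfrac{1}{\tr_h[v_i]}\tr_h[v_i]+\tr_h[v_i]^2\right)\;=\;\tfrac12(1+\tr_h[v_i]^2)\;\le\;1.
\end{align*}
Thus the barrier advances by $\delta_i\mathbf{1}_i$ at each step; after $n$ steps $z^{(n)}=(4,0,\dots,0)\in\Ab_{P_n}$, which is precisely the claim. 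The main technical obstacle is Step~2, the bound $\tr_h[v_i]\le 1$, which critically leverages $\rank_h(v_i)\le 1$ to identify the trace with the hyperbolic norm; a secondary care point is that the chain of barrier inequalities above is only valid because Lemma~\ref{lem:multi_phi_convex} ensures $\Phi^j$ is nonnegative and non-increasing under positive shifts, so the monotonicity-in-$\Phi$ argument used to verify Condition~\eqref{eqn:barrier_cond_2} is legitimate.
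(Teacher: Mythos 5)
Your proposal is correct and follows the same barrier-argument strategy as the paper: establish $\tr_h[v_i]\le 1$ from $\rank_h(v_i)\le 1$ and the ordering hypothesis, bound $\Phi_P^i$ at the initial barrier point via the concavity of $h/D_{v_i}h$ and Fact~\ref{fac:related_to_D_v_h}, and iterate Lemma~\ref{lem:real_stable_cone} with $c=1/2$, $\delta_i=2\tr_h[v_i]$, invoking Lemma~\ref{lem:multi_phi_convex} for nonnegativity of $\Phi$. The only cosmetic differences are that you absorb $\tau_i$ into the vectors up front and appeal to eigenvalue (Weyl-type) monotonicity rather than the hyperbolic-norm monotonicity the paper cites; both give $\tr_h[v_i]^2\le 1$.
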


\begin{proof}
Define $(n+1)$-variate polynomial $P(x,z)\in \R[x,z_1,\dots, z_n]$ as
\begin{align*}
P ( x , z ) := \left( h \Big( x e + \sum_{i=1}^n z_i \tau_iv_i \Big) \right)^2.
\end{align*}
By Claim~\ref{clm:hyper_linear_restriction} and Fact~\ref{fac:closure_real_stable}, we can show that $P(x,z)$ is real-stable. Thus, we can apply the multivariate barrier method in \cite{kls20} with the barrier functions $\Phi_{P}^i(x,z)=\frac{\partial_{z_i}P(x,z)}{P(x,z)}$ for $i\in [n]$.

For $t > 0$, let $\delta_i = t \tau_i\tr_h[v_i]$ and let 
\begin{align*}
    \mathbf{\delta} = (\delta_1,\dots,\delta_n).    
\end{align*}
For some $\alpha(t) > t$ where $\alpha(t)$ is a parameter to be chosen later, we evaluate $P$ at
\begin{align*}
    (\alpha, -\delta) = (\alpha(t),-\delta_1,\dots,-\delta_n)
\end{align*}
to find that
\begin{align*}
P(\alpha(t), - \delta_1, \dots, - \delta_n)
= & ~  \left( h \Big( \alpha(t)  e - \sum_{i=1}^n \delta_i \tau_iv_i \Big) \right)^2 \\
= & ~  \left( h \Big( \alpha(t) e - t \sum_{i=1}^n \tau_i^2 \tr_h[v_i] v_i \Big) \right)^2\\
= &~ \left( h(e)\prod_{j=1}^d \Big(\alpha(t)-t\lambda_j\Big(\sum_{i=1}^n \tau_i^2\tr_h[v_i] v_i\Big)\Big) \right)^2
\end{align*}
where $d$ is the degree of $h$. Here the last step follows from the hyperbolicity of $h$, and the fact that the set of roots of 
\begin{align*}
    h \Big( \alpha(t) e - t \sum_{i=1}^n \tau_i^2 \tr_h[v_i] v_i \Big)
\end{align*}
are
\begin{align*}
    \left\{ \left( \frac{1}{\alpha(t)} \cdot \lambda_j \Big( \sum_{i=1}^n \tau_i^2\tr_h[v_i] v_i \Big) \right)^{-1} \right\}_{j \in [d]}
\end{align*}
Since $\sum_{i=1}^n \tau_i^2\tr_h[v_i]v_i \preceq e$, we have for all $j\in [d]$, $\lambda_j(\sum_{i=1}^n \tau_i^2\tr_h[v_i]v_i)\leq \lambda_j(e)=1$. 

Hence, by the assumption of $t< \alpha(t)$, we get that
\begin{align*}
    P(\alpha(t), - \delta_1, \dots, - \delta_n) \geq h(e)^2 (\alpha(t)-t)^{2d}>0.
\end{align*}
This implies that $(\alpha,-\mathbf{\delta}) \in \R^{m+1}$ is above the roots of $P(x,z)$, i.e. $(\alpha,-\mathbf{\delta}) \in \Ab_{P}$. 

Moreover, we can upper bound $\Phi^i_P (\alpha,-\mathbf{\delta})$ as follows:

\begin{align*}
\Phi^i_P ( \alpha(t), - \delta )
= & ~ \frac{ \partial_{z_i} P }{ P } \Big|_{ x = \alpha(t), z = -\mathbf{\delta} } \\
= & ~ \frac{2 h(xe + \sum_{i=1}^n z_i\tau_i v_i ) \partial_{z_i} h( xe + \sum_{i=1}^n z_i \tau_iv_i )} {h( x e + \sum_{i=1}^n z_i \tau_iv_i )^2} \Big|_{ x = \alpha(t), z = - \delta } \\
= & ~ 2 \cdot \frac{  \partial_{z_i} h( xe + \sum_{i=1}^n z_i\tau_i v_i ) }{ h( x e + \sum_{i=1}^n z_i\tau_i v_i ) } \Big|_{x = \alpha(t), z = - \delta} \\
= &~ 2\cdot \frac{(D_{\tau_iv_i}h)(\alpha e+\sum_{j=1}^n \delta_j \tau_jv_j)}{h(\alpha e + \sum_{j=1}^n \delta_j \tau_jv_j)}\\
= &~ 2\cdot \frac{(D_{\tau_iv_i}h)(\alpha e-t\sum_{j=1}^n \tau_j^2\tr_h[v_j] v_j)}{h(\alpha e -t \sum_{j=1}^n \tau_j^2\tr_h[v_j] v_j)}\\
= &~ 2\cdot \frac{(D_{\tau_iv_i}h)(\alpha e-te + t(e - \sum_{j=1}^n \tau_j^2\tr_h[v_j] v_j))}{h(\alpha e -te + t(e - \sum_{j=1}^n \tau_j^2\tr_h[v_j] v_j))}\\
\leq & ~  \frac{ 2(D_{\tau_iv_i} h)( \alpha e - t e ) }{ h(\alpha e - t e) } \\
\leq & ~ \frac{ 2\tr_h[\tau_iv_i] }{ \alpha - t }.
\end{align*}
where the second last step follows from $\sum_{i=1}^n \tau_j^2\tr_h[v_i] v_i \preceq e$, Claim~\ref{clm:prec_e} and Fact~\ref{fac:D_v_h_over_h_concave}. The last step follows from $ \frac{ (D_v h)(\beta e) }{ h(\beta e) } = \frac{\tr_h[v]}{\beta}$ by Fact~\ref{fac:related_to_D_v_h}. %

Since $\rank_h(v_i)\leq 1$, we have $\tr_h[v_i] = \|v_i\|_h$ for all $i\in [n]$. Since $\sum_{j\ne i}\tau_j^2\tr_h[v_j]v_j\in \Gamma^h_+$, by the monotonicity of the hyperbolic norm (Theorem 2.15 in \cite{hlj09}), we have
\begin{align*}
    (\tau_i\tr_h[v_i])^2 = \|\tau_i^2\tr_h[v_i]v_i\|_h\leq \left\|\sum_{j=1}^n \tau_j^2 \tr_h[v_j]v_j\right\|_h\leq \|e\|_h= 1
\end{align*}
for all $i\in [n]$.

Thus, we have
\begin{align*}
\max_{i \in [n]} ~~( \tau_i\tr_h[v_i] )^2 \leq 1.
\end{align*}

Choosing $\alpha(t) = 2t = 4$. We get
\begin{align}
    \Phi_P^i (\alpha, - \delta) \leq \frac{ 2\tr_h[ \tau_iv_i ] }{ \alpha - t}=\frac{ 2\tau_i\tr_h[ v_i ] }{ \alpha - t} = \frac{2\tau_i\tr_h[v_i]}{2}\leq 1 < \sqrt{2}. \label{eqn:kls_root_cond_1}
\end{align}
This coincides with Eqn. \eqref{eqn:barrier_cond_1} of Lemma \ref{lem:real_stable_cone}. Also from Fact \ref{fac:closure_real_stable} we know that $(1 - \frac{1}{2} \partial_{z_i}^2 )P$ is real stable. 

Thus by Lemma \ref{lem:real_stable_cone}, for all $i\in [n]$,
\begin{align*}
    (4,-\mathbf{\delta})\in \Ab_{(1 - \frac{1}{2} \partial_{z_i}^2)P }.    
\end{align*}

In addition, $\forall i\in [n]$, since $\delta_i  = t \tau_i\tr_h[v_i] = 2 \tau_i\tr_h[v_i]> 0$,
\begin{align}\label{eq:phi_less_than_one}
    \frac{1}{ \delta_i } \Phi_P^i(4,-\mathbf{\delta}) + \frac{1}{2} \Phi_P^i(4,-\mathbf{\delta})^2 
    \leq & ~ \frac{1}{2 \tau_i\tr_h[v_i]} \tau_i\tr_h[v_i] + \frac{1}{2} ( \tau_i \tr_h[v_i] )^2 \nonumber \\
    \leq & ~ \frac{1}{2} + \frac{1}{2} =  1.
\end{align}

This coincides with Eqn. \eqref{eqn:barrier_cond_2} of Lemma \ref{lem:real_stable_cone}. Therefore for all $j\in [n]$,
\begin{align}
    \Phi^j_{(1 - \frac{1}{2} \partial_{z_i}^2)P} (4, -\mathbf{\delta} + \delta_i {\bf 1}_i) \le \Phi_P^j(4, -\mathbf{\delta}). \label{eqn:kls_root_cond_2}
\end{align}

In particular, we have
\begin{align*}
    \Phi^2_{(1 - \frac{1}{2} \partial_{z_1}^2)P} (4, -\mathbf{\delta} + \delta_1 {\bf 1}_1) \le \Phi_P^2(4, -\mathbf{\delta})<\sqrt{2}.
\end{align*}

We also have $(4, -\delta + \delta_1{\bf 1}_1)\in \Ab_{(1-\frac{1}{2}\partial_{z_1}^2)P}$, which follows from $(4,-\delta)\in \Ab_{(1-\frac{1}{2}\partial_{z_1}^2)P}$. By Lemma~\ref{lem:multi_phi_convex} with $k=0$, we get that
\begin{align*}
    \Phi^2_{(1 - \frac{1}{2} \partial_{z_1}^2)P} (4, -\mathbf{\delta} + \delta_1 {\bf 1}_1)\geq 0.
\end{align*}

Hence, we have
\begin{align*}
    &\frac{1}{ \delta_2 } \Phi_{(1 - \frac{1}{2} \partial_{z_1}^2)P}^2(4,-\mathbf{\delta} + \delta_1 {\bf 1}_1) + \frac{1}{2} \Phi_{(1 - \frac{1}{2} \partial_{z_1}^2)P}^2(4,-\mathbf{\delta} + \delta_1 {\bf 1}_1)^2\\ \leq &~ \frac{1}{ \delta_2 } \Phi_P^2(4,-\mathbf{\delta}) + \frac{1}{2} \Phi_P^2(4,-\mathbf{\delta})^2\\
    \leq &~ 1,
\end{align*}
where the last step follows from Eqn. \eqref{eq:phi_less_than_one}.

Therefore, by Lemma~\ref{lem:real_stable_cone} again, we have
\begin{align*}
    \Phi^i_{(1 - \frac{1}{2} \partial_{z_2}^2)(1 - \frac{1}{2} \partial_{z_1}^2)P} (4, -\mathbf{\delta} + \delta_1 {\bf 1}_1+\delta_2{\bf 1}_2) \le \Phi^i_{(1 - \frac{1}{2} \partial_{z_1}^2)P} (4, -\mathbf{\delta} + \delta_1 {\bf 1}_1).
\end{align*}

Repeating this argument for each $i\in [n]$ demonstrates that 
\begin{align*}
    (4, -\mathbf{\delta} + \sum_{i=1}^n \delta_i {\bf 1}_i) &= (4, 0,0,\dots,0) \\
    &\in \Ab_{\prod_{i=1}^n (1 - \partial_{z_i}^2 / 2) P}
\end{align*}
i.e. $(4, 0,0,\dots,0)$ lies above the roots of 
\begin{align*}
    \prod_{i=1}^n \Big( 1 - \frac{1}{2} \frac{ \partial^2 }{ \partial z_i^2 } \Big)  \left( h \Big( x e + \sum_{i=1}^n z_i\tau_i v_i \Big) \right)^2.
\end{align*}

\end{proof}

\subsection{Combining together: proof of Theorem \ref{thm:kls20_ours_formal}}\label{sec:kls-proof-main}
In this section, we will combine the results from the previous section and prove Theorem \ref{thm:kls20_ours_formal}:

\begin{proof}[Proof of Theorem \ref{thm:kls20_ours_formal}]
Define $u_i := \frac{v_i}{\sigma}$. Note that $\sigma>0$ since $v_1,\dots,v_n$ are in the hyperbolicity cone of $h$. 

Then, we have
\begin{align*}
    \left\|\sum_{i=1}^n \tau_i^2\tr_h[u_i]u_i\right\|_h =  \left\|\sum_{i=1}^n \frac{\tau_i^2\tr_h[v_i]v_i}{\sigma^2}\right\|_h=1,
\end{align*}
where the first step follows from the linearity of the hyperbolic trace $\tr_h$, and the second step follows from $\|\cdot\|_h$ is a norm.
 
By Lemma~\ref{lem:kls_bound_root} and restricting to $z_i = 0$ for all $i\in [n]$, we have that $4$ lies above the largest root of the univariate polynomial
\begin{align*}
    \prod_{i=1}^n \Big(1- \frac{1}{2} \frac{\partial^2}{ \partial z_i^2 }\Big) \Big|_{z_i = 0} \left(h \Big( xe + \sum_{i=1}^n z_i\tau_i u_i \Big)\right)^2
\end{align*}
We then conclude by Lemma \ref{lem:kls_expectation_hyperbolic} that $4$ upper bounds the largest root of 
\begin{align*}
    p_{\emptyset} = \E_{\xi_1, \dots, \xi_n} \left[ h \Big( x e + \sum_{i=1}^n (\xi_i-\mu_i) u_i \Big) \cdot h \Big( x e - \sum_{i=1}^n (\xi_i-\mu_i) u_i \Big) \right].
\end{align*}
where $p_{\emptyset}$ is the average of the polynomials in the interlacing family $\mathcal{P}$ in Definition \ref{def:interlacing-family-kls20}.

Finally, by Corollary~\ref{cor:kls-interlacing}, we conclude that there exists $s_1,\dots,s_n\in \supp(\xi_1)\times \cdots \times \supp(\xi_n)$ such that
\begin{align*}
    \left\|\sum_{i=1}^n (s_i-\mu_i) u_i\right\|_h\leq 4.
\end{align*}
Hence we have
\begin{align*}
    \left\|\sum_{i=1}^n (s_i-\mu_i) v_i\right\|_h\leq 4\sigma.
\end{align*}
\end{proof}
\section{Hyperbolic Extension of Kadision-Singer for Strongly Rayleigh }\label{sec:ag}
In this section, we prove Theorem \ref{thm:ag14_ours}. We restate the theorem as follows:

\begin{theorem}[Formal statement of Theorem \ref{thm:ag14_ours}]\label{thm:ag14_ours_formal}
Let $h\in \R[x_1,\dots,x_m]$ denote a hyperbolic polynomial with respect to hyperbolic direction $e\in \Gamma_{++}^h$. 
Let $\mu$ be a homogeneous strongly Rayleigh probability distribution on $[n]$ such that the marginal probability of each element is at most $\epsilon_1$, and let $v_1, \cdots, v_n \in \Gamma_{+}^h$ be $n$ vectors in isotropic positions,
\begin{align*}
    \sum_{i=1}^n v_i  = e,
\end{align*}
such that for all $i \in [n]$, 
\begin{align*}
    \rank_h(v_i) \leq 1 \text{, and } \| v_i \|_h \leq \epsilon_2.  
\end{align*}
Then
\begin{align*}
    \Pr_{S \sim \mu} \left[ \Big\| \sum_{i \in S} v_i \Big\|_h \leq4 (\epsilon_1+ \epsilon_2) + 2 (\epsilon_1+\epsilon_2)^2 \right] >0.
\end{align*}
\end{theorem}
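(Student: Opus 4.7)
The overall plan is to follow the three-step framework of Theorem \ref{thm:kls20_ours_formal}: assemble an interlacing family of characteristic polynomials indexed by the support of $\mu$, rewrite its mixed characteristic polynomial as the restriction at $z=0$ of a multivariate polynomial built from $h$ and the generating polynomial $g_\mu$, and then control the largest root of the latter by the multivariate barrier method.

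\textbf{Interlacing family.} For each $S\in\supp(\mu)$, I would define the leaf polynomial $q_S(x):=\mu(S)\cdot h\bigl(xe-\sum_{i\in S}v_i\bigr)$, and for a prefix $(\xi_1,\ldots,\xi_\ell)\in\{0,1\}^\ell$ define $q_{(\xi_1,\ldots,\xi_\ell)}$ by summing $q_{S(\xi)}$ over all extensions, where $S(\xi):=\{i:\xi_i=1\}$. To verify that this collection is an interlacing family in the sense of Definition \ref{def:interlacingfamily} I would invoke Lemma \ref{lem:commoninterlacing}: it suffices that every conditional expectation of the form $\E_{T\sim\nu}\bigl[h(xe-\sum_{i\in T}v_i)\bigr]$ is real-rooted, where $\nu$ ranges over conditionings of $\mu$ on events of the form $\{i\in T\}$ and $\{i\notin T\}$. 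Each such $\nu$ is again homogeneous strongly Rayleigh by closure of SR under conditioning \cite{bbl09}, so real-rootedness reduces to the identity of the next step together with Fact \ref{fac:uni_real_stable}. Lemma \ref{lem:rootbound} combined with Fact \ref{fact:hyperbolic-norm-max-root} then produces an $S\in\supp(\mu)$ with $\|\sum_{i\in S}v_i\|_h$ bounded by the largest root of $q_\emptyset(x)=\E_{S\sim\mu}[h(xe-\sum_{i\in S}v_i)]$.

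\textbf{Expectation identity.} The main technical step is to prove the hyperbolic analogue of the Anari--Oveis Gharan identity
\begin{align*}
    x^{d_\mu}\cdot\E_{S\sim\mu}\!\Bigl[h\bigl(x^2 e-\textstyle\sum_{i\in S}v_i\bigr)\Bigr]
    = x^{d}\cdot \prod_{i=1}^n\!\Bigl(1-\tfrac{1}{2}\partial_{z_i}^2\Bigr)\Bigl(h\bigl(xe+\textstyle\sum_{i=1}^n z_i v_i\bigr)\,g_\mu(x\mathbf{1}+z)\Bigr)\Big|_{z=0}.
\end{align*}
Since \cite{ag14} expanded the right-hand side via minors of $\det$, there is no immediate analogue for hyperbolic $h$. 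Instead I would carry out the inductive rewrite sketched in the overview: at each step, isolate the expectation over one variable $\xi_i$ using the marginal-probability formula for SR distributions (Fact \ref{fac:marginal_polynomial_SR}), and then use $\rank_h(v_i)\le 1$ together with the fact that $g_\mu$ is multilinear in $z_i$ to fold the one-variable expectation into an operator of the form $(1-\tfrac{x}{2}\partial_{z_i}^2)$, in direct analogy with Claim \ref{clm:hyperbolic_lemma_3.1_in_kls19}. After $n$ iterations, rewriting partial derivatives as directional derivatives of $h$ and using homogeneity of $h$ (to pull out the factor $x^d$) yields the displayed identity. Real-rootedness of the univariate restriction in $x$ then follows because $g_\mu$ is real-stable (strongly Rayleigh), $h(xe+\sum z_i v_i)$ is real-stable by Claim \ref{clm:hyper_linear_restriction}, their product is real-stable, and each $(1-\tfrac{1}{2}\partial_{z_i}^2)$ followed by restriction to $z_i=0$ preserves real-stability by Fact \ref{fac:closure_real_stable}.

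\textbf{Barrier argument.} Finally I would control the largest root of $Q(x,z):=h(xe+\sum z_i v_i)\,g_\mu(x\mathbf{1}+z)$ at $z=0$ by locating an upper barrier of the form $(\alpha,-\delta\mathbf{1})$ for appropriately chosen $\alpha$ and $\delta$. Since $Q$ is a product, the barrier function splits as $\Phi_Q^i=\Phi_h^i+\Phi_{g_\mu}^i$. For the hyperbolic factor, the hypothesis $\sum_i v_i=e$ together with the concavity argument of Section \ref{sec:kls_bound_root} (Fact \ref{fac:D_v_h_over_h_concave}) and $\|v_i\|_h\le\epsilon_2$ yields $\Phi_h^i(\alpha,-\delta\mathbf{1})\le \epsilon_2/(\alpha-\delta)$. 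For the SR factor, the marginal bound $\Pr_{S\sim\mu}[i\in S]\le\epsilon_1$ combined with multidimensional convexity of the barrier (Lemma \ref{lem:multi_phi_convex}) yields $\Phi_{g_\mu}^i(\alpha,-\delta\mathbf{1})\le\epsilon_1/(\alpha-\delta)$. Setting $\delta:=\epsilon_1+\epsilon_2$, I would then apply Lemma \ref{lem:real_stable_cone} with $c=1/2$ iteratively, absorbing one operator $(1-\tfrac{1}{2}\partial_{z_i}^2)$ at a time while shifting the barrier in coordinate $i$ by $\delta$, thereby lifting $(\alpha,-\delta\mathbf{1})$ to $(\alpha,0,\ldots,0)$. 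Solving the two barrier conditions of Lemma \ref{lem:real_stable_cone} for the smallest admissible $\alpha$ gives $\alpha=4(\epsilon_1+\epsilon_2)+2(\epsilon_1+\epsilon_2)^2$, which combined with the previous two steps completes the proof.

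I expect the expectation identity to be the main obstacle: the inductive bookkeeping is delicate because the $\xi_i$-expectation couples both to the hyperbolic factor (through $v_i$) and to the $g_\mu$ factor (through the $i$-th marginal), and the cancellation that produces the single operator $(1-\tfrac{1}{2}\partial_{z_i}^2)$ relies simultaneously on $\rank_h(v_i)\le 1$ and on multilinearity of $g_\mu$. Once the identity is in hand, the interlacing step is routine and the barrier step parallels Section \ref{sec:kls_bound_root} with the extra SR marginal contribution handled additively.
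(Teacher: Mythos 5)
Your plan matches the paper's: the same interlacing family $q_S(x)=\mu(S)h(xe-\sum_{i\in S}v_i)$, the same expectation identity relating it to $\prod_i(1-\tfrac12\partial_{z_i}^2)\bigl(h(xe+\sum z_iv_i)g_\mu(x\mathbf{1}+z)\bigr)\big|_{z=0}$ proved inductively via Fact~\ref{fac:marginal_polynomial_SR}, and the same barrier argument on $Q(x,z)=h(xe+\sum z_iv_i)g_\mu(x\mathbf{1}+z)$ with the barrier function splitting additively over the two factors.

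There is, however, a genuine bookkeeping gap in your barrier step. Your stated expectation identity has $x^2$ in the argument of $h$ on the left and $x$ on the right, so a bound on the largest root of the multivariate polynomial in the variable $x$ yields a bound on the largest root of $q_\emptyset(y)=\E_{S\sim\mu}[h(ye-\sum_{i\in S}v_i)]$ only after squaring. The paper therefore chooses $\alpha=2t=\sqrt{4\epsilon+2\epsilon^2}$ with $\epsilon=\epsilon_1+\epsilon_2$ (so $\alpha-t=t$, $\Phi_Q^i\le\epsilon/t$, and $\tfrac{1}{t}\Phi_Q^i+\tfrac12(\Phi_Q^i)^2\le\epsilon/t^2+\epsilon^2/(2t^2)=1$), and the final root bound $\alpha^2=4\epsilon+2\epsilon^2$ emerges from the squaring, not from the barrier conditions directly. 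If you instead try to take $\alpha=4\epsilon+2\epsilon^2$ and $\delta=\epsilon$, as you propose, condition~\eqref{eqn:barrier_cond_2} of Lemma~\ref{lem:real_stable_cone} fails for small $\epsilon$ (the term $\tfrac{1}{\delta}\Phi\approx\tfrac{1}{3\epsilon}$ blows up). You also attribute the bound $\Phi_{g_\mu}^i\le\epsilon_1/(\alpha-\delta)$ to Lemma~\ref{lem:multi_phi_convex}, but that lemma is not what is used; the bound follows from the $d_\mu$-homogeneity of $g_\mu$ together with $\partial_{z_i}g_\mu(\mathbf{1})=\Pr_{S\sim\mu}[i\in S]$. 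Similarly, since $\sum_iv_i=e$ gives $\alpha e-t\sum_iv_i=(\alpha-t)e$ exactly, the hyperbolic barrier term is computed directly from Fact~\ref{fac:related_to_D_v_h}; invoking the concavity of Fact~\ref{fac:D_v_h_over_h_concave} is unnecessary here.
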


We will introduce the preliminary facts in Section \ref{sec:ag_prelim}. 
In Section \ref{sec:ag_interlacing}, we define the family of hyperbolic characteristic polynomials and show that it forms an interlacing family. Therefore, it remains to upper-bound the largest root of the average of the interlacing polynomial family, i.e. the mixed hyperbolic characteristic polynomial.  We remark that we require a lemma that will be proved later in Section \ref{sec:ag_expectation}.

In Section \ref{sec:ag_expectation}, we reduce the problem of upper-bounding the largest root of the mixed hyperbolic characteristic polynomial to an easier task of upper-bounding the largest root of a multivariate polynomial.  
In Section \ref{sec:ag_bound_root}, we upper bound the largest root of the multivariate polynomial using the multivariate barrier method. 
Finally, in Section \ref{sec:ag-proof-main}, we prove Theorem \ref{thm:ag14_ours_formal}.

\subsection{Preliminaries}\label{sec:ag_prelim}
In this section, we present some preliminary results on strongly Rayleigh distributions.

\begin{definition}[Generating polynomial of probability distribution]
Let $\mu: 2^{[n]} \rightarrow \R_{\geq 0}$ be a probability distribution. For a random variable $X \sim \mu$, the generating polynomial of $\mu$ is defined as
follows:
\begin{align*}
    g_{\mu}(z_1,\dots,z_n)=\E\left[z^X\right]=\sum_{S\subseteq [n]} \Pr[X=S]z^S.
\end{align*}
\end{definition}

\begin{definition}[Strongly Rayleigh distribution]
Let $\mu: 2^{[n]} \rightarrow \R_{\geq 0}$ be a probability distribution and $g_\mu$ be its generating polynomial. We say $\mu$ is strongly Rayleigh (SR) if $g_\mu$ is a real stable polynomial.

Moreover, we say $\mu$ is $d_\mu$-homogeneous strongly Rayleigh if $g_{\mu}$ is $d_{\mu}$-homogeneous real stable.
\end{definition}

We provide two facts about the generating polynomials of Strongly Rayleigh distributions.

\begin{fact}[Marginals of homogeneous SR distributions]\label{fac:marginal_polynomial_SR}
Let $\mu:2^{[n]} \rightarrow \R_{\geq 0}$ be a $d_\mu$-homogeneous SR distribution with generating polynomial $g_\mu$. For $1\leq k\leq n$, consider the marginal distribution on the first $k$ elements $\mu_k: 2^{[k]} \rightarrow \R_{\geq 0}$ such that
\begin{align*} 
    \mu_k(S)=\Pr_{T\sim \mu}[T\cap [k]=S]\quad \forall S\subseteq [k]. 
\end{align*}
Then, for all $S\subseteq [k]$,
\begin{align}\label{eq:marginal_sr}
    \mu_k(S) = x^{|S|-d_\mu}\cdot \prod_{i\in S} \partial_{z_i} \prod_{i\in [k]\backslash S}(1-x\partial_{z_i})g_{\mu}(x\mathbf{1} + z)\Bigg|_{z=0}.
\end{align}
In particular,
\begin{align*}
    \mu(S) = x^{|S|-d_\mu}\cdot \prod_{i\in S} \partial_{z_i} \prod_{i\in [n]\backslash S}(1-x\partial_{z_i})g_{\mu}(x\mathbf{1} + z)\Bigg|_{z=0}.
\end{align*}
\end{fact}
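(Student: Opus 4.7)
The plan is to exploit the fact that $g_\mu$ is multilinear in $z_1,\dots,z_n$ (each index appears at most once in any $z^T$), so that the differential operators act on each monomial in a combinatorially transparent way. First I would expand
\[
g_\mu(x\mathbf{1}+z)=\sum_{T\subseteq [n]}\mu(T)\prod_{i\in T}(x+z_i),
\]
and track the effect of each operator on a single term indexed by $T$. For $i\in S$, the partial derivative $\partial_{z_i}$ kills the term unless $i\in T$, in which case it strips off the factor $(x+z_i)$ and replaces it by $1$. Hence
\[
\prod_{i\in S}\partial_{z_i}\, g_\mu(x\mathbf{1}+z)=\sum_{T\supseteq S}\mu(T)\prod_{i\in T\setminus S}(x+z_i).
\]

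Next I would apply the commuting operators $(1-x\partial_{z_i})$ for $i\in [k]\setminus S$. Because each $z_i$ occurs with degree at most one, a direct calculation shows $(1-x\partial_{z_i})(x+z_i)=z_i$, while $(1-x\partial_{z_i})$ acts as the identity on any factor that does not depend on $z_i$. Therefore, for fixed $T\supseteq S$, each index $i\in ([k]\setminus S)\cap T$ converts its corresponding factor from $(x+z_i)$ to $z_i$, and indices $i\in [k]\setminus S$ outside $T$ contribute nothing. After setting $z=0$, any surviving $z_i$ factor vanishes, so only those $T$ with $T\cap ([k]\setminus S)=\emptyset$ remain. Combining with $T\supseteq S$ and $S\subseteq[k]$, this is exactly the condition $T\cap [k]=S$.

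For the surviving $T$'s, the remaining factors are indexed by $T\setminus S\subseteq [n]\setminus[k]$ and contribute $\prod_{i\in T\setminus S}(x+0)=x^{|T|-|S|}$. Using that $\mu$ is $d_\mu$-homogeneous, every $T$ in the support has $|T|=d_\mu$, so this common factor is $x^{d_\mu-|S|}$. Putting everything together,
\[
\prod_{i\in S}\partial_{z_i}\prod_{i\in [k]\setminus S}(1-x\partial_{z_i})\, g_\mu(x\mathbf{1}+z)\Big|_{z=0}
=x^{d_\mu-|S|}\!\!\sum_{T:\, T\cap [k]=S}\!\!\mu(T)
=x^{d_\mu-|S|}\mu_k(S),
\]
and multiplying by $x^{|S|-d_\mu}$ yields the claimed identity \eqref{eq:marginal_sr}. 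The special case $k=n$ reduces to $T\cap[n]=S$, i.e.\ $T=S$, which recovers $\mu(S)$.

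The only subtlety worth flagging is the commutativity and order of the operators: $\partial_{z_i}$ and $(1-x\partial_{z_j})$ commute for all $i\neq j$ and the $(1-x\partial_{z_\cdot})$ operators commute among themselves, so the nested product is unambiguously defined and one may apply them in whichever order is most convenient. No deep facts about real-stability or strong Rayleighness are used in this proof; the statement is a purely combinatorial/algebraic consequence of multilinearity and homogeneity of $g_\mu$.
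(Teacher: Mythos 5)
Your proof is correct, and it takes a genuinely different route from the paper's. The paper proves Eqn.~\eqref{eq:marginal_sr} by induction on $k$: it first verifies the base case $k=1$ by writing $g_\mu(x\mathbf{1}+z) = f(z_2,\dots,z_n)(x+z_1) + g(z_2,\dots,z_n)$ and identifying $f$ and $g$ with the marginals on $\{1\}$ and $\emptyset$, and then handles $k>1$ by a case split on whether $k\in S$, observing that $\partial_{z_k}g_\mu$ (resp.\ $(1-x\partial_{z_k})g_\mu|_{z_k=0}$) is, up to normalization, the generating polynomial of $\mu$ conditioned on $k\in T$ (resp.\ $k\notin T$), so the inductive hypothesis applies to a distribution of one lower dimension and homogeneity degree. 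Your argument instead expands $g_\mu(x\mathbf{1}+z)=\sum_T \mu(T)\prod_{i\in T}(x+z_i)$ and tracks, monomial by monomial, what each of the commuting operators $\partial_{z_i}$ and $(1-x\partial_{z_i})$ does, showing that precisely the terms with $T\cap[k]=S$ survive at $z=0$ with a common factor $x^{d_\mu-|S|}$. The two proofs are equally elementary and both use only multilinearity plus $d_\mu$-homogeneity; the trade-off is that your direct computation is shorter and makes the combinatorial selection of $T$ completely transparent in one pass, while the paper's induction foregrounds the conditioning structure (that the derivative operators produce generating polynomials of conditioned SR distributions), which is conceptually aligned with how this fact is later deployed. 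Either is a valid proof of the statement.
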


\begin{remark}
We note that the dummy variable $x$ will be cancelled in the RHS of Eqn.~\eqref{eq:marginal_sr}, and hence both sides are numbers. 
\end{remark}
\begin{proof}
Note that $g_\mu$ can be written as
\begin{align*}
    g_{\mu}(x{\bf 1}+z) = f(z_2,\dots,z_n)(x+z_1) + g(z_2,\dots,z_n),
\end{align*}
where 
\begin{align*}
    f(z_2,\dots,z_n):=&~ \sum_{\substack{S\subseteq [n]\backslash\{1\}, \\ |S| = d_\mu - 1}}\mu(S\cup \{1\})\prod_{i\in S}(x+z_i),\quad \text{and}\\
    g(z_2,\dots,z_n):=&~ \sum_{\substack{S\subseteq [n]\backslash\{1\}, \\ |S| = d_\mu}}\mu(S)\prod_{i\in S}(x+z_i).
\end{align*}
First, if $k=1$. We have
\begin{align*}
    \mu_1(\{1\})=&~ \Pr_{T\sim \mu}[1\in T]
    = x^{-d_\mu+1}\cdot f(z_2,\dots,z_n)\Big|_{z=0}
    = x^{-d_\mu+1}\cdot \partial_{z_1} g_{\mu}(x{\bf 1}+z)\Big|_{z=0}.
\end{align*}
Also,
\begin{align*}
    \mu_1(\emptyset)=&~ \Pr_{T\sim \mu}[1\not\in T]
    = x^{-d_\mu}\cdot g(z_2,\dots,z_n)\Big|_{z=0}
    = x^{-d_\mu}\cdot (1-x\partial_{z_1}) g_{\mu}(x{\bf 1}+z)\Big|_{z=0}.
\end{align*}
So, Eqn.~\eqref{eq:marginal_sr} holds for $k=1$.

For the cases where $k>1$s, we can prove by induction on $k$.
Suppose \eqref{eq:marginal_sr} holds for $1, \cdots, k-1$. Let $S\subseteq [k]$ be any subset of $[k]$.
If $k\in S$, then consider $\partial_{z_k}g_{\mu}(x{\bf 1} + z)$, which is the generating polynomial of $\mu'$ that restricts $\mu$ to the sets $T\subseteq[n]\backslash \{k\}$ with $\mu(T\cup \{k\})>0$. Let $S':=S\backslash \{k\}$.  
By the induction hypothesis, we have
\begin{align*}
    \mu(S) = \mu'(S') = &~ x^{|S'|- d_{\mu'}}\cdot  \prod_{i\in S'}\partial_{z_i} \prod_{i\in [k-1]\backslash S'}(1-x\partial_{z_i})\partial_{z_k} g_{\mu}(x{\bf 1}+z)\Bigg|_{z=0} \\
    = &~ x^{|S|-d_\mu}\cdot \prod_{i\in S}\partial_{z_i} \prod_{i\in [k]\backslash S}(1-x\partial_{z_i})g_{\mu}(x{\bf 1}+z)\Bigg|_{z=0},
\end{align*}
where the second line follows from $|S'|=|S|-1$ and $d_{\mu'}=d_{\mu}-1$.

If $k\notin S$, then consider $(1-x\partial_{z_k})g_{\mu}(x{\bf 1} + z)\Big|_{z_k=0}$, which is the generating polynomial of $\mu''$ that restricts $\mu$ to the sets $T\subseteq [n]\backslash\{k\}$ with $\mu(T)>0$.
Then by the induction hypothesis, we have
\begin{align*}
    \mu(S)=\mu''(S)= &~ x^{|S|- d_{\mu''}}\cdot  \prod_{i\in S}\partial_{z_i} \prod_{i\in [k-1]\backslash S}(1-x\partial_{z_i})(1-x\partial_{z_k})g_{\mu}(x{\bf 1} + z)\Bigg|_{z=0} \\
    = &~ x^{|S|-d_{\mu}}\cdot \prod_{i\in S}\partial_{z_i} \prod_{i\in [k]\backslash S}(1-x\partial_{z_i})g_{\mu}(x{\bf 1} + z)\Bigg|_{z=0}.
\end{align*}

Hence, Eqn.~\eqref{eq:marginal_sr} holds for all $k\in [n]$, which completes the proof of the fact.
\end{proof}

\subsection{Defining interlacing family of characteristic polynomials}\label{sec:ag_interlacing}

In this section, we consider the following family of polynomials:

\begin{definition}[Interlacing Family of Theorem \ref{thm:ag14_ours_formal}]\label{def:interlacing-family-ag14}
    Let $h\in \R[x_1,\dots,x_m]$ denote a degree-$d$ hyperbolic polynomial with respect to hyperbolic direction $e\in \Gamma_{++}^h$.  Let $\mu : 2^{[n]} \to \R$ be a homogeneous strongly Rayleigh probability distribution. Let $v_1,\dots, v_n\in \Gamma_+^h$ be $n$ vectors such that $\rank_h(v_i) \leq 1$ for all $i\in [n]$. Let $\mathcal{F} = \{S \subseteq [n]: \mu(S) > 0\}$ be the support of $\mu$.
    For any $S\in \mathcal{F}$, let 
    \begin{align}\label{eq:ag-interlacing}
        q_S(x) = \mu(S) \cdot h\left(xe - \sum_{i \in S} v_i\right).
    \end{align}
    Let $\mathcal{Q }$ denote the following family of polynomials:
    \begin{align*}
        \mathcal{Q} := \Big\{ q_{s_1\cdots s_\ell}(s) = \sum_{\substack{t_{\ell+1} ,\cdots,t_{n} \\ (s_1\cdots s_\ell,t_{\ell+1} ,\cdots,t_{n}) \in \mathcal{F}}} q_{s_1,\cdots,s_\ell, t_{\ell+1},\cdots,t_n} : \forall \ell \in [n], (s_1,\cdots,s_\ell) \in \mathcal{F}|_{[\ell]} \Big\}.
    \end{align*}
    where $\mathcal{F}|_{[\ell]}$ is $\mathcal{F}$ restricted to $[\ell]$, and a subset is represented by a binary indicator vector. %
\end{definition}

We show that the family defined above is an interlacing family. We will crucially use this fact to show Theorem \ref{thm:ag14_ours_formal}.

\begin{lemma}[Hyperbolic version of Theorem 3.3 in \cite{ag14}]\label{lem:ag_define_interlacing_family}
    Let $\mathcal{Q}$ denote the family of polynomials as in Definition \ref{def:interlacing-family-ag14}.
    Then, if the polynomial
    \begin{align*}
        \E_{\xi\sim \mu}\left[h(xe - (\sum_{i=1}^n \xi_i v_i))\right]
    \end{align*}
    is real-rooted in $x$ for any strongly Rayleigh distribution $\mu$, then the polynomial family $\mathcal{Q}$ forms an interlacing family.
\end{lemma}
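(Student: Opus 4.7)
The plan is to invoke Lemma~\ref{lem:commoninterlacing} to convert the common-interlacing condition into a real-rootedness test, and then to realize each convex combination of children polynomials as $\E_{\tilde\mu}[h(xe-\sum_i\xi_i v_i)]$ for a carefully constructed homogeneous strongly Rayleigh distribution $\tilde\mu$, at which point the standing hypothesis applies. Concretely, I would fix $\ell\in\{0,\ldots,n-1\}$ and a partial assignment $(s_1,\ldots,s_\ell)\in\mathcal{F}|_{[\ell]}$; the children $q_{s_1\cdots s_\ell t}$ (for the at most two valid values of $t$) are degree-$d$ polynomials with positive leading coefficient $c_t\cdot h(e)>0$, where $c_t$ is a partial-sum probability under $\mu$ and $h(e)>0$ since $e\in\Gamma_{++}^h$. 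By Lemma~\ref{lem:commoninterlacing}, common interlacing reduces to showing that $\alpha_0 q_{s_1\cdots s_\ell 0}+\alpha_1 q_{s_1\cdots s_\ell 1}$ is real-rooted for every convex combination $(\alpha_0,\alpha_1)$.

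The main construction is the SR distribution $\tilde\mu$. First, let $\mu^{*}$ be $\mu$ conditioned on $(\xi_1,\ldots,\xi_\ell)=(s_1,\ldots,s_\ell)$; this is a distribution on subsets of $\{\ell+1,\ldots,n\}$, and is SR because conditioning on a coordinate value corresponds to applying either $\partial_{z_i}$ (for $s_i=1$) or restriction $z_i\!=\!0$ (for $s_i=0$) to $g_\mu$, both of which preserve real stability (Fact~\ref{fac:closure_real_stable}). Second, apply the external-field rescaling $z_{\ell+1}\mapsto \lambda z_{\ell+1}$ to $g_{\mu^{*}}$ with $\lambda=\alpha_1/\alpha_0$; since $\lambda>0$, this substitution maps the product of upper half-planes into itself and hence preserves real stability, giving an SR distribution $\tilde\mu^{*}$ on $\{\ell+1,\ldots,n\}$. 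Finally, extend $\tilde\mu^{*}$ to a distribution $\tilde\mu$ on $2^{[n]}$ by prepending the deterministic coordinates $(s_1,\ldots,s_\ell)$; the generating polynomial is $z^{S_0}\cdot g_{\tilde\mu^{*}}(z_{\ell+1},\ldots,z_n)$ with $S_0=\{i\le\ell:s_i=1\}$, which is a product of real-stable factors and hence real stable, so $\tilde\mu$ is SR.

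It then remains to match the expectation under $\tilde\mu$ with the convex combination. Writing $\gamma_t=\Pr_{\mu^{*}}[\xi_{\ell+1}=t]$ and $E_t(x)=\E_{\mu^{*}\mid \xi_{\ell+1}=t}[h(xe-\sum_{i\in S_0}v_i-\sum_{i>\ell+1}\xi_i v_i-t\, v_{\ell+1})]$, one gets $q_{s_1\cdots s_\ell t}(x)=\mu(\text{prefix})\cdot\gamma_t\cdot E_t(x)$, while a direct computation with $Z=\gamma_0+\lambda\gamma_1$ and $\lambda=\alpha_1/\alpha_0$ shows
\begin{align*}
\E_{\tilde\mu}\!\Big[h\Big(xe-\sum_{i}\xi_iv_i\Big)\Big]
=\tfrac{1}{Z}\big(\gamma_0E_0(x)+\lambda\gamma_1E_1(x)\big)
=\tfrac{1}{\alpha_0Z\,\mu(\text{prefix})}\big(\alpha_0 q_{s_1\cdots s_\ell 0}+\alpha_1 q_{s_1\cdots s_\ell 1}\big).
\end{align*}
Applying the hypothesis to the SR distribution $\tilde\mu$ gives real-rootedness in $x$ of the left-hand side, hence of the convex combination. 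The boundary cases $\alpha_0=0$ or $\alpha_1=0$ reduce directly to real-rootedness of a single $q_{s_1\cdots s_\ell t}$, which follows by applying the hypothesis to $\mu$ conditioned on the full prefix $(s_1,\ldots,s_\ell,t)$.

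The main obstacle I expect is bookkeeping: confirming that external-field rescaling produces precisely the intended reweighting of the $\xi_{\ell+1}$-marginal and that the prepending of deterministic coordinates commutes with the expectation in the way claimed. The conceptual content is just three SR-preserving operations (coordinate conditioning, positive external field, and multiplication by a monomial), but stating the identities so that the normalization constants cancel correctly—and handling the degenerate edges $\alpha_0=0$ or $\alpha_1=0$ uniformly—requires some care.
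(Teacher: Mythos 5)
Your proof is correct and reconstructs exactly the argument the paper is referring to: the paper's "proof" is simply the one-line remark that this is the same as Theorem~3.3 of \cite{ag14}, and your chain (Lemma~\ref{lem:commoninterlacing} to reduce common interlacing to real-rootedness of convex combinations, then realize the convex combination as an expectation over a modified SR measure obtained by conditioning and a positive external field) is that argument. One minor citation nit: Fact~\ref{fac:closure_real_stable} as stated in the paper covers products, real restrictions, and $(1-c\,\partial^2_{x_i})$, but not the single derivative $\partial_{z_i}$ you use for conditioning on $s_i=1$; that closure property is standard (it is exactly the closure of SR measures under conditioning noted in \cite{bbl09}), but you should cite it as such rather than through Fact~\ref{fac:closure_real_stable}.
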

The proof of this lemma is the same as that of Theorem 3.3 in \cite{ag14}. 

From Lemma \ref{lem:rootbound} and Fact \ref{fact:hyperbolic-norm-max-root}, we obtain the following corollary:
\begin{corollary}\label{cor:ag_largest_root}
    Let $h\in \R[x_1,\dots,x_m]$ denote a degree-$d$ hyperbolic polynomial with respect to hyperbolic direction $e\in \Gamma_{++}^h$.  Let $\mu : 2^{[n]} \to \R$ be a homogeneous strongly Rayleigh probability distribution. Let $v_1,\dots, v_n\in \Gamma_+^h$ be $n$ vectors such that $\rank_h(v_i) \leq 1$ for all $i\in [n]$. Let $\mathcal{F} = \{S \subseteq [n]: \mu(S) > 0\}$ be the support of $\mu$.
    Then there exists $S\in \mathcal{F}$, such that the hyperbolic norm $\|\sum_{i \in S} v_i\|_h$ equals to the largest root of  $q_S$, and is upper bounded by the largest root of 
    \begin{align*}
        q_{\emptyset} = \E_{\xi\sim \mu}\left[h(xe - (\sum_{i=1}^n \xi_i v_i))\right].
    \end{align*}
\end{corollary}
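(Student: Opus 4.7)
The plan is to assemble the corollary from three ingredients already at hand: the interlacing family structure from Lemma~\ref{lem:ag_define_interlacing_family}, the root comparison principle from Lemma~\ref{lem:rootbound}, and a direct identification of the hyperbolic norm of $\sum_{i\in S}v_i$ with the largest root of $q_S$. I treat the last ingredient first because it is immediate: since $v_1,\dots,v_n\in\Gamma_+^h$ and $\Gamma_+^h$ is a convex cone, for every $S\in\mathcal F$ we have $\sum_{i\in S}v_i\in\Gamma_+^h$. Consequently all hyperbolic eigenvalues of $\sum_{i\in S}v_i$ are non-negative, so $\|\sum_{i\in S}v_i\|_h=\lambda_1(\sum_{i\in S}v_i)$, which by Definition~\ref{def:hyperbolic_polyn} is exactly the largest root of $h(xe-\sum_{i\in S}v_i)$, and hence of $q_S(x)=\mu(S)\cdot h(xe-\sum_{i\in S}v_i)$ since $\mu(S)>0$ for $S\in\mathcal F$.

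Next I would verify that the real-rootedness hypothesis required by Lemma~\ref{lem:ag_define_interlacing_family} is satisfied, i.e.\ that for every homogeneous strongly Rayleigh distribution $\mu$ the mixed polynomial
\begin{align*}
q_\emptyset(x)=\E_{\xi\sim\mu}\Bigl[h\Bigl(xe-\sum_{i=1}^n\xi_i v_i\Bigr)\Bigr]
\end{align*}
is real-rooted in $x$. Using Fact~\ref{fac:first_order_expand}, each summand can be written as $\prod_{i\in S}(1-\partial_{z_i})\,h(xe+\sum_j z_j v_j)\big|_{z=0}$, so taking the $\mu$-weighted sum rewrites $q_\emptyset(x)$ as the restriction to $z=0$ of a stability-preserving operator (built from the generating polynomial $g_\mu$) applied to the multivariate polynomial $h(xe+\sum_j z_j v_j)$. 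By Claim~\ref{clm:hyper_linear_restriction} the latter is real-stable, and strong Rayleigh-ness of $\mu$ together with standard closure properties (Fact~\ref{fac:closure_real_stable}) should preserve real-stability under this operator; setting $z=0$ then yields a real-stable univariate polynomial, which by Fact~\ref{fac:uni_real_stable} is real-rooted. (In the paper this equivalence is made completely explicit by the multivariate representation established in Section~\ref{sec:ag_expectation}, which I would cite if the closure argument above is not sufficient.)

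With real-rootedness in hand, Lemma~\ref{lem:ag_define_interlacing_family} yields that $\mathcal Q$ is an interlacing family. Applying Lemma~\ref{lem:rootbound} to this family produces an assignment $(s_1,\dots,s_n)\in\mathcal F$ such that the largest root of $q_{s_1\cdots s_n}$ is at most the largest root of $q_\emptyset$. Taking $S=\{i:s_i=1\}$ and combining with the first paragraph gives $\|\sum_{i\in S}v_i\|_h=\lambda_{\max}(q_S)\le\lambda_{\max}(q_\emptyset)$, which is precisely the conclusion of the corollary.

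The main obstacle is the real-rootedness of $q_\emptyset$: the interlacing and root-comparison steps are direct applications of general lemmas, and the norm-to-root identification is a one-line observation, but controlling an expectation over a strongly Rayleigh distribution in the hyperbolic (non-determinantal) setting requires leveraging both the real-stability of $g_\mu$ and the real-stability of linear restrictions of $h$ through a nontrivial operator-theoretic step. This is exactly the technical content that the paper develops in Section~\ref{sec:ag_expectation}, and the cleanest way to finish the proof is to invoke that structural identity.
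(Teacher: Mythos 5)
Your proof is correct and follows the same skeleton as the paper's: real-rootedness of $q_\emptyset$ (via Lemma~\ref{lem:ag14_theorem_3.1}), interlacing via Lemma~\ref{lem:ag_define_interlacing_family}, root comparison via the interlacing-family machinery, then identifying $\|\sum_{i\in S}v_i\|_h$ with the largest root of $q_S$. Two minor remarks. First, your direct cone-nonnegativity argument for the norm-to-root step is actually \emph{more} apt than the paper's citation of Fact~\ref{fact:hyperbolic-norm-max-root}: that fact is phrased for the product $h(xe-\sum s_iv_i)h(xe+\sum s_iv_i)$ (as in the first interlacing family), whereas $q_S$ has only the single factor $h(xe-\sum_{i\in S}v_i)$; your observation that $\sum_{i\in S}v_i\in\Gamma_+^h$ forces $\lambda_d\ge 0$, hence $\|\cdot\|_h=\lambda_1=$ the largest root, is the cleaner route. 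Second, your $(1-\partial_{z_i})$-based closure sketch for real-rootedness would not go through from Fact~\ref{fac:closure_real_stable} alone: that fact only licenses $(1-c\,\partial_{z_i}^2)$, not first-order operators, and the $\mu$-weighted sum $\sum_S\mu(S)\prod_{i\in S}(1-\partial_{z_i})$ is not a recognized stability-preserving operation in the paper's toolkit. The paper instead establishes real-rootedness through the $x\mapsto x^2$ substitution and the identity in Lemma~\ref{lem:ag14_theorem_3.1}, which recasts $q_\emptyset(x^2)$ as $\prod_i(1-\tfrac12\partial_{z_i}^2)$ applied to the real-stable polynomial $h(xe+\sum z_iv_i)\,g_\mu(x\mathbf{1}+z)$, after which Fact~\ref{fac:closure_real_stable} does apply. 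Since you flagged this and fell back on invoking that structural identity, the proof as a whole is sound; just be aware that the informal operator-sketch is not self-contained.
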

\begin{proof}
    By Lemma \ref{lem:ag14_theorem_3.1} (we remark that this lemma does not depend on the results in this section. We will prove Lemma \ref{lem:ag14_theorem_3.1} in Section \ref{sec:ag_expectation}), the mixed characteristic polynomial 
    \begin{align*}
    \E_{\xi\sim \mu}\left[h(xe - (\sum_{i=1}^n \xi_i v_i))\right]
    \end{align*}
    is real-rooted. Then by Lemma \ref{lem:ag_define_interlacing_family}, the polynomial family $\mathcal{Q}$ (see Definition \ref{def:interlacing-family-ag14}) is an interlacing family. Therefore, by Lemma \ref{lem:interlacing_ub_root}, there exists a subset $S\in \mathcal{F}$, such that the largest root of $q_S$, is upper bounded by the largest root of 
    \begin{align*}
        q_{\emptyset} = \E_{\xi\sim \mu}\left[h(xe - (\sum_{i=1}^n \xi_i v_i))\right].
    \end{align*}
    The corollary then follows from Fact \ref{fact:hyperbolic-norm-max-root}.
\end{proof}

\subsection{From mixed characteristic polynomial to multivariate polynomial}\label{sec:ag_expectation}

In this section, we want to show that the mixed characteristic polynomial
\begin{align*}
    \E_{\xi\sim \mu}\left[h(x^2e - (\sum_{i=1}^n \xi_i v_i))\right]
\end{align*}
has roots equals to the roots of the following multivariate polynomial after taking $z_1=\cdots = z_n = 0$. 
\begin{align*}
    \prod_{i=1}^n (1-\frac{1}{2}\partial^2_{z_i})\Big(h(xe+\sum_{i=1}^nz_iv_i)g_\mu(x{\bf 1}+z)\Big)
\end{align*}
The largest root of this multivariate polynomial is relatively easy to upper-bound using barrier argument. We will describe the details in Section \ref{sec:ag_bound_root}.

The main lemma of this section is as follows:

\begin{lemma}[Hyperbolic version of Theorem 3.1 in \cite{ag14}]\label{lem:ag14_theorem_3.1}
Let $h\in \R[x_1,\cdots,x_m]$ be a degree-$d$ hyperbolic polynomial with hyperbolic direction $e\in \Gamma_{++}^h$. Let $\mu : 2^{[n]} \to \R$ be a $d_\mu$-homogeneous strongly Rayleigh probability distribution with generating polynomial $g_{\mu} \in \R[z_1,\cdots,z_n]$. Let $v_1,\dots, v_n\in \Gamma_+^h$ be $n$ vectors such that $\rank_h(v_i) \leq 1$ for all $i\in [n]$. Then, we have
\begin{align}
x^{d_\mu}\cdot \E_{\xi\sim \mu}\left[h(x^2e - (\sum_{i=1}^n \xi_i v_i))\right]=  x^d \cdot \prod_{i=1}^n (1-\frac{1}{2}\partial^2_{z_i})\Big(h(xe+\sum_{i=1}^nz_iv_i)g_\mu(x{\bf 1}+z)\Big)\Bigg|_{z=0}. \label{eqn:ag_expected_hyp}
\end{align}
Moreover, 
\begin{align*}
    \E_{\xi\sim \mu}\left[h(xe - (\sum_{i=1}^n \xi_i v_i))\right]
\end{align*}
is real-rooted in $x$. %
\end{lemma}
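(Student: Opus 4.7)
The plan is to prove the identity by induction on how many variables $\xi_1,\ldots,\xi_n$ have been peeled off the expectation, and then to deduce real-rootedness of the mixed characteristic polynomial from Claim~\ref{clm:hyper_linear_restriction} together with the closure properties of real-stable polynomials (Fact~\ref{fac:closure_real_stable}). Each induction step will replace the expectation over one coordinate $\xi_i$ by a single application of the operator $\bigl(1-\tfrac{x}{2}\partial_{z_i}^2\bigr)$; after $n$ iterations, a short computation using homogeneity of $h$ reconciles $h(x^2e+\cdots)$ with $h(xe+\cdots)$ and the identity~\eqref{eqn:ag_expected_hyp} drops out.

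\textbf{Inductive step.} Conditioning on $\xi_2,\ldots,\xi_n$, the conditional marginal of $\xi_1$ is encoded, via Fact~\ref{fac:marginal_polynomial_SR}, by the reduced generating polynomial
\[
    g_2(t):=x^{\sum_{i\ge 2}\xi_i}\prod_{i\ge 2}\bigl(\xi_i\partial_{z_i}+(1-\xi_i)(1-x\partial_{z_i})\bigr)g_\mu(t,x+z_2,\ldots,x+z_n)\Big|_{z_2,\ldots,z_n=0}.
\]
Using the rank-one hypothesis $\rank_h(v_1)\le 1$, which makes $h(y+z_1v_1)$ affine in $z_1$, together with multilinearity of $g_\mu$ in each variable, one can consolidate the conditional average of $h(x^2e-\sum\xi_iv_i)$ over $\xi_1$ as
\[
    \tfrac12\bigl(1-\tfrac{x}{2}\partial_{z_1}^2\bigr)\Bigl(h\bigl(x^2e-\textstyle\sum_{i\ge 2}\xi_iv_i+z_1v_1\bigr)\cdot g_2(x+z_1)\Bigr)\Big|_{z_1=0}.
\]
Iterating $n$ times yields
\[
    x^{d_\mu}\E_{\xi\sim\mu}\bigl[h(x^2e-\textstyle\sum_i\xi_iv_i)\bigr]=\sum_{T\subseteq[n]}\bigl(-\tfrac{x}{2}\bigr)^{|T|}\partial_{z^T}^2\Bigl(h(x^2e+\textstyle\sum_iz_iv_i)\,g_\mu(x\mathbf{1}+z)\Bigr)\Big|_{z=0}.
\]

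\textbf{Matching the two sides.} Expanding the right-hand side of~\eqref{eqn:ag_expected_hyp} produces a similar sum over $T\subseteq[n]$. Because $\rank_h(v_i)\le 1$ kills $\partial_{z_i}^2h$ and multilinearity of $g_\mu$ kills $\partial_{z_i}^2g_\mu$, the Leibniz rule collapses each summand into $2^{|T|}\prod_{i\in T}D_{v_i}h(\cdot)\cdot g_\mu^{(T)}(x\mathbf{1})$. The homogeneity of $h$ gives $D_{v_i}h(x^2e)=x^{2(d-1)}D_{v_i}h(e)$ and $D_{v_i}h(xe)=x^{d-1}D_{v_i}h(e)$, from which the $x$-powers on both sides reconcile term-by-term, producing the overall prefactor $x^d$ on the right.

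\textbf{Real-rootedness, and the main obstacle.} By Claim~\ref{clm:hyper_linear_restriction}, $h(xe+\sum z_iv_i)$ is real-stable; $g_\mu(x\mathbf{1}+z)$ is real-stable by affine substitution into the real-stable $g_\mu$; their product, each operator $(1-\tfrac12\partial_{z_i}^2)$, and the restriction $z=0$ all preserve real-stability by Fact~\ref{fac:closure_real_stable}. Hence the right-hand side of~\eqref{eqn:ag_expected_hyp} is a univariate real-stable, i.e.~real-rooted, polynomial in $x$. The identity then forces $x^{d_\mu}p(x^2)$ to be real-rooted in $x$, where $p(y):=\E_\xi[h(ye-\sum\xi_iv_i)]$; since $x^{d_\mu}p(x^2)$ can only be real-rooted if every root of $p$ is real and non-negative, the moreover claim follows. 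The hard part I anticipate is the inductive step: the $x$-powers produced by the SR marginal formula in Fact~\ref{fac:marginal_polynomial_SR} must line up precisely with those introduced by $(1-\tfrac{x}{2}\partial_{z_1}^2)$, so that after $n$ peelings the mismatch between $h(x^2e+\cdots)$ and $h(xe+\cdots)$ collapses through homogeneity into the single prefactor $x^d$; the rest of the argument is mechanical bookkeeping once this delicate step is carried out.
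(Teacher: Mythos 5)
Your proposal takes essentially the same approach as the paper's proof: induction on peeling off $\xi_1,\ldots,\xi_n$ using Fact~\ref{fac:marginal_polynomial_SR}, with each conditional expectation replaced by $\bigl(1-\tfrac{x}{2}\partial_{z_i}^2\bigr)$ via the rank-one/degree-one observations, then the sum over $T\subseteq[n]$ collapsed through homogeneity and Fact~\ref{fac:direct_deriv_hyperbolic} into the stated $x^d$-prefactor form, with real-rootedness from Claim~\ref{clm:hyper_linear_restriction} and Fact~\ref{fac:closure_real_stable}. The ``delicate step'' you flag is exactly the computation carried out in the paper, and your sketch of it (Leibniz rule kills $\partial_{z_1}^2 h$ and $\partial_{z_1}^2 g_2$, leaving $2D_{v_1}h\cdot Dg_2$) is correct.
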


\begin{proof}

First, we rewrite the left-hand-side of \eqref{eqn:ag_expected_hyp} as the expectation over $T\sim \mu$ as an expectation over all indicator $\xi$ of the subsets of $[n]$:
\begin{align}
    &~ x^{d_\mu}\cdot 2^{-n}  \cdot \E_{T\sim \mu}\left[h(x^2e - (\sum_{i \in T} v_i))\right]\notag\\
    = &~ x^{d_\mu}\cdot 2^{-n}  \cdot \sum_{\xi \in \{0,1\}^n} \left( h(x^2e - (\sum_{i=1}^n \xi_i v_i)) \cdot \mu(\xi) \right) \notag\\
    = &~ x^{d_\mu} \cdot \E_{\xi\sim \{0,1\}^n}\left[ h(x^2e-\sum_{i=1}^n \xi_iv_i) \cdot \mu(\xi)\right] \notag\\
     = &~ x^{d_\mu}\cdot \E_{\xi\sim \{0,1\}^n}\left[ h(x^2e -\sum_{i=1}^n \xi_iv_i)\cdot x^{\sum_{i=1}^n \xi_i - d_\mu}\cdot 
    \prod_{i=1}^n \big(\xi_i \partial_{z_i} + (1-\xi_i)(1-z_i\partial_{z_i}) \big)g(x{\bf 1}+z)\Bigg|_{z=0}\right] 
\end{align}
where in the third step, we let $\xi \in \{0, 1\}^n$ be a random bit string uniformly sampled from $\{0, 1\}^n$.
In the last step we use Fact~\ref{fac:marginal_polynomial_SR} and the fact that $\prod_{i=1}^n \big( \xi_i \partial_{z_i} + (1 - \xi_i)(1 - z_i \partial_{z_i}) \big) = \prod_{i \in [n]: \xi_i = 1} \partial_{z_i} \prod_{i \in [n]: \xi_i = 0} (1 - z_i \partial_{z_i})$.
Setting $g_2\in \R[t]$ as
\begin{align*}
    g_2(t) := x^{\sum_{i=2}^n \xi_i}\cdot \prod_{i=2}^n \big(\xi_i \partial_{z_i} + (1-\xi_i)(1-x\partial_{z_i})\big) g(t,x+z_2,x+z_3,\cdots,x+z_n)\Big|_{z_2,\dots,z_n=0}
\end{align*}
and $x_2=x^2e-\sum_{i=2}^n \xi_i v_i$, we can simplify the above equation as
\begin{align}
    &~ x^{d_\mu}\cdot 2^{-n}  \cdot \E_{T\sim \mu}\left[h(x^2e - (\sum_{i \in T} v_i))\right]\notag\\
     = &~ \frac{1}{2}\E_{\xi_2, \cdots, \xi_n \sim \{0,1\}^{n-1}}\left[ h(x_2-v_1)x\partial_{ z_1}g_2(x+z_1)+h(x_2)(1-x\partial{z_1})g_2(x+z_1)\Bigg|_{z_1=0} \right]\notag \\
     = &~ \frac{1}{2}\E_{\xi_2, \cdots, \xi_n \sim \{0,1\}^{n-1}}\left[ (1-\partial_{z_1})h(x_2+z_1v_1)x\partial_{ z_1}g_2(x+z_1)+h(x_2)(1-x\partial_{z_1})g_2(x+z_1)\Bigg|_{z_1=0} \right]. \label{eq:induction_exp_ag14}
\end{align}

Now, we can expand the term inside the expectation of Eqn.~\eqref{eq:induction_exp_ag14}, and get that
\begin{align*}
    &~ (1-\partial_{z_1})h(x_2+z_1v_1)x\partial_{ z_1}g_2(x+z_1)+h(x_2)(1-x\partial_{z_1})g_2(x+z_1)\Bigg|_{z_1=0} \\
    = &~ xh(x_2+z_1v_1)(\partial_{z_1}g_2(x+z_1)) - x(\partial_{z_1} h(x_2+z_1v_1))(\partial_{z_1}g_2(x +z_1))\\
    &~ +h(x_2)g_2(x+z_1)-h(x_2)(x\partial_{z_1}g_2(x+z_1))\Bigg|_{z_1=0}\\
    = &~ xh(x_2)(Dg_2)(x)-x(D_{v_1}h)(x_2)(Dg_2)(x)+h(x_2)g_2(x)-xh(x_2)(Dg_2)(x)\\
    = &~ h(x_2)g_2(x)-xD_{v_1}h(x_2)(Dg_2)(x) \\
    = &~ (1-\frac{x}{2}\partial^2_{z_1})\Big( h(x_2+z_1v_1)g_2(x+z_1) \Big)\Bigg|_{z_1=0},
\end{align*}
where the last step follows from $\rank_h(v_1)\leq 1$ and $\deg(g_2)\leq 1$.

Therefore, the left-hand-side of \eqref{eqn:ag_expected_hyp} equals to
\begin{align}
    &~ x^{d_\mu}\cdot 2^{-n} \cdot \E_{\xi\sim \mu}\left[h(x^2 e - (\sum_{i=1}^n \xi_i v_i))\right] \nonumber \\
    = &~ \frac{1}{2}\E_{\xi_2,\dots,\xi_n}\left[ (1-\frac{x}{2}\partial^2_{z_1})\Big( h(x_2+z_1v_1)g_2(x + z_1) \Big)\Bigg|_{z_1=0} \right] \nonumber\\
    = &~ \frac{1}{2} (1-\frac{x}{2}\partial^2_{z_1})\left( \E_{\xi_2,\dots,\xi_n}\left[h(xe-\sum_{i=2}^n \xi_i v_i+z_1v_1)g_2(x+z_1)\right] \right)\Bigg|_{z_1=0}.  \label{eq:ag_expected_hyp_recursive}
\end{align}

If we repeat this process for $n$ times, we will finally get
\begin{align}
x^{d_\mu} \cdot \E_{\xi\sim \mu}\left[h(x^2e - (\sum_{i=1}^n \xi_i v_i))\right]= &~ \prod_{i=1}^n (1-\frac{x}{2}\partial^2_{z_i})\Big(h(x^2e+\sum_{i=1}^nz_iv_i)g_\mu(x{\bf 1}+z)\Big)\Bigg|_{z=0}. \label{eq:ag_expected_hyp_1}
\end{align}

Now we show that the right-hand-side of \eqref{eq:ag_expected_hyp_1} equals to the right-hand-side of \eqref{eqn:ag_expected_hyp}.
First, we expand the product of partial operator $\prod_{i=1}^n (1 - \frac{x}{2} \partial^2_{z_i})$ and get that
\begin{align*}
    \prod_{i=1}^n (1-\frac{x}{2}\partial^2_{z_i})\Big(h(x^2e+\sum_{i=1}^nz_iv_i)g_\mu(x{\bf 1}+z)\Big)\Bigg|_{z=0}
    = &~ \sum_{T\subseteq [n]}(-\frac{x}{2})^{|T|}\partial_{z^{T}}^2\Big(h(x^2e+\sum_{i=1}^nz_iv_i)g_\mu(x{\bf 1}+z)\Big)\Bigg|_{z=0}.
\end{align*}
For any $T\subseteq [n]$ with $|T|=k$, we have
\begin{align*}
    (-\frac{x}{2})^{k}\partial_{z^{T}}^2\Big(h(x^2e+\sum_{i=1}^nz_iv_i)g_\mu(x{\bf 1}+z)\Big)\Bigg|_{z=0} = &~ (-\frac{x}{2})^{k} \cdot 2^{k}\cdot \left(\prod_{i\in T}D_{v_i} \right)h(x^2e)\cdot g_\mu^{(T)}(x{\bf 1}),
\end{align*}
where $g_\mu^{(T)}(x{\bf 1})=\prod_{i\in T}\partial_{z_i} g_\mu(x{\bf 1}+z)\Big|_{z=0}$.

Since $h$ is $d$-homogeneous, we know that $(\prod_{i\in T}D_{v_i})h$ is $(d-k)$-homogeneous. Hence, we get that
\begin{align*}
    (-\frac{x}{2})^{k}\partial_{z^{T}}^2\Big(h(x^2e+\sum_{i=1}^nz_iv_i)g_\mu(x{\bf 1}+z)\Big)\Bigg|_{z=0} = &~(-\frac{x}{2})^{k} \cdot 2^{k} \cdot x^{d-k}\cdot \left(\prod_{i\in T}D_{v_i} \right)h(xe)\cdot g_\mu^{(T)}(x{\bf 1})\\
    = &~ x^d\cdot (-1)^k\cdot \left(\prod_{i\in T}D_{v_i} \right)h(xe)\cdot g_\mu^{(T)}(x{\bf 1})\\
    = &~ x^d \cdot (-\frac{1}{2})^{k}\partial_{z^T}^2\left(h(xe+\sum_{i=1}^n z_i v_i)g_\mu(x{\bf 1}+z)\right)\Bigg|_{z=0}.
\end{align*}
Therefore,
\begin{align*}
    x^{d_\mu}\cdot \E_{\xi\sim \mu}\left[h(x^2e - (\sum_{i=1}^n \xi_i v_i))\right]= &~ \sum_{T\subseteq [n]} x^d \cdot (-\frac{1}{2})^{k}\partial_{z^T}^2\left(h(xe+\sum_{i=1}^n z_i v_i)g_\mu(x{\bf 1}+z)\right)\Bigg|_{z=0}\\
    = &~ x^d \cdot \prod_{i=1}^n (1-\frac{1}{2}\partial^2_{z_i})\Big(h(xe+\sum_{i=1}^nz_iv_i)g_\mu(x{\bf 1}+z)\Big)\Bigg|_{z=0},
\end{align*}
which completes the proof of Eqn. (\ref{eqn:ag_expected_hyp}).

Finally, we show that  Eqn. (\ref{eqn:ag_expected_hyp}) is real-rooted in $x$. Since $e\in \Gamma_{++}^h$ and $v_1,\dots,v_n\in \Gamma^h_+$, by Claim~\ref{clm:hyper_linear_restriction}, we get that $h(xe+\sum_{i=1}^nz_iv_i)$ is real-stable. Furthermore, since $g_\mu(x{\bf 1}+z)$ is real-stable, by Fact \ref{fac:closure_real_stable}, 
\begin{align*}
    h(xe+\sum_{i=1}^nz_iv_i)g_\mu(x{\bf 1}+z)
\end{align*}
is also real-stable.
Then by Fact \ref{fac:closure_real_stable} again, we get that
\begin{align*}
    \prod_{i=1}^n (1-\frac{1}{2}\partial^2_{z_i})\Big(h(xe+\sum_{i=1}^nz_iv_i)g_\mu(x{\bf 1}+z)\Big)\Bigg|_{z=0}
\end{align*}
is real-stable. 
By Fact \ref{fac:uni_real_stable}, it implies that it is real-rooted in $x$. Equivalently, 
\begin{align*}
    q_\emptyset(x^2)=\E_{\xi\sim \mu}\left[h(x^2e - (\sum_{i=1}^n \xi_i v_i))\right]
\end{align*}
is real-rooted in $x$. Then, it is easy to see that 
\begin{align*}
    q_\emptyset(x)=\E_{\xi\sim \mu}\left[h(xe - (\sum_{i=1}^n \xi_i v_i))\right]
\end{align*}
is also real-rooted in $x$, since a complex root of $q_\emptyset(x)$ implies a complex root of $q_\emptyset(x^2)$.

The lemma is then proved.
\end{proof}

\subsection{Applying barrier argument to bound the largest root of multivariate polynomial}\label{sec:ag_bound_root}

In this section, we upper bound the largest root of the following multivariate polynomial:
\begin{align*}
    \prod_{i=1}^n (1-\frac{1}{2}\partial^2_{z_i})\Big(h(xe+\sum_{i=1}^nz_iv_i)g_\mu(x{\bf 1}+z)\Big)
\end{align*}
using the real-stable version of the barrier method in \cite{ag14}.

\begin{lemma}[Hyperbolic version of Theorem 4.1 in \cite{ag14}]\label{lem:ag_bound_root}
Let $h\in \R[x_1,\cdots,x_m]$ denote a degree-$d$ hyperbolic polynomial with hyperbolic direction $e\in\Gamma_{++}^h$. Let $v_1, \cdots, v_n \in \Gamma^h_+$ be $n$ vectors such that $\sum_{i=1}^n v_i=e$ and $\tr_h[ v_i ]\leq \epsilon_2$ for all $i\in [n]$. Let  $\mu : 2^{[n]} \rightarrow \R_{\geq 0}$ be a $d_{\mu}$-homogeneous strongly Rayleigh probability distribution such that the marginal probability of each element $i \in [n]$ is at most $\epsilon_1$. Then, all the roots of 
\begin{align*}
    \prod_{i=1}^n (1-\frac{1}{2}\partial^2_{z_i})\Big(h(xe+\sum_{i=1}^nz_iv_i)g_\mu(x{\bf 1}+z)\Big) \in \R[x,z_1,\cdots,z_n]
\end{align*}
lie below $(\sqrt{4\epsilon +2\epsilon^2},0,\cdots,0) \in \R^{n+1}$, where $\epsilon = \epsilon_1 + \epsilon_2$.
\end{lemma}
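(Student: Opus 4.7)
The plan is to mirror the barrier-method argument used to prove Lemma 4.9 (the multivariate root bound in Section 4.3), with the key new feature that the test polynomial carries the extra factor $g_\mu(x\mathbf{1}+z)$ whose contribution to the barrier function must be controlled by the marginal bound $\epsilon_1$. Let $P(x,z) := h(xe+\sum_{i=1}^n z_i v_i)\cdot g_\mu(x\mathbf{1}+z)$. First I would check that $P$ is real-stable: the hyperbolic factor is real-stable by Claim 4.17, $g_\mu$ is real-stable by the strongly Rayleigh hypothesis, and Fact 4.3 says products of real-stable polynomials are real-stable. Then I would choose the single parameter $t:=\tfrac{1}{2}\sqrt{4\epsilon+2\epsilon^2}$ and take as initial ``above roots'' point $(2t,-t\mathbf{1})\in\R^{n+1}$; positivity of $P$ above this point follows from $\sum_i v_i = e$, which makes the hyperbolic slot equal to $(2t+s_0-t)e+\sum_i s_i v_i\in\Gamma_{++}^h$ for any $s=(s_0,\ldots,s_n)\ge 0$, together with coefficient-positivity of $g_\mu$ on the positive orthant.

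Next I would bound $\Phi^{z_i}_P(2t,-t\mathbf{1})$ by splitting it into two pieces, since $\partial_{z_i}\log P = \partial_{z_i}\log h + \partial_{z_i}\log g_\mu$. Using $\sum_j v_j = e$, the $h$-argument collapses to $te$, so by Fact 4.7,
\begin{align*}
\frac{\partial_{z_i}h(xe+\sum_j z_jv_j)}{h(xe+\sum_j z_jv_j)}\bigg|_{(2t,-t\mathbf{1})} = \frac{D_{v_i}h(te)}{h(te)} = \frac{\tr_h[v_i]}{t} \le \frac{\epsilon_2}{t}.
\end{align*}
Similarly, $d_\mu$-homogeneity of $g_\mu$ and the identity $\partial_{z_i} g_\mu(\mathbf{1}) = \Pr_{T\sim\mu}[i\in T]\le\epsilon_1$ (together with $g_\mu(\mathbf{1})=1$) give $\partial_{z_i}\log g_\mu(t\mathbf{1})\le\epsilon_1/t$. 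Hence $\Phi^{z_i}_P(2t,-t\mathbf{1})\le \epsilon/t$, which in particular is $<\sqrt{2}$.

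Now I would apply Lemma 4.15 iteratively with $c=1/2$ and per-step shift $\delta=t$. The two conditions of that lemma reduce respectively to $\epsilon/t<\sqrt{2}$ (trivial for $\epsilon\in(0,1)$) and
\begin{align*}
\tfrac{1}{2}\Big(\tfrac{2}{t}\Phi_P^{z_i} + (\Phi_P^{z_i})^2\Big) \;\le\; \frac{\epsilon}{t^2} + \frac{\epsilon^2}{2t^2} \;=\; \frac{2\epsilon+\epsilon^2}{2t^2} \;=\; 1,
\end{align*}
by the exact choice $t^2=\epsilon+\epsilon^2/2$; this is precisely the reason for the bound $\sqrt{4\epsilon+2\epsilon^2}$. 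Applying $(1-\tfrac{1}{2}\partial_{z_1}^2)$ shifts the test point to $(2t,-t\mathbf{1}+t\mathbf{1}_1)$ while Lemma 4.15 guarantees $\Phi^{z_j}$ does not increase in any coordinate $j$. To proceed to coordinate $2$ I need the intermediate barriers to stay in $[0,\epsilon/t]$: the upper bound is preserved by Lemma 4.15, while nonnegativity at the shifted point follows from the multidimensional convexity of barrier functions (Lemma 4.16 with $k=0$). Iterating for $i=1,\ldots,n$, the test point arrives at $(2t,0,\ldots,0)=(\sqrt{4\epsilon+2\epsilon^2},0,\ldots,0)$, which is therefore above all roots of $\prod_i(1-\tfrac{1}{2}\partial_{z_i}^2)P$.

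I expect the main obstacle to be the two upper-bound computations for $\Phi^{z_i}_P$. For the hyperbolic piece, the simplification relies critically on $\sum_i v_i=e$, which collapses the argument of $h$ to a scalar multiple of $e$ so that Fact 4.7 applies cleanly; without the isotropic hypothesis one would instead have to invoke Fact 4.8 (concavity of $h/D_{v_i}h$). For the $g_\mu$ piece, I need the precise identification of $\partial_{z_i} g_\mu(\mathbf{1})$ with the marginal $\Pr[i\in T]$, which follows from the $k=1$ case of Fact 4.14; combined with $d_\mu$-homogeneity, this gives the required $\epsilon_1/t$ bound. Once both bounds are in place, the rest of the argument is a direct adaptation of the barrier inductive step from Section 4.3.
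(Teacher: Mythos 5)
Your proof is correct and follows essentially the same route as the paper: same barrier polynomial $Q(x,z)=h(xe+\sum_i z_i v_i)\,g_\mu(x\mathbf{1}+z)$, same decomposition of $\Phi^i_Q$ into a hyperbolic term bounded by $\epsilon_2/(\alpha-t)$ (using $\sum_i v_i=e$ and Fact~\ref{fac:related_to_D_v_h}) plus a generating-polynomial term bounded by $\epsilon_1/(\alpha-t)$ (using homogeneity and the marginal identity), and the same parameter choice $\alpha=2t=\sqrt{4\epsilon+2\epsilon^2}$ making both conditions of Lemma~\ref{lem:real_stable_cone} tight. One minor remark: the inequality $\epsilon/t<\sqrt{2}$ actually holds for all $\epsilon>0$, not just $\epsilon\in(0,1)$, since $\epsilon^2/t^2 = 2\epsilon/(2+\epsilon)<2$ unconditionally.
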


\begin{proof}
Let $Q\in \R[x,z_1,\dots,z_n]$ be an $(n+1)$-variate polynomial:
\begin{align*}
    Q(x,z):=h(xe+\sum_{i=1}^nz_iv_i)g_\mu(x{\bf 1}+z).
\end{align*}
We have already proved in Lemma~\ref{lem:ag14_theorem_3.1} that $Q(x,z)$ is real-stable.

For $0<t<\alpha$ where $\alpha=\alpha(t)$ is a parameter to be chosen later, we have
\begin{align*}
    Q(\alpha, -t{\bf 1}) = &~ h(\alpha e -t\sum_{i=1}^n v_i)g_{\mu}((\alpha-t){\bf 1})\\
    = &~ h((\alpha-t) e)g_{\mu}((\alpha-t){\bf 1})\\
    = &~ (\alpha - t)^{d+d_\mu}h(e)g_\mu({\bf 1})\\
    > &~ 0,
\end{align*}
where the second step follows from $\sum_{i=1}^n v_i = e$ and the last step follows from $\alpha > t$, $h(e)>0$ and $g_\mu({\bf 1})=1$.
This implies that $(\alpha, -t{\bf 1}) \in \Ab_Q$.

We can upper bound $\Phi_Q^i(\alpha, -t{\bf 1})$ as follows.
\begin{align*}
    \Phi_Q^i(\alpha, -t{\bf 1}) = &~ \frac{\partial_{z_i} Q}{Q}\Bigg|_{x=\alpha, z=-t{\bf 1}}\\
    = &~ \frac{(\partial_{z_i} h(xe+\sum_{i=1}^n z_i v_i))g_\mu(x{\bf 1}+z)+h(xe+\sum_{i=1}^n z_i v_i)(\partial_{z_i}g_\mu({\bf 1}+z))}{h(xe+\sum_{i=1}^nz_iv_i)g_\mu(x{\bf 1}+z)}\Bigg|_{x=\alpha, z=-t{\bf 1}}\\
    = &~ \frac{\partial_{z_i} h(xe+\sum_{i=1}^n z_i v_i)}{h(xe+\sum_{i=1}^n z_i v_i)}+\frac{\partial_{z_i}g_\mu(x{\bf 1}+z)}{g_\mu(x{\bf 1}+z)}\Bigg|_{x=\alpha, z=-t{\bf 1}}\\
    = &~ \frac{D_{v_i} h(xe+\sum_{i=1}^n z_i v_i)}{h(xe+\sum_{i=1}^n z_i v_i)}+\frac{\partial_{z_i}g_\mu(x{\bf 1}+z)}{g_\mu(x{\bf 1}+z)}\Bigg|_{x=\alpha, z=-t{\bf 1}}\\
    = &~ \frac{D_{v_i}h(\alpha e - te)}{h(\alpha e - te)}+\frac{(\alpha-t)^{d_\mu-1}\cdot \Pr_{S\sim \mu}[i\in S]}{(\alpha-t)^{d_\mu}}\\
    \leq  &~ \frac{\tr[v_i]}{\alpha - t}+\frac{\epsilon_1}{\alpha-t}\\
    \leq &~ \frac{\epsilon_1 + \epsilon_2}{\alpha - t},
\end{align*}
where the first inequality follows from Fact~\ref{fac:related_to_D_v_h}.

Let $\epsilon:=\epsilon_1 + \epsilon_2$. By choosing $\alpha = 2t = \sqrt{4\epsilon+2\epsilon^2}$, we get that 
\begin{align*}
    \Phi_Q^i(\alpha, -t{\bf 1})\leq \frac{\epsilon}{\sqrt{\epsilon+\epsilon^2/2}}= \frac{\sqrt{2}}{\sqrt{1+2/\epsilon}}<\sqrt{2}.
\end{align*}

Then, by Lemma~\ref{lem:real_stable_cone}, we know that $(\alpha, -t{\bf 1})\in \Ab_{(1-\frac{1}{2}\partial_{z_i}^2)Q}$ for any $i\in [m]$.

Furthermore, 
\begin{align*}
    \frac{1}{t}\Phi_Q^i(\alpha, -t{\bf 1}) + \frac{1}{2}\Phi_Q^i(\alpha, -t{\bf 1})^2 \leq &~ \frac{\epsilon}{t^2} +\frac{1}{2}\frac{\epsilon^2}{t^2}=1.
\end{align*}

By the second part of Lemma~\ref{lem:real_stable_cone}, we have for all $j\in [m]$,
\begin{align*}
    \Phi^j_{(1-\frac{1}{2}\partial_{z_i}^2)Q} (\alpha, -t{\bf 1} + t{\bf 1}_i)\leq \Phi_Q^j(\alpha, -t{\bf 1}).
\end{align*}

By a similar induction process like in the proof of Lemma~\ref{lem:kls_bound_root}, we have
\begin{align*}
    (\alpha, -t{\bf 1} + \sum_{i=1}^n t {\bf 1}_i) &= (\sqrt{4\epsilon + 2\epsilon^2}, 0,0,\dots,0) \\
    &\in \Ab_{\prod_{i=1}^n (1 - \partial_{z_i}^2 / 2) Q}
\end{align*}
i.e. $(\sqrt{4\epsilon + 2\epsilon^2}, 0,0,\dots,0)$ lies above the roots of 
\begin{align*}
    \prod_{i=1}^n \Big( 1 - \frac{1}{2} \frac{ \partial^2 }{ \partial z_i^2 } \Big)  \left[ h \Big( x e + \sum_{i=1}^n \xi_i v_i \Big) \cdot g_\mu(x{\bf 1}+z) \right]
\end{align*}
as desired.
\end{proof}

\subsection{Combining together: proof of Theorem \ref{thm:ag14_ours_formal}}\label{sec:ag-proof-main}

Now we can combine the results from the previous section and prove Theorem \ref{thm:ag14_ours_formal}:

\begin{proof}[Proof of Theorem \ref{thm:ag14_ours_formal}]

Let $\mathcal{F}$ be the support of $\mu$.

By Lemma~\ref{lem:ag_bound_root} and restricting to $z_i = 0$ for all $i\in [n]$, we have that $\sqrt{4(\epsilon_1 + \epsilon_2) + 2(\epsilon_1 + \epsilon_2)^2}$
lies above the largest root of the univariate polynomial
\begin{align*}
    \prod_{i=1}^n (1-\frac{1}{2}\partial^2_{z_i})\Big(h(xe+\sum_{i=1}^nz_iv_i)g_\mu(x{\bf 1}+z)\Big)\Bigg|_{z=0}.
\end{align*}

We then conclude by Lemma \ref{lem:ag14_theorem_3.1} that $\sqrt{4(\epsilon_1 + \epsilon_2) + 2(\epsilon_1 + \epsilon_2)^2}$
upper bounds the largest root of 
\begin{align*}
    \E_{\xi\sim \mu}\left[h(x^2e - (\sum_{i=1}^n \xi_i v_i))\right].
\end{align*}
Therefore, the largest root of
\begin{align*}
    q_{\emptyset} = \E_{\xi\sim \mu}\left[h(xe - (\sum_{i=1}^n \xi_i v_i))\right]
\end{align*}
is upper bounded by $4(\epsilon_1 + \epsilon_2) + 2(\epsilon_1 + \epsilon_2)^2$,
where $q_{\emptyset}$ is the average of the polynomials in the interlacing family $\mathcal{Q}$ in Definition \ref{def:interlacing-family-ag14}.

Finally, by Corollary \ref{cor:ag_largest_root}, there exists $S \subseteq[n]$ in the support of $\mu$, such that
\begin{align*}
    \Big\|\sum_{i\in S} v_i\Big\|_h \leq 4(\epsilon_1 + \epsilon_2) + 2(\epsilon_1 + \epsilon_2)^2 .
\end{align*}

\end{proof}

\section{Sub-Exponential Algorithms }\label{sec:sub-exp-alg}

\subsection{Definitions}

\begin{definition}[$k$-th symmetric polynomials]\label{def:k-symmetric-polynomial}
    For any $n \in \Z_+, k \leq n$, let $e_k \in \R[z_1,\cdots,z_n]$ denote the $k$-th elementary symmetric polynomial defined as
    \begin{align*}
        e_k(z_1,\cdots,z_n) = \sum_{T\in \binom{[n]}{k}} \prod_{i\in T} z_i.
    \end{align*}
\end{definition}

\begin{definition}[$k$-th power sum polynomials]\label{def:k-power-sum-polynomial}
    For any $n,k \in \Z_+$, let $p_k \in \R[z_1,\cdots,z_n]$ denote the $k$-th power sum polynomial defined as $p_k(z_1,\cdots,z_n) = \sum_{i=1}^n z_i^k$.
\end{definition}

\begin{fact}[Vieta's formulas]\label{fact:vieta-formula}
    Let $f \in \R[x]$ be any degree $n$ monic variate polynomial defined as $f(x)  = x^n + c_1 x^{n-1} + \cdots + c_n$. Then for any $k\in [n]$,
    \begin{align*}
        c_k = (-1)^k e_k(\lambda_1,\cdots,\lambda_n).
    \end{align*}
    Where $\lambda_1,\cdots,\lambda_n \in \C$ are the roots of $f$, and $e_k$ is the $k$-th elementary symmetric polynomial defined in Definition \ref{def:k-symmetric-polynomial}.
\end{fact}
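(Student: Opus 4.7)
The plan is to prove Vieta's formulas by the standard route of factoring the monic polynomial over $\mathbb{C}$ and matching coefficients with the expanded product. Since $f\in\R[x]$ is monic of degree $n$ with roots $\lambda_1,\ldots,\lambda_n\in\C$ (counted with multiplicity), the fundamental theorem of algebra gives the factorization
\begin{align*}
    f(x) = \prod_{i=1}^n (x - \lambda_i).
\end{align*}

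Next, I would expand this product combinatorially. Each term in the expansion is obtained by choosing, from each of the $n$ factors $(x-\lambda_i)$, either the $x$ or the $-\lambda_i$ contribution. A term contributes to $x^{n-k}$ precisely when $-\lambda_i$ is chosen from exactly $k$ of the $n$ factors, indexed by some $T\in\binom{[n]}{k}$. Summing over all such choices yields
\begin{align*}
    f(x) = \sum_{k=0}^n x^{n-k} \sum_{T\in \binom{[n]}{k}} \prod_{i\in T}(-\lambda_i) = \sum_{k=0}^n (-1)^k\, e_k(\lambda_1,\ldots,\lambda_n)\, x^{n-k},
\end{align*}
using Definition~\ref{def:k-symmetric-polynomial} of the elementary symmetric polynomial $e_k$.

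Finally, I would match coefficients with the assumed form $f(x)=x^n+c_1 x^{n-1}+\cdots+c_n$: the coefficient of $x^{n-k}$ on the left is $c_k$ (with the convention $c_0=1$, consistent with $e_0=1$), while the coefficient on the right is $(-1)^k e_k(\lambda_1,\ldots,\lambda_n)$. Identifying these for each $k\in[n]$ yields the desired identity $c_k=(-1)^k e_k(\lambda_1,\ldots,\lambda_n)$.

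There is no real obstacle here; the only subtle point is ensuring that $\lambda_1,\ldots,\lambda_n$ are listed with multiplicity so that the product $\prod_{i=1}^n(x-\lambda_i)$ has the correct degree and leading coefficient $1$. Since $f$ is monic, the factorization is unique up to ordering, and the combinatorial expansion above is purely formal, so the proof goes through without further difficulty.
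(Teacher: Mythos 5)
Your proof is correct and is the standard factor-and-expand argument for Vieta's formulas; the paper states this as a well-known Fact without proof, so there is no alternative paper proof to compare against, and your argument fills it in appropriately.
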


\begin{fact}[Newton's identities]\label{fact:newton-identity}
    For any $n \in \Z_+, k\in [n]$, let $e_k, p_k \in \R[z_1,\cdots,z_n]$ be defined in Definition \ref{def:k-symmetric-polynomial} and Definition \ref{def:k-power-sum-polynomial} respectively. Then there exists an $O(k^2)$-time algorithm %
    $\textsc{ElemToPower}(k,e_1,\cdots,e_k)$, such that given any $k \in [n]$, and any $e_1 = e_1(x),\cdots,e_k = e_k(x)$ for some fixed $x\in \R^n$, outputs $p_k = p_k(x)$.
\end{fact}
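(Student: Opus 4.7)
The plan is to invoke the classical Newton identities, which give a linear recursion expressing each power sum $p_k$ in terms of the elementary symmetric polynomials $e_1,\dots,e_k$ and the previously computed power sums $p_1,\dots,p_{k-1}$. Concretely, for any $1\le k\le n$ and any fixed $x\in\R^n$, Newton's identity reads
\begin{align*}
p_k(x) \;=\; \sum_{j=1}^{k-1} (-1)^{j-1}\, e_j(x)\, p_{k-j}(x) \;+\; (-1)^{k-1}\, k\, e_k(x).
\end{align*}
This can be derived in a standard way by differentiating the identity $\prod_{i=1}^n(1-z_i t)=\sum_{j=0}^n (-1)^j e_j(x)\, t^j$ with respect to $t$ and comparing coefficients of $t^{k-1}$ on both sides, or equivalently by logarithmic differentiation of the generating function.

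Given this recursion, the algorithm $\textsc{ElemToPower}(k,e_1,\dots,e_k)$ is straightforward. First I would initialize $p_1 \leftarrow e_1$, which is the base case since $p_1(x)=\sum_{i=1}^n x_i = e_1(x)$. Then, for $i = 2, 3,\dots, k$ in order, I would compute
\begin{align*}
p_i \;\leftarrow\; \sum_{j=1}^{i-1} (-1)^{j-1} e_j\, p_{i-j} \;+\; (-1)^{i-1}\, i\, e_i,
\end{align*}
using only the previously stored values $p_1,\dots,p_{i-1}$ together with the inputs $e_1,\dots,e_i$. After $k-1$ such iterations, the algorithm returns $p_k$.

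For the runtime analysis, the $i$-th iteration performs $O(i)$ arithmetic operations (one multiplication and one addition per summand, plus the final correction term). Summing over $i=2,\dots,k$ gives a total cost of $\sum_{i=2}^k O(i) = O(k^2)$, as claimed. Correctness follows immediately from Newton's identities by induction on $i$: assuming $p_1,\dots,p_{i-1}$ have been computed correctly, the recursion yields the correct value of $p_i$.

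There is no real obstacle here; the statement is essentially a restatement of Newton's identities together with the observation that they admit a bottom-up dynamic programming evaluation. The only mild subtlety to be careful about is the sign pattern and the special coefficient $k$ on the $e_k$ term (which appears because $p_0$ would nominally equal $n$, but the identity is typically rearranged so that $p_0$ does not appear explicitly); once the recursion is written out correctly, both the $O(k^2)$ runtime bound and the correctness of the output are immediate.
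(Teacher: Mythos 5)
The paper does not actually provide a proof of Fact~\ref{fact:newton-identity}; it is stated as a known classical fact and used as a black box in Algorithm~\ref{alg:alg-max-root}. Your proposal supplies the standard argument: the Newton recursion $p_k=\sum_{j=1}^{k-1}(-1)^{j-1}e_j p_{k-j}+(-1)^{k-1}k e_k$ together with a bottom-up evaluation, and the cost analysis $\sum_{i=2}^k O(i)=O(k^2)$ is exactly what one would expect. This is correct and is the obvious way to justify the cited running time.

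One small thing worth double-checking against the paper's conventions: Definition~\ref{def:k-symmetric-polynomial} and Definition~\ref{def:k-power-sum-polynomial} define $e_k$ and $p_k$ as polynomials in $n$ variables $z_1,\dots,z_n$, and the fact statement asks you to produce $p_k(x)$ from the scalar values $e_1(x),\dots,e_k(x)$ for a fixed $x\in\R^n$. Your recursion correctly works entirely at the level of scalars (you never need to know $n$ or the individual coordinates of $x$), which is precisely what makes the running time $O(k^2)$ rather than depending on $n$. You implicitly use this when you note that the algorithm ``stores'' the scalars $p_1,\dots,p_{i-1}$; it may be worth stating explicitly that the algorithm never touches $x$ itself, so the stated bound is independent of $n$.
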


\subsection{Algorithm to approximate the largest root}\label{sec:alg-largest-root}

\begin{algorithm}[ht]\caption{Algorithm approximating the largest root.}\label{alg:alg-max-root}
\begin{algorithmic}[1]
\Procedure{\textsc{MaxRoot}}{$n\in \Z_+,k\in [n], f\in \R[x], c_1,\cdots,c_k \in \R$} 
    \State \textbf{Precondition:} $k\leq n$, $c_1,\cdots,c_k \in \R$ are the top-$k$ coefficients of a degree $n$ real-rooted monic-variate polynomial $f \in \R[x]$.
    \State \textbf{Output:} Return an approximate of the largest root of $f$.
    \For{$i=1$ to $k$}
        \State $e_k \leftarrow c_k \cdot (-1)^k$. 
        \Statex \Comment{$e_k = e_k(\lambda_1,\cdots,\lambda_n)$ is the $k$-th elementary polynomial of the roots of $f$ by Fact \ref{fact:vieta-formula}.}
    \EndFor
    \State $p_k \leftarrow \textsc{ElemToPower}(k,e_1,\cdots,e_k)$ 
    \Statex \Comment{$p_k(\lambda) = \lambda_1^k + \cdots + \lambda_n^k$ is the power sum of the roots of $f$ by Fact \ref{fact:newton-identity}.} 
    \State \textbf{Return} $(p_k)^{1/k}$.
\EndProcedure
\end{algorithmic}
\end{algorithm}

\begin{lemma}\label{lem:alg-max-root-approx}
    Let $f = x^n + c_1 x^{n-1} + \cdots + c_n \in R[x]$ be any degree $n$ real-rooted monic-variate polynomial. 
    Then $\textsc{MaxRoot}(n,k,f,c_1,\cdots,c_k)$ (Algorithm \ref{alg:alg-max-root}) returns an $(n^{1/k})$-approximate of the largest root of $f$ in time $O(k^2 + k)$. 
\end{lemma}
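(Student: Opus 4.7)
The plan is to verify the three lines of Algorithm~\ref{alg:alg-max-root} in sequence and then prove a sandwich bound on $p_k^{1/k}$. First, since $f$ is monic of degree $n$ with real roots $\lambda_1,\dots,\lambda_n$, we can write $f(x)=\prod_{i=1}^n(x-\lambda_i)$. Expanding this product and matching coefficients with $f(x)=x^n+c_1 x^{n-1}+\cdots+c_n$ gives Vieta's formulas (Fact~\ref{fact:vieta-formula}), namely $c_j=(-1)^j e_j(\lambda_1,\dots,\lambda_n)$ for all $j\in[k]$. This justifies the first loop of the algorithm which sets $e_j=(-1)^j c_j$ and thereby recovers the elementary symmetric polynomials evaluated at the root vector. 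Plugging these into the $\textsc{ElemToPower}$ subroutine of Fact~\ref{fact:newton-identity} then yields $p_k=p_k(\lambda_1,\dots,\lambda_n)=\sum_{i=1}^n\lambda_i^k$ correctly, completing the correctness of the intermediate computation.

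The core of the argument is the two-sided bound on $p_k^{1/k}$. Assuming the roots are nonnegative (which is the regime in which this subroutine is invoked, since we apply it to hyperbolic characteristic polynomials whose roots are hyperbolic eigenvalues restricted to the hyperbolicity cone and, if needed, shifted so that zero lies below all roots), we immediately obtain
\begin{align*}
\lambda_{\max}^k \;\le\; \sum_{i=1}^n \lambda_i^k \;=\; p_k \;\le\; n\cdot \lambda_{\max}^k,
\end{align*}
because each summand is nonnegative and each is at most $\lambda_{\max}^k$. Taking $k$-th roots yields $\lambda_{\max}\le p_k^{1/k}\le n^{1/k}\,\lambda_{\max}$, so the output $(p_k)^{1/k}$ is an $n^{1/k}$-multiplicative approximation to $\lambda_{\max}$, as required.

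For the running time, the first loop performs $k$ sign flips and assignments in $O(k)$ time. The call to $\textsc{ElemToPower}(k,e_1,\dots,e_k)$ runs in $O(k^2)$ time by Fact~\ref{fact:newton-identity}. The final $k$-th root extraction is $O(1)$ in our computational model (as in \cite{aoss18}). Summing gives the claimed $O(k^2+k)$ total time.

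The only subtlety I expect is the sign/positivity issue: if some eigenvalues can be negative, then $p_k$ for odd $k$ need not equal $\sum |\lambda_i|^k$, and the simple one-sided inequality $\lambda_{\max}^k\le p_k$ can fail. In the body of the paper this is handled by restricting attention to even $k$ (so that $p_k=\sum|\lambda_i|^k$ upper-bounds $\|\cdot\|_h^k$) or, equivalently, by applying the routine to a polynomial whose roots are known to be nonnegative through a shift by a fixed constant. I would state this positivity assumption explicitly at the top of the proof and then the sandwich bound goes through verbatim.
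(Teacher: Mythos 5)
Your proof is correct and matches the paper's argument essentially line for line: recover $e_1,\dots,e_k$ via Vieta's formulas, convert to $p_k$ via Newton's identities, and sandwich $\lambda_{\max}^k \le p_k \le n\lambda_{\max}^k$ before taking $k$-th roots. Your caveat about the roots needing to be nonnegative (or $\lambda_1 = \max_i |\lambda_i|$) is a genuine omission in the paper's proof, which silently uses $\lambda_i^k \le \lambda_1^k$ and $\lambda_i^k\geq 0$; you are right that in the paper's applications this holds because the polynomials in question either have all nonnegative roots or have the symmetric $\pm\lambda$ root structure (so even $k$ suffices), and stating that hypothesis explicitly, as you propose, would be a strict improvement.
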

\begin{proof}
    Let $\lambda_1 \geq \lambda_2 \geq \lambda_n \in \R$ denote the roots of $f(x)$. By Fact \ref{fact:vieta-formula} and Fact \ref{fact:newton-identity}, Algorithm \ref{alg:alg-max-root} returns $(p_k)^{1/k}$, where $p_k = \lambda_1^k + \cdots + \lambda_n^k$. Notice that
    \begin{align*}
        \frac{\lambda_1^k + \cdots + \lambda_n^k}{n} \leq \lambda_1^k \leq \lambda_1^k + \cdots + \lambda_n^k
    \end{align*}
    we have
    \begin{align*}
        \lambda_1 \leq (p_k/n)^{1/k} \leq n^{1/k}\lambda_1.
    \end{align*}
\end{proof}

\begin{remark}
    We remark that when $k > \log n$, the approximation factor $n^{1/k}$ is upper bounded by $1 + \frac{\log n}{k}$.
\end{remark}

\subsection{Reducing Kadison-Singer to finding leading coefficients of interlacing polynomial}

We define an oracle that generates the top-$k$ coefficients as follows:
\begin{definition}\label{def:oracle-max-coeff}
Fix a family of degree $n$ monic-variable polynomials $\mathcal{F} = \{f_{s_1,\cdots,s_m}\in \R[x] : s_1,\cdots s_\ell \in S_1 \times \cdots \times S_\ell\}$.
We define oracle $\textsc{MaxCoeff}_{\mathcal{F}}(k, \ell, s_1,\cdots s_\ell )$ as follows: given any $k\in [n], \ell \in [m], s_1,\cdots s_\ell \in S_1 \times \cdots \times S_\ell$ as inputs, and it outputs the top-$k$ coefficients of $f_{s_1 \cdots s_{\ell}}$ %
in $\T_{\mathrm{Coeff}}(\mathcal{F}, k)$ time.
\end{definition}

\begin{lemma}[Theorem 4.4 of \cite{aoss18}]\label{lem:kadison-singer-alg-general}
    Let $S_1,\cdots,S_m$ be finite sets and let $\mathcal{F} = \{f_{s_1 \cdots s_m}(x)\in \R[x] : s_1\in S_1,\cdots,s_m \in S_m\}$ be an interlacing family of degree $n$ real-rooted monic-variate polynomials. Let $\textsc{MaxCoeff}_{\mathcal{F}}(k, \ell, s_1,\cdots s_\ell )$ be the oracle for finding the largest $k$ coefficients defined as Definition \ref{def:oracle-max-coeff}.
    
    Then there is an algorithm $\textsc{KadisonSinger}(\delta, \mathcal{F} = \{f_{s_1 \cdots s_m}(x) : s_1\in S_1,\cdots,s_m \in S_m\})$
    (Algorithm \ref{alg:kadison-singer}) that, given any $\delta > 0$ and $\mathcal{F}$ as input, returns the elements $s_1' \in S_1,\cdots,s_m' \in S_m$,
    such that the maximum root of $f_{s_1' \cdots s_m'}$ is at most $(1+\delta)$ times the maximum root of $f_{\emptyset}$, in time
    \begin{align*}
        \left(\T_{\mathrm{Coeff}}\left(\mathcal{F}, O(\log(n) m \delta^{-1} ) \right) + k^2 \right) \cdot {\cal S}^{\sqrt{m}} \cdot \sqrt{m}, 
    \end{align*}
    where ${\cal S} = \max_{i\in [m]}|S_i|$.
\end{lemma}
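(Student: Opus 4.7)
The plan is to design a greedy tree-traversal algorithm over the interlacing family that descends in batches of $\sqrt{m}$ indices at a time. Concretely, the algorithm maintains a current partial assignment $(s_1', \ldots, s_\ell')$, initially empty, and at each meta-step enumerates every one of the $\mathcal{S}^{\sqrt{m}}$ possible extensions $(t_{\ell+1}, \ldots, t_{\ell+\sqrt{m}}) \in S_{\ell+1} \times \cdots \times S_{\ell+\sqrt{m}}$. For each such candidate, it invokes $\textsc{MaxCoeff}_{\mathcal{F}}$ to obtain the top-$k$ coefficients of $f_{s_1', \ldots, s_\ell', t_{\ell+1}, \ldots, t_{\ell+\sqrt{m}}}$ and then calls $\textsc{MaxRoot}$ (Algorithm~\ref{alg:alg-max-root}) with these coefficients to produce an approximation to that polynomial's largest root. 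The candidate achieving the smallest approximate maximum root is committed as the next $\sqrt{m}$ coordinates of the output, and the process repeats for a total of $\sqrt{m}$ meta-steps, producing a full assignment $(s_1', \ldots, s_m')$.

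For correctness, I would argue by induction on the meta-steps. The core observation is that the interlacing family property from Lemma~\ref{lem:rootbound}, applied recursively along a chain of length $\sqrt{m}$, guarantees that for any node $f_{s_1', \ldots, s_\ell'}$ there exists at least one descendant at depth $\ell + \sqrt{m}$ whose true largest root is bounded by the largest root of $f_{s_1', \ldots, s_\ell'}$. By Lemma~\ref{lem:alg-max-root-approx}, each approximate maximum root is within a factor $n^{1/k}$ of the true value; consequently, choosing the candidate with smallest approximate value incurs a multiplicative slack of at most $(n^{1/k})^2$ per meta-step in the true largest root. Compounding over $\sqrt{m}$ meta-steps yields a total multiplicative error of at most $(n^{1/k})^{2\sqrt{m}}$, and setting $k = \Theta(\log(n) \cdot m/\delta)$ makes this quantity bounded by $1+\delta$, proving that the largest root of $f_{s_1', \ldots, s_m'}$ is at most $(1+\delta)$ times that of $f_{\emptyset}$.

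The running-time bound follows by summing the cost across meta-steps: each of the $\sqrt{m}$ meta-steps issues $\mathcal{S}^{\sqrt{m}}$ oracle calls, each of cost $\T_{\mathrm{Coeff}}(\mathcal{F}, k)$, and performs the same number of $\textsc{MaxRoot}$ invocations at $O(k^2)$ time each by Lemma~\ref{lem:alg-max-root-approx}, giving $\sqrt{m}\cdot \mathcal{S}^{\sqrt{m}} \cdot (\T_{\mathrm{Coeff}}(\mathcal{F}, k) + O(k^2))$ with $k = O(\log(n) \cdot m/\delta)$, as claimed. The main obstacle I anticipate is the error-compounding analysis: a naive per-coordinate greedy would multiply errors across all $m$ levels and demand an impractically large $k$, while the batched approach must carefully balance the enumeration cost $\mathcal{S}^{\sqrt{m}}$ against the number $\sqrt{m}$ of compounding steps so that the approximation degradation remains within a $(1+\delta)$ factor; verifying that the interlacing inequality indeed propagates intactly across a chain of $\sqrt{m}$ applications of Lemma~\ref{lem:interlacing_ub_root} is the essential technical ingredient enabling this tradeoff.
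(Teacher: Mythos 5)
Your proposal is correct and follows essentially the same approach as the paper's proof of this lemma: the paper's Algorithm~\ref{alg:kadison-singer} also proceeds in $\sqrt{m}$ meta-steps, brute-forcing over the $\mathcal{S}^{\sqrt{m}}$ extensions in each batch, calling $\textsc{MaxCoeff}_{\mathcal{F}}$ and $\textsc{MaxRoot}$, committing the extension with the smallest approximate maximum root, and arguing by induction on meta-steps that the error compounds over only $\sqrt{m}$ levels. One small remark: since the estimate produced by Lemma~\ref{lem:alg-max-root-approx} is a one-sided over-estimate (namely $\lambda_1 \le p_k^{1/k} \le n^{1/k}\lambda_1$), choosing the smallest approximate value loses only a single factor of $n^{1/k}$ per meta-step (as the paper uses, with $k=\Theta(\sqrt{m}\log n/\delta)$ in Algorithm~\ref{alg:kadison-singer}), rather than the $(n^{1/k})^2$ you budget for; your more conservative accounting still lands within the stated $O(\log(n)\,m\,\delta^{-1})$ bound on $k$, so this does not affect the final claim.
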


\begin{algorithm}[ht]\caption{Algorithm finding approximate solutions of Kadison-Singer problems}\label{alg:kadison-singer}
\begin{algorithmic}[1]
\Procedure{\textsc{KadisonSinger}}{$\delta, \mathcal{F} = \{f_{s_1 \cdots s_m}(x) : s_1\in S_1,\cdots,s_m \in S_m\}$} 
    \State \textbf{Precondition:} $\delta > 0$, $S_1,\cdots,S_m$ are finite sets, $\mathcal{F} = \{f_{s_1 \cdots s_m}(x)\in \R[x] : s_1\in S_1,\cdots,s_m \in S_m\}$ is an interlacing family of degree $n$ real-rooted monic-variate polynomials.
    \State \textbf{Output:} $s_1' \in S_1,\cdots,s_m' \in S_m$, such that the maximum root of $f_{s_1' \cdots s_m'}$ is at most $(1+\delta)$ times the maximum root of $f_{\emptyset}$.
    \State $M \leftarrow \sqrt{m}$
    \State $k \leftarrow \frac{M \log n}{\delta}$
    \State $\lambda_{\max} \leftarrow -\infty$
    \For{$i=0$ to $\frac{m}{M}-1$} \Comment{$\sqrt{m}$ iterations}
        \For{$t_{iM+1} \in S_{iM+1},\cdots, t_{iM+M} \in S_{iM+M}$} \Comment{Brute force search on $\prod_{j=1}^M |S_{iM+j}|$ elements}
            \State $(c_1,\cdots,c_k) \leftarrow \textsc{MaxCoeff}_{\mathcal{F}}(k,M(i+1),s_1,\cdots,s_{iM}, t_{iM+1},\cdots,t_{iM+M} )$
            \State $\lambda \leftarrow \textsc{MaxRoot}(n,k,f_{s_1,\cdots,s_{iM}, t_{iM+1},\cdots,t_{iM+M}}, c_1,\cdots,c_k)$ \label{line:ks-find-root}
            \If{$\lambda \leq \lambda_{\min}$}
                \State $\lambda_{\min} \leftarrow \lambda$
                \State $s_{iM+1}\leftarrow t_{iM+1},\cdots,s_{iM+M} \leftarrow t_{iM+M}$
            \EndIf
        \EndFor
    \EndFor
    \State \textbf{Return} $(s_1,\cdots,s_m)$ 
\EndProcedure
\end{algorithmic}
\end{algorithm}

\begin{proof}
    Notice that Algorithm \ref{alg:kadison-singer} runs in $\frac{m}{M} = \sqrt{M}$ iterations. Inside each iteration, it does a brute force search on at most $\mathcal{Q}^{M} = \mathcal{Q}^{\sqrt{m}}$ elements. For each element $(t_{iM+1},\cdots,t_{iM+M})$, we query the oracles $\textsc{MaxCoeff}_{\mathcal{F}}$ and  $\textsc{MaxRoot}$, which takes $\T_{\mathrm{Coeff}}(\mathcal{F}, k) = \T_{\mathrm{Coeff}}(\mathcal{F}, O(\log(n) m \delta^{-1} )$ and $O(k + k^2)$ time respectively. Therefore, the running time of the algorithm is at most
    \begin{align*}
    \left(\T_{\mathrm{Coeff}}\left(\mathcal{F}, O(\log(n) m \delta^{-1} ) \right) + k^2 \right) \cdot {\cal S}^{\sqrt{m}} \cdot \sqrt{m}.
    \end{align*}
    
    Now we show the correctness of the algorithm by induction on the number of iteration $i$.
    For any $0\leq i\leq \frac{m}{M}-1$,
    suppose we have selected $s_1,\cdots,s_{iM}$ in the previous iterations, and suppose 
    \begin{align*}
        \lambda_{\max}\left( f_{s_1,\cdots,s_{iM}} \right) \leq (1+\frac{\delta}{2M})^{i+1} \cdot \lambda_{\max}(f_{\emptyset})
    \end{align*}
    where $\lambda_{\max}(f)$ denotes the maximum root of the univariate polynomial $f$.
    
    Suppose we fix $s_{iM+1},\cdots,s_{iM+M}$ on the $i$-th iteration. Note that $k = \frac{2M\log n}{\delta} > \log n$, by Lemma \ref{lem:alg-max-root-approx}, Line \ref{line:ks-find-root} of the algorithm returns an $1 + \frac{\log n}{k} = (1 + \frac{\delta}{2M})$-approximation of the largest root of $f_{s_1,\cdots,s_{iM}, t_{iM+1},\cdots,t_{iM+M}}$. Therefore, for any $0\leq i \leq \frac{m}{M}$, we have
    \begin{align*}
        \lambda_{\max}\left( f_{s_1,\cdots,s_{iM+M}} \right) \leq (1+\frac{\delta}{2M}) \cdot \min_{t_{iM+1}\in S_{iM+1},\cdots, t_{iM+M}\in S_{iM+M}} \lambda_{\max}\left( f_{s_1,\cdots,s_Mi,t_{iM+1},\cdots,t_{iM+M}} \right)
    \end{align*}
    Since $\{f_{s_1 \cdots s_m}(x) : s_1\in S_1,\cdots,s_m \in S_m\}$ is an interlacing family, we have
    \begin{align*}
        \min_{t_{iM+1}\in S_{iM+1},\cdots, t_{iM+M}\in S_{iM+M}} \lambda_{\max}\left( f_{s_1,\cdots,s_Mi,t_{iM+1},\cdots,t_{iM+M}} \right) \leq \lambda_{\max}\left( f_{s_1,\cdots,s_{iM}} \right).
    \end{align*}
    This proves the induction hypothesis.
\end{proof}

\begin{remark}
It's important to note that our algorithm runs in $\mathcal{S}^{\tilde{O}(\sqrt{n})}$ time. A paper by \cite{aoss18} provides an algorithm with a faster running time of $\mathcal{S}^{\tilde{O}(\sqrt[3]{n})}$. However, this algorithm is limited to only the $k$-th largest root of \emph{determinantal polynomials}.

Our algorithm faces a challenge in approximating the $k$-th largest root of \emph{hyperbolic polynomials}. In order to achieve an $(1 + \epsilon)$ approximation, our sub-exponential algorithm must run $\tilde{O}(\sqrt{n}\epsilon)$ iterations and search $k = \tilde{O}(\sqrt{n}/\epsilon)$ elements at each time. This ensures that the cumulative error does not exceed $\epsilon$. During each iteration, we have to brute-force over $O(\sqrt{n})$ elements, which results in a $2^{\tilde{O}(n)}$ search time.
\end{remark}

\subsection{Sub-exponential algorithm for Theorem \ref{thm:kls20_ours}}\label{sec:alg-kls20-ours}

In this section, we want to describe the sub-exponential algorithm for constructing Theorem \ref{thm:kls20_ours}.
Let $\mathcal{P}$ denote the interlacing family 
\ifdefined\isarxiv
as defined in Definition \ref{def:interlacing-family-kls20}. 
\else
as follows:
    Let $\xi_1,\dots,\xi_n$ denote $n$ independent random variables with finite supports and $\E[\xi_i]=\mu_i$ for $i\in [n]$. Let $v_1, \dots, v_n \in \Gamma_{+}^h$ be $n$ vectors such that $\mathrm{rank}_h(v_i) \leq 1$ for all $i \in [n]$. 
    For each $s = (s_1,\dots,s_n)$ where $s_i\in \supp(\xi_i)$, let $p_{s} \in \R [x]$ define the following polynomial:
    \begin{align*}
        p_{\bf s}(x) := \left(\prod_{i=1}^n p_{i, s_i}\right)\cdot h\left(x e + \sum_{i=1}^n (s_i-\mu_i)  v_i  \right) \cdot h\left(x e - \sum_{i=1}^n (s_i-\mu_i)  v_i  \right)
    \end{align*}
    where $p_{i, s_i} := \Pr_{\xi_i}[\xi_i = s_i]$.
    Let $\mathcal{P}$ denote the following family of polynomials:
    \begin{align}
        \mathcal{P} := \left\{ p_{(s_1,\cdots, s_\ell)} = \sum_{\substack{t_{\ell+1} ,\cdots,t_{n}:\\t_j\in \supp(\xi_j)~\forall j\in \{\ell+1,\dots,n\}}} p_{(s_1,\cdots,s_\ell, t_{\ell+1},\cdots,t_n)} : ~ \ell \in [n], s_i \in \supp(\xi_i)~\forall i\in [\ell]\right\}.
        \label{def:kls_interlacing}
    \end{align}
\fi

Suppose each $p_\mathbf{s}(x) \in \mathcal{P}$ has degree $d$.
Let $\textsc{MaxCoeff}_\mathcal{P}(k,\ell,s_1,\cdots,s_\ell)$ be the oracle defined in Definition \ref{def:oracle-max-coeff}, i.e. given any $k\in [d]$, $\ell \in [n]$, $(s_1,\cdots,s_\ell) \in \{\pm 1\}^\ell$, outputs the top-$k$ coefficients of $ p_{s_1,\cdots,s_\ell}$ in at most $\T_{\mathrm{Coeff}}(\mathcal{P}, k)$ time. The following lemma states that if $\T_{\mathrm{Coeff}}(\mathcal{P}, k)$ is polynomial in $k$, then we can construct Theorem \ref{thm:kls20_ours} in sub-exponential time:

\begin{corollary}[Sub-exponential algorithm for Theorem \ref{thm:kls20_ours}, formal statement of Proposition \ref{prop:subexpalg-kls20-ours-informal}]\label{cor:subexpalg-kls20-ours}
    Let $h\in \R[x_1,\dots,x_m]$ denote a hyperbolic polynomial with respect to $e\in \Gamma_{++}^h$. Let $u_1, \dots, u_n \in \Gamma_{+}^h$ be $n$ vectors such that
    \begin{align*}
        \sigma = \Big\| \sum_{i=1}^n \tr_h[u_i] u_i \Big\|_h.
    \end{align*}
    Let $\mathcal{P}$ be the interlacing family 
    defined
    \ifdefined\isarxiv
    in Definition \ref{def:interlacing-family-kls20}.
    \else
    in \eqref{eq:kls_interlacing}
    \fi
    Let $\textsc{MaxCoeff}_\mathcal{P}$ be the oracle defined in Definition \ref{def:oracle-max-coeff} with running time $\T_{\mathrm{Coeff}}(\mathcal{P}, k)$.
    Then for any $\delta > 0$, the algorithm $\mathrm{KadisonSinger}(\delta, \mathcal{P})$ (Algorithm \ref{alg:kadison-singer}) returns a sign assignment $(s_1,\cdots,s_n)\in \{\pm 1\}^n$, such that
    \begin{align*}
     \Big\| \sum_{i=1}^n s_i u_i \Big\|_h \leq 4(1+\delta) \sigma
    \end{align*} 
    in time 
    \begin{align*}
         \left(\T_{\mathrm{Coeff}}\left(\mathcal{P}, O(\log(n) m \delta^{-1})\right) + \frac{m \log^2 n}{\delta^2} \right)
         \cdot 2^{\sqrt{m}} \cdot \sqrt{m}.
    \end{align*}
\end{corollary}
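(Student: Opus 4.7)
The plan is to view this corollary as a packaging result: all the substantive analytic work—constructing the interlacing family $\mathcal{P}$ (Lemma \ref{lem:interlacing-family-kls20}), identifying the mixed characteristic polynomial with the multivariate form (Lemma \ref{lem:kls_expectation_hyperbolic}), and bounding its largest root by the barrier method (Lemma \ref{lem:kls_bound_root}, combined in Theorem \ref{thm:kls20_ours_formal})—has already been done. What remains is to verify that these ingredients fit the hypotheses of the generic sub-exponential algorithm of Lemma \ref{lem:kadison-singer-alg-general} and to translate the parameters correctly.

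First I would check the preconditions. By Lemma \ref{lem:interlacing-family-kls20}, $\mathcal{P}$ is an interlacing family of real-rooted monic polynomials in $x$ of degree $2\deg(h)$, and every index set has size $|\supp(\xi_i)| = 2$, so that the parameter $\mathcal{S}$ in Lemma \ref{lem:kadison-singer-alg-general} equals $2$. Specializing Theorem \ref{thm:kls20_ours_formal} to $\mu_i = 0$ and $\tau_i = 1$ provides the crucial quantitative input:
$$\lambda_{\max}(p_\emptyset) \leq 4\sigma,$$
where $p_\emptyset$ is the top node of $\mathcal{P}$ and $\sigma = \| \sum_{i=1}^n \tr_h[u_i]\, u_i \|_h$.

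Next I would invoke Lemma \ref{lem:kadison-singer-alg-general} on $\mathcal{P}$ with tolerance $\delta$ through the procedure $\mathrm{KadisonSinger}(\delta,\mathcal{P})$ (Algorithm \ref{alg:kadison-singer}). The lemma returns an assignment $(s_1,\dots,s_n) \in \{\pm 1\}^n$ satisfying
$$\lambda_{\max}(p_{s_1 \cdots s_n}) \leq (1+\delta)\, \lambda_{\max}(p_\emptyset) \leq 4(1+\delta)\sigma.$$
By Fact \ref{fact:hyperbolic-norm-max-root} (equivalently, Corollary \ref{cor:kls-interlacing}), the left-hand side equals $\|\sum_{i=1}^n s_i u_i\|_h$, which delivers the claimed norm bound. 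For the running time, one plugs $\mathcal{S}=2$ into Lemma \ref{lem:kadison-singer-alg-general}'s bound to obtain the factor $2^{\sqrt{m}}\cdot\sqrt{m}$, and substitutes $k = \Theta(\sqrt{m}\log n / \delta)$ used inside Algorithm \ref{alg:kadison-singer} to see that $k^2 = \Theta(m\log^2 n / \delta^2)$.

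I do not anticipate a real obstacle, since the corollary is essentially an application of existing machinery. The only subtle point is notational: the roles of ``degree'' and ``number of coordinates'' in Lemma \ref{lem:kadison-singer-alg-general} correspond respectively to $2\deg(h)$ (bounded in terms of the ambient dimension of $h$) and to the length $n$ of the sign assignment in the corollary's setting, and one must be careful to track this reindexing when reading off the stated running time.
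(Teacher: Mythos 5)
Your proposal follows the same route as the paper: establish the root bound $\lambda_{\max}(p_\emptyset)\leq 4\sigma$ via the barrier argument (Lemma \ref{lem:kls_bound_root}, as packaged in Theorem \ref{thm:kls20_ours_formal}), then hand the interlacing family $\mathcal{P}$ to the generic routine $\mathrm{KadisonSinger}$ of Lemma \ref{lem:kadison-singer-alg-general} with $\mathcal{S}=2$ and $k=\Theta(\sqrt{m}\log n/\delta)$. The only step the paper makes explicit that you elide is the rescaling $v_i = u_i/\sqrt{\sigma}$ before invoking Lemma \ref{lem:kls_bound_root}---needed because that lemma requires $\sum_i \tr_h[v_i]v_i\preceq e$, after which one unscales to get the bound for the $u_i$---but the structure of the argument and the running-time bookkeeping (including the reindexing of ``degree'' versus ``number of coordinates'' in Lemma \ref{lem:kadison-singer-alg-general}, which you correctly flag) match the paper's.
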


\begin{proof}
    For all $i\in [n]$, let $v_i = \frac{u_i}{\sqrt{\sigma}}$. Then we have
    \begin{align*}
        \left\|\sum_{i=1}^n \tr_h[v_i]v_i\right\|_h =  \left\|\sum_{i=1}^n \frac{\tr_h[u_i]u_i}{\sigma}\right\|_h=1.
    \end{align*}
    
    Let $s_1,\cdots,s_m$ denote the output of $\textsc{KadisonSinger}(\sigma, \mathcal{P})$ (Algorithm \ref{alg:kadison-singer}). Then we have
    \begin{align*}
        \|\sum_{i=1}^m s_i v_i\|_h & = \lambda_{\max}(p_{s_1,\cdots,s_m}) \\
        & \leq (1+\delta) \cdot \lambda_{\max}(p_{\emptyset}) & (\text{By Lemma \ref{lem:kadison-singer-alg-general}}) \\
        & \leq 4(1 + \delta)  & 
        (\text{By Lemma \ref{lem:kls_bound_root}})
    \end{align*}
    Therefore we have
    \begin{align*}
        \|\sum_{i=1}^m s_i u_i\|_h \leq 4\sigma(1 + \delta) \leq 8\sigma .
    \end{align*}
\end{proof}

\subsection{Sub-exponential algorithm for Theorem \ref{thm:kls20}}\label{sec:alg-kls20}

In particular, we can explicitly compute the running time of the sub-exponential algorithm for Theorem \ref{thm:kls20}. We first define the interlacing family for Theorem \ref{thm:kls20} with the following statement:

\begin{lemma}[Interlacing family for Theorem \ref{thm:kls20}, Proposition 4.1 and 5.4 of \cite{kls20}]\label{lem:interlacing-family-kls20-original}
    Let $\xi_1,\cdots,\xi_n$ denote $n$ i.i.d. random variables sampled uniformly at random from $\{\pm 1\}$. Let $u_1, \dots, u_n \in \Gamma_{+}^h$ be $n$ vectors such that $\sigma^2 = \|\sum_{i=1}^n (u_i u_i^*)^2\|$
    
    For each $\mathbf{s} \in \{\pm 1\}^n$, let $f_{\mathbf{s}} \in \R[x]$ denote the following polynomial:
    \begin{align*}
        f_{\mathbf{s}}(x) := (\prod_{i=1}^n p_{i,s_i}) \cdot \det \left(x^2 - \left( \sum_{i=1}^n (s_i - \lambda_i)u_i u_i^* \right)^2\right).
    \end{align*}
    where $\forall i\in [n]$, $\lambda_i = \E[\xi_i]$, and $\forall i\in [n], \forall s_i \in \{\pm 1\}$, $p_{i, s_i} = \Pr_{\xi_i}[\xi_i = s_i]$.
    Let $\mathcal{F}$ denote the following family of polynomials:
    \begin{align*}
        \mathcal{F} := \left\{ f_{s_1\cdots s_\ell}(x) = \sum_{t_{\ell+1} \in \{\pm 1\},\cdots,t_{n}\in \{\pm 1\}} f_{s_1,\cdots,s_\ell, t_{\ell+1},\cdots,t_n} : \forall \ell \in [n], s_1,\cdots,s_\ell \in \{\pm 1\}^\ell \right\}.
    \end{align*}
    Then $\mathcal{F}$ is an interlacing family. Moreover, there exists a choice of outcomes $s_1,\cdots,s_n\in \{\pm 1\}^n$, such that
    \begin{align*}
        \|\sum_{i=1}^n (s_i - \lambda_i) u_i u_i^*\| = \lambda_{\max}(f_{s_1\cdots s_n}) \leq \lambda_{\max} (f_{\emptyset}) \leq 4\sigma .
    \end{align*}
\end{lemma}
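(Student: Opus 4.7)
The plan is to obtain Lemma~\ref{lem:interlacing-family-kls20-original} as a direct specialization of the hyperbolic Kadison--Singer machinery developed in Section~\ref{sec:kls}. Instantiate that framework with the ambient space being the $m\times m$ real symmetric matrices, the hyperbolic polynomial $h(X)=\det(X)$, the hyperbolic direction $e=I$, and the vectors $v_i = u_i u_i^* \in \Gamma_+^h$ (each of hyperbolic rank at most $1$). With $\xi_i$ uniform on $\{\pm 1\}$ we have $\mu_i := \E[\xi_i]=\lambda_i=0$ and $\tau_i^2=1$, so $\prod_i p_{i,s_i}=2^{-n}$. Using the factorization $\det(x^2 I - M^2)=\det(xI-M)\det(xI+M)$ for symmetric $M$ applied to $M=\sum_i(s_i-\lambda_i)u_i u_i^*$, the polynomial $f_{\mathbf{s}}$ matches (up to the common scalar $2^{-n}$) the interlacing family member $p_{\mathbf{s}}$ of Definition~\ref{def:interlacing-family-kls20}. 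Furthermore $\tr_h[u_i u_i^*]=\|u_i\|^2$, so $\tr_h[v_i]\,v_i = \|u_i\|^2\, u_i u_i^* = (u_i u_i^*)^2$, and hence the hyperbolic variance $\|\sum_i \tau_i^2\tr_h[v_i] v_i\|_h$ coincides with the quantity $\sigma^2 = \|\sum_i (u_i u_i^*)^2\|$ from the statement.

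With these identifications in hand, the interlacing family property follows verbatim from Lemma~\ref{lem:interlacing-family-kls20}: for any partial assignment $(s_1,\dots,s_\ell)$ and any convex combination of $f_{s_1\cdots s_\ell,+1}$ and $f_{s_1\cdots s_\ell,-1}$, reinterpret the convex weights as the probabilities of a new $\pm 1$ variable $\tilde\xi_{\ell+1}$ and express the combination as the expected polynomial marginalized over $\tilde\xi_{\ell+1},\xi_{\ell+2},\dots,\xi_n$. Lemma~\ref{lem:kls_expectation_hyperbolic}, specialized to $h=\det$, rewrites this expectation as $\prod_i(1-\tfrac12\partial_{z_i}^2)|_{z=0}$ applied to a real-stable polynomial in $(x,z)$ (real-stability follows from Lemma~\ref{lem:determinantstable} combined with Fact~\ref{fac:closure_real_stable}), and hence the combination is real-rooted in $x$. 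Common interlacing then follows from Lemma~\ref{lem:commoninterlacing}. Corollary~\ref{cor:kls-interlacing} now yields outcomes $s_1,\dots,s_n\in\{\pm 1\}$ with
\[
\Bigl\|\sum_i s_i u_i u_i^*\Bigr\| \;=\; \lambda_{\max}(f_{s_1\cdots s_n}) \;\le\; \lambda_{\max}(f_\emptyset).
\]

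To bound $\lambda_{\max}(f_\emptyset)$ by $4\sigma$, rescale $u_i \mapsto u_i/\sqrt{\sigma}$ so that the hyperbolic variance becomes $1$, and apply Lemma~\ref{lem:kls_bound_root} to the real-stable polynomial $(\det(xI+\sum_i z_i u_i u_i^*))^2$; this places $(4,0,\dots,0)$ above all roots of $\prod_i(1-\tfrac12\partial_{z_i}^2)$ applied to it. Invoking Lemma~\ref{lem:kls_expectation_hyperbolic} again identifies the restriction at $z=0$ with the mixed characteristic polynomial, so $\lambda_{\max}(f_\emptyset)\le 4$ in the rescaled setting, i.e.\ $\lambda_{\max}(f_\emptyset)\le 4\sigma$ after undoing the rescaling.

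The main obstacle is conceptual rather than technical: one must check that the combinatorial objects (the normalizing factor $2^{-n}$, the factorization of $\det(x^2 I - M^2)$ into two characteristic-polynomial factors, and the identification of $\sigma^2$ with the hyperbolic variance $\|\sum_i \tr_h[v_i]v_i\|_h$) align precisely with the hyperbolic framework, and that the generalization of Claim~\ref{clm:hyperbolic_lemma_3.1_in_kls19} to non-centered Rademacher variables (recorded in the remark following that claim) is invoked correctly. Once these bookkeeping steps are verified, every nontrivial step---real-stability of the linear restriction of $\det$, the common-interlacing argument, and the multivariate barrier argument---has already been carried out in Sections~\ref{sec:kls_interlacing}--\ref{sec:kls_bound_root} and transfers without modification.
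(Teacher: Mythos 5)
The paper gives no proof of this lemma at all---it is stated with the bracketed attribution ``Proposition 4.1 and 5.4 of \cite{kls20}'' and treated as a black-box citation to prior work. Your argument is nonetheless correct, and it takes a genuinely different route from what the paper (implicitly) relies on: rather than citing the determinant-specific proof in \cite{kls20}, you recover the statement as a direct specialization of the paper's hyperbolic machinery in Section~\ref{sec:kls}, taking $h=\det$ on symmetric matrices, $e=I$, and $v_i=u_iu_i^*\in\Gamma_+^h$ with $\rank_h(v_i)\le 1$. Your bookkeeping checks out: the factorization $\det(x^2 I-M^2)=\det(xI-M)\det(xI+M)$ identifies each $f_{\mathbf{s}}$ with the $p_{\mathbf{s}}$ of Definition~\ref{def:interlacing-family-kls20} (with $\mu_i=0$, $\tau_i^2=1$, $\prod_i p_{i,s_i}=2^{-n}$); and since $\tr(u_iu_i^*)\cdot u_iu_i^*=\|u_i\|^2 u_iu_i^*=(u_iu_i^*)^2$, the hyperbolic variance $\|\sum_i\tau_i^2\tr_h[v_i]v_i\|_h$ coincides with $\sigma^2=\|\sum_i(u_iu_i^*)^2\|$. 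From there the interlacing-family property (Lemma~\ref{lem:interlacing-family-kls20}), the expectation-to-operator identity (Lemma~\ref{lem:kls_expectation_hyperbolic}), the barrier bound (Lemma~\ref{lem:kls_bound_root} after rescaling $u_i\mapsto u_i/\sqrt{\sigma}$), and the root comparison (Corollary~\ref{cor:kls-interlacing}) do transfer verbatim. What your route buys is a self-contained derivation demonstrating that the \cite{kls20} theorem is literally a corollary of the paper's Theorem~\ref{thm:kls20_ours_formal} rather than an external prerequisite; the paper leaves this implicit by simply citing it.
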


\begin{lemma}[Computing the top $k$ coefficients of interlacing polynomials for Theorem \ref{thm:kls20}]\label{lem:maxcoeff-kls20}
Let $\mathcal{F}$ denote the interlacing family defined in Lemma \ref{lem:interlacing-family-kls20-original}.
Given independent random variable $\xi_1,\dots,\xi_n\in \R$ with finite supports such that we know all moments of each random variable. There exists an algorithm $\textsc{MaxCoeff}_{\mathcal{F}}(k,\ell,s_1,\cdots,s_\ell)$ such that for any $1\leq \ell \leq n$ and $1\leq k \leq m$, and for any  $s_1,\dots,s_\ell$ in the supports of $\xi_1,\dots,\xi_\ell$ respectively, returns the top-$k$ coefficients of $f_{s_1,\dots,s_\ell}$ in time $O(n^{k}\poly(m))$.
\end{lemma}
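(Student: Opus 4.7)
The plan is to reduce computing the top $k$ coefficients of $f_{s_1,\dots,s_\ell}$ to computing the first $\lceil k/2\rceil$ expected power sums $\E[\tr(W^{2i})]$ of a suitable random matrix $W$, and then apply Newton's identities. Set $\eta_i := s_i - \lambda_i$ for $i\le \ell$ (deterministic, computable from the input) and $\eta_j := \xi_j - \lambda_j$ for $j>\ell$ (independent scalars whose central moments are available by hypothesis), and define $W := \sum_{i=1}^n \eta_i\, u_i u_i^{*}$, so that
\begin{align*}
f_{s_1,\dots,s_\ell}(x) = \Big(\prod_{i=1}^\ell p_{i,s_i}\Big)\cdot \E_{\xi_{\ell+1},\dots,\xi_n}\!\bigl[\det(x^2 I - W^2)\bigr].
\end{align*}
The eigenvalues of $W^2$ are squares of the eigenvalues of $W$, so $\det(x^2 I - W^2) = \sum_{j=0}^m (-1)^j e_j(W^2)\, x^{2m-2j}$, where $e_j$ is the $j$-th elementary symmetric polynomial of the spectrum (Fact \ref{fact:vieta-formula}). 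Odd-indexed coefficients vanish, and the top $k$ coefficients of $f_{s_1,\dots,s_\ell}$ are $\prod_{i}p_{i,s_i}$ times $\E[e_j(W^2)]$ (with signs) for $0\le j\le \lceil k/2\rceil$.

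By Newton's identities, each $e_j(W^2)$ is recoverable in $O(j^2)$ arithmetic operations from the power sums $p_i(W^2) = \tr(W^{2i})$ for $1\le i\le j$, so by linearity of expectation it suffices to compute $\E[\tr(W^{2i})]$ for $i\le \lceil k/2\rceil$. Here I use the rank-one structure of the summands:
\begin{align*}
\tr(W^{2i}) = \sum_{(j_1,\dots,j_{2i})\in [n]^{2i}} \eta_{j_1}\cdots\eta_{j_{2i}}\cdot \langle u_{j_1},u_{j_2}\rangle\langle u_{j_2},u_{j_3}\rangle\cdots \langle u_{j_{2i}},u_{j_1}\rangle.
\end{align*}
After an $O(n^2 m)$ precomputation of the Gram matrix $(\langle u_a,u_b\rangle)_{a,b}$, the cyclic inner-product chain for each tuple evaluates in $O(i)=O(k)$ time. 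Independence of the $\xi_j$'s makes the moment expectation factor across distinct indices: if index $j$ occurs with multiplicity $r_j$ in $(j_1,\dots,j_{2i})$, then $\E[\eta_{j_1}\cdots \eta_{j_{2i}}] = \prod_j \E[\eta_j^{r_j}]$, which is a product of known central moments of $\xi_j$ for $j>\ell$ and deterministic powers $(s_j-\lambda_j)^{r_j}$ otherwise. Grouping a tuple by its multiplicity pattern is an $O(k)$-time sort.

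Summing over at most $n^{2i}\le n^k$ tuples at $\poly(k)$ arithmetic each produces $\E[\tr(W^{2i})]$ in $O(n^k\poly(m))$ time; iterating across $i\le\lceil k/2\rceil$ and applying Newton's identities plus the outer scalar $\prod_{i\le \ell}p_{i,s_i}$ contribute only an additional $O(k^2+\ell)$ overhead, meeting the claimed $O(n^k\poly(m))$ bound. There is no genuine obstacle; the only subtlety is the moment factorization across repeated indices, but independence of the $\xi_j$'s reduces this to a per-distinct-index lookup in the provided moment table, and the degree-$2m$ polynomial structure guarantees that reading off the requested coefficients from the $\E[e_j(W^2)]$'s is immediate once the power sums have been assembled.
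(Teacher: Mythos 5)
Your overall framing — precompute the Gram matrix, exploit the rank-one structure, expand coefficients as sums over index tuples, and factor the moment expectation across distinct indices using independence — matches the spirit of the paper's proof. But there is a genuine error in the middle of your argument: you invoke Newton's identities to recover $e_j(W^2)$ from the power sums $p_i(W^2)=\tr(W^{2i})$, and then claim that ``by linearity of expectation it suffices to compute $\E[\tr(W^{2i})]$.'' Newton's identities are \emph{not} linear in the power sums. Already $e_2 = \tfrac12(p_1^2 - p_2)$, so
\begin{align*}
\E[e_2(W^2)] = \tfrac12\bigl(\E[p_1(W^2)^2] - \E[p_2(W^2)]\bigr),
\end{align*}
which requires the second moment $\E[\tr(W^2)^2]$, not merely $\E[\tr(W^2)]$. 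More generally $e_j$ is a weighted-degree-$j$ polynomial in $p_1,\dots,p_j$ with nonlinear monomials such as $p_1^j$, so $\E[e_j(W^2)]$ is not determined by the individual $\E[p_i(W^2)]$'s. As written, your algorithm computes the wrong quantities.

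The gap is repairable without changing the complexity class: you would need to compute $\E$ of every monomial $\prod_t p_{i_t}(W^2)^{a_t}$ appearing in Newton's formula for $e_j$, which expands into sums over several tuples whose total length is still $2j$; the tuple count stays $n^{O(j)}$. But this is a noticeably different computation than the one you described. The paper sidesteps the issue entirely by working directly with the elementary symmetric polynomials: writing $W^2$ as a sum of rank-one matrices $c_{i,j}\,u_iu_j^*$ and invoking the MSS expansion (Propositions 3.10 and 3.11 of \cite{mss15b}), which expresses $\sigma_k$ of a sum of rank-one matrices as a sum over $k$-subsets of index pairs, with the scalar $c_{i,j}$'s entering multilinearly. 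Because each $c_{i,j}$ appears to first degree within each summand, expectation passes inside term-by-term, and the per-term expectation factors across distinct indices exactly as you intended. That multilinearity is what justifies ``linearity of expectation'' in the paper; the power-sum route loses it.
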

\begin{proof}
The leading constant $\prod_{i=1}^\ell p_{i,s_i}$ can be easily computed. It is easy to see that all odd-degree terms vanish. Thus, for the following expected polynomial:
\begin{align*}
    \E_{\xi_{\ell+1},\dots,\xi_n} \left[ \det\left(x^2 I - \left(\sum_{i=1}^\ell s_i u_iu_i^* + \sum_{i=\ell+1}^n \xi_i u_iu_i^*\right)^2\right) \right],
\end{align*}
consider the coefficient of $x^{2(n-k)}$, which is $(-1)^k$ times the sum of all principal $k$-by-$k$ minors of the matrix $(\sum_{i=1}^\ell s_i u_iu_i^* + \sum_{i=\ell+1}^n \xi_i u_iu_i^*)^2$ in expectation. For any fixed value $\xi_{\ell+1},\dots,\xi_n$, we have
\begin{align*}
    &~\sigma_k\left(\left(\sum_{i=1}^\ell s_i u_iu_i^* + \sum_{i=\ell+1}^n \xi_i u_iu_i^*\right)^2\right)\\
    = &~ \sigma_k\left(\sum_{i,j\in [\ell]} s_i s_j \langle u_i, u_j\rangle u_iu_i^* + \sum_{i\in[\ell],j\in [n]\backslash [\ell]} s_i \xi_j \langle u_i, u_j\rangle (u_i u_j^* + u_ju_i^*)  + \sum_{i,j\in [n]\backslash[\ell]} \xi_i\xi_j \langle u_i, u_j\rangle u_iu_i^*\right).
\end{align*}
For simplicity, let
\begin{align*}
    c_{i,j}:=\begin{cases}
    s_is_j\langle u_i, u_j\rangle&\text{if}~i,j\in [\ell],\\
    \xi_is_j\langle u_i, u_j\rangle&\text{if}~i\in [n]\backslash [\ell],j\in [\ell],\\
    s_i\xi_j\langle u_i, u_j\rangle&\text{if}~i\in [\ell], j\in [n]\backslash [\ell],\\
    \xi_i\xi_j\langle u_i, u_j\rangle&\text{if}~i, j\in [n]\backslash [\ell]
    \end{cases}~~~,~\forall i,j\in [n]\times [n].
\end{align*}
Then, we have
\begin{align*}
    \sigma_k\left(\sum_{i,j\in [n]\times [n]} c_{i,j}u_iu_j^*\right) = &~ \sum_{S\in \binom{[n]\times [n]}{k}}\sigma_k\left(\sum_{(i,j)\in S} c_{i,j} u_iu_j^*\right)\\
    = &~ \sum_{S\in \binom{[n]\times [n]}{k}} \prod_{(i,j)\in S}c_{i,j}\cdot \sigma_k\left(\sum_{(i,j)\in S} u_iu_j^*\right),
\end{align*}
where the first step follows from Proposition 3.11 in \cite{mss15b} and the second step follows from Proposition 3.10 in \cite{mss15b}. 

Thus, the coefficient of the expected polynomial is 
\begin{align*}
    \sum_{S\in \binom{[n]\times [n]}{k}} \E_{\xi_{\ell+1},\dots,\xi_n}\left[\prod_{(i,j)\in S}c_{i,j}\right]\cdot \sigma_k\left(\sum_{(i,j)\in S} u_iu_j^*\right).
\end{align*}
Note that there are at most $n^{2k}$ terms in the summation. And for each $S$, the expectation $\E_{\xi_{\ell+1},\dots,\xi_n}\left[\prod_{(i,j)\in S}c_{i,j}\right]$ can be computed in $O(k)$-time, assuming we know all moments of each random variable, and the inner product $\langle u_i, u_j\rangle$ for all $i,j\in [n]$ can be pre-processed in $O(n^2m)$-time. The sum of minors $\sigma_k(\sum_{(i,j)\in S}u_iu_j^*)$ can be computed in $\poly(m)$-time. 

Therefore, the coefficient of $x^{2(m-k)}$ can be computed in $n^{2k}\cdot \poly(m)$-time, which implies that the top-$k$ coefficients can be computed in $O(n^k\poly(m))$-time. The Lemma is then proved.
\end{proof}

A similar proof of Corollary \ref{cor:subexpalg-kls20-ours} yields the following corollary:

\begin{corollary}[Sub-exponential algorithm for Theorem \ref{thm:kls20}]\label{cor:subexpalg-kls}
    Let $u_1, \dots, u_n \in \Gamma_{+}^h$ be $n$ vectors such that $\sigma^2 = \Big\| \sum_{i=1}^n (u_i u_i^*)^2\|$.
    Let $\mathcal{F}$ be the interlacing family defined in Lemma \ref{lem:interlacing-family-kls20-original}.
    Then for any $\delta > 0$, the algorithm $\mathrm{KadisonSinger}(\delta, \mathcal{F})$ returns a sign assignment $(s_1,\cdots,s_n)\in \{\pm 1\}^n$, such that
    \begin{align*}
        \Big\| \sum_{i=1}^n s_i u_i \Big\|_h \leq 4(1+\delta) \sigma
    \end{align*} 
    in time 
    \begin{align*}
         \left(O(n^{O(\sqrt{m}\log n/\delta)} \poly(m)) + \frac{m \log^2 n}{\delta^2} \right)
         \cdot 2^{\sqrt{m}} \cdot \sqrt{m}.
    \end{align*}
\end{corollary}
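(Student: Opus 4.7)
The argument is a direct plug-in: I would take the interlacing family $\mathcal{F}$ from Lemma \ref{lem:interlacing-family-kls20-original} and feed it, together with the coefficient oracle of Lemma \ref{lem:maxcoeff-kls20}, into the generic sub-exponential driver $\textsc{KadisonSinger}$ of Lemma \ref{lem:kadison-singer-alg-general} (Algorithm \ref{alg:kadison-singer}). The structure is identical to that of Corollary \ref{cor:subexpalg-kls20-ours}; what is new here is only that the coefficient oracle is instantiated concretely using the determinantal structure, which lets one bound $\T_{\mathrm{Coeff}}$ explicitly rather than leaving it as a black box.

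For the approximation guarantee, first I invoke Lemma \ref{lem:interlacing-family-kls20-original} to see that $\mathcal{F}=\{f_{\mathbf{s}}\}_{\mathbf{s}\in\{\pm 1\}^n}$ is an interlacing family of degree-$2m$ real-rooted monic polynomials, that $\lambda_{\max}(f_{\emptyset})\leq 4\sigma$, and that at every leaf $\mathbf{s}$ we have $\lambda_{\max}(f_{\mathbf{s}})=\|\sum_{i=1}^n s_i u_i u_i^*\|$. Running $\textsc{KadisonSinger}(\delta,\mathcal{F})$ and applying the guarantee of Lemma \ref{lem:kadison-singer-alg-general} then yields $(s_1,\dots,s_n)\in\{\pm 1\}^n$ with
\begin{align*}
\lambda_{\max}(f_{s_1\cdots s_n}) \leq (1+\delta)\,\lambda_{\max}(f_{\emptyset}) \leq 4(1+\delta)\sigma,
\end{align*}
which is exactly the claimed spectral-norm bound on $\|\sum_{i=1}^n s_i u_i u_i^*\|$ (and hence in the hyperbolic norm $\|\cdot\|_h$, under the determinant specialization).

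For the running time, Lemma \ref{lem:kadison-singer-alg-general} calls $\textsc{MaxCoeff}_{\mathcal{F}}$ with truncation parameter $k$ chosen so that the $n^{1/k}$ per-step approximation error of $\textsc{MaxRoot}$ (Lemma \ref{lem:alg-max-root-approx}) compounded across the $\sqrt{\,\cdot\,}$ outer iterations telescopes into a total factor of at most $1+\delta$; plugging the degree $2m$ and the $n$ sign variables into the driver forces $k = \Theta(\sqrt{m}\log n/\delta)$. Substituting the explicit oracle cost $\T_{\mathrm{Coeff}}(\mathcal{F},k) = O(n^k\poly(m))$ from Lemma \ref{lem:maxcoeff-kls20} gives the coefficient-computation contribution $O(n^{O(\sqrt{m}\log n/\delta)}\poly(m))$, the $O(k^2)=O(m\log^2 n/\delta^2)$ overhead of $\textsc{MaxRoot}$ per node, and the brute-force search factor $\mathcal{S}^{\sqrt{m}}\cdot\sqrt{m}=2^{\sqrt{m}}\sqrt{m}$ (since $\mathcal{S}=|\{\pm 1\}|=2$). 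Multiplying these three pieces produces the stated bound.

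I do not anticipate a significant obstacle: the proof is essentially an accounting exercise combining three black-box ingredients, and the only technical point is verifying that Lemma \ref{lem:maxcoeff-kls20} supplies the coefficients of the \emph{conditional} polynomials $f_{s_1,\dots,s_\ell}$ for every $\ell$, not merely $f_{\emptyset}$. This is already built into that lemma's proof via the principal-minor expansion using Propositions 3.10--3.11 of \cite{mss15b} combined with multilinearity of the remaining expectation over $\xi_{\ell+1},\dots,\xi_n$. This determinantal shortcut is precisely what fails for general hyperbolic polynomials, and is why the hyperbolic counterpart (Corollary \ref{cor:subexpalg-kls20-ours}) has to leave $\T_{\mathrm{Coeff}}$ abstract.
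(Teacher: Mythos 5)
Your proof follows the paper's approach, which is left implicit: the paper states this corollary only with the remark that ``A similar proof of Corollary~\ref{cor:subexpalg-kls20-ours} yields the following corollary,'' and you have spelled out exactly that argument by combining the interlacing family of Lemma~\ref{lem:interlacing-family-kls20-original}, the coefficient oracle of Lemma~\ref{lem:maxcoeff-kls20}, and the driver of Lemma~\ref{lem:kadison-singer-alg-general}. Reading the $4\sigma$ bound on $\lambda_{\max}(f_\emptyset)$ directly off Lemma~\ref{lem:interlacing-family-kls20-original}, rather than rescaling the $u_i$ and reinvoking the barrier lemma as the paper does in the proof of Corollary~\ref{cor:subexpalg-kls20-ours}, is a harmless simplification since that lemma already packages the bound. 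You also correctly identify that the one step needing verification is that Lemma~\ref{lem:maxcoeff-kls20} gives coefficients of the conditional polynomials $f_{s_1,\dots,s_\ell}$ for all $\ell$, and you are right that the determinantal minor expansion handles this.

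One step to correct: you assert that ``plugging the degree $2m$ and the $n$ sign variables into the driver forces $k=\Theta(\sqrt{m}\log n/\delta)$.'' That substitution does not give this. In Algorithm~\ref{alg:kadison-singer} the number of sign variables plays the role of the driver's block parameter (call it $m'$), so $M=\sqrt{m'}=\sqrt{n}$ and $k=M\log(\deg)/\delta=\Theta(\sqrt{n}\log m/\delta)$, and the brute-force factor is $\mathcal{S}^{\sqrt{m'}}\sqrt{m'}=2^{\sqrt{n}}\sqrt{n}$, not $2^{\sqrt{m}}\sqrt{m}$. Your expressions have $m$ and $n$ reversed relative to what the cited algorithm actually produces. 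They do match what is printed in the statement of Corollary~\ref{cor:subexpalg-kls}, so this appears to be a variable swap in the paper itself (the same pattern shows up in Corollary~\ref{cor:subexpalg-kls20-ours} and in Lemma~\ref{lem:kadison-singer-alg-general}'s stated time bound, which writes $O(m\log n/\delta)$ while the algorithm sets $k=\sqrt{m}\log n/\delta$). You should not present the swapped quantities as if they followed from the substitution you describe; either re-derive the time bound consistently with Algorithm~\ref{alg:kadison-singer} or note the discrepancy explicitly.
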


\subsection{Sub-exponential algorithm for Theorem \ref{thm:ag14_ours}}\label{sec:alg-ag14-ours}

In this section, we want to describe the sub-exponential algorithm for constructing Theorem \ref{thm:ag14_ours}.
Let $\mathcal{Q}$ denote the interlacing family 
\ifdefined\isarxiv 
defined in Definition \ref{def:interlacing-family-ag14}. 
\else
as follows:
 Let $\mu : 2^{[n]} \to \R$ be a homogeneous strongly Rayleigh probability distribution. Let $v_1,\dots, v_n\in \Gamma_+^h$ be $n$ vectors such that $\rank_h(v_i) \leq 1$ for all $i\in [n]$. Let $\mathcal{F} = \{S \subseteq [n]: \mu(S) > 0\}$ be the support of $\mu$.
    For any $S\in \mathcal{F}$, let 
    \begin{align*}
        q_S(x) = \mu(S) \cdot h\left(xe - \sum_{i \in S} v_i\right).
    \end{align*}
    Let $\mathcal{Q }$ denote the following family of polynomials:
    \begin{align}
        \mathcal{Q} := \Big\{ q_{s_1\cdots s_\ell}(s) = \sum_{\substack{t_{\ell+1} ,\cdots,t_{n} \\ (s_1\cdots s_\ell,t_{\ell+1} ,\cdots,t_{n}) \in \mathcal{F}}} q_{s_1,\cdots,s_\ell, t_{\ell+1},\cdots,t_n} : \forall \ell \in [n], (s_1,\cdots,s_\ell) \in \mathcal{F}|_{[\ell]} \Big\}.
        \label{eq:ag_interlacing}
    \end{align}
    where $\mathcal{F}|_{[\ell]}$ is $\mathcal{F}$ restricted to $[\ell]$, and a subset is represented by a binary indicator vector. %
\fi

Suppose each $q_\mathbf{s}(x) \in \mathcal{Q}$ has degree $d$.
Let $\textsc{MaxCoeff}_\mathcal{Q}(k,\ell,s_1,\cdots,s_\ell)$ be the oracle defined in Definition \ref{def:oracle-max-coeff}, i.e. given any $k\in [d]$, $\ell \in [n]$, $(s_1,\cdots,s_\ell)$ in the support of $\mu$, outputs the top-$k$ coefficients of $q_{s_1,\cdots,s_\ell}$
in at most $\T_{\mathrm{Coeff}}(\mathcal{Q}, k)$ time. The following corollary states that if $\T_{\mathrm{Coeff}}(\mathcal{Q}, k)$ is polynomial in $k$, then we can construct Theorem \ref{thm:ag14_ours}. The proof of this corollary is similar to that of Corollary \ref{cor:subexpalg-kls20-ours}.

\begin{corollary}[Sub-exponential algorithm for Theorem \ref{thm:ag14_ours}, formal statement of Proposition \ref{prop:subexpalg-ag14-informal}]\label{cor:subexpalg-ag14}
    Let $h\in \R[x_1,\dots,x_m]$ denote a hyperbolic polynomial with respect to $e\in \Gamma_{++}^h$. 
    Let $\mu$ be a homogeneous strongly Rayleigh probability distribution on $[n]$ such that the marginal probability of each element is at most $\epsilon_1$, and let $v_1, \cdots, v_n \in \Gamma_{+}^h$ be $n$ vectors such that $\sum_{i=1}^n v_i  = e$, and for all $i \in [n]$, $\| v_i \|_h \leq \epsilon_2$ and $\rank_h(v_i) \leq 1$.

    Let $\mathcal{Q}$ be the interlacing family defined in
    \ifdefined\isarxiv
    Definition \ref{def:interlacing-family-ag14}
    \else 
    \eqref{eq:ag_interlacing}
    \fi
    , and $\textsc{MaxCoeff}_\mathcal{Q}$ be the oracle defined in Definition \ref{def:oracle-max-coeff} with running time $\T_{\mathrm{Coeff}}(\mathcal{Q}, k)$.
    Then for any $\delta > 0$, the algorithm $\mathrm{KadisonSinger}(\delta, \mathcal{Q})$ returns a set $S$ in the support of $\mu$, such that
    \begin{align*}
     \Big\| \sum_{i\in S} u_i \Big\|_h \leq (1+\delta) \cdot \left( 4 (\epsilon_1+ \epsilon_2) + 2 (\epsilon_1+\epsilon_2)^2 \right)
    \end{align*} 
    in time 
    \begin{align*}
         \left(\T_{\mathrm{Coeff}}\left(\mathcal{Q}, O(\log(n) m \delta^{-1})\right) + k^2 \right)
         \cdot 2^{\sqrt{m}} \cdot \sqrt{m}.
    \end{align*}
\end{corollary}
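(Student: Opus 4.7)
The plan is to reduce Corollary \ref{cor:subexpalg-ag14} to a direct invocation of the generic sub-exponential algorithm $\textsc{KadisonSinger}$ (Algorithm \ref{alg:kadison-singer}), analogous to the proof of Corollary \ref{cor:subexpalg-kls20-ours}. Concretely, I would feed the interlacing family $\mathcal{Q}$ of hyperbolic characteristic polynomials $q_S(x) = \mu(S) \cdot h\bigl(xe - \sum_{i \in S} v_i\bigr)$ into Algorithm \ref{alg:kadison-singer} with accuracy parameter $\delta$. Before doing so, I need to check that the two preconditions of the algorithm are met: every $q_S$ is a real-rooted polynomial of the same degree $d$ with positive leading coefficient, and $\mathcal{Q}$ forms an interlacing family. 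Real-rootedness of the individual $q_S$ is immediate from the hyperbolicity of $h$ in direction $e$; real-rootedness of the partial sums $q_{s_1 \cdots s_\ell}$ (which is what the interlacing property actually requires) follows from Lemma \ref{lem:ag14_theorem_3.1} applied to the conditional Strongly Rayleigh distributions. The interlacing family property is then precisely the content of Lemma \ref{lem:ag_define_interlacing_family}.

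Once the preconditions are in place, Lemma \ref{lem:kadison-singer-alg-general} guarantees that the algorithm returns a subset $S$ in the support of $\mu$ such that $\lambda_{\max}(q_S) \leq (1+\delta)\cdot \lambda_{\max}(q_\emptyset)$. To convert this into a bound on the hyperbolic norm, I use that each $v_i \in \Gamma_+^h$, so $\sum_{i \in S} v_i \in \Gamma_+^h$ as well, which means all hyperbolic eigenvalues of $\sum_{i \in S} v_i$ are non-negative and consequently $\bigl\|\sum_{i \in S} v_i\bigr\|_h$ equals $\lambda_{\max}\bigl(h(xe - \sum_{i\in S}v_i)\bigr) = \lambda_{\max}(q_S)$ (the positive prefactor $\mu(S)$ does not affect roots). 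Combined with the bound $\lambda_{\max}(q_\emptyset) \leq 4(\epsilon_1+\epsilon_2) + 2(\epsilon_1+\epsilon_2)^2$ established inside the proof of Theorem \ref{thm:ag14_ours_formal} (via Lemma \ref{lem:ag_bound_root} and Lemma \ref{lem:ag14_theorem_3.1}), this yields the claimed bound on $\bigl\|\sum_{i\in S} v_i\bigr\|_h$.

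For the running time, I would directly apply the time bound of Lemma \ref{lem:kadison-singer-alg-general}. Each coefficient query costs $\T_{\mathrm{Coeff}}\bigl(\mathcal{Q}, O(\log(n) m \delta^{-1})\bigr)$, each root extraction costs $O(k^2)$, and the outer brute-force search over $\sqrt{m}$ consecutive coordinates contributes the factor $\mathcal{S}^{\sqrt{m}} \cdot \sqrt{m}$, where $\mathcal{S}$ is the size of each single-coordinate choice set. Since at each step the decision is simply whether to include a given index $i$ in $S$, we have $\mathcal{S} = 2$, producing the claimed factor $2^{\sqrt{m}} \cdot \sqrt{m}$.

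The main delicate point I anticipate is purely combinatorial: Algorithm \ref{alg:kadison-singer} is written for product choice sets $S_1 \times \cdots \times S_m$, but the support of a homogeneous SR distribution is a downward-closed structure inside $\{0,1\}^n$ whose projections $\mathcal{F}|_{[\ell]}$ may not be products. I would handle this by restricting, in each outer iteration, the brute force to the conditionally feasible partial assignments inside $\mathcal{F}|_{[\ell]}$, and observing that this restriction can only shrink the search space while preserving the interlacing property on the conditional sub-family. Everything else is a direct transport of the reasoning used in Corollary \ref{cor:subexpalg-kls20-ours}.
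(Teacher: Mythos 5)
Your proof is correct and follows the same strategy the paper intends for this corollary (the paper only states ``The proof of this corollary is similar to that of Corollary~\ref{cor:subexpalg-kls20-ours}'' and omits the details): invoke Algorithm~\ref{alg:kadison-singer} on the interlacing family $\mathcal{Q}$ of Definition~\ref{def:interlacing-family-ag14}, use Lemma~\ref{lem:kadison-singer-alg-general} to obtain a leaf whose largest root is at most $(1+\delta)\,\lambda_{\max}(q_\emptyset)$, bound $\lambda_{\max}(q_\emptyset)$ by $4(\epsilon_1+\epsilon_2) + 2(\epsilon_1+\epsilon_2)^2$ via Lemmas~\ref{lem:ag14_theorem_3.1} and~\ref{lem:ag_bound_root}, and identify $\lambda_{\max}(q_S)$ with $\bigl\|\sum_{i\in S}v_i\bigr\|_h$ (which is legitimate since $\sum_{i\in S}v_i \in \Gamma_+^h$, so only the one-sided factor $h(xe-\sum_{i\in S}v_i)$ is needed, unlike in the signed case). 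You also correctly interpret the $u_i$ in the statement as $v_i$, matching the hypotheses.

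One thing you add that the paper glosses over is the mismatch between the product-set format $S_1\times\cdots\times S_m$ assumed by Algorithm~\ref{alg:kadison-singer} and Lemma~\ref{lem:kadison-singer-alg-general}, and the non-product index set of $\mathcal{Q}$: the family is only defined for prefixes in $\mathcal{F}|_{[\ell]}$, so a naive brute force over all of $\{0,1\}^M$ in each block can land on a prefix whose conditional polynomial $q_{s_1\cdots s_\ell}$ is identically zero (and hence not degree-$d$ monic). Your fix---restricting the inner loop to conditionally feasible partial assignments---is the correct patch: among the siblings that remain, all polynomials have degree $d$ and positive leading coefficient, and Lemma~\ref{lem:interlacing_ub_root} still applies to that restricted sibling set because their sum equals the parent. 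This is a genuine loose end in the paper's treatment and you were right to flag it.
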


\section{Examples and Discussions}\label{sec:example}

\paragraph*{Examples of real-stable polynomials}
\begin{itemize}
    \item \textit{Spanning tree polynomial:} let $G=(V,E)$ be a connected undirected graph. Then its spanning tree polynomial 
    \begin{align}\label{eq:spanning-tree-polyn}
        P_G(x) = \sum_{\substack{T\subset E,\\T~\text{spanning tree}}} \prod_{e\in T} x_e
    \end{align}
    is real-stable.
    \item \textit{Elementary Symmetric Polynomials: } For any $n,k>0$, the elementary symmetric polynomial
    \begin{align*}
        e_k(x) = \sum_{S\in \binom{[n]}{k}} \prod_{i\in S}x_i 
    \end{align*}
    is real-stable.
    \item \textit{Vertex matching polynomial: } let $G=(V,E)$ be a undirected graph. Then its vertex matching polynomial
    \begin{align}\label{eq:matching-polyn}
        M_G(x)=\sum_{\substack{M\subset E,\\ M ~\text{matching}}} \prod_{\{u,v\}\in M} -x_ux_v
    \end{align}
    is real-stable \cite{bb09}.
    \item \textit{V\'{a}mos matroid polynomial: } let ${\cal B}$ consists of all subsets $B\subset [10]$ of size $4$ except for $\{1, 2, 3, 4\}, \{1, 2, 5, 6\}, \{1, 2, 7, 8\}, \{1, 2, 9, 10\}, \{3, 4, 5, 6\}, \{5, 6, 7, 8\},\{7, 8, 9, 10\}$. Then, ${\cal B}$ is the collection of basis of a V\'{a}mos matroid. See Figure \ref{fig:vamos-matroid-polyn} as an illustration of ${\cal B}$. Its generating polynomial
    \begin{align*}
        f_{10}(x) = \sum_{B\in {\cal B}} \prod_{i\in B}x_i
    \end{align*}
    is real-stable. Furthermore, for any $k>0$, $(f_{10}(x))^k$ cannot be represented as a determinant of a linear matrix with positive semidefinite Hermitian forms \cite{bvy14}.
    \begin{figure}[h]
    \centering 
        \includegraphics[width=0.35\textwidth]{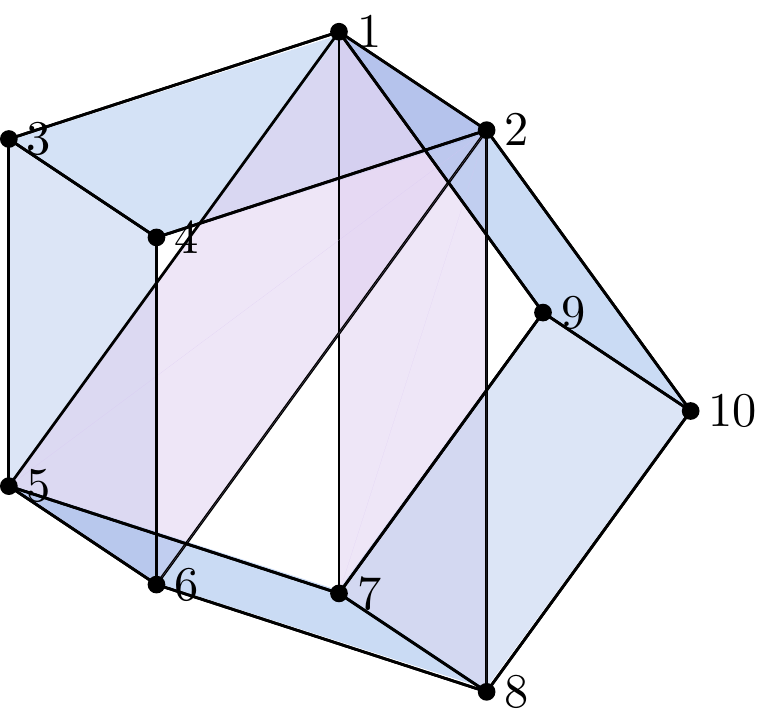}
        \caption{An example of the V\'{a}mos matroid $\mathcal{B}$. $\mathcal{B}$ contains all the sets in $\binom{[10]}{4}$, except the sets of size $4$ represented by the colored faces.}
    \label{fig:vamos-matroid-polyn}
    \end{figure}
    \item \textit{Determinant of the mixture of PSD matrices: } let $A_1,\dots,A_n\in \R^{d\times d}$ be PSD matrix and $B\in \R^{d\times d}$ be symmetric. Then, the polynomial
    \begin{align*}
        p(x)=\det(x_1A_1+\cdots + x_n A_n + B)
    \end{align*}
    is real-stable.
\end{itemize}

Figure \ref{fig:spanning-tree-polyn} and Figure \ref{fig:matching-polyn}  illustrate an example of the spanning tree polynomial (Eqn. (\ref{eq:spanning-tree-polyn})) and vertex matching polynomial (Eqn. (\ref{eq:matching-polyn})) respectively.  

\begin{figure}[h]
\centering 
    \includegraphics[width=0.6\textwidth]{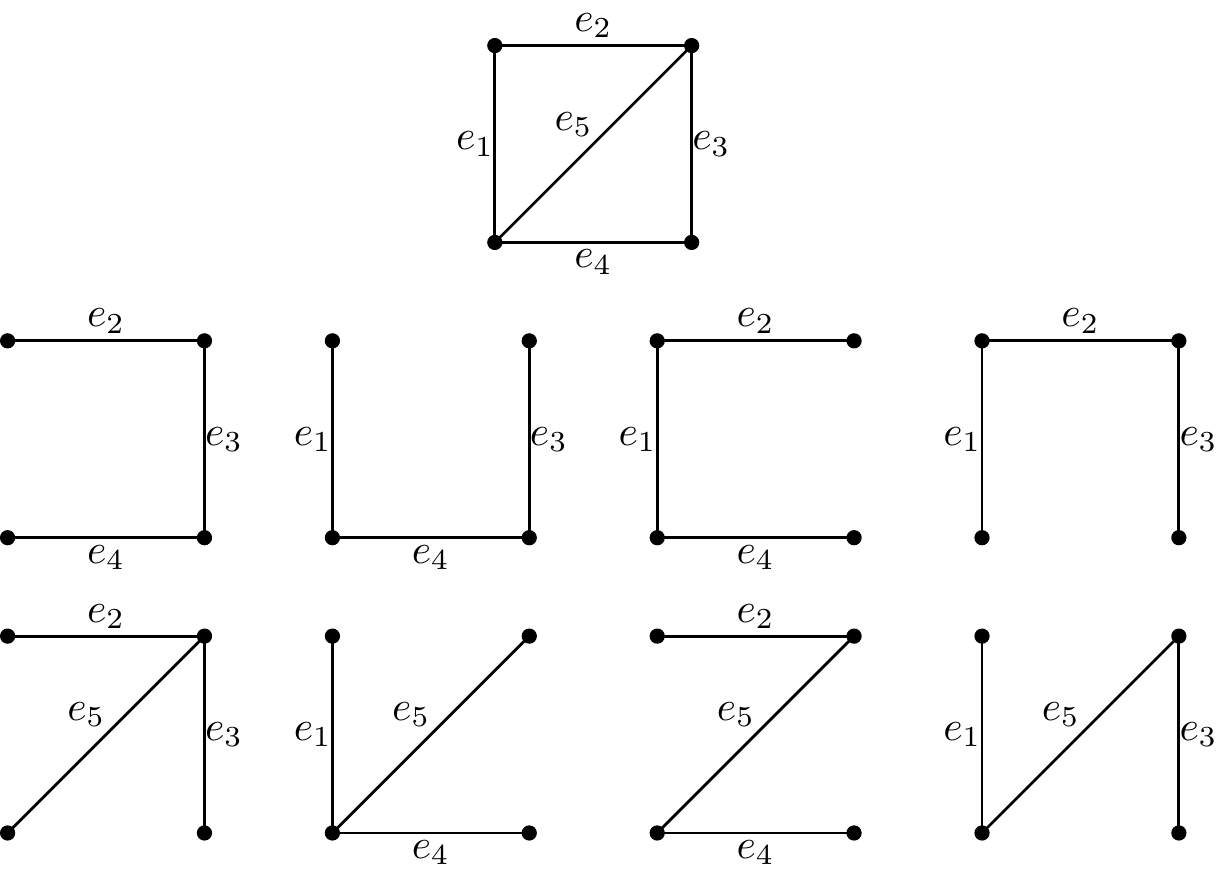}
    \caption{An example of the spanning tree polynomial. Above: the graph $G$; Below: the set of spanning trees of $G$. Therefore the spanning tree polynomial of $G$ is 
    $ P_G(x) = x_2x_3x_4 + x_1x_3x_4 + x_1x_2x_4 + x_1x_2x_3 + x_2x_3x_5 + x_1x_4x_5 + x_2x_4x_5 + x_1x_3x_5. $}
\label{fig:spanning-tree-polyn}
\end{figure}

\begin{figure}[h]
\centering 
    \includegraphics[width=0.45\textwidth]{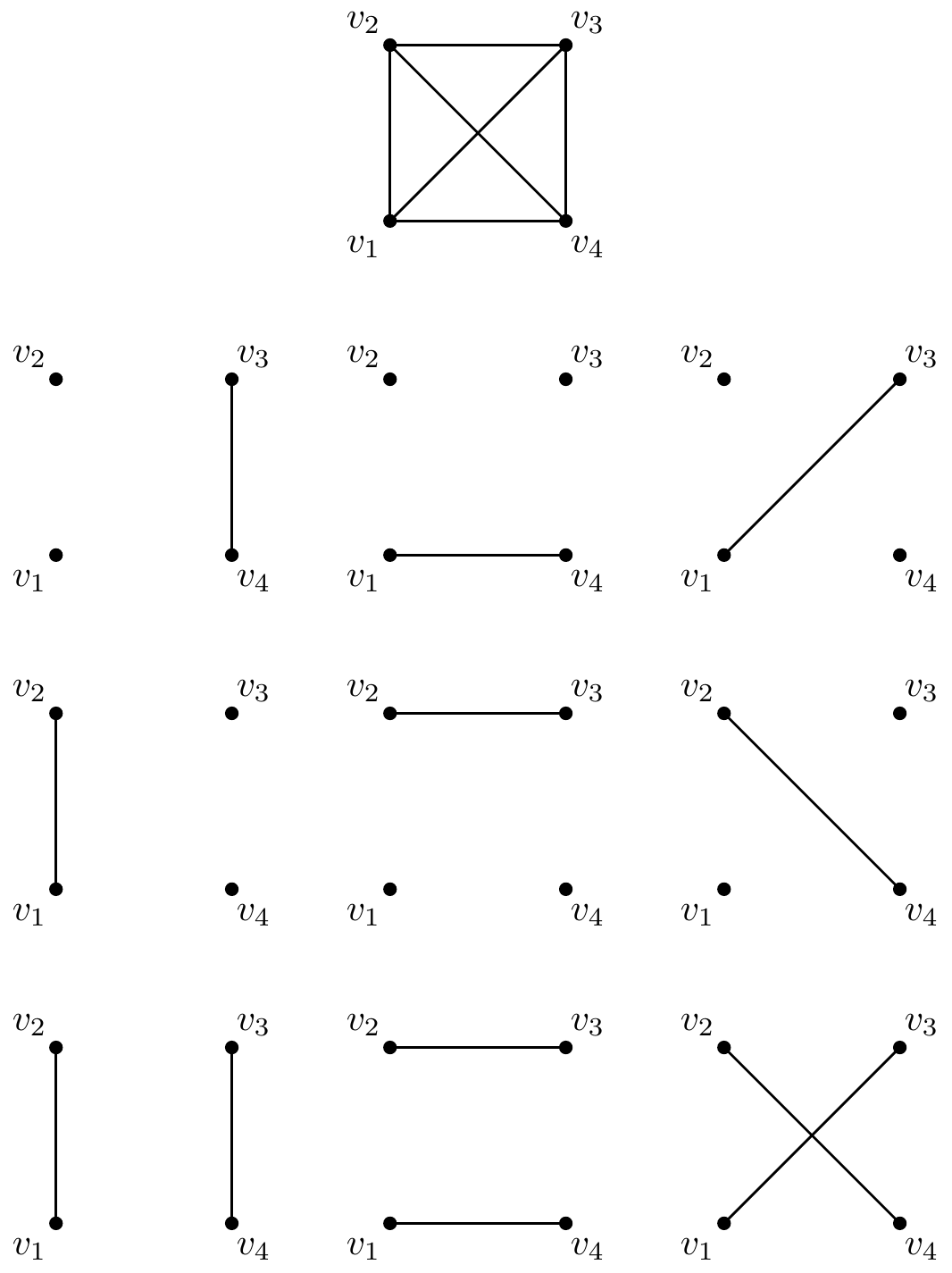}
    \caption{An example of the vertex matching polynomial. Above: the graph $G$; Below: the set of matchings of $G$. Therefore the matching polynomial of $G$ is 
    $ M_G(x) = -(x_1x_2 + x_3x_4 + x_2x_3 + x_1 x_4 + x_1 x_3 + x_2 x_4) + 3\cdot x_1x_2x_3x_4$.}
\label{fig:matching-polyn}
\end{figure}

\paragraph*{Examples of hyperbolic polynomials}
\begin{itemize}
    \item \textit{Lorentz polynomial: }
    \begin{align*}
        p(x)=x_n^2-x_1^2-\cdots - x_{n-1}^2
    \end{align*}
    is hyperbolic with respect to $e=\begin{bmatrix}0 & \cdots & 0 & 1\end{bmatrix}^\top$.
    \item \textit{Determinant polynomial: }
    \begin{align*}
        p(x)=\det(\mathrm{mat}(x))\in \R[\{x_{i,j}\}_{1\leq i\leq j\leq n}]
    \end{align*}
    is hyperbolic with respect to $e=\mathrm{vec}(I)$, where $\mathrm{mat}(\cdot)$ packs an $(n(n+1)/2)$-dimensional vector to an $n$-by-$n$ symmetric matrix, and $\mathrm{vec}(\cdot)$ vectorize a symmetric matrix to a vector.
    \item \textit{Multivariate matching polynomial: } let $G=(V,E)$ be an undirected graph. Then
    \begin{align*}
        \mu_G(x,w)=\sum_{\substack{M\subset E,\\ M ~\text{matching}}}(-1)^{|M|} \cdot \prod_{u\notin V(M)} x_u\cdot \prod_{e\in M} w_e^2
    \end{align*}
    is hyperbolic \cite{ami19} with respect to $e=\begin{bmatrix}{\bf 1}_V & {\bf 0}\end{bmatrix}^\top$.
\end{itemize}
\fi

\end{document}